\newcommand{\Xcomment}[1]{}
\renewcommand{\section}{\@startsection{section}{1}{0pt}%
{-3.5ex plus -1ex minus -.2ex}{2.3ex plus .2ex}%
{\normalfont\Large}}
\renewcommand{\subsection}{\@startsection{subsection}{2}{0pt}%
{-3.0ex plus -1ex minus -.2ex}{1.5ex plus .2ex}%
{\normalfont\normalsize\bf}}
\renewcommand{\subsubsection}{\@startsection{subsubsection}{3}{0pt}%
{-3.0ex plus -1ex minus -.2ex}{-1.5ex plus -.2ex}%
{\normalfont\normalsize\bf}}
\newtheorem{theorem}{Theorem}[section]
\newtheorem{lemma}[theorem]{Lemma}
\newtheorem{corollary}[theorem]{Corollary}
\newtheorem{prop}[theorem]{Proposition}
\newenvironment{proof}{\noindent{\bf Proof}~}%
{$\qed$\medskip}
\makeatletter \@addtoreset{equation}{section} \makeatother
\newenvironment{numitem}{\refstepcounter{equation}\begin{enumerate}%
\item[(\thesection.\arabic{equation})]$\quad$}{\end{enumerate}}
\newenvironment{numitem1}{\refstepcounter{equation}\begin{enumerate}%
\item[(\thesection.\arabic{equation})]}{\end{enumerate}}
\newcommand{\refeq}[1]{(\ref{eq:#1})}  
\def\qed{ \ \vrule width.1cm height.3cm depth0cm}
\def\tilde{\widetilde}
\def\hat{\widehat}
\def\eps{\varepsilon}
\def\Rset{{\mathbb R}}
\def\Zset{{\mathbb Z}}
\def\Bscr{\mathcal{B}}
\def\Escr{\mathcal{E}}
\def\Fscr{\mathcal{F}}
\def\Gscr{\mathcal{G}}
\def\Kscr{\mathcal{K}}
\def\Mscr{\mathcal{M}}
\def\frakB{{B}}
\def\frakC{{C}}
\def\Kmn{\mathcal{K}^{(-n)}}
\def\Kmone{\mathcal{K}^{(-1)}}
\def\prv{\check{v}}
\def\Kup{K^{\uparrow}}
\def\Klow{K^{\downarrow}}
\def\Piup{\Pi^{\uparrow}}
\def\Pilow{\Pi^{\downarrow}}
\def\vup{v^{\uparrow}}
\def\vlow{v^{\downarrow}}
\def\aup{a^{\uparrow}}
\def\bup{b^{\uparrow}}
\def\parup{c^{\uparrow}}
\def\parlow{c^{\downarrow}}
\def\heartup{\hslash^{\uparrow}}
\def\heartlow{\hslash^{\downarrow}}
\def\parmid{c^{\uparrow\downarrow}}
\def\parupdown{c^{\uparrow\downarrow}}
\def\Kmid{K^{\uparrow\downarrow}}
\def\heartmid{\hslash^{\uparrow\downarrow}}
\def\heartupdown{\hslash^{\uparrow\downarrow}}
\def\bmid{\hslash^{\downarrow\uparrow}}
\def\Dupdown{\Delta^{\uparrow\downarrow}}
\def\parupup{c^{\uparrow\uparrow}}
\def\heartupup{\hslash^{\uparrow\uparrow}}
\def\Dupup{\Delta^{\uparrow\uparrow}}
\def\Kpu{K'^{\uparrow}}
\def\Kpd{K'^{\downarrow}}
\def\Pipu{\Pi'^{\uparrow}}
\def\Pipd{\Pi'^{\downarrow}}
\def\zu{z^{\uparrow}}
\def\zd{z^{\downarrow}}
\def\zdu{z^{\downarrow\uparrow}}
\def\zud{z^{\uparrow\downarrow}}
\def\zuu{z^{\uparrow\uparrow}}
\def\cpu{c'^{\uparrow}}
\def\cpd{c'^{\downarrow}}
\def\cpuu{c'^{\uparrow\uparrow}}
\def\apu{a'^{\uparrow}}
\def\bpu{b'^{\uparrow}}
\def\Dpu{\Delta'^{\uparrow}}
\def\Dpd{\Delta'^{\downarrow}}
\def\Dpud{\Delta'^{\uparrow\downarrow}}
\def\Dpuu{\Delta'^{\uparrow\uparrow}}
\def\hpu{\hslash'^{\uparrow}}
\def\hpd{\hslash'^{\downarrow}}
\def\hpuu{\hslash'^{\uparrow\uparrow}}
\def\hpud{\hslash'^{\uparrow\downarrow}}
\def\bfa{a}
\def\bfb{b}
\def\bfc{c}
\def\bfp{p}
\def\bfq{q}
\def\bfzero{{\bf 0}}
\def\bfone{{\bf 1}}
\def\bftwo{{\bf 2}}
\def\bfthree{{\bf 3}}
\def\bffour{{\bf 4}}
\def\tone{\tilde\bfone}
\def\ttwo{\tilde\bftwo}
\def\bone{\bar\bfone}
\def\btwo{\bar\bftwo}
\def\Pin{P^{\rm in}}
\def\Pout{P^{\rm out}}
\def\ellin{t}
\def\ellout{h}
\def\eNW{e^{\rm NW}}
\def\eSW{e^{\rm SW}}
\def\eNE{e^{\rm NE}}
\def\eSE{e^{\rm SE}}
\begin{document}

\begin{center}
{\large\bf  On the combinatorial structure of crystals of types A,\,B,\,C}%
\footnote[1]{Supported by RFBR grant 10-01-9311-CNRSL\_\,a.}
\end{center}
 \medskip

\begin{center}
{\sc Vladimir~I.~Danilov}\footnote[2] {Central Institute of Economics and
Mathematics of the RAS, 47, Nakhimovskii Prospect, 117418 Moscow, Russia;
emails: danilov@cemi.rssi.ru and koshevoy@cemi.rssi.ru.},
{\sc Alexander~V.~Karzanov}\footnote[3]{Institute for System Analysis of the
RAS, 9, Prospect 60 Let Oktyabrya, 117312 Moscow, Russia; email:
sasha@cs.isa.ru. A part of this research was done while this author was
visiting Equipe Combinatoire et Optimisation, Univ. Paris-6, and Institut
f\"ur Diskrete Mathematik, Univ. Bonn.}, \\
{\sc and Gleb~A.~Koshevoy$^2$}
\end{center}

 \bigskip
 \begin{quote}
 {\bf Abstract.}
 {\small
Regular $A_n$-, $B_n$- and $C_n$-\emph{crystals} are edge-colored directed
graphs, with ordered colors $1,2,\ldots,n$, which are related to
representations of quantized algebras $U_q(\mathfrak{sl}_{n+1})$,
$U_q(\mathfrak{sp}_{2n})$ and $U_q(\mathfrak{so}_{2n+1})$, respectively. We
develop combinatorial methods to reveal refined structural properties of such
objects.

Firstly, we study subcrystals of a regular $A_n$-crystal $K$ and characterize
pairwise intersections of maximal subcrystals with colors $1,\ldots,n-1$ and
colors $2,\ldots,n$. This leads to a recursive description of the structure of
$K$ and provides an efficient procedure of assembling $K$.

Secondly, using merely combinatorial means, we demonstrate a relationship
between regular $B_n$-crystals (resp. $C_n$-crystals) and regular
\emph{symmetric} $A_{2n-1}$-crystals (resp. $A_{2n}$-crystals).
 \smallskip

{\em Keywords}\,: Crystals of representations, Simply and doubly laced Lie
algebras
\smallskip

{\em AMS Subject Classification}\, 17B37, 05C75, 05E99
  }
  \end{quote}

\section{Introduction} \label{sec:intr}

{\em Crystals} are certain ``exotic'' edge-colored graphs. This graph-theoretic
abstraction, introduced by Kashiwara~\cite{kas-90,kas-95}, has proved its
usefulness in the theory of representations of Lie algebras and their quantum
analogues. A (general) \emph{crystal} is a directed graph $K$ such that: the
edges are partitioned into $n$ subsets, or \emph{color} classes, labeled
$1,\ldots,n$, each connected monochromatic subgraph of $K$ is a finite path,
and there is an interrelation between the lengths of such paths described in
terms of the $n\times n$ Cartan matrix $M=(m_{ij})$ related to a given Lie
algebra $\mathfrak{g}$. This interrelation is: for colors $i,j$, any edge
$(u,v)$ with color $i$ satisfies $(h_j(u)-t_j(u))-(h_j(v)-t_j(v))=m_{ij}$,
where for a vertex $v'$, $h_j(v')$ (resp. $t_j(v')$) denotes the length of the
maximal path colored $j$ that begins (resp. ends) at $v'$. Throughout we
assume, w.l.o.g., that any crystal in question is (weakly) connected, and call
an edge with color $i$ an $i$-\emph{edge}. Depending on Cartan matrices,
several {\em types} of crystals are distinguished.

Of most interest are crystals of representations, or {\em regular} crystals.
They are associated to elements of a certain basis of the highest weight
integrable modules (representations) over a quantized algebra
$U_q(\mathfrak{g})$. There are known ``global'' models to characterize the
regular crystals for a variety of types: generalized Young
tableaux~\cite{KN-94}, Lusztig's canonical bases~\cite{Lusztig}, Littelmann's
path model~\cite{Lit-95,Littl}, and some others.

This paper continues our combinatorial study of crystals begun
in~\cite{A2,cross,B2} and considers $n$-colored regular crystals of three
types: A,\,B,\,C, where the number $n$ of colors is arbitrary. Recall that {\em
type~A} (concerning $\mathfrak{g}=\mathfrak{sl}_{n+1}$) is related to the
Cartan matric $M$ with: $m_{ij}=-1$ if $|i-j|=1$, $m_{ij}=0$ if $|i-j|>1$, and
$m_{ii}=2$. For {\em type~B} (concerning $\mathfrak{g}=\mathfrak{sp}_{2n}$),
the matrix is obtained from the above $M$ by replacing $m_{n-1,n}$ by $-2$. And
for {\em type~C} (concerning $\mathfrak{g}=\mathfrak{so}_{2n+1}$), one should
replace $m_{n,n-1}$ by $-2$. We will refer to a regular $n$-colored crystal of
type A (B,\, C) as an $A_n$-{\em crystal} (resp. $B_n$-, $C_n$-{\em crystal})
and omit the index $n$ when the number of colors is not specified.

It is known that the (finite) regular crystals $K$ of these types have the
following properties. (i) $K$ is acyclic (i.e. without directed cycles) and has
exactly one zero-indegree vertex, called the {\em source}, and exactly one
zero-outdegree vertex, called the {\em sink} of $K$. (ii) For any
$I\subseteq\{1,\ldots,n\}$, each (inclusion-wise) maximal connected subgraph of
$K$ whose edges have colors from $I$ is a regular crystal related to the
corresponding $I\times I$ submatrix of the Cartan matrix of $K$. Throughout,
speaking of a subcrystal of $K$, we will always mean a subgraph of this kind.

Two-colored subcrystals are of most importance, due to the result
in~\cite{KKM-92} that for a crystal with exactly one zero-indegree vertex, the
regularity of all such subcrystals implies the regularity of the whole crystal.
Let $K'$ be a two-colored subcrystal with colors $i,j$ in $K$. Then for type~A,
~$K'$ is the Cartesian product of a path with color $i$ and a path with color
$j$ (forming an $A_1\times A_1$-crystal) when $|i-j|>1$, and an $A_2$-crystal
when $|i-j|=1$. For type~B, the only difference is that $K'$ is a $B_2$-crystal
when $(i,j)=(n-1,n)$, and the corresponding submatrix is viewed as
$\binom{\;\;2\;-2}{-1\;\;2}$. And for type~C, ~$K'$ with $(i,j)=(n-1,n)$ is
again a $B_2$-crystal but the corresponding submatrix is now
$\binom{\;\;2\;-1}{-2\;\;2}$. The A-crystals belong to the group of {\em
simply-laced} crystals (defined by the requirement that each two-colored
subcrystal is of type $A_1\times A_1$ or $A_2$), and the B- and C-crystals
belong to the group of {\em doubly-laced} ones (where each two-colored
subcrystal is of type $A_1\times A_1$ or $A_2$ or $B_2$); cf.,
e.g.,~\cite{Stem}.

Throughout the paper we are going to deal with regular crystals only, and for
this reason the adjective ``regular'' will usually be omitted. It should be
noted that even in case of $A_2$- and $B_2$-crystals, the corresponding
specifications of ``global'' models from~\cite{KN-94,Lit-95,Littl,Lusztig} are
rather intricate to work with directly. Fortunatelly, in the last decade there
appeared more explicit and enlightening ways to define these crystals, via
``local'' graph-theoretic axioms or by use of direct combinatorial
constructions. In case of $A_2$-crystals, a short list of ``local'' defining
axioms is pointed out by Stembridge~\cite{Stem} and an explicit construction is
given in~\cite{A2}. According to that construction, any $A_2$-crystal can be
obtained from an $A_1\times A_1$-crystal by replacing each monochromatic path
of the latter by a graph viewed as a triangular half of a directed square grid.
In case of $B_2$-crystals, both ``local'' axioms and a direct combinatorial
construction are given in~\cite{B2}. It is shown there that a $B_2$-crystal can
be obtained from an $A_2$-crystal by replacing each monochromatic path by a
certain quadrangular part of a square grid. Also~\cite{B2} describes an
alternative combinatorial construction for $B_2$-crystals, the so-called {\em
worm model}. This model will be extensively used in this paper. (For some other
results on $B_2$-crystals, see~\cite{Stern}.)

An important fact is that for any $n$-tuple $c=(c_1,\ldots,c_n)$ of nonnegative
integers, there exists exactly one $A_n$-crystal $K$ such that each $c_i$ is
equal to the length of the maximal path with color $i$ beginning at the source
(for a short combinatorial proof, see~\cite[Sec.~2]{cross}). A similar property
takes place for $B_n$- and $C_n$-crystals. We denote a crystal $K$ (of a given
type) determined by $c$ in this way by $K(c)$, and refer to $c$ as the {\em
parameter} of this crystal.

When $n>2$, the combinatorial structure of $A_n$-crystals becomes rather
complicated, even for $n=3$. Attempting to learn more about this structure, we
elaborated in~\cite{cross} a new combinatorial construction, the so-called {\em
crossing model} (which is a refinement of the Gelfand-Tsetlin pattern
model~\cite{GT-50}). This powerful tool has helped us to reveal more structural
features of an $A_n$-crystal $K=K(c)$. In particular, $K$ has the so-called
{\em principal lattice}, a set $\Pi$ of vertices with the following nice
properties:

(P1) $\Pi$ contains the source and sink of $K$, and the vertices $v\in \Pi$
correspond to the elements of the integer box $\Bscr(c):=\{a\in\Zset^n\colon
0\le a\le c\}$; we write $v=\prv[a]$;

(P2) For any $a,a'\in\Bscr(c)$ with $a\le a'$, the \emph{interval} of $K$ from
$\prv[a]$ to $\prv[a']$ (i.e. the subgraph of $K$ formed by the vertices and
edges contained in (directed) paths from $\prv[a]$ to $\prv[a']$) is isomorphic
to the $A_n$-crystal $K(a'-a)$, and its principal lattice consists of the
principal vertices $\prv[a'']$ of $K$ with $a\le a''\le a'$;

(P3) The set $\Kscr^{(-n)}$ of $(n-1)$-colored subcrystals $K'$ of $K$ having
colors $1,\ldots,n-1$ is bijective to $\Pi$; more precisely, $K'\cap\Pi$
consists of a single vertex (called the \emph{heart} of $K'$ w.r.t. $K$); and
similarly for the set $\Kscr^{(-1)}$ of subcrystals of $K$ with colors
$2,\ldots,n$.

(A sort of principal lattice can be introduced for B- and C-crystals as well;
it satisfies~(P1) and (P2) but not (P3); see Remark~5 in the end of
Section~\ref{sec:proofB3-B4}.)
\smallskip

For $a\in\Bscr(c)$, let $\Kup[a]$ (resp. $\Klow[a]$) denote the subcrystal in
$\Kscr^{(-n)}$ (resp. in $\Kscr^{(-1)}$) that contains the principal vertex
$\prv[a]$; we call it the {\em upper} (resp. {\em lower}) subcrystal at $a$. It
is shown in~\cite{cross} that the parameter of this subcrystal is expressed by
a linear function of $c$ and $a$, and that the number of upper (lower)
subcrystals with a fixed parameter $c'$ is expressed by a piece-wise linear
function of $c$ and $c'$.
\smallskip

In this paper, we further use the crossing model, aiming to obtain a refined
description of the structure of an $A_n$-crystal $K$. We study the
intersections of subcrystals $\Kup[a]$ and $\Klow[b]$ for all $a,b\in\Bscr(c)$.
This intersection may be empty or consist of one or more subcrystals with
colors $2,\ldots,n-1$, called \emph{middle} subcrystals of $K$. Each of these
middle subcrystals $\tilde K$ is therefore a lower subcrystal of $\Kup[a]$ and
an upper subcrystal of $\Klow[b]$; so $\tilde K$ has a unique vertex $z$ in the
principal lattice $\Piup$ of the former, and a unique vertex $z'$ in the
principal lattice $\Pilow$ of the latter. Our main result on A-crystals
(Theorem~\ref{tm:mainA}) and its consequences give explicit relations between
$a$, $b$, the locus of $z$ in $\Piup$, the locus of $z'$ in $\Pilow$, and the
parameters of $K$ and $\tilde K$.

This gives rise to a recursive procedure of assembling of the $A_n$-crystal
$K(c)$. More precisely, suppose that the $(n-1)$-colored crystals $\Kup[a]$ and
$\Klow[b]$ for all $a,b\in\Bscr(c)$ are already constructed. Then we can
combine these subcrystals to obtain the desired crystal $K(c)$, by properly
identifying the corresponding middle subcrystals (if any) for each pair
$\Kup[a],\Klow[b]$. This recursive method is implemented as an efficient
algorithm which, given a parameter $c\in\Zset_+^n$, outputs the crystal $K(c)$.
The running time of the algorithm and the needed space are bounded by
$Cn^2|K(c)|$, where $C$ is a constant and $|K(c)|$ is the size of $K(c)$. (It
may be of practical use for small $n$ and $c$; in general, an $A_n$-crystal has
``dimension'' $\frac{n(n+1)}{2}$ and its size grows sharply by increasing $c$.)

The second part of the paper is devoted to $n$-colored (regular) B-crystals.
With the help of Theorem~\ref{tm:mainA}, we explain, using merely combinatorial
means, that any B-crystal can be extracted from a symmetric A-crystal. More
precisely, given $c\in\Zset_+^n$, define the $(2n-1)$-tuple $c'$ by
$c'_i=c'_{2n-i}:=c_i$ for $i=1,\ldots,n$. The $A_{2n-1}$-crystal $K=K(c')$ has
a canonical involution $\sigma$ on the vertices under which the image
$(\sigma(u),\sigma(v))$ of an $i$-edge $(u,v)$ is a $(2n-i)$-edge. We say that
$K$ is \emph{symmetric} and that a vertex $v$ with $\sigma(v)=v$ is {\em
self-complementary}; let $S$ be the set of such vertices. The {\em symmetric
extract} from $K$ is the $n$-colored graph $\tilde K$ whose vertex set is $S$
and whose edges are defined as follows: (i) the edges of $\tilde K$ colored $n$
are exactly the $n$-edges of $K$ connecting elements of $S$, and (ii) for
$i<n$, vertices $u,v\in S$ are connected in $\tilde K$ by edge $(u,v)$ colored
$i$ if $K$ contains a 2-edge path from $u$ to $v$ whose edges are colored $i$
and $(2n-i)$. We prove that $\tilde K$ is isomorphic to the $B_n$-crystal with
the parameter $c$. The crucial part of the proof is a verification in case
$n=2$.

In the final part, we explain a similar fact for C-crystals; now the
$C_n$-crystals are extracted from symmetric $A_{2n}$-crystals.

It should be noted that such a way of constructing B- and C-crystals from
corresponding symmetric A-crystals has been known; this can be concluded from
the work of Naito and Sagaki~\cite{NS} where the argument relies on a
sophisticated path model. Our goal is to give alternative proofs which are
direct and purely combinatorial. We take advantages from rather transparent
axiomatics and constructions for crystals of types A,B,C, and appeal to
structural results from Section~\ref{sec:ass_A}.
\smallskip

This paper is organized as follows. Section~\ref{sec:prelim} is devoted to
basic definitions and backgrounds. Here we recall ``local'' axioms and the
crossing model for A-crystals, and review needed results on the principal
lattice $\Pi$ of an $A_n$-crystal and relations between $\Pi$ and the
$(n-1)$-colored subcrystals from~\cite{cross}. Section~\ref{sec:ass_A} gives a
recursive description of the structure of an $A_n$-crystal $K$ and the
algorithm of assembling $K$; here we rely on the main structural result
(Theorem~\ref{tm:mainA}) proved in the next Section~\ref{sec:proof}. The
devised assembling method is illustrated in Section~\ref{sec:illustr} for two
special cases of A-crystals: for an arbitrary $A_2$-crystal (in which case the
method can be compared with the explicit combinatorial construction
in~\cite{A2}), and for the particular $A_3$-crystal $K(1,1,1)$. The rest of the
paper is devoted to B- and C-crystals. Our combinatorial proof of the theorem
that the $B_n$-crystals are exactly the extracts from symmetric
$A_{2n-1}$-crystals is given in Sections~\ref{sec:Bn}--\ref{sec:proofB3-B4}.
Here Section~\ref{sec:Bn} reduces the task to $n=2$, Section~\ref{sec:worm}
recalls the worm model from~\cite{B2}, and the crucial
Section~\ref{sec:proofB3-B4} gives a proof for $n=2$, relying on the
construction of $B_2$-crystals via the worm model. An important step in the
proof consists in representing the self-complementary vertices of a symmetric
$A_3$-crystal as integer points of a certain 4-dimensional polytope (in
Theorem~\ref{tm:S-constr}). Arguing in a similar fashion, Section~\ref{sec:A2n}
gives a combinatorial proof of the theorem that the extracts from symmetric
$A_{2n}$-crystals are $C_n$-crystals. Most technical claims used in
Sections~\ref{sec:proofB3-B4} and~\ref{sec:A2n} are proved in the Appendix.

\section{Preliminaries} \label{sec:prelim}

In this section we recall definitions and some basic properties of (regular)
crystals of types A,\,B and C, referring to them as A-, B- and C-{\em
crystals}, respectively, and review results from~\cite{cross} that will be
important for further purposes.

An $n$-{\em colored crystal} is a certain directed graph $K$ whose edge set
$E(K)$ is partitioned into $n$ subsets $E_1,\ldots,E_n$, denoted as
$K=(V(K),E_1\sqcup\ldots\sqcup E_n)$.  We assume that $K$ is (weakly) {\em
connected}, i.e. it is not the disjoint union of two nonempty graphs. We say
that an edge $e\in E_i$ has {\em color} $i$, or is an $i$-{\em edge}. When
speaking of a {\em subcrystal} $K'$ of $K$, we always mean that $K'$ is
inclusion-wise {\em maximal} among the connected subgraphs having the same set
of colors as $K'$.

 \subsection{Crystals of type A} \label{ssec:typeA}

Stembridge~\cite{Stem} pointed out a list of ``local'' graph-theoretic axioms
for the regular simply-laced crystals. The A-crystals form a subclass of those
and are defined by axioms (A1)--(A5) below; we give the axiomatics in a
slightly different, but equivalent, form compared with~\cite{Stem}. Let $K$ be
an $n$-edge-colored graph as before.

Unless explicitly stated otherwise, by a {\em path} we mean a simple finite
directed path, i.e. a sequence of the form $(v_0,e_1,v_1,\ldots,e_k,v_k)$,
where $v_0,v_1,\ldots,v_k$ are distinct vertices and each $e_i$ is an edge from
$v_{i-1}$ to $v_i$ (admitting $k=0$).

The first axiom concerns the
structure of monochromatic subgraphs of $K$.
\begin{itemize}
\item[(A1)] For $i=1,\ldots,n$, each connected subgraph of $(V(K),E_i)$ is
a path.
  \end{itemize}

So each vertex of $K$ has at most one incoming $i$-edge and at most one
outgoing $i$-edge, and therefore one can associate to the set $E_i$ a {\em
partial invertible operator} $F_i$ acting on vertices: $(u,v)$ is an $i$-edge
if and only if $F_i$ {\em acts} at $u$ and $F_i(u)=v$ (or $u=F_i^{-1}(v)$,
where $F_i^{-1}$ is the partial operator inverse to $F_i$). Since $K$ is
connected, one can use the operator notation to express any vertex via another
one. For example, the expression $F_1^{-1}F_3^2F_2(v)$ determines the vertex
$w$ obtained from a vertex $v$ by traversing 2-edge $(v,v')$, followed by
traversing 3-edges $(v',u)$ and $(u,u')$, followed by traversing 1-edge
$(w,u')$ in backward direction. Emphasize that every time we use such an
operator expression in what follows, this automatically says that all
corresponding edges do exist in $K$.

We refer to a monochromatic path with color $i$ on the edges as an $i$-{\em
path}, and to a maximal $i$-path as an $i$-{\em line} (the latter is an
$A_1$-subcrystal of $K$). The $i$-line passing through a given vertex $v$
(possibly consisting of the only vertex $v$) is denoted by $P_i(v)$, its part
from the first vertex to $v$ by $\Pin_i(v)$, and its part from $v$ to the last
vertex by $\Pout_i(v)$ (the \emph{tail} and \emph{head} parts of $P$ w.r.t.
$v$). The lengths (i.e. the numbers of edges) of $\Pin_i(v)$ and $\Pout_i(v)$
are denoted by $\ellin_i(v)$ and $\ellout_i(v)$, respectively.

Axioms (A2)--(A5) concern interrelations of different colors $i,j$. They say
that each component of the two-colored graph $(V(K),E_i\sqcup E_j)$ forms an
$A_2$-crystal when colors $i,j$ are {\em neighboring}, which means that
$|i-j|=1$, and forms an $A_1\times A_1$-crystal otherwise.

When an edge of a color $i$ is traversed, the head and tail part lengths of
lines of another color $j$ behave as follows:
 \begin{itemize}
 \item[(A2)]
For different colors $i,j$ and for an edge $(u,v)$ with color $i$, one
holds $\ellin_j(v)\le\ellin_j(u)$ and $\ellout_j(v)\ge\ellout_j(u)$. The
value $(\ellout_j(u)-\ellin_j(u))-(\ellout_j(v)-\ellin_j(v))$ is the
constant $m_{ij}$ equal to $-1$ if $|i-j|=1$, and 0 otherwise.
Furthermore, $h_j$ is convex on each $i$-path, in the sense that if
$(u,v),(v,w)$ are consecutive $i$-edges, then $h_j(u)+h_j(w)\ge 2h_j(v)$.
  \end{itemize}
These constants $m_{ij}$ are just the coefficients of the Cartan $n\times n$
matrix $M$ related to the crystal type A and the number $n$ of colors. Each
diagonal entry $m_{ii}$ equals 2, which agrees with the trivial relation
$(\ellout_i(u)-\ellin_i(u))-(\ellout_i(v)-\ellin_i(v))=2$ for an $i$-edge
$(u,v)$.

It follows that for neighboring colors $i,j$, each $i$-line $P$ contains a
unique vertex $r$ such that: when traversing any edge $e$ of $P$ before $r$
(i.e. $e\in\Pin_i(r)$), the tail length $\ellin_j$ decreases by 1 while the
head length $\ellout_j$ does not change, and when traversing any edge of $P$
after $r$, $\ellin_j$ does not change while $\ellout_j$ increases by 1. This
$r$ is called the {\em critical} vertex for $P,i,j$. To each $i$-edge $e=(u,v)$
we associate {\em label} $\ell_{j}(e):=\ellout_j(v)-\ellout_j(u)$; then
$\ell_j(e)\in\{0,1\}$ and $t_j(v)=t_j(u)-1+\ell_j(e)$. Emphasize that the
critical vertices on an $i$-line $P$ w.r.t. its neighboring colors $j=i-1$ and
$j=i+1$ may be different (and so are the edge labels on $P$).

Two operators $F=F_i^{\alpha}$ and $F'=F_j^\beta$, where $\alpha,\beta\in
\{1,-1\}$, are said to {\em commute} at a vertex $v$ if each of $F,F'$ acts at
$v$ (i.e. corresponding $i$-edge and $j$-edge incident with $v$ exist) and
$FF'(v)=F'F(v)$. The third axiom indicates situations when such operators
commute for neighboring $i,j$.

\begin{itemize}
\item[(A3)] Let $|i-j|=1$. (a) If a vertex $u$ has outgoing $i$-edge $(u,v)$
and outgoing $j$-edge $(u,v')$ and if $\ell_{j}(u,v)=0$, then
$\ell_{i}(u,v')=1$ and $F_i,F_j$ commute at $v$.
Symmetrically: (b) if a vertex $v$ has incoming $i$-edge $(u,v)$ and
incoming $j$-edge $(u',v)$ and if $\ell_{j}(u,v)=1$, then
$\ell_{i}(u',v)=0$ and $F_i^{-1},F_j^{-1}$ commute at $v$.
(See the picture.)
  \end{itemize}
 \begin{center}
  \unitlength=1mm
  \begin{picture}(140,20)
\put(5,5){\circle{1.0}}
\put(15,5){\circle{1.0}}
\put(45,5){\circle{1.0}}
\put(55,5){\circle{1.0}}
\put(95,5){\circle{1.0}}
\put(125,5){\circle{1.0}}
\put(135,5){\circle{1.0}}
\put(5,15){\circle{1.0}}
\put(45,15){\circle{1.0}}
\put(55,15){\circle{1.0}}
\put(85,15){\circle{1.0}}
\put(95,15){\circle{1.0}}
\put(125,15){\circle{1.0}}
\put(135,15){\circle{1.0}}
\put(5,5){\vector(1,0){9.5}}
\put(45,5){\vector(1,0){9.5}}
\put(125,5){\vector(1,0){9.5}}
\put(45,15){\vector(1,0){9.5}}
\put(85,15){\vector(1,0){9.5}}
\put(125,15){\vector(1,0){9.5}}
\put(5,5){\vector(0,1){9.5}}
\put(45,5){\vector(0,1){9.5}}
\put(55,5){\vector(0,1){9.5}}
\put(95,5){\vector(0,1){9.5}}
\put(125,5){\vector(0,1){9.5}}
\put(135,5){\vector(0,1){9.5}}
\put(25,9){\line(1,0){9}}
\put(25,11){\line(1,0){9}}
\put(105,9){\line(1,0){9}}
\put(105,11){\line(1,0){9}}
\put(31,6){\line(1,1){4}}
\put(31,14){\line(1,-1){4}}
\put(111,6){\line(1,1){4}}
\put(111,14){\line(1,-1){4}}
\put(2,3){$u$}
\put(2,16){$v'$}
\put(16,3){$v$}
\put(56,16){$w$}
\put(82,16){$u$}
\put(96,16){$v$}
\put(96,3){$u'$}
\put(121.5,3){$w$}
\put(9,1.5){0}
\put(49,1.5){0}
\put(129,1.5){1}
\put(49,16){0}
\put(89,16){1}
\put(129,16){1}
\put(42.5,9){1}
\put(56,9){1}
\put(122.5,9){0}
\put(136,9){0}
  \end{picture}
 \end{center}

Using this axiom, one easily shows that if four vertices are connected by two
$i$-edges $e,e'$ and two $j$-edges $\tilde e,\tilde e'$ (forming a ``square''),
then $\ell_{j}(e)=\ell_{j}(e')\ne \ell_{i}(\tilde e)=\ell_{i}(\tilde e')$ (as
illustrated in the picture). Another important consequence of (A3) is that for
neighboring colors $i,j$, if $v$ is the critical vertex on an $i$-line w.r.t.
color $j$, then $v$ is also the critical vertex on the $j$-line passing $v$
w.r.t. color $i$, i.e. we can speak of common critical vertices for the pair
$\{i,j\}$.

The fourth axiom points out situations when, for neighboring $i,j$, the
operators $F_i,F_j$ and their inverse ones  ``remotely commute'' (they are
said to satisfy the ``Verma relation of degree 4'').

\begin{itemize}
\item[(A4)] Let $|i-j|=1$.
(i) If a vertex $u$ has outgoing edges with color $i$ and color $j$ and if each
edge is labeled 1 w.r.t. the other color, then $F_iF_j^2F_i(u)=F_jF_i^2F_j(u)$.
Symmetrically: (ii) if $v$ has incoming edges with color $i$ and color $j$ and
if both are labeled 0, then $F_i^{-1}(F_j^{-1})^2
F_i^{-1}(v)=F_j^{-1}(F_i^{-1})^2 F_j^{-1}(v)$. (See the picture.)
  \end{itemize}
 \begin{center}
  \unitlength=1mm
  \begin{picture}(147,30)
\put(5,5){\circle{1.0}}
\put(15,5){\circle{1.0}}
\put(5,15){\circle{1.0}}
\put(2,3){$u$}
\put(5,5){\vector(1,0){9.5}}
\put(5,5){\vector(0,1){9.5}}
\put(9,1.5){1}
\put(2,9){1}
\put(20,14){\line(1,0){9}}
\put(20,16){\line(1,0){9}}
\put(26,11){\line(1,1){4}}
\put(26,19){\line(1,-1){4}}
\put(35,5){\circle{1.0}}
\put(45,5){\circle{1.0}}
\put(35,15){\circle{1.0}}
\put(45,13){\circle{1.0}}
\put(43,15){\circle{1.0}}
\put(55,15){\circle{1.0}}
\put(45,25){\circle{1.0}}
\put(55,25){\circle{1.0}}
\put(35,5){\circle{2.0}}
\put(45,13){\circle{2.0}}
\put(43,15){\circle{2.0}}
\put(55,25){\circle{2.0}}
\put(35,5){\vector(1,0){9.5}}
\put(35,15){\vector(1,0){7.5}}
\put(43,15){\vector(1,0){11.5}}
\put(45,25){\vector(1,0){9.5}}
\put(35,5){\vector(0,1){9.5}}
\put(45,5){\vector(0,1){7.5}}
\put(45,13){\vector(0,1){11.5}}
\put(55,15){\vector(0,1){9.5}}
\put(32,3){$u$}
\put(39,1.5){1}
\put(32,9){1}
\put(46,7.5){0}
\put(37.5,16){0}
\put(49,16){1}
\put(42,19){1}
\put(56,19){0}
\put(49,26){0}
\put(85,25){\circle{1.0}}
\put(95,15){\circle{1.0}}
\put(95,25){\circle{1.0}}
\put(85,25){\vector(1,0){9.5}}
\put(95,15){\vector(0,1){9.5}}
\put(96,26){$v$}
\put(89,26){0}
\put(96,19){0}
\put(100,14){\line(1,0){9}}
\put(100,16){\line(1,0){9}}
\put(106,11){\line(1,1){4}}
\put(106,19){\line(1,-1){4}}
\put(115,5){\circle{1.0}}
\put(125,5){\circle{1.0}}
\put(115,15){\circle{1.0}}
\put(125,13){\circle{1.0}}
\put(123,15){\circle{1.0}}
\put(135,15){\circle{1.0}}
\put(125,25){\circle{1.0}}
\put(135,25){\circle{1.0}}
\put(115,5){\circle{2.0}}
\put(125,13){\circle{2.0}}
\put(123,15){\circle{2.0}}
\put(135,25){\circle{2.0}}
\put(115,5){\vector(1,0){9.5}}
\put(115,15){\vector(1,0){7.5}}
\put(123,15){\vector(1,0){11.5}}
\put(125,25){\vector(1,0){9.5}}
\put(115,5){\vector(0,1){9.5}}
\put(125,5){\vector(0,1){7.5}}
\put(125,13){\vector(0,1){11.5}}
\put(135,15){\vector(0,1){9.5}}
\put(136,26){$v$}
\put(119,1.5){1}
\put(112,9){1}
\put(126,7.5){0}
\put(117.5,16){0}
\put(129,16){1}
\put(122,19){1}
\put(136,19){0}
\put(129,26){0}
  \end{picture}
 \end{center}

Again, one shows that the label w.r.t. $i,j$ of each of the eight involved
edges is determined uniquely, just as indicated in the above picture (where the
bigger circles indicate critical vertices).

The final axiom concerns non-neighboring colors.

 \begin{itemize}
 \item[(A5)]
Let $|i-j|\ge 2$. Then for any $F\in\{F_i,F_i^{-1}\}$ and $F'\in
\{F_j,F_j^{-1}\}$, the operators $F,F'$ commute at each vertex where both
act.
  \end{itemize}

This is equivalent to saying that each component of the two-colored subgraph
$(V(K),E_i\sqcup E_j)$ is the Cartesian product of an $i$-path $P$ and a
$j$-path $P'$, or that each subcrystal of $K$ with non-neighboring colors $i,j$
is an $A_1\times A_1$-{\em crystal}.

 \medskip
One shows that any $A_n$-crystal $K$ is finite and has exactly one
zero-indegree vertex $s_K$ and one zero-outdegree vertex $t_K$, called the {\em
source} and {\em sink} of $K$, respectively. Furthermore, the $A_n$-crystals
$K$ admit a nice parameterization: the lengths $h_1(s_K),\ldots,h_n(s_K)$ of
monochromatic paths determine $K$, and for each tuple $c=(c_1,\ldots,c_n)$ of
nonnegative integers, there exists a (unique) $A_n$-crystal $K$ such that
$c_i=h_i(s_K)$ for $i=1,\ldots,n$. (See~\cite{Stem} and~\cite{cross}.) We call
$c$ the {\em parameter} of $K$ and denote $K$ by $K(c)$.

 \subsection{Crystals of types B and C} \label{ssec:typeBC}

These crystals are defined via the types of their two-colored subcrystals,
exhibited in axioms~(BC1)--(BC3). The difference between B- and C-crystals
concerns only specifications of axiom~(BC3) given in (BC4) and (BC4$'$). As
before, $K=(V(K),E_1\sqcup\ldots\sqcup E_n)$ is a connected $n$-colored graph.

 \begin{itemize}
 \item[(BC1)]
$K$ satisfies~(A1) and~(A5).
  \Xcomment{
i.e. each maximal monochromatic subgraph in it is an $A_1$-crystal (a path),
and for colors $i,j$ with $|i-j|\ge 2$, each component of $(V(K),E_i\sqcup
E_j)$ is an $A_1\times A_1$-crystal (the Cartesian product of two paths).
  }
  \end{itemize}

  \begin{itemize}
  \item[(BC2)]
For colors $i,j<n$ with $|i-j|=1$, each component of $(V(K),E_i\sqcup E_j)$ is
an $A_2$-crystal, i.e. it satisfies~(A2)--(A4).
  \end{itemize}

  \begin{itemize}
  \item[(BC3)]
Each component of $(V(K),E_{n-1}\sqcup E_n)$ is isomorphic to a $B_2$-crystal.
  \end{itemize}

There are several ways to define $B_2$-crystals. Based on Littlemann's path
model~\cite{Littl}, it is shown in~\cite{B2} that a $B_2$-crystal can be
equivalently defined in three ways: (i) via an explicit combinatorial
construction, (ii) via a graphical {\em worm model}, which represents each
vertex of the crystal as a certain pair of line-segments in a rectangle, and
(iii) via a list of 14 local or ``almost local'' axioms. Compared with the
$A_2$ case, this list is big enough and less convenient to handle practically.
In contrast, the worm model has a rather compact description, reviewed in
Section~\ref{sec:worm}, and we will appeal just to this model in our
examination of two-colored symmetric extracts from  corresponding A-crystals
(in Sections~\ref{sec:proofB3-B4} and~\ref{sec:A2n}).

For a $B_2$-crystal, with colors $i$ and $j$ say, the coefficients $m_{ij}$ and
$m_{ji}$ are different and take values $-1$ and $-2$ (where, as before,
$m_{pq}=(h_q(u)-t_q(u))-(h_q(v)-t_q(v))$ for an edge $(u,v)$ of color $p$). The
difference between B and C types is the following:
    \begin{itemize}
  \item[(BC4)]
~For $B_n$-crystals, $m_{n-1,n}=-2$ and $m_{n,n-1}=-1$.
  \end{itemize}
    \begin{itemize}
  \item[(BC4$'$)]
~For $C_n$-crystals, $m_{n-1,n}=-1$ and $m_{n,n-1}=-2$.
  \end{itemize}

The Cartan matrices for types A,\,B,\,C and $n=4$ are illustrated in the
picture where the coefficient in each empty cell is zero.
  \begin{center}
  \unitlength=1mm
  \begin{picture}(150,32)
    \begin{picture}(45,32)(0,0)
  \put(10,0){\line(0,1){32}}
  \put(18,0){\line(0,1){32}}
  \put(26,0){\line(0,1){32}}
  \put(34,0){\line(0,1){32}}
  \put(42,0){\line(0,1){32}}
  \put(10,0){\line(1,0){32}}
  \put(10,8){\line(1,0){32}}
  \put(10,16){\line(1,0){32}}
  \put(10,24){\line(1,0){32}}
  \put(10,32){\line(1,0){32}}
  \put(3,15){A:}
  \put(13,27){2}
  \put(21,19){2}
  \put(29,11){2}
  \put(37,3){2}
  \put(12,19){--1}
  \put(20,11){--1}
  \put(20,27){--1}
  \put(28,3){--1}
  \put(28,19){--1}
  \put(36,11){--1}
    \end{picture}
    \begin{picture}(45,32)(-5,0)
  \put(10,0){\line(0,1){32}}
  \put(18,0){\line(0,1){32}}
  \put(26,0){\line(0,1){32}}
  \put(34,0){\line(0,1){32}}
  \put(42,0){\line(0,1){32}}
  \put(10,0){\line(1,0){32}}
  \put(10,8){\line(1,0){32}}
  \put(10,16){\line(1,0){32}}
  \put(10,24){\line(1,0){32}}
  \put(10,32){\line(1,0){32}}
  \put(3,15){B:}
  \put(13,27){2}
  \put(21,19){2}
  \put(29,11){2}
  \put(37,3){2}
  \put(12,19){--1}
  \put(20,11){--1}
  \put(20,27){--1}
  \put(28,3){--1}
  \put(28,19){--1}
  \put(36,11){--2}
    \end{picture}
    \begin{picture}(45,32)(-10,0)
  \put(10,0){\line(0,1){32}}
  \put(18,0){\line(0,1){32}}
  \put(26,0){\line(0,1){32}}
  \put(34,0){\line(0,1){32}}
  \put(42,0){\line(0,1){32}}
  \put(10,0){\line(1,0){32}}
  \put(10,8){\line(1,0){32}}
  \put(10,16){\line(1,0){32}}
  \put(10,24){\line(1,0){32}}
  \put(10,32){\line(1,0){32}}
  \put(3,15){C:}
  \put(13,27){2}
  \put(21,19){2}
  \put(29,11){2}
  \put(37,3){2}
  \put(12,19){--1}
  \put(20,11){--1}
  \put(20,27){--1}
  \put(28,3){--2}
  \put(28,19){--1}
  \put(36,11){--1}
    \end{picture}
  \end{picture}
 \end{center}

 \medskip
Using arguments as in~\cite{cross,Stem} for A-crystals, one can show that any
B-crystal $K$ is finite, has exactly one source $s=s_K$ (and one sink), and is
determined by the lengths $h_1(s_K),\ldots,h_n(s_K)$. Also a $B_n$-crystal $K$
with $h(s_K)=c$ exists for any $c\in\Zset_+^n$, and similarly for
$C_n$-crystals (this is explained in~\cite{B2} for $n=2$, and follows from
reasonings in Sections~\ref{sec:Bn} and~\ref{sec:A2n} for $n>2$). This gives a
parametrization of B-crystals similar to that for A-crystals.

\subsection{The crossing model for $A_n$-crystals} \label{ssec:cross}

Following~\cite{cross}, the {\em crossing model} $\Mscr_n(c)$ generating the
$A_n$-crystal $K=K(c)$ with a parameter $c=(c_1,\ldots,c_n)\in \Zset_+^n$
consists of three ingredients:

(i) a directed graph $G_n=G=(V(G),E(G))$ depending on $n$, called the {\em
supporting graph} of the model;

(ii) a set $\Fscr=\Fscr(c)$ of {\em feasible} functions on $V(G)$;

(iii) a set $\Escr=\Escr(c)$ of transformations $f\mapsto f'$ of feasible
functions, called {\em moves} in the model.

 \smallskip
To explain the construction of the supporting graph $G$, we first introduce
another directed graph $\Gscr=\Gscr_n$ that we call the {\em proto-graph} of
$G$. Its node set consists of elements $V_i(j)$ for all $i,j\in\{1,\ldots,n\}$
such that $j\le i$. (We use the term ``node'' for vertices in the crossing
model, to avoid a possible mess between these and vertices of crystals.) Its
edges are all possible pairs of the form $(V_i(j),V_{i-1}(j))$ ({\em ascending}
edges) or $(V_i(j),V_{i+1}(j+1))$ ({\em descending} edges). We say that the
nodes $V_i(1),\ldots,V_i(i)$ form $i$-th {\em level} of $\Gscr$ and order them
as indicated (by increasing $j$). We visualize $\Gscr$ by drawing it on the
plane so that the nodes of the same level lie in a horizontal line, the
ascending edges point North-East, and the descending edges point South-East.
See the picture where $n=4$.
  \begin{center}
  \unitlength=1mm
  \begin{picture}(80,40)
   \put(0,0){$V_4(1)$}
   \put(24,0){$V_4(2)$}
   \put(48,0){$V_4(3)$}
   \put(72,0){$V_4(4)$}
   \put(12,12){$V_3(1)$}
   \put(36,12){$V_3(2)$}
   \put(60,12){$V_3(3)$}
   \put(24,24){$V_2(1)$}
   \put(48,24){$V_2(2)$}
   \put(36,36){$V_1(1)$}
  \put(8,5){\vector(1,1){5}}
  \put(32,5){\vector(1,1){5}}
  \put(56,5){\vector(1,1){5}}
  \put(20,17){\vector(1,1){5}}
  \put(44,17){\vector(1,1){5}}
  \put(32,29){\vector(1,1){5}}
  \put(20,10){\vector(1,-1){5}}
  \put(44,10){\vector(1,-1){5}}
  \put(68,10){\vector(1,-1){5}}
  \put(32,22){\vector(1,-1){5}}
  \put(56,22){\vector(1,-1){5}}
  \put(44,34){\vector(1,-1){5}}
  \end{picture}
 \end{center}

The supporting graph $G$ is produced by replicating elements of $\Gscr$ as
follows. Each node $V_i(j)$ generates $n-i+1$ nodes of $G$, denoted as
$v_i^k(j)$ for $k=i-j+1,\ldots n-j+1$, which are ordered by increasing $k$ (and
accordingly follow from left to right in the visualization). We identify
$V_i(j)$ with the set of these nodes and call it a {\em multinode} of $G$. Each
edge of $\Gscr$ generates a set of edges of $G$ (a {\em multi-edge}) connecting
elements with equal upper indices. More precisely, $(V_i(j),V_{i-1}(j))$
produces $n-i+1$ ascending edges $(v_i^k(j),v_{i-1}^k(j))$ for $k=i-j+1,\ldots,
n-j+1$, and $(V_i(j),V_{i+1}(j+1))$ produces $n-i$ descending edges
$(v_i^k(j),v_{i+1}^k(j+1))$ for $k=i-j+1,\ldots,n-j$.

The resulting $G$ is the disjoint union of $n$ directed graphs
$G^1,\ldots,G^n$, where each $G^k$ contains all vertices of the form
$v_i^k(j)$. Also $G^k$ is isomorphic to the Cartesian product of two paths,
with the lengths $k-1$ and $n-k$. For example, for $n=4$, the graph $G$ is
viewed as
 \begin{center}
  \unitlength=1mm
  \begin{picture}(90,39)
\put(0,0){\begin{picture}(54,36)%
\put(0,0){\circle{1.0}}
\put(18,12){\circle{1.0}}
\put(36,24){\circle{1.0}}
\put(54,36){\circle{1.0}}
\put(0,0){\vector(3,2){17.5}}
\put(18,12){\vector(3,2){17.5}}
\put(36,24){\vector(3,2){17.5}}
  \end{picture}}
\put(12,0){\begin{picture}(54,36)%
\put(0,12){\circle{1.0}}
\put(18,0){\circle{1.0}}
\put(18,24){\circle{1.0}}
\put(36,12){\circle{1.0}}
\put(36,36){\circle{1.0}}
\put(54,24){\circle{1.0}}
\put(0,12){\vector(3,2){17.5}}
\put(0,12){\vector(3,-2){17.5}}
\put(18,0){\vector(3,2){17.5}}
\put(18,24){\vector(3,2){17.5}}
\put(18,24){\vector(3,-2){17.5}}
\put(36,12){\vector(3,2){17.5}}
\put(36,36){\vector(3,-2){17.5}}
  \end{picture}}
\put(24,0){\begin{picture}(54,36)%
\put(0,24){\circle{1.0}}
\put(18,12){\circle{1.0}}
\put(18,36){\circle{1.0}}
\put(36,0){\circle{1.0}}
\put(36,24){\circle{1.0}}
\put(54,12){\circle{1.0}}
\put(0,24){\vector(3,2){17.5}}
\put(0,24){\vector(3,-2){17.5}}
\put(18,12){\vector(3,2){17.5}}
\put(18,12){\vector(3,-2){17.5}}
\put(18,36){\vector(3,-2){17.5}}
\put(36,0){\vector(3,2){17.5}}
\put(36,24){\vector(3,-2){17.5}}
  \end{picture}}
\put(36,0){\begin{picture}(54,36)%
\put(0,36){\circle{1.0}}
\put(18,24){\circle{1.0}}
\put(36,12){\circle{1.0}}
\put(54,0){\circle{1.0}}
\put(0,36){\vector(3,-2){17.5}}
\put(18,24){\vector(3,-2){17.5}}
\put(36,12){\vector(3,-2){17.5}}
  \end{picture}}
\put(45,36){\oval(24,6)}
\put(30,24){\oval(18,6)}
\put(60,24){\oval(18,6)}
\put(15,12){\oval(12,6)}
\put(45,12){\oval(12,6)}
\put(75,12){\oval(12,6)}
\put(0,0){\oval(6,4)}
\put(30,0){\oval(6,4)}
\put(60,0){\oval(6,4)}
\put(90,0){\oval(6,4)}
\end{picture}
 \end{center}
 \noindent
(where the multinodes are surrounded by ovals) and its components
$G^1,G^2,G^3,G^4$ are viewed as
 \begin{center}
  \unitlength=1mm
  \begin{picture}(150,27)
\put(0,24){\circle{1.0}}
\put(8,16){\circle{1.0}}
\put(16,8){\circle{1.0}}
\put(24,0){\circle{1.0}}
\put(0,24){\vector(1,-1){7.5}}
\put(8,16){\vector(1,-1){7.5}}
\put(16,8){\vector(1,-1){7.5}}
\put(2,6){$G^1:$}
\put(2,24){$v_1^1(1)$}
 \put(8,17){$v_2^1(2)$}
 \put(16,9){$v_3^1(3)$}
\put(14,-2){$v_4^1(4)$}
\put(40,16){\circle{1.0}}
\put(48,8){\circle{1.0}}
\put(48,24){\circle{1.0}}
\put(56,0){\circle{1.0}}
\put(56,16){\circle{1.0}}
\put(64,8){\circle{1.0}}
\put(40,16){\vector(1,1){7.5}}
\put(40,16){\vector(1,-1){7.5}}
\put(48,8){\vector(1,1){7.5}}
\put(48,8){\vector(1,-1){7.5}}
\put(48,24){\vector(1,-1){7.5}}
\put(56,0){\vector(1,1){7.5}}
\put(56,16){\vector(1,-1){7.5}}
\put(37,4){$G^2:$}
 \put(30,15){$v_2^2(1)$}
 \put(65,8){$v_3^2(3)$}
 \put(50,24){$v_1^2(1)$}
 \put(57,17){$v_2^2(2)$}
 \put(58,-2){$v_4^2(3)$}
\put(85,8){\circle{1.0}}
\put(93,0){\circle{1.0}}
\put(93,16){\circle{1.0}}
\put(101,8){\circle{1.0}}
\put(101,24){\circle{1.0}}
\put(109,16){\circle{1.0}}
\put(85,8){\vector(1,1){7.5}}
\put(85,8){\vector(1,-1){7.5}}
\put(93,0){\vector(1,1){7.5}}
\put(93,16){\vector(1,1){7.5}}
\put(93,16){\vector(1,-1){7.5}}
\put(101,8){\vector(1,1){7.5}}
\put(101,24){\vector(1,-1){7.5}}
\put(83,18){$G^3:$}
 \put(76,5){$v_3^3(1)$}
\put(95,-2){$v_4^3(2)$}
 \put(102,6){$v_3^3(2)$}
 \put(103,24){$v_1^3(1)$}
\put(111,15){$v_2^3(2)$}

\put(125,0){\circle{1.0}}
\put(133,8){\circle{1.0}}
\put(141,16){\circle{1.0}}
\put(149,24){\circle{1.0}}
\put(125,0){\vector(1,1){7.5}}
\put(133,8){\vector(1,1){7.5}}
\put(141,16){\vector(1,1){7.5}}
\put(129,16){$G^4:$}
 \put(127,-2){$v_4^4(1)$}
 \put(134,6){$v_3^4(1)$}
 \put(142,14){$v_2^4(1)$}
 \put(139,24){$v_1^4(1)$}
\end{picture}
 \end{center}

So each node $v=v_i^k(j)$ of $G$ has at most four incident edges, namely,
$(v_{i-1}^k(j-1),v)$, $(v_{i+1}^k(j),v)$, $(v,v_{i-1}^k(j))$,
$(v,v_{i+1}^k(j+1))$; we refer to them, when exist, as the NW-, SW-, NE-,
and SE-{\em edges}, and denote by $\eNW(v),\eSW(v),\eNE(v),\eSE(v)$,
respectively.

  \medskip
By a {\em feasible} function in the model (with a given $c$) we mean a function
$f:V(G)\to \Zset_+$ satisfying the following three conditions, where for an
edge $e=(u,v)$, $\;\partial f(e)$ denotes the increment $f(u)-f(v)$ of $f$ on
$e$, and $e$ is called {\em tight} for $f$, or $f$-{\em tight}, if $\partial
f(e)=0$:
  \begin{numitem1}
 \begin{itemize}
 \item[(i)] $f$ is {\em monotone} on the edges, in the sense that
$\partial f(e)\ge 0$ for all $e\in E(G)$;
 \item[(ii)] $0\le f(v)\le c_k$ for each $v\in V(G^k)$, $k=1,\ldots,n$;
 \item[(iii)] each multinode $V_i(j)$ contains a node $v$ with the following
property: the edge $\eSE(u)$ is tight for each node $u\in V_i(j)$
preceding $v$, and $\eSW(u')$ is tight for each node $u'\in V_i(j)$
succeeding $v$.
  \end{itemize}
 \label{eq:feas}
  \end{numitem1}

  \noindent
The {\em first} node $v=v_i^k(j)$ (i.e. with $k$ minimum) satisfying the
property in (iii) is called the {\em switch-node} of the multinode $V_i(j)$.
These nodes play an important role in our transformations of feasible functions
in the model.

To describe the rule of transforming $f\in\Fscr(c)$, we first extend each $G^k$
by adding extra nodes and edges (following~\cite{cross} and aiming to slightly
simplify the description). In the extended directed graph $\bar G^k$, the node
set consists of elements $v^k_i(j)$ for all $i=0,\ldots,n+1$ and $j=0,\ldots,n$
such that $j\le i$. The edge set of $\bar G^k$ consists of all possible pairs
of the form $(v^k_i(j),v^k_{i-1}(j))$ or $(v^k_i(j),v^k_{i+1}(j+1))$. Then all
$\bar G^k$ are isomorphic. The disjoint union of these $\bar G^k$ gives the
{\em extended supporting graph} $\bar G$.

Each feasible function on $V(G)$ is extended to the extra nodes $v=v^k_i(j)$ as
follows: $f(v):=c_k$ if there is a path from $v$ to a node of $G^k$, and
$f(v):=0$ otherwise (one may say that $v$ lies on the left of $G^k$ in the
former case, and on the right of $G^k$ in the latter case). In particular, each
edge $e$ of $\bar G$ not incident with a node of $G$ is tight, i.e. $\partial
f(e)=0$ (extending $\partial f$ to the extra edges). For a node $v=v_i^k(j)$
with $1\le j\le i\le n$, define the value $\eps(v)=\eps_f(v)$ by
  \begin{equation}    \label{eq:epsv}
\eps(v):= \partial f(\eNW(v))-\partial f(\eSE(u))\quad
  (=\partial f(\eSW(v))-\partial f(\eNE(u)),
   \end{equation}
where $u:=v_i^k(j-1)$. For a multinode $V_i(j)$ (and the given $f$),
define the numbers
  \begin{equation} \label{eq:xiij}
  \eps_i(j):=\sum(\eps(v)\colon v\in V_i(j))
  \end{equation}
and
  \begin{equation} \label{eq:barxiij}
  \tilde\eps_i(j):=\max\{0,\min\{\eps_i(p)+\eps_i(p+1)+\ldots+\eps_i(j)
                \colon 1\le p\le j\}\}.
  \end{equation}
We call $\eps(v)$, $\eps_i(j)$ and $\tilde\eps_i(j)$ the {\em slack} at a node
$v$, the {\em total slack} at a multinode $V_i(j)$ and the {\em reduced slack}
at $V_i(j)$, respectively. (We define the slacks $\eps,\tilde\eps$ in a
slightly different way than in~\cite{cross}, which however does not affect the
definitions of active multinodes and switch-nodes below.)

Now we are ready to define the transformations of $f$ (or the moves from $f$).
At most $n$ transformations $\phi_1,\ldots,\phi_n$ are possible. Each $\phi_i$
changes $f$ within level $i$ and is applicable when this level contains a
multinode $V_i(j')$ with $\tilde\eps_i(j')>0$. In this case we take the
multinode $V_i(j)$ such that
  \begin{equation} \label{eq:activej}
  \tilde\eps_i(j)>0\quad \mbox{and}\quad \tilde\eps_i(q)=0\;\;
                \mbox{for $q=j+1,\ldots,i$},
  \end{equation}
referring to it as the {\em active} multinode for the given $f$ and $i$,
and increase $f$ by 1 at the
{\em switch-node} in $V_i(j)$, preserving $f$ on the other nodes of $G$. It is
shown~\cite{cross} that the resulting function $\phi_i(f)$ is again
feasible.

So the model generates the $n$-colored directed graph $\Kscr(c)=(\Fscr,
\Escr_1\sqcup\ldots\sqcup\Escr_n)$, where each color class $\Escr_i$ is formed
by the edges $(f,\phi_i(f))$ for all feasible functions $f$ to which the
operator $\phi_i$ is applicable. This graph is just an $A_n$-crystal.

  \begin{theorem} {\rm \cite[Th.~5.1]{cross}}  \label{tm:cross}
For each $n$ and $c\in\Zset_+^n$, the $n$-colored graph $\Kscr(c)$ is exactly
the $A_n$-crystal $K(c)$.
  \end{theorem}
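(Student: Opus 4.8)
The plan is to use the uniqueness of the $A_n$-crystal with a prescribed parameter. Since there is exactly one $A_n$-crystal $K(c)$ with $h_i(s_K)=c_i$ for all $i$, it suffices to show that $\Kscr(c)$ is a connected $n$-colored graph obeying the local axioms (A1)--(A5) and possessing a unique source whose head-lengths are $c_1,\ldots,c_n$; the isomorphism $\Kscr(c)\cong K(c)$ then follows.

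First I would pin down the single-color structure. For a fixed color $i$, the operator $\phi_i$ raises $f$ by $1$ at the switch-node of the active multinode on level $i$. I would verify that this is an injective partial operator and describe its inverse (which lowers $f$ at a symmetric node), so that each component of $(\Fscr,\Escr_i)$ is a directed path, yielding (A1). The crux here is to express the head- and tail-lengths $h_i(f),t_i(f)$ through the level-$i$ slacks; the reduced-slack formula $\tilde\eps_i(j)=\max\{0,\min_{p\le j}(\eps_i(p)+\cdots+\eps_i(j))\}$ is exactly a signature/bracketing quantity, and I expect $h_i$ and $t_i$ to be the two running-sum extrema of the sequence $(\eps_i(1),\ldots,\eps_i(i))$. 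One then checks that $\phi_i$ lowers $h_i$ by $1$ and raises $t_i$ by $1$, so that the $\Escr_i$-lines have the correct length and the minimal feasible function (zero on $V(G)$) is the unique source with $h_i=c_i$.

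Next I would treat pairs of colors. For non-neighboring $i,j$ (with $|i-j|\ge2$), the move $\phi_i$ changes $f$ only on level $i$; by $\refeq{epsv}$ the slack at any node of level $j$ involves $f$ only on levels $j-1,j,j+1$, which exclude level $i$, so the level-$j$ data, hence the applicability and effect of $\phi_j$, are untouched. This gives the commutation (A5). For neighboring colors $j=i\pm1$, I would compute how applying $\phi_i$ perturbs the total slacks $\eps_j(\cdot)$ through the ascending and descending edges shared by consecutive levels, track the induced shift of the switch-node on level $j$, and read off the change in $h_j-t_j$. The Cartan increment in (A2), the convexity of $h_j$ along an $i$-line, the commutations (A3), and the degree-$4$ Verma relation (A4) should then all reduce to identities among slack quantities on two adjacent levels.

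The main obstacle is precisely this neighboring-color bookkeeping. Because the switch-node is fixed by a global tightness condition and the reduced slack is a minimum of running sums, a unit change of $f$ at one switch-node on level $i$ can shift the switch-node on the adjacent level and alter several $\eps_j$-values at once; controlling these interactions---in particular establishing (A4), whose length-$4$ alternating word forces a fine case analysis on the relative positions of the two switch-nodes and on the coincidence of critical vertices for $\{i,j\}$---is the delicate heart of the argument. Once (A1)--(A5) are confirmed and connectivity is checked (by induction on $\sum_{v}f(v)$, every feasible function is reachable from the source by forward moves), uniqueness of $K(c)$ closes the proof.
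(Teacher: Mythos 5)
The paper itself contains no proof of this statement: Theorem~\ref{tm:cross} is imported verbatim from the earlier paper \cite[Th.~5.1]{cross}, so there is no in-paper argument to compare yours against. Your overall strategy --- show that $\Kscr(c)$ is connected, satisfies (A1)--(A5), and has a unique source with head-lengths $c_1,\ldots,c_n$, then invoke the existence-and-uniqueness of the $A_n$-crystal with a given parameter (which is established independently of the crossing model, in Section~2 of \cite{cross}, so no circularity arises) --- is the natural and correct one.

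However, what you have written is a roadmap rather than a proof, and the missing content is exactly where you locate it. Two points are genuinely open. First, the identification of $h_i(f)$ and $t_i(f)$ with running-sum extrema of the slack sequence $(\eps_i(1),\ldots,\eps_i(i))$ is only ``expected''; yet the invertibility of $\phi_i$, the path structure in (A1), the identification of the all-minimal feasible function as the unique source, and the reachability argument for connectivity (every nonzero feasible $f$ admits some applicable $\phi_i^{-1}$) all rest on precisely this signature formula, so it must be proved, not presumed. Second, and more seriously, the whole verification of (A2)--(A4) for neighbouring colours --- how a unit increase at the switch-node of the active multinode on level $i$ propagates through shared ascending and descending edges to the slacks $\eps_j(\cdot)$ on level $j=i\pm 1$, how it displaces the active multinode and switch-node there, and why the net effect on $h_j-t_j$ is the Cartan entry, why the commutation of (A3) holds, and why the degree-$4$ Verma relation (A4) holds --- is deferred with ``I would compute'' and ``should reduce to identities''. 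This is the entire technical substance of the theorem; only the non-neighbouring case (A5) is actually complete in your write-up, since there the level-$j$ slacks and switch conditions involve $f$ only on levels $j-1,j,j+1$ and are untouched by $\phi_i$. Until the neighbouring-colour case analysis is carried out in full, the proposal cannot be accepted as a proof.
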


 \subsection{Principal lattice and $(n-1)$-colored subcrystals of an $A_n$-crystal}
          \label{ssec:pr_lat}

Based on the crossing model, \cite{cross} reveals some important ingredients
and relations for an $A_n$-crystal $K=K(\bfc)$. One of them is the so-called
principal lattice, which is defined as follows.

Let $\bfa\in\Zset_+^n$ and $\bfa\le \bfc$. One easily checks that the function
on the vertices of the supporting graph $G$ that takes the constant value $a_k$
within each subgraph $G^k$ of $G$, $k=1,\ldots,n$, is feasible. We denote this
function and the vertex of $K$ corresponding to it by $f[\bfa]$ and
$\prv[\bfa]$, respectively, and call them {\em principal}. So the set of
principal vertices is bijective to the integer box
$\Bscr(\bfc):=\{a\in\Zset^n\colon \bfzero\le\bfa\le\bfc\}$; this set is called
the {\em principal lattice} of $K$ and denoted by $\Pi=\Pi(\bfc)$. When it is
not confusing, the term ``principal lattice'' may also be applied to
$\Bscr(\bfc)$.

The following properties of the principal lattice will be essentially used
later.

 \begin{prop} {\rm\cite[Expression~(6.4)]{cross}} \label{pr:fund_string}
Let $\bfa\in \Bscr(\bfc)$, $k\in\{1,\ldots,n\}$, and $\bfa':=\bfa+1_k$ (where
$1_k$ is $i$-th unit base vector in $\Rset^n$). The principal vertex
$\prv[\bfa']$ is obtained from $\prv[\bfa]$ by applying the operator string
   \begin{equation} \label{eq:string}
S_{n,k}:=w_{n,k,n-k+1}\cdots w_{n,k,2}w_{n,k,1},
   \end{equation}
where for $j=1,\ldots,n-k+1$, the substring $w_{n,k,j}$ is defined as
  $$
  w_{n,k,j}:=F_jF_{j+1}\cdots F_{j+k-1}.
  $$
When acting on $\Pi$, any two (applicable) strings $S_{n,k},S_{n,k'}$ commute.
In particular, any principal vertex $\prv[\bfa]$ is expressed via the source
$s_K=\prv[\bfzero]$ as
   \begin{equation} \label{eq:prin_str}
   \prv[\bfa]=S_{n,n}^{a_n}S_{n,n-1}^{a_{n-1}}\cdots S_{n,1}^{a_1}(s_K).
   \end{equation}
   \end{prop}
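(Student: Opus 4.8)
The plan is to argue entirely inside the crossing model of Subsection~\ref{ssec:cross}. Recall that the principal vertex $\prv[\bfa]$ is the feasible function $f[\bfa]$ equal to the constant $a_k$ on each component $G^k$ of the supporting graph, and that the operator $F_i=\phi_i$ raises the current function by one at the switch-node of the active multinode in level $i$. For the first (and principal) assertion $\prv[\bfa']=S_{n,k}(\prv[\bfa])$ with $\bfa'=\bfa+1_k$ (so $a_k<c_k$), I would follow the $k(n-k+1)$ operators constituting $S_{n,k}$ one at a time. Note that $k(n-k+1)=|V(G^k)|$, and that after identifying $G^k$ with the grid of coordinates $(p,q)=(i-j,\,j-1)$, $0\le p\le k-1$, $0\le q\le n-k$ — in which an ascending edge lowers $p$ and a descending edge raises $q$ — the substring $w_{n,k,j}$, which applies $F_{j+k-1},\ldots,F_{j}$ in this order, is predicted to increment successively the nodes of the column $q=j-1$ from $p=k-1$ down to $p=0$. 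Thus the assertion amounts to saying that $S_{n,k}$ fills $G^k$ column after column, raising $f$ by one at every node of $G^k$ and leaving all other components untouched, so that the terminal function is exactly $f[\bfa']$.

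The heart of the proof is the inductive step. The inductive hypothesis is that the current function $g$ equals $f[\bfa]+1$ on the filled staircase $R=\{q\le j-2\}\cup\{q=j-1,\ p\ge p_0+1\}$ of $G^k$ and $f[\bfa]$ elsewhere, and that the next operator is $F_i$ with $i=p_0+j$, whose target is the node $v_i^k(j)=(p_0,\,j-1)$. For such $g$ the slack \refeq{epsv} equals the mixed second difference $\eps(v)=[g(p,q-1)-g(p,q)]-[g(p+1,q-1)-g(p+1,q)]$ (with the standard extension of $f[\bfa]$ to $\bar G^k$, value $c_k$ where a path reaches $G^k$ and $0$ otherwise). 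A direct evaluation of $\eps$ against the indicator of $R$ shows that, \emph{at the level $i$ of the operator to be applied}, the total slacks are nonnegative and take the simple shape $\eps_i(1)=c_i-a_i$ (or $c_k-a_k-1\ge 0$ when $i=k$), $\eps_i(j)=+1$ at the target, and $\eps_i(j')=0$ for every other $j'$; the negative slacks produced by the fill occur at levels $\ne i$ and are cancelled by later fills before the corresponding operator acts. Consequently the reduced slacks \refeq{barxiij} satisfy $\tilde\eps_i(j)>0$ while $\tilde\eps_i(j')=0$ for all $j'>j$ (the one-term partial sum makes the $\min$ nonpositive there), so by \refeq{activej} the active multinode is exactly $V_i(j)$ — this uses that one selects the \emph{largest} $j'$ with positive reduced slack, so a possibly positive $\tilde\eps_i(1)$ is harmless. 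Finally the switch-node criterion \refeq{feas}(iii) pins the switch-node to the target $v_i^k(j)$: in $V_i(j)$ the $\eSW$-edge of $v_i^k(j)$ runs from the just-filled $(p_0+1,j-1)$ to the still-empty $(p_0,j-1)$ and so carries increment $1$, whereas every $\eSW$-edge of a later node of $V_i(j)$ is tight (those nodes lie in unmodified components) and every $\eSE$-edge of an earlier node is tight; hence the switch-node can neither precede nor exceed $v_i^k(j)$. Thus $F_i$ raises $g$ by one precisely at $v_i^k(j)$, advancing the induction, while feasibility of each intermediate function is guaranteed by Theorem~\ref{tm:cross}.

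The remaining two claims follow quickly. For the commutation of $S_{n,k}$ and $S_{n,k'}$ on $\Pi$: the first assertion shows each string merely raises the constant value on its own component by one, and since $G^k$ and $G^{k'}$ are disjoint, both orders carry $\prv[\bfa]$ to the function equal to $f[\bfa]$ plus the indicators of $G^k$ and $G^{k'}$, that is, to $\prv[\bfa+1_k+1_{k'}]$, whenever the four lattice points involved lie in $\Bscr(\bfc)$. Formula~\refeq{prin_str} then follows by iterating the first assertion from $s_K=\prv[\bfzero]$: $S_{n,1}^{a_1}$ installs the first coordinate, then $S_{n,2}^{a_2}$ the second, and so on up to $S_{n,n}^{a_n}$, every intermediate principal vertex remaining in the box since the coordinates are raised monotonically toward $\bfc$.

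The step I expect to be the main obstacle is the slack bookkeeping of the inductive step: evaluating $\eps$ at the boundary and extended nodes of $\bar G^k$ for a general staircase $R$, and verifying that the fixed corner slacks $c_i-a_i$ together with the single $+1$ of the advancing front produce a total-slack profile at level $i$ that is nonnegative, positive only at $j'=1$ and at the target $j'=j$, and zero beyond the target — together with the accompanying $\eSW/\eSE$ tightness pattern that forces the switch-node onto $G^k$. Once this local picture is in hand, the advance of the fill, the commutation, and the product formula all follow without further difficulty.
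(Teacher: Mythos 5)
Your argument is correct, and although the paper itself imports this proposition from~\cite{cross} without reproving it, your column-by-column fill of $G^k$, the slack profile $\eps_i(1)=c_i-a_i$ (resp.\ $c_k-a_k-1\ge 0$ when $i=k$), $\eps_i(j)=+1$ at the target and $0$ elsewhere, and the ensuing active-multinode and switch-node identification are exactly the technique the paper deploys in Section~\ref{sec:proof} (Case~1 of the proof of Lemma~\ref{lm:faDelta}, cf.~\refeq{phijm}--\refeq{incase1}) for the analogous statement that $S_{n-1,k}$ advances the deviation inside $\Kup[a]$. The only imprecision is cosmetic: when the target lies in column $j=1$ the two nonzero contributions you list belong to the same multinode, so $\eps_i(1)=c_i-a_i+1$ there, which only strengthens the conclusion that $V_i(1)$ is the active multinode.
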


  \begin{prop} {\rm\cite[Prop.~6.1]{cross}} \label{pr:int_prlat}
For $c',c''\in Z_+^n$ with $c'\le c''\le c$, let $K(c'\colon\!c'')$ be the
subgraph of $K(c)$ formed by the vertices and edges contained in (directed)
paths from $\prv[c']$ to $\prv[c'']$ (the \emph{interval} of $K(c)$ from
$\prv[c']$ to $\prv[c'']$). Then $K(c'\colon\!c'')$ is isomorphic to the
$A_n$-crystal $K(c''-c')$, and the principal lattice of $K'$ consists of the
principal vertices $\prv[a]$ of $K(c)$ with $c'\le a\le c''$.
  \end{prop}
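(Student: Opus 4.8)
The plan is to argue inside the crossing model $\Mscr_n$, using Theorem~\ref{tm:cross} to identify $K(c)$ with $\Kscr(c)$ and $K(c''-c')$ with $\Kscr(c''-c')$. Recall that the principal vertex $\prv[\bfa]$ is the feasible function $f[\bfa]$ that is constant equal to $a_k$ on each component $G^k$. I would study the pointwise translation $\Psi\colon f\mapsto f-f[c']$. Since $f[\bfa]-f[c']=f[\bfa-c']$, it sends $\prv[c']$ to the source $\prv[\bfzero]$ of $K(c''-c')$, sends $\prv[c'']$ to its sink $\prv[c''-c']$, and more generally sends $\prv[\bfa]$ with $c'\le\bfa\le c''$ to $\prv[\bfa-c']$. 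The goal is to show that $\Psi$ is a color-preserving isomorphism from the interval onto $K(c''-c')$.

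First I would check that $\Psi$ carries feasible functions for parameter $c$ lying in the range $f[c']\le f\le f[c'']$ bijectively onto $\Fscr(c''-c')$. The point of this particular translation is that $f[c']$ is constant on each $G^k$, so $\partial f[c']=0$ on every edge of $G$ and hence $\partial(\Psi f)=\partial f$ there. Consequently monotonicity~(i) is preserved, the box condition~(ii) merely shifts ($c'_k\le f\le c''_k$ on $G^k$ becomes $0\le\Psi f\le c''_k-c'_k$), and the slack $\eps(v)$ at every \emph{interior} node --- being an alternating sum of four values of $f$ within a single $G^k$ --- is left unchanged. It follows at interior multinodes that the total and reduced slacks $\eps_i(j),\tilde\eps_i(j)$, the active multinode, and its switch-node all coincide for $f$ and $\Psi f$, so that $\Psi$ intertwines each operator $\phi_i$ there, both incrementing by $1$ at the same node.

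The main obstacle I anticipate is the boundary bookkeeping in the extended graph $\bar G$. The extension value attached to an extra node is $c_k$ (or $0$) for $f$ but $c''_k-c'_k$ (or $0$) for $\Psi f$, and these do not differ by the constant $c'_k$; consequently tightness of an edge joining a real node to an extra node, and the value of $\eps(v)$ at a node adjacent to the extension, are not automatically preserved. I would resolve this by verifying directly, case by case at the boundary multinodes, that the quantities that actually drive the dynamics --- the \emph{total} slacks $\eps_i(j)$ summed over each multinode, the reduced slacks $\tilde\eps_i(j)$, and the switch-node condition~(iii) --- remain invariant under $\Psi$ on the relevant range $f[c']\le f\le f[c'']$, the interior cancellation already being in hand. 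This step, together with the matching of the box bounds, also shows that $\phi_i$ is applicable to $f$ (staying $\le f[c'']$) exactly when it is applicable to $\Psi f$ (staying within $c''-c'$).

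Finally I would identify the domain of $\Psi$ with the interval $K(c'\colon\!c'')$. Since each move raises $f$ by $1$ at a single node, $f$ is nondecreasing along any directed path, so every vertex on a path from $\prv[c']$ to $\prv[c'']$ automatically satisfies $f[c']\le f\le f[c'']$. Conversely, given a feasible $f$ in this range, its image $g=\Psi f$ is a vertex of the connected crystal $K(c''-c')$; as every vertex of a finite acyclic crystal lies on a directed path from the unique source to the unique sink, $g$ lies on a path from $\prv[\bfzero]$ to $\prv[c''-c']$, and pulling this path back through the intertwiner $\Psi$ exhibits $f$ on a path from $\prv[c']$ to $\prv[c'']$. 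Hence the domain of $\Psi$ is exactly $K(c'\colon\!c'')$, $\Psi$ is the desired isomorphism, and its computed action on principal vertices shows that the principal lattice of the interval is precisely $\{\prv[\bfa]\colon c'\le\bfa\le c''\}$.
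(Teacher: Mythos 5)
The paper does not actually prove this proposition: it is imported verbatim from \cite[Prop.~6.1]{cross}, so there is no internal proof to compare against, and your attempt has to be judged on its own. The overall strategy (the translation $\Psi\colon f\mapsto f-f[c']$ in the crossing model) is natural, and the parts of your argument that live entirely inside $G$ are correct: $\partial f$ is unchanged on every real edge, so monotonicity and the switch condition of feasibility are preserved, the box condition shifts as you say, and the identification of the interval with $\{f\in\Fscr(c)\colon f[c']\le f\le f[c'']\}$ is sound.

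The gap is precisely in the step you defer to a ``case by case'' check: the quantities driving the dynamics are \emph{not} invariant under $\Psi$, so the verification you propose would fail as stated. Concretely, take the first node $v=v_i^i(1)$ of the multinode $V_i(1)$. Three of the four nodes entering $\eps(v)=f(w)+f(z)-f(u)-f(v)$, namely $w=v_{i-1}^i(0)$, $u=v_i^i(0)$ and $z=v_{i+1}^i(1)$, are extension nodes lying to the left of $G^i$; they carry the value $c_i$ in $\Mscr_n(c)$ but $c''_i-c'_i$ in $\Mscr_n(c''-c')$, and since they enter with signs $+,-,+$ the boundary values do not cancel. One gets $\eps_f(v)=c_i-f(v)$ versus $\eps_{\Psi f}(v)=c''_i-f(v)$ (this agrees with the paper's own Claim~(b) in Section~4), so the total slack $\eps_i(1)$ shifts by $c_i-c''_i\ge 0$, and through the $p=1$ partial sums this shift enters \emph{every} reduced slack $\tilde\eps_i(j)$ in level $i$; there are no ``interior multinodes'' exempt from it. This is not removable bookkeeping: it is exactly the mechanism by which the interval loses those $i$-edges of $K(c)$ whose head exceeds $f[c'']$. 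What actually has to be proved is that (a) $\phi_i$ is applicable to $\Psi f$ in $\Mscr_n(c''-c')$ if and only if $\phi_i$ is applicable to $f$ in $\Mscr_n(c)$ \emph{and} $\phi_i(f)\le f[c'']$, and (b) whenever both apply, the active multinode and its switch-node coincide in the two models --- which is not automatic, since $\tilde\eps_i(j)^{f}\ge\tilde\eps_i(j)^{\Psi f}$ means the active multinode for $f$ could a priori lie strictly to the right of that for $\Psi f$. Neither statement is formulated or established in your proposal, and your final paragraph (``pulling the path back through the intertwiner'') silently uses both. Until this analysis is supplied the proof is incomplete.
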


Let $\Kmn(c)$ denote the set of subcrystals with colors $1,\ldots,n-1$, and
$\Kmone$ the set of subcrystals with colors $2,\ldots,n$ in $K$ (recall that a
subcrystal is assumed to be connected and maximal).

  \begin{prop} {\rm\cite[Prop.~7.1]{cross}} \label{pr:prlat-subcryst}
Each subcrystal in $\Kmn$ (in $\Kmone$) contains precisely one principal
vertex. This gives a bijection between $\Kmn$ and $\Pi$ (resp., between
$\Kmone$ and $\Pi$).
  \end{prop}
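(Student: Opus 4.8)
The plan is to establish that the map sending a principal vertex to the unique subcrystal of $\Kmn$ (resp.\ $\Kmone$) containing it is a bijection onto $\Kmn$ (resp.\ $\Kmone$). Since the maximal connected subgraphs with a fixed color set partition $V(K)$, every principal vertex lies in exactly one subcrystal of each kind, so this map is well defined and its bijectivity is precisely the assertion. I would work inside the crossing model of Theorem~\ref{tm:cross}, where the vertices of $K=K(\bfc)$ are the feasible functions $f\in\Fscr(\bfc)$ and an $i$-edge is a pair $(f,\phi_i(f))$. The decisive structural fact is that each move $\phi_i$ alters $f$ only at the switch-node of the active multinode, which sits on level $i$; hence the colors $1,\dots,n-1$ never change the values of $f$ on the bottom level $n$.

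First I treat $\Kmn$. Level $n$ consists of the $n$ single-node multinodes $V_n(j)=\{v_n^{\,n-j+1}(j)\}$, $j=1,\dots,n$, one lying in each component $G^k$, and for the principal function $f[\bfa]$, which is constant equal to $a_k$ on $G^k$, these $n$ node-values read off exactly the tuple $\bfa$. By the fact above, the restriction of $f$ to level $n$ is unchanged along every edge of colors $1,\dots,n-1$, hence constant on each subcrystal of $\Kmn$. Consequently, if two principal vertices $\prv[\bfa]$ and $\prv[\bfa']$ lie in one subcrystal of $\Kmn$, their level-$n$ values agree, so $\bfa=\bfa'$. This already gives injectivity: each subcrystal of $\Kmn$ contains \emph{at most} one principal vertex.

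For surjectivity I would prove the following reachability lemma: for every $f\in\Fscr(\bfc)$, writing $a_k:=f(v_n^{\,k}(n-k+1))$ for its level-$n$ values, the principal function $f[\bfa]$ is obtained from $f$ by a sequence of moves $\phi_i^{\pm1}$ with $i\le n-1$. Granting this, $\prv[\bfa]$ lies in the same $\Kmn$-subcrystal as $f$, so every subcrystal contains a principal vertex (necessarily unique, by the previous paragraph), and the bijection $\Pi\leftrightarrow\Kmn$ follows. To prove the lemma I would argue by induction on a monovariant measuring the deviation of $f$ from constancy on the components $G^k$, a natural candidate being $\sum_k\sum_{e\in E(G^k)}\partial f(e)$, which vanishes exactly on the principal functions: one must show that whenever this quantity is positive some applicable move $\phi_i^{\pm1}$ with $i<n$ strictly decreases it while preserving the level-$n$ values, so that the process terminates at $f[\bfa]$. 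I expect this progress-and-termination step to be the main obstacle, since it is not a soft finiteness argument: it requires the detailed behaviour of the slacks $\eps_i(j)$ and $\tilde\eps_i(j)$ under a move and a verification that a nonconstant feasible function always admits a deviation-reducing move of color $<n$ (so that one never gets stuck with the monovariant positive), rather than merely that the orbit is finite.

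Finally, the case of $\Kmone$ is \emph{not} symmetric to $\Kmn$ within this model: the moves $\phi_2,\dots,\phi_n$ freeze only the single-node level $1$, recording just the one coordinate $a_1$, so the level-freezing invariant no longer determines $\bfa$. To deduce it I would invoke the order-reversing involution on $A_n$-crystals, which reverses all edges and relabels color $i$ as $n+1-i$ and carries $K(\bfc)$ to the $A_n$-crystal with reversed parameter $(c_n,\dots,c_1)$; it interchanges the families $\Kmn$ and $\Kmone$ and fixes the principal lattice setwise, so the statement already proved for $\Kmn$ (valid for every parameter, in particular the reversed one) transfers to $\Kmone$. The one point needing care is the verification that this involution maps the principal lattice of $K(\bfc)$ onto that of the reversed crystal; alternatively, one repeats the argument above verbatim for the reflected crossing model in which the roles of colors $1$ and $n$ are exchanged.
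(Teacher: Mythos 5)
First, a point of reference: the paper does not prove this proposition at all — it is imported verbatim from \cite[Prop.~7.1]{cross} — so there is no in-paper argument to compare against. Your overall strategy (level-invariance in the crossing model for injectivity, a reachability lemma for surjectivity) is natural and is consistent with how the paper itself later exploits this fact in~\refeq{fn1}. The injectivity half is fine. The genuine gap is the surjectivity step, which you correctly identify as the crux but then attack with a monovariant that does not work: if a move $\phi_i^{\pm 1}$ changes $f$ by $\pm 1$ at a single node $w$, then $\sum_k\sum_{e\in E(G^k)}\partial f(e)$ changes by exactly $\pm(\mathrm{outdeg}(w)-\mathrm{indeg}(w))$, and every interior node of the grid $G^k$ has outdegree and indegree both equal to $2$, so the quantity is unchanged by most moves. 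Hence the assertion that some applicable move of colour $<n$ strictly decreases it is false as stated, and the progress-and-termination argument collapses. One needs a different potential function, or a direct analysis via the slacks $\eps_i(j)$ showing that a nonprincipal feasible function always admits an applicable $\phi_i^{-1}$ with $i<n$ landing closer to $f[\bfa]$; this is precisely the technical content of \cite[Prop.~7.1]{cross} that your sketch leaves unproved.

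Separately, your treatment of $\Kmone$ rests on a misreading of the model. Level $1$ does consist of the single multinode $V_1(1)$, but that multinode contains $n$ nodes $v_1^1(1),\dots,v_1^n(1)$, one in each component $G^k$, and the principal function $f[\bfb]$ takes the value $b_k$ at $v_1^k(1)$. So freezing level $1$ under the moves $\phi_2,\dots,\phi_n$ records the entire tuple $\bfb$, not just one coordinate — this is exactly the second half of~\refeq{fn1} — and the direct argument for $\Kmn$ transfers verbatim to $\Kmone$ with level $n$ replaced by level $1$. Your detour through the colour-reversing involution is not wrong in principle (the paper uses that involution in the proof of~\refeq{nablai}, and its compatibility with the principal lattice follows from Proposition~\ref{pr:fund_string}), but it was motivated by an incorrect premise and is unnecessary.
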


We refer to the members of $\Kmn$ and $\Kmone$ as {\em upper} and {\em lower}
($(n-1)$-colored) subcrystals of $K$, respectively. For $\bfa\in \Bscr(\bfc)$,
the upper subcrystal containing the vertex $\prv[\bfa]$ is denoted by
$\Kup[\bfa]$. This subcrystal has its own principal lattice of dimension $n-1$,
which is denoted by $\Piup[\bfa]$. We say that the coordinate tuple $a$ is the
{\em locus} of $\Kup[\bfa]$ (and of $\Piup[\bfa]$) in $\Pi$. Analogously, for
$\bfb\in \Bscr(\bfc)$, the lower subcrystal containing $\prv[\bfb]$ is denoted
by $\Klow[\bfb]$, and its principal lattice by $\Pilow[\bfb]$; we say that
$\bfb$ is the locus of $\Klow[b]$ (and of $\Pilow[b]$) in $\Pi$. It turns out
that the parameters of upper and lower subcrystals can be expressed explicitly,
as follows.

  \begin{prop} {\rm\cite[Props.~7.2,7.3]{cross}} \label{pr:par-subcryst}
For $\bfa\in \Bscr(\bfc)$, the upper subcrystal $\Kup[\bfa]$ is isomorphic to
the $A_{n-1}$-crystal $K(\parup)$, where $\parup$ is the tuple
$(\parup_1,\ldots,\parup_{n-1})$ defined by
  \begin{equation} \label{eq:par_up}
  \parup_i:=c_i-a_i+a_{i+1}, \qquad i=1,\ldots,n-1.
  \end{equation}
The principal vertex $\prv[\bfa]$ is contained in the upper lattice
$\Piup[\bfa]$ and its coordinate $\heartup=(\heartup_1,\ldots,\heartup_{n-1})$
in $\Piup[\bfa]$ satisfies
  \begin{equation}  \label{eq:heart_up}
  \heartup_i=a_{i+1}, \qquad i=1,\ldots,n-1.
  \end{equation}

Symmetrically, for $\bfb\in \Bscr(\bfc)$, the lower subcrystal $\Klow[\bfb]$ is
isomorphic to the $A_{n-1}$-crystal $K(\parlow)$ with colors $2,\ldots,n$,
where $\parlow$ is defined by
  \begin{equation} \label{eq:par_low}
  \parlow_i:=c_i-b_i+b_{i-1}, \qquad i=2,\ldots,n.
  \end{equation}
The principal vertex $\prv[\bfb]$ is contained in the lower lattice $\Pilow[b]$
and its coordinate $\heartlow=(\heartlow_2,\ldots,\heartlow_n)$ in $\Pilow[b]$
satisfies
  \begin{equation}  \label{eq:heart_low}
  \heartlow_i=b_{i-1}, \qquad i=2,\ldots,n.
  \end{equation}
  \end{prop}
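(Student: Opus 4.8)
The plan is to prove everything inside the crossing model $\Mscr_n(c)$ of Theorem~\ref{tm:cross}, exploiting the fact that the operators $\phi_1,\ldots,\phi_{n-1}$ generating the upper subcrystals act only on levels $1,\ldots,n-1$ of the supporting graph $G$. First I would record the decisive structural remark: each multinode $V_n(j)$ of the bottom level is a singleton $v_n^{n-j+1}(j)$ sitting in the component $G^{n-j+1}$, and no move $\phi_i$ with $i<n$ alters the value of a feasible function there. Hence the vertex set of $\Kup[\bfa]$ is exactly the set of feasible functions that coincide with the principal function $f[\bfa]$ on the whole bottom level, i.e. take the value $a_{n-j+1}$ at $v_n^{n-j+1}(j)$ for every $j$. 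In other words, $\Kup[\bfa]$ is generated by the model restricted to levels $1,\ldots,n-1$ with level $n$ \emph{frozen} at these principal values; by Proposition~\ref{pr:prlat-subcryst} the unique principal vertex it contains is $\prv[\bfa]$, which will be its heart.

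The core of the argument is to identify this frozen restricted model with the crossing model $\Mscr_{n-1}(\parup)$ of an $A_{n-1}$-crystal. Since the proto-graph $\Gscr_n$ restricted to levels $1,\ldots,n-1$ is literally $\Gscr_{n-1}$, the two skeletons agree after the obvious relabeling, and the work is to check that conditions~(i)--(iii) of~\refeq{feas}, once the frozen bottom-level values are substituted, reduce precisely to the feasibility conditions of $\Mscr_{n-1}$ and that the active-multinode/switch-node rule defining each $\phi_i$ is preserved. The genuinely new feature is that every multinode of the restricted model carries one extra node (the column $k=n-j+1$, absent in $\Gscr_{n-1}$); the monotonicity requirement in~\refeq{feas} along the edges joining these columns to the frozen bottom level, together with the switch-node condition, is what couples the frozen data $a_{n-j+1}$ into the effective bounds of the reduced model and thereby replaces the naive bound $c_i$ by $\parup_i=c_i-a_i+a_{i+1}$. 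I expect this bookkeeping of the boundary coupling to be the main obstacle. The cleanest way to organize it is to settle the case $n=2$ by hand, where the restricted model is an ``L-shaped'' path: its source is the feasible function whose level-$1$ values are $(f(v_1^1(1)),f(v_1^2(1)))=(a_1,0)$, and forward moves first raise $f$ on $G^2$ up to the frozen value $a_2$ and then raise $f$ on $G^1$ up to $c_1$, so the path has length $(c_1-a_1)+a_2=\parup_1$ and the heart $\prv[\bfa]=(a_1,a_2)$ sits at distance $a_2$ from the source. The general case then follows by carrying out the same analysis for every color, the extra columns transmitting the frozen bottom-level values through the monotonicity constraints.

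Once the isomorphism $\Kup[\bfa]\cong\Mscr_{n-1}(\parup)$ is established, both remaining assertions are read off. The principal vertex $\prv[\bfa]$ corresponds to the restriction of $f[\bfa]$, which is constant equal to $a_k$ on each component $G^k$; the $n=2$ computation shows that along color $i$ the position of this function is dictated not by the bound but by the frozen value carried on the adjacent component, namely $a_{i+1}$. Since the principal vertex of $\Mscr_{n-1}(\parup)$ with coordinate $d$ is exactly the function constant equal to $d_i$ on its $i$-th component, this forces $\heartup_i=a_{i+1}$, so that indeed $\prv[\bfa]\in\Piup[\bfa]$ with the stated coordinate. The specialization $\bfa=\bfzero$ gives the reassuring check that $s_K=\prv[\bfzero]$ is the source of $\Kup[\bfzero]=K(c)$, with heart at the origin.

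Finally, the statements for the lower subcrystal $\Klow[\bfb]$ I would obtain by the order- and color-reversing symmetry $i\mapsto n+1-i$ of $A_n$-crystals, which reverses all edges, carries $K(c)$ to $K(\bar c)$ with $\bar c_i=c_{n+1-i}$, and interchanges the families $\Kmn$ and $\Kmone$. Applying the upper-subcrystal formulas inside $K(\bar c)$ and transporting them back through this symmetry yields $\parlow_i=c_i-b_i+b_{i-1}$ and $\heartlow_i=b_{i-1}$; the only point requiring care is the matching of loci, since reversing edges exchanges source and sink and hence replaces a principal coordinate by its complement in the principal lattice.
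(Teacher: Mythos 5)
The paper does not actually prove this proposition: it is imported verbatim from \cite[Props.~7.2,~7.3]{cross}, so your proposal has to be judged on its own completeness rather than against an in-paper argument. Your overall strategy --- work inside the crossing model $\Mscr_n(c)$, observe that the bottom-level multinodes are singletons whose values are untouched by $\phi_1,\ldots,\phi_{n-1}$, identify $V(\Kup[\bfa])$ with the feasible functions frozen at $f(v_n^{n-j+1}(j))=a_{n-j+1}$, and then recognize the frozen model as $\Mscr_{n-1}(\parup)$ --- is the natural one and is surely close in spirit to the original proof. Your $n=2$ computation is correct: the frozen model is the L-shaped path of length $(c_1-a_1)+a_2=\parup_1$ with $\prv[\bfa]$ at distance $a_2$ from its source.

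The gap is that the single claim which \emph{is} the proposition --- that for general $n$ the frozen restriction of $\Mscr_n(c)$ to levels $1,\ldots,n-1$ is isomorphic, as a colored-graph-generating model, to $\Mscr_{n-1}(\parup)$ --- is asserted but not established. ``The general case then follows by carrying out the same analysis for every color'' is not an argument: the restricted supporting graph is not $G_{n-1}$ (every multinode $V_i(j)$, $i<n$, carries one extra node, not just those adjacent to the frozen level), so you need an explicit re-coordinatization $\Phi$ of feasible functions (in the $n=2$ case it is the affine map $f\mapsto f(v_1^2(1))+f(v_1^1(1))-a_1$, and something analogous but piecewise-defined in general) together with a verification that $\Phi$ carries conditions~\refeq{feas}(i)--(iii) with bounds $c$ to the same conditions with bounds $\parup$ \emph{and} intertwines the active-multinode/switch-node rule for every $\phi_i$. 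None of this decomposes into independent one-color computations, because the switch condition and the slacks $\eps_i(j)$ couple all nodes of a multinode and adjacent levels. A cheaper route, which avoids the model isomorphism entirely, is to use that $\Kup[\bfa]$ is a regular $A_{n-1}$-crystal (general subcrystal property) and that such crystals are determined by the head-lengths at their source: the source of $\Kup[\bfa]$ is the explicit minimal feasible function ($f=a_k$ on $P^k$, $f=0$ elsewhere, as written out in Section~\ref{sec:proof}), and computing $h_i$ there via axiom~(A2) gives \refeq{par_up}, after which \refeq{heart_up} follows from the distance of $\prv[\bfa]$ to that source. Finally, for the lower subcrystals the clean symmetry is the pure color relabeling $i\mapsto n+1-i$ (no edge reversal), which sends $K(c)$ to $K(\bar c)$ with $\bar c_i=c_{n+1-i}$ and $\Klow[\bfb]$ to an upper subcrystal of $K(\bar c)$ at locus $\bar b$; the edge-reversing anti-automorphism you describe is an isomorphism of $K(c)$ with itself, not with $K(\bar c)$, and introduces the source/sink complementation issue you would then have to untangle.
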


We call $\prv[\bfa]$ the {\em heart} of $\Kup[\bfa]$ w.r.t. $K$, and similarly
for lower subcrystals.


\section{Assembling an $A_n$-crystal} \label{sec:ass_A}

As mentioned in the Introduction, the structure of an $A_n$-crystal $K=K(\bfc)$
will be described in a recursive manner. The idea is as follows. We know that
$K$ contains $|\Pi|=(c_1+1)\times\ldots\times (c_n+1)$ upper subcrystals (with
colors $1,\ldots,n-1$) and $|\Pi|$ lower subcrystals (with colors
$2,\ldots,n$). Moreover, the parameters of these subcrystals are expressed
explicitly by~\refeq{par_up} and~\refeq{par_low}. So we may assume by recursion
that the set $\Kscr^{(-n)}$ of upper subcrystals and the set $\Kscr^{(-1)}$ of
lower subcrystals are available (already constructed). In order to assemble
$K$, it suffices to characterize, in appropriate terms, the intersection
$\Kup[\bfa]\cap \Klow[\bfb]$ for all pairs $\bfa,\bfb\in \Bscr(\bfc)$ (the
intersection may either be empty, or consist of one or more $(n-2)$-colored
subcrystals with colors $2,\ldots,n-1$ in $K$). We give an appropriate
characterization in Theorem~\ref{tm:mainA} below.

To state it, we need additional terminology and notation. Consider a subcrystal
$\Kup[\bfa]$, and let $\parup,\heartup$ be defined as
in~\refeq{par_up},\refeq{heart_up}. For $\bfp=(p_1,\ldots,p_{n-1})\in
\Bscr(\parup)$, the vertex in the upper lattice $\Piup[\bfa]$ having the
coordinate $\bfp$ is denoted by $\vup[\bfa,\bfp]$. We call the vector
$\Delta:=\bfp-\heartup$ the {\em deviation} of $\vup[\bfa,\bfp]$ from the heart
$\prv[\bfa]$ in $\Piup[\bfa]$, and will use the alternative notation
$\vup[a|\Delta]$ for this vertex. In particular,
$\prv[a]=\vup[\bfa,\heartup]=\vup[\bfa|\,0]$.

Similarly, for a lower subcrystal $\Klow[\bfb]$, let $\parlow,\heartlow$ be as
in~\refeq{par_low},\refeq{heart_low}. For $\bfq=(q_2,\ldots,q_n)\in
\Bscr(\parlow)$, the vertex with the coordinate $\bfq$ in $\Pilow[\bfb]$ is
denoted by $\vlow[\bfa,\bfq]$. Its deviation is $\nabla:=\bfq-\heartlow$, and
we may alternatively denote this vertex by $\vlow[b|\nabla]$.

We call an $(n-2)$-colored subcrystals with colors $2,\ldots,n-1$ in $K$ a {\em
middle subcrystals} and denote the set of these by $\Kscr^{(-1,-n)}$. Each
middle crystal $\Kmid$ is a {\em lower} subcrystal of some upper subcrystal
$K'=\Kup[a]$ of $K$. By Proposition~\ref{pr:prlat-subcryst} applied to $K'$,
~$\Kmid$ has a unique vertex $\vup[a|\Delta]$ in the lattice $\Piup[\bfa]$. So
each $\Kmid$ can be encoded by a pair $(a,\Delta)$ formed by a locus $a\in
\Bscr(c)$ and a deviation $\Delta$ in $\Piup[a]$. At the same time, $\Kmid$ is
an \emph{upper} subcrystal of some lower subcrystal $\Klow[b]$ of $K$ and has a
unique vertex $\vlow[b|\nabla]$ in $\Pilow[b]$. Therefore, the members of
$\Kscr^{(-1,-n)}$ determine a bijection
  $$
  \zeta:(a,\Delta)\mapsto (b,\nabla)
  $$
between all pairs $(a,\Delta)$ concerning upper subcrystals and all pairs
$(b,\nabla)$ concerning lower subcrystals.

The map $\zeta$ is expressed explicitly in the following theorem. Here for a
tuple $\rho=(\rho_i\colon i\in I)$, we denote by $\rho^+$ ($\rho^-$) the tuple
with the entries $\rho_i^+:=\max\{0,\rho_i\}$ (resp.
$\rho_i^-:=\min\{0,\rho_i\}$), $i\in I$.

 \begin{theorem} \label{tm:mainA}
Let $a\in \Bscr(c)$ and let $\Delta=(\Delta_1,\ldots,\Delta_{n-1})$ be a
deviation in $\Piup[a]$. Let $(b,\nabla)=\zeta(a,\Delta)$. Then $b$ satisfies

  \begin{equation} \label{eq:bi}
  b_i=a_i+\Delta^+_i+\Delta^-_{i-1}, \qquad i=1,\ldots,n,
  \end{equation}
letting $\Delta_0=\Delta_n:=0$, and $\nabla$ satisfies
  \begin{equation} \label{eq:nablai}
  \nabla_i=-\Delta_{i-1}, \qquad i=2,\ldots,n.
 \end{equation}
 \end{theorem}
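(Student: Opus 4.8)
The goal is to track how a single vertex of the principal lattice $\Piup[a]$ (namely $\vup[a|\Delta]$, the heart of the middle subcrystal $\Kmid$) sits inside a lower subcrystal $\Klow[b]$ of $K$, and to identify both the locus $b$ and the deviation $\nabla$. The natural strategy is to compute everything from the explicit formulas already supplied in Proposition~\ref{pr:par-subcryst} together with the operator-string description of the principal lattice in Proposition~\ref{pr:fund_string}. The key conceptual point is that $\vup[a|\Delta]$ is simultaneously a principal vertex of $K'=\Kup[a]$ (with coordinate $\heartup+\Delta$ in $\Piup[a]$) and a principal vertex of $K$ itself — because an interval argument via Proposition~\ref{pr:int_prlat} should identify the principal lattice of the subcrystal $\Kup[a]$ with a slice of the principal lattice of the ambient $K$. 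So the first step is to pin down the coordinate of $\vup[a|\Delta]$ as a principal vertex of $K$.

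\textbf{Locating $\vup[a|\Delta]$ as a vertex of $K$.}
First I would express $\vup[a|\Delta]$ through the source $s_K$. Applying Proposition~\ref{pr:fund_string} inside $K'=\Kup[a]$, the vertex with coordinate $\bfp=\heartup+\Delta$ is reached from the heart $\prv[a]$ by the commuting strings $S_{n-1,k}^{p_k-\heartup_k}=S_{n-1,k}^{\Delta_k}$ (with negative exponents meaning inverse strings). Since $\heartup_i=a_{i+1}$ by~\refeq{heart_up}, and since these $(n-1)$-colored strings act within the colors $1,\ldots,n-1$, I expect that composing with the expression~\refeq{prin_str} for $\prv[a]$ yields $\vup[a|\Delta]$ as a product of the ambient strings $S_{n,k}$ applied to $s_K$. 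The payoff is a formula $\vup[a|\Delta]=\prv[\bfx]$ for an explicit $\bfx\in\Bscr(c)$, whose entries I would read off by matching exponents; I anticipate $x_i=a_i+\Delta_i^+ + \Delta_{i-1}^-$ or a close variant, with the $\max/\min$ splitting arising because a deviation component $\Delta_k$ can be realized by either an ascending ($F$) or a descending ($F^{-1}$) portion of the string, contributing to different coordinates of $\bfx$ according to its sign.

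\textbf{Reading off $b$ and $\nabla$.}
Once $\vup[a|\Delta]=\prv[\bfx]$ is established, the locus $b$ of the lower subcrystal $\Klow[b]$ containing this vertex is forced: by Proposition~\ref{pr:prlat-subcryst} the lower subcrystal through a principal vertex $\prv[\bfx]$ is exactly $\Klow[\bfx]$, so $b=\bfx$, which is precisely~\refeq{bi}. For the deviation $\nabla$, I would invoke~\refeq{heart_low}: the heart of $\Klow[b]$ has lower-coordinate $\heartlow_i=b_{i-1}$, and since $\vup[a|\Delta]=\prv[b]$ is itself that heart, its deviation from the heart is a priori zero — so the subtle point is that $\nabla$ is \emph{not} the deviation of $\vup[a|\Delta]$ but of the companion vertex $\vlow[b|\nabla]$ singled out by the bijection $\zeta$ as the heart of $\Kmid$ viewed inside $\Klow[b]$. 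I would therefore identify the unique vertex of $\Kmid\cap\Pilow[b]$ and compute its lower-coordinate $\bfq$, then set $\nabla=\bfq-\heartlow$. Matching the $j$-colored critical structure of $\Kmid$ against both lattices should give $q_i=b_{i-1}-\Delta_{i-1}$ type relations, collapsing via~\refeq{heart_low} to $\nabla_i=-\Delta_{i-1}$, which is~\refeq{nablai}.

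\textbf{The main obstacle.}
The routine part is the bookkeeping of string exponents; the genuinely delicate step is justifying the sign-splitting in~\refeq{bi}, i.e. proving that a deviation $\Delta_k$ decomposes into a nonnegative contribution $\Delta_k^+$ to coordinate $k$ and a nonpositive contribution $\Delta_k^-$ to coordinate $k+1$ of $b$. This is where the non-commutativity of $F$ with $F^{-1}$ along a single color matters, and where I expect to need the convexity and critical-vertex properties from axiom~(A2)–(A3) (equivalently, the switch-node behavior in the crossing model) rather than pure lattice combinatorics. I would handle it by reducing to the one-step case $\Delta=\pm 1_k$ and verifying the two signs separately against the crossing-model description of the principal lattice, then extending by the commutativity of the strings $S_{n,k}$ asserted in Proposition~\ref{pr:fund_string}.
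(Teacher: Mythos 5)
Your argument rests on a premise that is false: you assert that $\vup[a|\Delta]$, being a principal vertex of the subcrystal $\Kup[a]$, is also a principal vertex of the ambient crystal $K$, i.e.\ that $\vup[a|\Delta]=\prv[\bfx]$ for some $\bfx\in\Bscr(c)$. Proposition~\ref{pr:prlat-subcryst} says exactly the opposite: each upper subcrystal meets $\Pi$ in precisely one vertex, the heart $\prv[a]$, so for $\Delta\ne 0$ the vertex $\vup[a|\Delta]$ does \emph{not} lie in $\Pi$. (Already for $n=2$ this is visible: the upper subcrystals are $1$-lines, their principal lattices are all of their vertices, but only one vertex per line is critical/principal in $K$.) Consequently the step ``the lower subcrystal through $\prv[\bfx]$ is $\Klow[\bfx]$, so $b=\bfx$'' has nothing to stand on, and with it the whole derivation of \refeq{bi} collapses. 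The interval argument via Proposition~\ref{pr:int_prlat} identifies intervals between principal vertices of $K$ with smaller crystals; it does not identify $\Piup[a]$ with a slice of $\Pi$.

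The real content of \refeq{bi} is precisely to locate which lower subcrystal contains a non-principal vertex of $K$, and this cannot be read off from operator-string bookkeeping on $\Pi$ alone. The paper does it by exhibiting an explicit feasible function $f_{a,\Delta}$ in the crossing model (constant on four rectangular pieces of each component $G^k$), proving it is feasible, and proving by induction on $\Delta_1+\cdots+\Delta_{n-1}$ that its vertex is $\vup[a|\Delta]$; then $b$ is read from the level-$1$ values of $f$ via \refeq{fn1}. Your closing remark about reducing to $\Delta=\pm 1_k$ and checking against the crossing model points in the right direction, but it is a plan for the hard part rather than a proof of it. Your treatment of \refeq{nablai} is likewise incomplete: identifying $\vlow[b|\nabla]$ ``by matching the critical structure'' is not carried out, and the paper needs a genuinely separate argument here (equating the two expressions \refeq{par_mid} and \refeq{par_mid2} for the parameter of $\Kmid$ to get $\Delta_{i-1}+\nabla_i\equiv\alpha$, then using the color-reversal symmetry together with \refeq{bi} applied to both $K$ and its reversal to force $\alpha=0$). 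As it stands, neither of the two assertions of the theorem is established.
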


A proof of this theorem will be given in the next section.

Based on Theorem~\ref{tm:mainA}, the crystal $K(c)$ is assembled as follows. By
recursion we assume that all upper and lower subcrystals are already
constructed. We also assume that for each upper subcrystal $\Kup[a]=K(\parup)$,
its principal lattice is distinguished by use of the corresponding injective
map $\sigma:\Bscr(\parup)\to V(K(\parup))$, and similarly for the lower
subcrystals. We delete the edges with color 1 in each $K(\parup)$ and extract
the components of the resulting graphs, forming a list $\tilde\Kscr$ of all
middle subcrystals of $K(c)$. Each $\Kmid \in\tilde\Kscr$ is encoded by a
corresponding pair $(a,\Delta)$, where $\bfa\in\Bscr(\bfc)$ and the deviation
$\Delta$ in $\Piup[a]$ is determined by use of $\sigma$ as above. Acting
similarly for the lower subcrystals $K(\parlow)$ (by deleting the edges with
color $n$ there), we obtain an isomorphic list of middle subcrystals, each of
which being encoded by a corresponding pair $(b,\nabla)$, where $b\in\Bscr(c)$
and $\nabla$ is a deviation in $\Pilow(b)$. Relations~\refeq{bi} and
\refeq{nablai} indicate how to identify each member of the first list with its
counterpart in the second one. Now restoring the deleted edges with colors 1
and $n$, we obtain the desired crystal $K(c)$. The corresponding map
$\Bscr(c)\to V(K(c))$ is constructed easily (e.g., by use of operator strings
as in Proposition~\ref{pr:fund_string}).

  \smallskip
We conclude this section with several remarks.

 \smallskip
 \noindent
{\bf Remark 1.} ~\refeq{bi} and~\refeq{nablai} lead to the following expression
of $a$ via $b$ and $\nabla$:
  \begin{equation} \label{eq:ai}
  a_i=b_i+\nabla^+_i+\nabla^-_{i+1}, \qquad i=1,\ldots,n,
  \end{equation}
letting $\nabla_1=\nabla_{n+1}:=0$.  This will be used, in particular, in the
Appendix.

 \medskip
 \noindent
{\bf Remark 2.} For each $a\in \Bscr(c)$ and each vertex $v=\vup[a,p]$ in the
upper lattice $\Piup[a]$, one can express the parameter $\parmid= (\parmid_2,
\ldots,\parmid_{n-1})$ of the middle subcrystal $\Kmid$ containing $v$, as well
as the coordinate $\heartmid=(\heartmid_2,\ldots, \heartmid_{n-1})$ of its
heart w.r.t. $\Kup[a]$ in the principal lattice of $\Kmid$. Indeed, since
$\Kmid$ is a lower subcrystal of $\Kup[a]$, one can apply relations as
in~\refeq{par_low},\refeq{heart_low}. Denoting the parameter of $\Kup[a]$ by
$\parup$ and the coordinate of its heart in $\Piup[a]$ by $\heartup$, letting
$\Delta:=p-\heartup$, and using~\refeq{par_up},\refeq{heart_up}, we have:
  \begin{gather}
  \parmid_i=\parup_i-p_i+p_{i-1}=(c_i-a_i+a_{i+1})-(a_{i+1}+\Delta_i)
     +(a_i+\Delta_{i-1}) \label{eq:par_mid} \\
    =c_i-\Delta_i+\Delta_{i-1}, \qquad i=2,\ldots,n-1; \nonumber
    \end{gather}
  \begin{equation} \label{eq:heart_mid}
  \heartmid_i=p_{i-1}=\heartup_{i-1}+\Delta_{i-1}=a_i+\Delta_{i-1},\qquad
          i=2,\ldots,n-1.
   \end{equation}

Symmetrically, if $\Kmid$ is contained in $\Klow[b]$ and is related to a
deviation $\nabla$ in $\Pilow[b]$, then
   \begin{eqnarray}
   \parmid_i &=& c_i-\nabla_i+\nabla_{i+1},
                     \label{eq:par_mid2} \\
   \bmid_i &=& b_i+\nabla_{i+1},  \qquad i=2,\ldots,n-1,
                      \label{eq:heart_mid2}
  \end{eqnarray}
where $\bmid$ is the coordinate of the heart of $\Kmid$ w.r.t. $\Klow[b]$ in
the principal lattice of $\Kmid$ (note that $\bmid$ may differ from
$\heartmid$). We will use formulas~\refeq{par_mid}--\refeq{heart_mid2} in
subsequent sections.

 \medskip
 \noindent
{\bf Remark 3.} A straightforward implementation of the above recursive method
of constructing $K=K(c)$ takes $O(2^{q(n)}N)$ time and space, where $q(n)$ is a
polynomial in $n$ and $N$ is the number of vertices of $K$. Here the factor
$2^{q(n)}$ appears because the total number of vertices in the upper and lower
subcrystals is $2N$ (implying that there appear $4N$ vertices in total on the
previous step of the recursion, and so on). Therefore, such an implementation
has polynomial complexity of the size of the output for each fixed $n$, but not
in general. However, many intermediate subcrystals arising during the recursive
process are repeated, and we can use this fact to improve the implementation.
More precisely, the colors occurring in each intermediate subcrystal in the
process form an interval of the ordered set $(1,\ldots,n)$. We call a
subcrystal of this sort a {\em color-interval subcrystal}, or a {\em
CI-subcrystal}, of $K$. In fact, {\em every} CI-subcrystal of $K$ appears in
the process. Since the number of intervals is $\frac{n(n+1)}{2}$ and the
CI-subcrystals concerning one and the same interval are pairwise disjoint, the
total number of vertices of all CI-subcrystals of $K$ is $O(n^2 N)$. It is not
difficult to implement the recursive process in such a way that each
CI-subcrystal $K'$ is explicitly constructed only once. (For this purpose, one
can use pointers from the vertices of $K'$ to its source $s_{K'}$ and
characterize $K'$ by its color-interval and $s_{K'}$.) As a result, we obtain
the following
  \begin{prop} \label{pr:efficient}
Let $n\in \Zset_+$ and $c\in\Zset_+^n$. The $A_n$-crystal $K(c)$ and all its
CI-subcrystals can be constructed in $O(q'(n)|V(K(c))|)$ time and space, where
$q'(n)$ is a polynomial in $n$.
  \end{prop}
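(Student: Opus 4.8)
The plan is to recast the recursive assembling procedure of Section~\ref{sec:ass_A} as a recursion indexed by \emph{color-intervals} and to collapse its repeated calls by memoization. For an interval $I=[i,j]:=\{i,i+1,\ldots,j\}$ of the color set $\{1,\ldots,n\}$, the maximal connected subgraphs of $K=K(c)$ using only $I$-colored edges are exactly the CI-subcrystals of $K$ with color set $I$. To assemble such a subcrystal $K'$ (regarded, after relabeling, as an $A_{|I|}$-crystal), the procedure calls for its upper subcrystals, which have color set $[i,j-1]$, and its lower subcrystals, which have color set $[i+1,j]$; these are themselves CI-subcrystals, with strictly shorter intervals. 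Hence, by induction on $|I|$, the only objects ever produced are CI-subcrystals of $K$, and every CI-subcrystal of $K$ is produced (the base case $|I|=1$ being the monochromatic lines, built directly from their parameters). In particular $K(c)$ itself, the CI-subcrystal for $I=[1,n]$, is produced.

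First I would bound the total size. The number of intervals $[i,j]$ with $1\le i\le j\le n$ is $\frac{n(n+1)}{2}$. For a fixed interval $I$, the CI-subcrystals with color set $I$ are pairwise vertex-disjoint and every vertex of $K$ lies in exactly one of them, so their vertices sum to $|V(K)|$. Summing over all intervals, the total number of vertices occurring in CI-subcrystals of $K$ is $\frac{n(n+1)}{2}\,|V(K)|=O(n^2|V(K)|)$. This simultaneously bounds the size of the full output collection and the amount of construction work, \emph{provided} each CI-subcrystal is built only once.

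The memoized implementation I would use keys each CI-subcrystal by its color-interval together with a pointer to its source $s_{K'}$, which is a vertex of $K$; a request first consults this table and triggers construction only on a miss, so the exponential blowup of the naive recursion collapses to a DAG with one node per CI-subcrystal. To build a CI-subcrystal with colors $[i,j]$ from its already-constructed upper and lower subcrystals, I would delete the $j$-edges (resp. the $i$-edges) to expose the middle subcrystals with colors $[i+1,j-1]$, and glue the two families by the explicit bijection $\zeta$ of Theorem~\ref{tm:mainA}: relations~\refeq{bi} and~\refeq{nablai} identify the $(a,\Delta)$-coded middle piece sitting in an upper subcrystal with the $(b,\nabla)$-coded piece sitting in a lower one. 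Restoring the deleted $i$- and $j$-edges then yields $K'$, and its principal-lattice map is recovered by operator strings as in Proposition~\ref{pr:fund_string}. Each vertex of $K'$ is touched only $O(\mathrm{poly}(n))$ times during this gluing and bookkeeping.

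The main obstacle is to verify that the gluing runs in time linear in $|V(K')|$ up to a $\mathrm{poly}(n)$ factor, and that the memoization keys correctly coalesce identical CI-subcrystals without redundant work. The key point is that Theorem~\ref{tm:mainA} supplies an explicit \emph{bijection} $\zeta$, not merely an existence statement, so the matching of middle pieces is deterministic and computable coordinate-wise from~\refeq{bi}--\refeq{nablai}; combined with the parameter formulas of Proposition~\ref{pr:par-subcryst} and the lattice addressing of Proposition~\ref{pr:prlat-subcryst}, this lets the identification be carried out by a single pass over the vertices of $K'$. Granting the $O(n^2|V(K(c))|)$ bound on total vertices and $O(\mathrm{poly}(n))$ work per vertex, the overall time and space are $O(q'(n)|V(K(c))|)$ for a suitable polynomial $q'$, as claimed.
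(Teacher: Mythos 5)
Your proposal is correct and follows essentially the same route as the paper's own argument (given in Remark~3 of Section~\ref{sec:ass_A}): identify the intermediate subcrystals as CI-subcrystals, bound their total vertex count by $\frac{n(n+1)}{2}\,|V(K)|$ using disjointness within each color-interval, and memoize construction by keying each CI-subcrystal on its color-interval and source vertex. The added detail on carrying out the gluing via the bijection $\zeta$ of Theorem~\ref{tm:mainA} matches the assembling procedure already described in the paper.
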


 \noindent
{\bf Remark 4.} Relation~\refeq{bi} shows that the intersection of $\Kup[a]$
and $\Klow[b]$ may consist of many middle subcrystals. Indeed, if $\Delta_i>0$
and $\Delta_{i-1}<0$ for some $i$, then $b$ does not change by simultaneously
decreasing $\Delta_i$ by 1 and increasing $\Delta_{i-1}$ by 1. The number of
common middle subcrystals of $\Kup[a]$ and $\Klow[b]$ for arbitrary $a,b\in
\Bscr(c)$ can be expressed by an explicit piecewise linear formula,
using~\refeq{bi} and the box constraints $-a_{i+1}\le\Delta_i\le c_i-a_i$,
$i=1,\ldots,n-1$, on the deviations $\Delta$ in $\Piup[a]$ (which follow
from~\refeq{par_up},\refeq{heart_up}).

 \section{Proof of Theorem~\ref{tm:mainA}} \label{sec:proof}

Let $a,\Delta,b,\nabla$ be as in the hypotheses of this theorem. First we prove
relation~\refeq{nablai} in the assumption that~\refeq{bi} is valid.

 \medskip
 \noindent
{\bf Proof of~\refeq{nablai}.} The middle subcrystal $\Kmid$ determined by
$(a,\Delta)$ is the same as the one determined by $(b,\nabla)$. The parameter
$\parmid$ of $\Kmid$ is expressed simultaneously by~\refeq{par_mid} and
by~\refeq{par_mid2}. Then $c_i-\Delta_i+\Delta_{i-1}=c_i-\nabla_i+\nabla_{i+1}$
for $i=2,\ldots,n-1$. Therefore,
   \begin{equation}  \label{eq:Delta-nabla}
  \Delta_1+\nabla_2=\Delta_2+\nabla_3=\ldots=
         \Delta_{n-1}+\nabla_n=:\alpha.
   \end{equation}

In order to obtain~\refeq{nablai}, one has to show that $\alpha=0$. We argue as
follows. Renumber the colors $1,\ldots,n$ as $n,\ldots,1$, respectively; this
yields the crystal $\hat K= K(\hat c)$ symmetric to $K(c)$. Then $\Klow[b]$
turns into the upper subcrystal $\hat K^\uparrow[\hat b]$ of $\hat K$, where
$(\hat b_1,\ldots,\hat b_n)=(b_n,\ldots,b_1)$. Also the deviation $\nabla$ in
$\Pilow[b]$ turns into the deviation $\hat\nabla=(\hat\nabla_1,
\ldots,\hat\nabla_{n-1})=(\nabla_n,\ldots,\nabla_2)$ in the principal lattice
of $\hat K^\uparrow[\hat b]$. Applying relations as in~\refeq{bi} to $(\hat
b,\hat\nabla)$, we have
  \begin{equation} \label{eq:bar_nabla}
  \hat a_i=\hat b_i+\hat\nabla^+_i+\hat\nabla^-_{i-1}=
    b_{n-i+1}+\nabla^+_{n-i+1}+\nabla^-_{n-i+2}, \qquad i=1,\ldots,n,
   \end{equation}
where $\hat a_i:=a_{n-i+1}$ and $\hat\nabla^+_n:=\hat\nabla^-_0:=0$.
On the other hand,~\refeq{bi} for $(a,\Delta)$ gives
  \begin{equation} \label{eq:nab}
  \hat b_i=b_{n-i+1}=a_{n-i+1}+\Delta^+_{n-i+1}+\Delta^-_{n-i},
            \qquad i=1,\ldots,n.
  \end{equation}

Relations~\refeq{bar_nabla} and~\refeq{nab} imply
  $$
  a_{n-i+1}=(a_{n-i+1}+\Delta^+_{n-i+1}+\Delta^-_{n-i})
    +\nabla^+_{n-i+1}+\nabla^-_{n-i+2},
    $$
whence
   $$
 \Delta^+_{n-i+1}+\Delta^-_{n-i}
    +\nabla^+_{n-i+1}+\nabla^-_{n-i+2}=0, \qquad i=1,\ldots,n.
  $$
Adding up the latter equalities, we obtain
  $$
  (\Delta_1+\ldots+\Delta_{n-1})+(\nabla_2+\ldots+\nabla_n)=0.
  $$
This and~\refeq{Delta-nabla} imply $(n-1)\alpha=0$. Hence $\alpha=0$, as
required. \hfill \qed

 \medskip
 \noindent
{\bf Proof of~\refeq{bi}.} This proof is rather technical and essentially uses
the crossing model.

For a feasible function $f\in\Fscr(c)$ and its corresponding vertex $v$ in
$K=K(c)$, we may denote $v$ as $v_f$, and $f$ as $f_v$. The following
observation from the crossing model will be of use:
  \begin{numitem1}
if a vertex $v\in V(K)$ belongs to $\Kup[a]$ and to $\Klow[b]$, then the
tuples $a$ and $b$ are expressed via the values of $f=f_v$ in levels $n$
and 1 as follows:

$a_k=f(v_n^k(n-k+1))$\; and\; $b_k=f(v_1^k(1))$\; for $k=1,\ldots,n$.
  \label{eq:fn1}
  \end{numitem1}
Indeed, the principal vertex $\prv[a]$ is reachable from $v$ by applying
operators $F_i$ or $F_i^{-1}$ with $i\ne n$. The corresponding moves in the
crossing model do not change $f$ within level $n$. Similarly, $\prv[b]$ is
reachable from $v$ by applying operators $F_i$ or $F_i^{-1}$ with $i\ne 1$, and
the corresponding moves in the crossing model do not change $f$ within level 1.
Also the relations in~\refeq{fn1} are valid for the principal function
$f=f_{\prv[a]}$.

Next we introduce special functions on the node set $V(G)$ of the supporting
graph $G=G_n$. Consider a component $G^k=(V^k,E^k)$ of $G$. It is a rectangular
grid (rotated by 45$^\circ$ in the visualization of $G$), and its vertex set is
  $$
  V^k=\{v_i^k(j)\colon j=1,\ldots,n-k+1,\;\; i=j,\ldots,j+k-1\}.
  $$
To represent it in a more convenient form, introduce the variable $m:=i-j+1$
and rename $v_i^k(j)$ as $u^k_{i-j+1}(j)$, or as $u_{i-j+1}(j)$ (when no
confusion can arise). Then
  $$
  V^k=\{u_m(j)\colon j=1,\ldots,n-k+1,\;\; m=1,\ldots,k\},
  $$
the (descending) SE-edges in $G^k$ are of the form $(u_m(j),u_m(j+1))$, and the
(ascending) NE-edges are of the form $(u_m(j),u_{m-1}(j))$. We specify the
following subsets of $V^k$:

(i) the {\em SW-side} $P=P^k:=\{u_k(1),\ldots,u_k(n-k+1)\}$;

(ii) the {\em right rectangle} $R=R^k:=\{u_m(j)\colon 1\le m\le k-1,\;
1\le j\le n-k+1\}$;

(iii) the {\em left rectangle} $L=L^k:=\{u_m(j)\colon 1\le m\le k,\;
1\le j\le n-k\}$.

\noindent Denote the characteristic functions (in $\Rset^{V^k}$) of $P, R, L$
as $\pi^k$, $\rho^k,\lambda^k$, respectively.

Return to $a\in\Bscr(c)$ and a deviation $\Delta$ in $\Piup[a]$. Associate to
$(a,\Delta)$ the functions
  \begin{equation} \label{eq:fkaD}
  f^k_{a,\Delta}:=a_k\pi^k+(a_k+\Delta^-_{k-1})\rho^k+\Delta^+_k\lambda^k
  \end{equation}
on $V^k$ for $k=1,\ldots,n$ (see Fig.~\ref{fig:partition}),  and their direct
sum
  $$
  f_{a,\Delta}:= f^1_{a,\Delta}\oplus\ldots\oplus f^n_{a,\Delta}
  $$
(the function on $V(G)$ whose restriction to each $V^k$ is $f^k_{a,\Delta}$).
  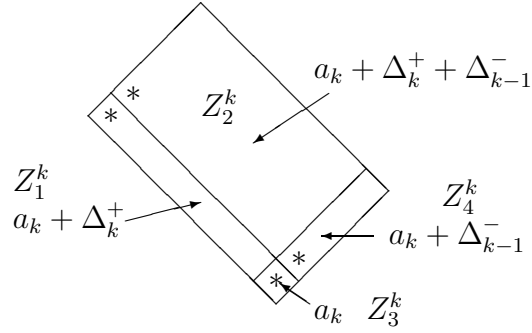
\begin{figure}[hbt]                  
  \begin{center}
  \unitlength=1mm
  \begin{picture}(70,40)(0,6)
  \put(10,30){\line(1,1){15}}
  \put(32,8){\line(1,1){15}}
  \put(35,5){\line(1,1){15}}
  \put(10,30){\line(1,-1){25}}
  \put(13,33){\line(1,-1){25}}
  \put(25,45){\line(1,-1){25}}
  \put(40,35){$a_k+\Delta^+_k+\Delta^-_{k-1}$}
  \put(12,29){$\ast$}
  \put(15,32){$\ast$}
  \put(34,7){$\ast$}
  \put(37,10){$\ast$}
  \put(0,20){$Z_1^k$}
  \put(0,15){$a_k+\Delta^+_k$}
  \put(40,3){$a_k$}
  \put(47,3){$Z^k_3$}
  \put(50,13){$a_k+\Delta^-_{k-1}$}
  \put(57,18){$Z^k_4$}
  \put(25,30){$Z^k_2$}
  \put(42,33){\vector(-3,-2){10}}
  \put(15,16){\vector(4,1){10}}
  \put(39.5,5){\vector(-3,2){4}}
  \put(48,14){\vector(-1,0){7}}
 \end{picture}
 \end{center}
  \caption{The partition of $V^k$.}
  \label{fig:partition}
  \end{figure}

In view of~\refeq{fn1}, $f=f_{a,\Delta}$ takes the values in levels $n$ and 1
as required in~\refeq{bi} (with $k$ in place of $i$), namely,
$f(v^k_n(n-k+1))=a_k$ and $f(v^k_1(1))=a_k+\Delta^+_k+\Delta^-_{k-1}$ for
$k=1,\ldots,n$. Therefore, to obtain~\refeq{bi} it suffices to show the
following
  \begin{lemma} \label{lm:faDelta}
(i) The function $f=f_{a,\Delta}$ is feasible. (ii) The vertex $v_f$ is
the vertex of $\Piup[a]$ having the deviation $\Delta$.
  \end{lemma}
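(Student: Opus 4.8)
The plan is to treat the two assertions separately: first verify that $f=f_{a,\Delta}$ satisfies the three feasibility conditions of~\refeq{feas}, and then locate the corresponding vertex $v_f$ inside the upper lattice $\Piup[a]$.

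For part~(i) I would exploit that on each component $G^k$ the function $f^k_{a,\Delta}$ is constant on the four regions $Z^k_1,\ldots,Z^k_4$ of Figure~\ref{fig:partition}, with only two ``jumps'': it drops by $\Delta^+_k\ge 0$ when an SE-edge crosses the right boundary of the left rectangle $L^k$ (at column $j=n-k$), and it drops by $-\Delta^-_{k-1}\ge 0$ when an NE-edge crosses the boundary between the SW-side $P^k$ and the right rectangle $R^k$ (at row $m=k$). Since every other edge is $f$-tight, monotonicity (condition~(i)) is immediate. For the box condition~(ii), the values taken by $f$ on $G^k$ range between $a_k+\Delta^-_{k-1}$ and $a_k+\Delta^+_k$; the inequalities $0\le a_k+\Delta^-_{k-1}$ and $a_k+\Delta^+_k\le c_k$ then follow at once from the admissibility bounds $-a_k\le \Delta_{k-1}$ and $\Delta_k\le c_k-a_k$ on a deviation in $\Piup[a]$ (Remark~4), using the conventions $\Delta_0=\Delta_n:=0$.

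The delicate part of~(i) is the switch-node condition~(iii). Writing a multinode as $V_i(j)$ and using $m:=i-j+1$, its node in $G^k$ is $u^k_m(j)$, with $k$ running over $i-j+1,\ldots,n-j+1$. From the two jumps above one checks that $\eSE(u^k_m(j))$ fails to be tight only when $k=n-j$ and $\Delta_{n-j}>0$, while $\eSW(u^k_m(j))$ fails to be tight only when $k=m+1$ and $\Delta_m<0$. A valid switch-node index $k^\ast$ must therefore satisfy $k^\ast\le n-j$ in the first case and $k^\ast\ge m+1$ in the second. For $i=n$ the multinode is a single node and~(iii) is trivial; for $i=n-1$ one has $m=n-j$, so the two conditions would require $\Delta_{n-j}$ to be simultaneously positive and negative and hence cannot collide; and for $i\le n-2$ one has $m<n-j$, so the interval $[m+1,\,n-j]$ is nonempty and any $k^\ast$ in it works. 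This case distinction is exactly the point where one uses that $\Delta$ is a single genuine deviation rather than two independent tuples.

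For part~(ii) I would first note that $f$ has level-$n$ values $f(v^k_n(n-k+1))=a_k$, so by~\refeq{fn1} the vertex $v_f$ lies in $\Kup[a]$; it then remains to show that $v_f$ is the principal vertex of this $A_{n-1}$-subcrystal carrying deviation $\Delta$, i.e. $v_f=\vup[a|\Delta]$. Since $f_{a,0}=f[a]$, whose vertex is the heart $\prv[a]=\vup[a|0]$, I would argue by induction on $\|\Delta\|_1$: by Proposition~\ref{pr:fund_string} applied to $\Kup[a]=K(\parup)$ with colors $1,\ldots,n-1$, passing from deviation $\Delta$ to $\Delta\pm 1_k$ is realized by the commuting principal string $S_{n-1,k}^{\pm1}$, and a direct computation of $f_{a,\Delta+1_k}-f_{a,\Delta}$ shows it equals the indicator of $L^k$ when $\Delta_k\ge 0$ and of $R^{k+1}$ when $\Delta_k<0$. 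The crux is therefore to follow the moves $\phi_1,\ldots,\phi_{n-1}$ composing $S_{n-1,k}$ through the crossing model and verify that, applied to $f_{a,\Delta}$, they raise $f$ by exactly $1$ on that region (equivalently, that the active multinodes and switch-nodes encountered along the string sweep out precisely $L^k$ or $R^{k+1}$). I expect this step-by-step tracking of the string action to be the main obstacle; the feasibility part~(i), including the switch-node analysis, is comparatively routine.
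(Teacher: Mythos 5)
Your part~(i) is correct and follows essentially the same route as the paper: the same four-region decomposition of each $G^k$, monotonicity from the two non-negative ``jumps'' (an SE-edge leaving $L^k$ drops by $\Delta^+_k$, an NE-edge crossing from $P^k$ to $R^k$ drops by $-\Delta^-_{k-1}$), the box condition from the bounds $-a_k\le\Delta_{k-1}$ and $\Delta_k\le c_k-a_k$, and the observation that the two possible non-tight entering/leaving edges in a multinode $V_i(j)$ sit at indices $k=m+1$ and $k=n-j$ and can only collide when $i=n-1$, where they are governed by $\Delta^-_{n-j}$ and $\Delta^+_{n-j}$ and hence never conflict. This is a slightly tidier packaging of the paper's case split between $i\le n-2$ and $i=n-1$, and it is sound.

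For part~(ii), however, there is a genuine gap, and you have correctly located it yourself: everything up to and including the identity $f_{a,\Delta+1_k}-f_{a,\Delta}=\lambda^k$ (for $\Delta_k\ge 0$) or $\rho^{k+1}$ (for $\Delta_k<0$) is bookkeeping, and the entire substance of the lemma is the assertion that the $k(n-k)$ moves of the string $S_{n-1,k}$, applied to $f_{a,\Delta}$ in the crossing model, increase $f$ by exactly $1$ at the nodes of $L^k$ (resp.\ $R^{k+1}$), one node per move, in the order dictated by the substrings $w_{n-1,k,j}$. This is not automatic from the definition of the moves: each $\phi_i$ acts at the switch-node of the \emph{active} multinode of level $i$, which is determined by the reduced slacks $\tilde\eps_i(j)$ of the \emph{current} function, so one must (a) compute the slacks $\eps(v)$ of $f_{a,\Delta}$ at all nodes (the paper's Claim: they vanish except at $v^{k'}_{k'}(1)$ and $v^{k'}_{k'-1}(1)$, where they equal $c_{k'}-a_{k'}-\Delta^+_{k'}$ and $-\Delta^-_{k'-1}$), (b) show that at each step exactly one candidate node in the relevant level has no tight entering edge and that it is indeed the switch-node of the active multinode, and (c) propagate this through the whole string by an inner induction on the position $(j,m)$ — this is the content of~\refeq{phijm} and~\refeq{phijm2} and of steps (A) and (B) in the paper. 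Without this the claim that ``the active multinodes and switch-nodes sweep out precisely $L^k$ or $R^{k+1}$'' is only a statement of what must be proved. A secondary, minor point: your induction on $\Vert\Delta\Vert_1$ from $\Delta=0$ forces you to handle decrements $\Delta\mapsto\Delta-1_k$ as well; this can be reduced to a forward application of $S_{n-1,k}$ to $f_{a,\Delta-1_k}$ by invertibility of the operators, but you should say so — the paper avoids the issue by inducting on $\eta(\Delta)=\sum_i\Delta_i$ starting from the minimal deviation, where $f_{a,\Delta}$ is visibly the source of $\Kup[a]$.
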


 \noindent
{\bf Proof}~ First we prove statement~(i). Let $k\in\{1,\ldots,n\}$. We
partition $V^k$ into four subsets (rectangular {\em pieces}):
  $$
  Z^k_1:=P^k\cap L^k;\quad
  Z^k_2:=L^k\setminus P^k;\quad
  Z^k_3:=\{u_k^k(n-k+1)\};\quad
  Z^k_4:=R^k\setminus L^k
  $$
\noindent (where $Z^k_2=Z^k_4=\emptyset$ when $k=1$, and
$Z^k_1=Z^k_2=\emptyset$ when $k=n$). By~\refeq{fkaD},
  \begin{numitem1}
$f$ takes a constant value within each piece $Z^k_q$, namely: $a_k+\Delta^+_k$
on $Z^k_1$; \\
$a_k+\Delta^+_k+\Delta^-_{k-1}$ on $Z^k_2$; $a_k$ on $Z^k_3$; and
$a_k+\Delta^-_{k-1}$ on $Z^k_4$
  \label{eq:valuesZ}
  \end{numitem1}
(as illustrated in Fig.~\ref{fig:partition}). Also each edge of $G^k$
connecting different pieces goes either from $Z^k_1$ to $Z^k_2\cup Z^k_3$ or
from $Z^k_2\cup Z^k_3$ to $Z^k_4$. This and~\refeq{valuesZ} imply that
$\partial f(e)\ge 0$ for each edge $e\in E^k$, whence $f$
satisfies~\refeq{feas}(i).

The deviation $\Delta$ is bounded as $-\heartup\le\Delta\le \parup-\heartup$,
where $\parup$ is the parameter of the subcrystal $\Kup[a]$ and $\heartup$ is
the coordinate of its heart $\prv[a]$ in $\Piup[a]$. Expressions~\refeq{par_up}
and~\refeq{heart_up} for $\parup$ and $\heartup$ give
  \begin{equation} \label{eq:boundsDelta}
  -a_{k+1}\le\Delta_k\le c_k-a_k\quad \mbox{and}\quad
  -a_k\le\Delta_{k-1}\le c_{k-1}-a_{k-1}.
  \end{equation}
The inequalities $\Delta_k\le c_k-a_k$ and $a_k\le c_k$ imply
$a_k+\Delta^+_k\le c_k$. And the inequalities $-a_k\le\Delta_{k-1}$ and $a_k\ge
0$ imply $a_k+\Delta^-_{k-1}\ge 0$. Then, in view of~\refeq{valuesZ}, we obtain
$0\le f(v)\le c_k$ for each node $v$ of $G^k$, yielding~\refeq{feas}(ii).

To verify the switch condition~\refeq{feas}(iii), consider a multinode
$V_i(j)$ with $i<n$. It consists of $n-i+1$ nodes $v^k_i(j)$, where
$i-j+1\le k\le n-j+1$.

Let $i\le n-2$. Suppose that there is a node $v=v_i^k(j)$ whose SW-edge
$e=(u,v)$ exists and is not $f$-tight. This is possible only if $u\in Z^k_1$
and $v\in Z^k_2$. Then $k$ is determined as $k=i-j+2$, i.e. $v$ is the second
node in $V_i(j)$. We observe that: (a) for the first node $v_i^{k-1}(j)$ of
$V_i(j)$, both ends of its SE-edge $e'$ belong to the piece $Z_1^{k-1}$, whence
$e'$ is $f$-tight; and (b) for any node $v_i^{k'}(j)$ with $k'>k$ in $V_i(j)$,
both ends of its SW-edge $e''$ belong either to $Z_2^{k'}$ or to $Z_4^{k'}$.
Therefore, the node $v$ satisfies the requirement in~\refeq{feas}(iii) for
$V_i(j)$.

Now let $i=n-1$. Then $V_i(j)$ consists of two nodes $v=v^{n-j}_{n-1}(j)$ and
$v'=v^{n-j+1}_{n-1}(j)$. Put $k:=n-j$. Then the edge $e=\eSE(v)$ goes from
$Z^k_1$ to $Z^k_3=\{u^k_k(n-k+1)\}$, and the edge $e'=\eSW(v')$ goes from
$Z^{k+1}_3=\{u^{k+1}_{k+1}(n-k)\}$ to $Z^{k+1}_4$. By~\refeq{valuesZ}, we have
$\partial f(e)=(a_k+\Delta_k^+)-a_k =\Delta^+_k$ and $\partial
f(e')=a_{k+1}-(a_{k+1}+\Delta_k^-)=-\Delta^-_k$. Since at least one of
$\Delta^+_k,\Delta^-_k$ is zero, we conclude that at least one of $e,e'$ is
tight. So~\refeq{feas}(iii) is valid again.

 \medskip
Next we start proving statement~(ii) in the lemma. We use induction on the
value
  $$
  \eta(\Delta):=\Delta_1+\ldots+\Delta_{n-1}.
  $$

In view of~\refeq{boundsDelta}, $\eta(\Delta)\ge -a_2-\ldots-a_n$. Let this
hold with equality. Then $\Delta_k=-a_{k+1}\le 0$ for $k=1,\ldots,n-1$, and
by~\refeq{valuesZ}, $f=f_{a,\Delta}$ takes the following values within each
$V^k$: $f(v)=a_k$ if $v\in P^k$, and $f(v)=0$ if $v\in V^k-P^k$. This $f$ is
the minimal feasible function whose values in level $n$ match $a$, and
therefore, $v_f$ is the source of $\Kup[a]$. Then $v_f$ is the minimal vertex
$\prv[a,\bfzero]$ in $\Piup[a]$, and its deviation in $\Piup[a]$ is just
$\Delta$, as required. This gives the base of the induction.

Now consider an arbitrary $\Delta$ satisfying~\refeq{boundsDelta}. Let $k$ be
such that $\Delta_k<c_k-a_k$ (if any) and define $\Delta'_k:=\Delta_k+1$ and
$\Delta'_i:=\Delta_i$ for $i\ne k$. Then $\eta(\Delta)<\eta(\Delta')$. We
assume by induction that claim~(ii) is valid for $f_{a,\Delta}$, and our aim is
to show validity of~(ii) for $f_{a,\Delta'}$.

In what follows $f$ stands for the initial function $f_{a,\Delta}$.

Let $v'$ be the vertex with the deviation $\Delta'$ in $\Piup[a]$. Both $v_f$
and $v'$ are principal vertices of the subcrystal $\Kup[a]$ and the coordinate
of $v'$ in $\Piup[a]$ is obtained from the one of $v_f$ by increasing its
$k$-th entry by 1. According to Proposition~\ref{pr:fund_string} (with $n$
replaced by $n-1$), $v'$ is obtained from $v_f$ by applying the operator string
  $$
  S_{n-1,k}=w_{n-1,k,n-k}\cdots w_{n-1,k,1},
  $$
where $w_{n-1,k,j}=F_j\cdots F_{j+k-1}$ (cf.~\refeq{string}). In light of this,
we have to show that
  \begin{numitem1}
when (the sequence of moves corresponding to) $S_{n-1,k}$ is applied to $f$,
the resulting feasible function is exactly $f_{a,\Delta'}$.
  \label{eq:applW}
  \end{numitem1}

For convenience $m$-th term, from left to right, in the substring $w_{n-1,k,j}$
(i.e. the operator $F_{j+m-1}$) will be denoted by $\phi(j,m)$, $m=1,\ldots,k$.
So $w_{n-1,k,j}=\phi(j,1)\phi(j,2)\ldots \phi(j,k)$.

We distinguish between two cases: $\Delta\ge 0$ and $\Delta<0$.

 \medskip
{\bf Case 1}: $\Delta_k\ge 0$. An essential fact is that the number $k(n-k)$ of
operators in $S_{n-1,k}$ is equal to the number of nodes in the left rectangle
$L^k$ of $G^k$, and moreover, the substrings in $S_{n-1,k}$ one-to-one
correspond to the NE-paths in $L^k$. More precisely, the level of each node
$u_m(j)$ of $L^k$ is equal to the ``color'' of the operator $\phi(j,m)$
(indeed, $u_m(j)=v^k_{j+m-1}(j)$ and $\phi(j,m)=F_{j+m-1}$).

Let $f^{j,m}$ denote the current function on $V(G)$ just before the application
of $\phi(j,m)$ (when the process starts with $f=f_{a,\Delta}$). Also we write
$(j',m')\prec (j,m)$ if $j'<j$ or if $j'=j$ and $m'>m$. We assert that
  \begin{numitem1}
for each $m$, the application of $\phi(j,m)$ to $f^{j,m}$ increases the value
at the node $u^k_m(j)$ by 1; equivalently:
$f^{j,m}(u^k_{m'}(j'))=f(u^k_{m'}(j'))+1$ if $(j',m')\prec (j,m)$, and
$f^{j,m}(v)=f(v)$ for the other nodes $v$ of $G$,
  \label{eq:phijm}
 \end{numitem1}
whence~\refeq{applW} will immediately follow.

In order to show~\refeq{phijm}, we first examine tight edges and the slacks
$\eps(v)$ of the nodes $v$ in levels $<n$ for the initial function $f$. One can
observe from~\refeq{valuesZ} that
  \begin{numitem1}
for $k'=1,\ldots,n$, each node $v$ of the subgraph $G^{k'}$ has at least one
entering edge (i.e. $\eSW(v)$ or $\eNW(v)$) which is $f$-tight, except,
possibly, for the nodes $v^{k'}_{k'}(1)$, $v^{k'}_{k'-1}(1)$,
$v^{k'}_{n}(n-k'+1)$, $v^{k'}_{n-1}(n-k'+1)$ (indicated by stars in
Fig.~\ref{fig:partition}).
  \label{eq:tightenter}
  \end{numitem1}

 \noindent
{\bf Claim.} {\em For $k'=1,\ldots,n$ and a node $v$ of \,$G^{k'}$ in a level
$<n$,

(a) if $v\ne v^{k'}_{k'}(1),v^{k'}_{k'-1}(1)$, then $\eps(v)=0$;

(b) if $v= v^{k'}_{k'}(1)$, then $\eps(v)=c_{k'}-a_{k'}-\Delta^+_{k'}\ge
0$;

(c) if $v=v^{k'}_{k'-1}(1)$, then $\eps(v)=-\Delta^-_{k'}\ge 0$. }

 \medskip
 \noindent
{\bf Proof of Claim.} Let $v=v^{k'}_i(j)$ and $i<n$. By~\refeq{epsv}, the slack
$\eps(v)$ is equal to $f(w)+f(z)-f(u)-f(v)$, where $w:=v^{k'}_{i-1}(j-1)$,
$z:=v^{k'}_{i+1}(j)$, $u:=v^{k'}_{i}(j-1)$ (these vertices belong to the
extended graph $\bar G^{k'}$). We consider possible cases and
use~\refeq{valuesZ}.

(i) If $w,z,u$ are in $G^{k'}$, then $\partial f(w,v)=\partial f(u,z)$.

(ii) If both $v,w$ are in the piece $Z^{k'}_1$ of $G^{k'}$, then $f(w)=f(v)$
and $f(u)=f(z)=c_{k'}$.

(iii) If $j=1$ and $i\le k'-2$, then $f(v)=f(z)$ and $f(u)=f(w)=c_{k'}$. So in
these cases we have $\eps(v)=0$, yielding~(a).

(iv) Let $v=v^{k'}_{k'}(1)$. Then $f(v)=a_{k'}+\Delta^+_{k'}$ and
$f(u)=f(w)=f(z)=c_{k'}$. This gives $\eps(v)=c_{k'}-a_{k'}-\Delta^+_{k'}$,
yielding~(b).

(v) Let $v=v^{k'}_{k'-1}(1)$. Then $f(v)=a_{k'}+\Delta^+_{k'}+\Delta^-_{k'-1}$,
$f(z)=a_{k'}+\Delta^+_{k'}$ and $f(u)=f(w)=c_{k'}$. This gives
$\eps(v)=-\Delta^-_{k'-1}$, yielding~(c). \hfill \qed

 \medskip
This Claim and the relations $\Delta^-_k=0$ and $\Delta_k<c_k-a_k$ enable us to
estimate the total slacks $\eps_i(j)$ for $f$ at the multinodes $V_i(j)$ with
$i<n$:
  \begin{numitem1}
(i) the edge $\eSW(v^{k+1}_k(1))$ is $f$-tight, $\eps(v^k_k(1))>0$, and
$\eps(v)=0$ for the other \\
 \hphantom{(i)} nodes $v$ in $V_k(1)$; so $\eps_k(1)>0$;

(ii) if $i\ne k,n$, then $\eps(v_i^i(1)),\eps(v_i^{i+1}(1))\ge 0$ and
$\eps(v)=0$ for the other nodes \\
 \hphantom{(ii)} $v$ in $V_i(1)$; so $\eps_i(1)\ge 0$;

(iii) if $i\ne n$ and $j>1$, then $\eps(v)=0$ for all nodes $v$ in $V_i(j)$; so
$\eps_i(j)=0$.
  \label{eq:incase1}
  \end{numitem1}

Now we are ready to prove~\refeq{phijm}. When dealing with a current function
$f^{j,m}$ and seeking for the node at level $j+m-1$ where the operator
$\phi(j,m)$ should act to increase $f^{j,m}$, we can immediately exclude from
consideration any node $v$ that has a tight entering edge (since acting the
operator at $v$ would cause violation of the monotonicity
condition~\refeq{feas}(i)).

Due to~\refeq{tightenter} and~\refeq{incase1}(i), for the initial function
$f=f^{1,k}$, there is only one node in level $k$ that has no tight entering
edge, namely, $v^k_k(1)$. So, at the first step of the process, the first
operator $\phi(1,k)$ of $S_{n-1,k}$ acts just at $v^k_k(1)$, as required
in~\refeq{phijm}.

Next consider a step with $f':=f^{j,m}$ and $\phi(j,m)$ for $(j,m)\ne (1,k)$,
assuming that~\refeq{phijm} is valid at the previous step.

(A) Let $j=1$ (and $m<k$). For $v:=v^k_m(1)$ and $z:=v^k_{m+1}(1)$, we have
$f'(v)=f(v)\le f(z)=f'(z)-1$. So the unique edge $e=(z,v)$ entering $v$ is not
$f'$-tight. By~\refeq{tightenter}, there are at most two other nodes in level
$m$ that may have no tight entering edges for $f$ (and therefore, for $f'$),
namely, $v^m_m(1)$ and $v^{m+1}_m(1)$. Then $\phi(1,m)$ must act at $v$, as
required in~\refeq{phijm} (since the non-tightness of the SW-edge $e$ of $v$
implies that none of the nodes $v^{m'}_m(1)$ in $V_m(1)$ preceding $v$ (i.e.
with $m'<k$) can be the switch-node).

 \smallskip
(B) Let $j>1$. Comparing $f'$ with $f$ in the node $v:=u^k_m(j)=v^k_{j+m-1}(j)$
and its adjacent nodes, we observe that $v$ has no $f'$-tight entering edge and
that $\eps_{f'}(v)>0$. Also for any other node $v'$ in level $j+m-1$, one can
see that if $v'$ has a tight entering edge for $f$, then so does for $f'$, and
that $\eps_f(v')\ge \eps_{f'}(v')\ge 0$. Using this,
properties~\refeq{tightenter},\,\refeq{incase1}(iii), and
condition~\refeq{activej}, one can conclude that the total and reduced slacks
for $f'$ at the multinode $V':=V_{j+m-1}(j)$ are positive, that $V'$ is the
active multinode for $f'$ in level $j+m-1$, and that $\phi(j,m)$ can be applied
only at $v$, yielding~\refeq{phijm} again.

Thus, \refeq{applW} is valid in Case~1.

 \medskip
{\bf Case 2}: $\Delta_k<0$. We assert that in this case the string $S_{n-1,k}$
acts within the right rectangle $R^{k+1}$ of the subgraph $G^{k+1}$ (note that
$R^{k+1}$ is of size $k\times(n-k)$). More precisely,
  \begin{numitem1}
each operator $\phi(j,m)$ modifies the current function by increasing its
value at the node $u^{k+1}_m(j)$ by 1.
  \label{eq:phijm2}
  \end{numitem1}
Then for the resulting function $\tilde f$ in the process, its restriction to
$V^{k+1}$ is
  $$
  a_{k+1}\pi^{k+1}+(a_{k+1}+\Delta^-_k+1)\rho^{k+1}+
         \Delta^+_{k+1}\lambda^{k+1}
  $$
(cf.~\refeq{fkaD}). Therefore, $\tilde f=f_{a,\Delta'}$ (in view of
$(\Delta')^-_k=\Delta^-_k+1$), yielding~\refeq{applW}.

To show~\refeq{phijm2}, we argue as in the previous case and
use~\refeq{tightenter} and the above Claim. Since $\Delta_k<0$, part~(i)
in~\refeq{incase1} for the initial function $f$ is modified as:
  \begin{numitem1}
for $j=1,\ldots,n-k$, the SW-edge of each node
$u^{k+1}_k(j)=v^{k+1}_{j+k-1}(j)$ is not $f$-tight, $\eps(v^{k+1}_k(1))>0$,
~$\eps(v^k_k(1))\ge 0$, and $\eps(v)=0$ for the other nodes $v$ in $V_k(1)$; so
$\eps_k(1)>0$,
  \label{eq:incase2}
  \end{numitem1}
while properties~(ii) and~(iii) preserve.

By~\refeq{tightenter} and~\refeq{incase2}, there are only two nodes in level
$k$ that have no $f$-tight entering edges, namely, $v^k_k(1)$ and
$v^{k+1}_k(1)$. Also $e=\eSW(v^{k+1}_k(1))$ is not tight. So, at the first
step, $\phi(1,k)$ must act at $v^{k+1}_k(1)$, as required in~\refeq{phijm2}
(since the non-tightness of $e$ implies that the node $v^k_k(1)$ preceding
$v^{k+1}_k(1)$ cannot be the switch-node in $V_k(1)$).

The fact that $\phi(1,m)$ with $m<k$ acts at $v^{k+1}_m(1)$ is shown by arguing
as in~(A) above. And for $j>1$, to show that $\phi(j,m)=F_{j+m-1}$ acts at
$u^{k+1}_m(j)=v^{k+1}_{j+m-1}(j)$, we argue as in~(B) above. Here, when $m=k$,
we also use the fact that the edge $\eSW(u^{k+1}_{k}(j))$ is not $f$-tight
(by~\refeq{incase2}), whence both edges entering $u^{k+1}_k(j)$ are not tight
for the current function. So~\refeq{phijm2} is always valid.

Thus, we have the desired property~\refeq{applW} in both cases~1 and~2, and
statement~(ii) in the lemma follows. \hfill \qed\qed

This completes the proof of relation~\refeq{bi} in Theorem~\ref{tm:mainA}.

 \section{Illustrations} \label{sec:illustr}

In this section we give two illustrations to the above assembling construction
for A-crystals. The first one specifies the interrelation between upper and
lower subcrystals in an arbitrary $A_2$-crystal, which can be compared with the
explicit construction (the so-called ``sail model'') for $A_2$-crystals
in~\cite{A2}. The second one visualizes the subcrystals structure for one
instance of $A_3$-crystals, namely, $K(1,1,1)$.

\subsection{$A_2$-crystals} \label{ssec:A2}

The subcrystals structure becomes simpler when we deal with an $A_2$-crystal
$K=K(c_1,c_2)$. In this case the roles of upper, lower, and middle subcrystals
are played by 1-paths, 2-paths, and vertices of $K$, respectively, where by an
$i$-path we mean a maximal path of color $i$.

Consider an upper subcrystal in $K$. This is a 1-path $P=(v_0,v_1,\ldots,v_p)$
containing exactly one principal vertex $\prv[a]$ of $K$ (the heart of $P$);
here $v_i$ stands for $i$-th vertex in $P$, $a=(a_1,a_2)\in\Zset_+^2$ and $a\le
c$. Let $\prv[a]=v_h$. Formulas~\refeq{par_up} and~\refeq{heart_up} give
  \begin{equation} \label{eq:1path}
  |P|=p=c_1-a_1+a_2 \qquad\mbox{and} \qquad h=a_2.
  \end{equation}
Fix a vertex $v=v_i$ of $P$. It belongs to some 2-path (lower subcrystal)
$Q=(u_1,u_2,\ldots,u_q)$. Let $v=u_j$ and let $\prv[b]=u_{\bar h}$ be the
principal vertex of $K$ occurring in $Q$ (the heart of $Q$). The vertex $v$
forms a middle subcrystal of $K$; its deviations from the heart of $P$ and from
the heart of $Q$ are equal to $i-h=:\delta$ and $j-\bar h=:\bar\delta$,
respectively. By~\refeq{nablai} in Theorem~\ref{tm:mainA}, we have
$\bar\delta=-\delta$. Then we can compute the coordinates $b$ by use
of~\refeq{bi} and, further, apply~\refeq{par_low} and~\refeq{heart_low} to
compute the length of $Q$ and the locus of its heart. This gives:
  \begin{numitem1} \label{eq:2path}
 \begin{itemize}
\item[(i)] if $\delta\ge 0$ (i.e. $a_2\le i\le c_1-a_1+a_2$), then
$b_1=a_1+\delta=a_1+i-a_2$, ~$b_2=a_2$, ~$|Q|=c_2-b_2+b_1=c_2-2a_2+a_1+i$, and
$|Q|-\bar h=|Q|-b_1=c_2-a_2$;
  \item[(ii)] if $\delta\le 0$ (i.e. $0\le i\le a_2$), then $b_1=a_1$,
~$b_2=a_2+\delta=a_2+(i-a_2)=i$, ~$|Q|=c_2-b_2+b_1=c_2-i+a_1$, and $\bar
h=b_1=a_1$.
  \end{itemize}
  \end{numitem1}

Using~\refeq{1path} and~\refeq{2path}, one can enumerate the sets of 1-paths
and 2-paths and properly intersect corresponding pairs, obtaining the
$A_2$-crystal $K(c)$. It is rather routine to check that the resulting graph
coincides with the one generated by the \emph{sail model} from~\cite{A2}. Next
we outline that construction (it will be used in Section~\ref{ssec:C1}).

Given $c\in\Zset_+^2$, the $A_2$-crystal $K(c)$ is produced from two particular
two-colored graphs $R$ and $L$, called the {\em right sail} of size $c_1$ and
the {\em left sail} of size $c_2$, respectively. The vertices of $R$ correspond
to the vectors $(i,j)\in\Zset^2$ such that $0\le j\le i\le c_1$, and the
vertices of $L$ to the vectors $(i,j)\in\Zset^2$ such that $0\le i\le j\le
c_2$. In both $R,L$, the edges of color 1 are all possible pairs of the form
$((i,j),(i+1,j))$, and the edges of color 2 are all possible pairs of the form
$((i,j),(i,j+1))$. (Observe that both $R$ and $L$ satisfy axioms (A1)-(A4), $R$
is isomorphic to $K(c_1,0)$, ~$L$ is isomorphic to $K(0,c_2)$, and their
critical vertices are the ``diagonal vertices'' $(i,i)$.)

In order to produce $K(c)$, take $c_2$ disjoint copies $R_1,\ldots,R_{c_2}$ of
$R$ and $c_1$ disjoint copies $L_1,\ldots,L_{c_1}$ of $L$, referring to $R_j$
as $j$-th right sail, and to $L_i$ as $i$-th left sail. Let $D(R_j)$ and
$D(L_i)$ denote the sets of diagonal vertices in $R_j$ and $L_i$, respectively.
For all $i=1,\ldots,c_1$ and $j=1,\ldots,c_2$, we identify the diagonal
vertices $(i,i)\in D(R_j)$ and $(j,j)\in D(L_i)$. The resulting graph is just
the desired $K(c)$. The edge colors of $K(c)$ are inherited from $L$ and $R$.
One checks that $K(c)$ has $(c_1+1)\times (c_2+1)$ critical vertices; they are
exactly those induced by the diagonal vertices of the sails. The principal
lattice of $K(c)$ is just constituted by the critical vertices.

The case $(c_1,c_2)=(1,2)$ is drawn in the picture; here the critical
(principal) vertices are indicated by circles, 1-edges by horizontal arrows,
and 2-edges by vertical arrows.
 \begin{center}
  \unitlength=1mm
  \begin{picture}(125,35)
\put(5,5){\circle{1.0}}           
\put(5,15){\circle{1.0}} \put(5,25){\circle{1.0}} \put(15,15){\circle{1.0}}
\put(15,25){\circle{1.0}} \put(25,25){\circle{1.0}} \put(5,5){\circle{2.5}}
\put(15,15){\circle{2.5}} \put(25,25){\circle{2.5}}
\put(5,15){\vector(1,0){9.5}} \put(5,25){\vector(1,0){9.5}}
\put(15,25){\vector(1,0){9.5}} \put(5,5){\vector(0,1){9.5}}
\put(5,15){\vector(0,1){9.5}} \put(15,15){\vector(0,1){9.5}}
\put(12,5){$L=K(0,2)$}
\put(55,5){\circle{1.0}} \put(65,5){\circle{1.0}} \put(65,15){\circle{1.0}}
\put(55,5){\circle{2.5}} \put(65,15){\circle{2.5}}
\put(55,5){\vector(1,0){9.5}} \put(65,5){\vector(0,1){9.5}}
 \put(50,20){$R=K(1,0)$}
\put(95,5){\circle{1.0}} \put(95,15){\circle{1.0}} \put(95,25){\circle{1.0}}
\put(105,15){\circle{1.0}} \put(105,25){\circle{1.0}}
\put(115,25){\circle{1.0}} \put(95,5){\circle{2.5}} \put(105,15){\circle{2.5}}
\put(115,25){\circle{2.5}} \put(95,15){\vector(1,0){9.5}}
\put(95,25){\vector(1,0){9.5}} \put(105,25){\vector(1,0){9.5}}
\put(95,5){\vector(0,1){9.5}} \put(95,15){\vector(0,1){9.5}}
\put(105,15){\vector(0,1){9.5}}
\put(102,12){\circle{1.0}} \put(102,22){\circle{1.0}}
\put(102,32){\circle{1.0}} \put(112,22){\circle{1.0}}
\put(112,32){\circle{1.0}} \put(122,32){\circle{1.0}}
\put(102,12){\circle{2.5}} \put(112,22){\circle{2.5}}
\put(122,32){\circle{2.5}} \put(102,22){\vector(1,0){9.5}}
\put(102,32){\vector(1,0){9.5}} \put(112,32){\vector(1,0){9.5}}
\put(102,12){\vector(0,1){9.5}} \put(102,22){\vector(0,1){9.5}}
\put(112,22){\vector(0,1){9.5}}
\put(102,5){\circle{1.0}} \put(95,5){\vector(1,0){6.5}}
\put(102,5){\vector(0,1){6.5}} \put(112,15){\circle{1.0}}
\put(105,15){\vector(1,0){6.5}} \put(112,15){\vector(0,1){6.5}}
\put(122,25){\circle{1.0}} \put(115,25){\vector(1,0){6.5}}
\put(122,25){\vector(0,1){6.5}}
 \put(110,5){$K(1,2)$}
  \end{picture}
 \end{center}

In particular, the sail model shows that the numbers of edges of each color in
an $A_2$-crystal are the same. This implies a similar property for any
$A_n$-crystal (and moreover, for crystals of classical simply-laced types).

\subsection{$A_3$-crystal $K(1,1,1)$} \label{ssec:K111}

Next we illustrate the $A_3$-crystal $K=K(1,1,1)$. It has 64 vertices and 102
edges, and drawing it in full would take too much space; for this reason, we
describe it in fragments, namely, by demonstrating all of its upper and lower
subcrystals. We abbreviate notation $\prv[(i,j,k)]$ for principal vertices to
$(i,j,k)$ for short. So the principal lattice consists of eight vertices
$(0,0,0),\ldots,(1,1,1)$, as drawn in the picture (where the arrows indicate
moves by principal operator strings $S_{3,k}$ as in~\refeq{string}):
  \begin{center}
  \unitlength=1mm
  \begin{picture}(80,42)
    \put(5,5){\begin{picture}(80,35)
  \put(0,0){\circle*{1.2}}
  \put(30,0){\circle*{1.2}}
  \put(0,20){\circle*{1.2}}
  \put(10,10){\circle*{1.2}}
  \put(40,10){\circle*{1.2}}
  \put(10,30){\circle*{1.2}}
  \put(30,20){\circle*{1.2}}
  \put(40,30){\circle*{1.2}}
  \put(0,0){\vector(1,0){29}}
  \put(10,10){\vector(1,0){29}}
  \put(0,20){\vector(1,0){29}}
  \put(10,30){\vector(1,0){29}}
  \put(0,0){\vector(0,1){19.3}}
  \put(30,0){\vector(0,1){19.3}}
  \put(10,10){\vector(0,1){19.3}}
  \put(40,10){\vector(0,1){19.3}}
  \put(0,0){\vector(1,1){9.5}}
  \put(30,0){\vector(1,1){9.5}}
  \put(0,20){\vector(1,1){9.5}}
  \put(30,20){\vector(1,1){9.5}}
  \put(-14,-4){$s$=(0,0,0)}
  \put(28,-4){(1,0,0)}
  \put(-12,21){(0,1,0)}
  \put(11,11){(0,0,1)}
  \put(19,21){(1,1,0)}
  \put(41,8){(1,0,1)}
  \put(4,31.5){(0,1,1)}
  \put(33,31.5){(1,1,1)=$t$}
  \put(10,-4){$S_{3,1}$}
  \put(-7,9){$S_{3,2}$}
  \put(6,3){$S_{3,3}$}
  \put(60,20){$S_{3,1}=F_3F_2F_1$}
  \put(60,13){$S_{3,2}=F_2F_3F_1F_2$}
  \put(60,6){$S_{3,3}=F_1F_2F_3$}
   \end{picture}}
 \end{picture}
 \end{center}

Thus, $K$ has eight upper subcrystals $\Kup[i,j,k]$ and eight lower subcrystals
$\Klow[i,j,k]$ (writing $K^\bullet[i,j,k]$ for $K^\bullet[(i,j,k)]$); they are
drawn in Figures~\ref{fig:upper111} and~\ref{fig:lower111}. Here the directions
of edges of colors 1,2,3 are as indicated in the upper left corner. In each
subcrystal we indicate its critical vertices by black circles, and the unique
principal vertex of $K$ occurring in it (the heart) by a big white circle. $K$
has 30 middle subcrystals (paths of color 2), which are labeled as $A,\ldots,Z,
\Gamma,\Delta,\Phi,\Psi$ (note that $B,F,G,N,P,T,V,\Phi$ consist of single
vertices).

  \begin{figure}[hbt]                  
  \begin{center}
  \unitlength=1mm
  \begin{picture}(145,137)
   \put(3,105){\begin{picture}(20,30)
  \put(0,10){\vector(4,-1){12}}
  \put(0,10){\vector(0,1){12}}
  \put(13,6){1}
  \put(1,20){2}
   \end{picture}}
   \put(30,103){\begin{picture}(40,30)
  \put(0,0){\circle*{1.5}}
  \put(12,-3){\circle{1.2}}
  \put(0,12){\circle{1.2}}
  \put(24,6){\circle{1.2}}
  \put(8,10){\circle*{1.5}}
  \put(12,21){\circle{1.2}}
  \put(12,13){\circle*{1.5}}
  \put(24,18){\circle*{1.5}}
  \put(0,0){\circle{2.5}}
  \put(0,0){\line(4,-1){12}}
  \put(0,12){\line(4,-1){24}}
  \put(12,21){\line(4,-1){12}}
  \put(0,0){\line(0,1){12}}
  \put(12,-3){\line(0,1){24}}
  \put(24,6){\line(0,1){12}}
  \put(-10,-4.5){(0,0,0)}
  \put(-3.5,5.5){$A$}
  \put(6,6){$B$}
  \put(13,2){$C$}
  \put(25,12){$D$}
  \put(-4,25){$\Kup[0,0,0]\simeq K(1,1)$}
   \end{picture}}
   \put(85,113){\begin{picture}(30,25)
  \put(0,0){\circle*{1.5}}
  \put(0,12){\circle{1.2}}
  \put(12,9){\circle*{1.5}}
  \put(0,0){\circle{2.5}}
  \put(0,12){\line(4,-1){12}}
  \put(0,0){\line(0,1){12}}
  \put(-10,-4.5){(1,0,0)}
  \put(-3.5,5.5){$E$}
  \put(11,5){$F$}
  \put(-4,16){$\Kup[1,0,0]\simeq K(0,1)$}
   \end{picture}}
   \put(55,75){\begin{picture}(40,30)
  \put(0,0){\circle*{1.5}}
  \put(12,-3){\circle{1.2}}
  \put(24,-6){\circle{1.2}}
  \put(24,6){\circle{1.2}}
  \put(12,9){\circle*{1.5}}
  \put(24,18){\circle*{1.5}}
  \put(12,9){\circle{2.5}}
  \put(0,0){\line(4,-1){24}}
  \put(12,9){\line(4,-1){12}}
  \put(12,-3){\line(0,1){12}}
  \put(24,-6){\line(0,1){24}}
  \put(6,11){(0,1,0)}
  \put(-1.5,1.5){$G$}
  \put(13,2){$H$}
  \put(25,10){$I$}
  \put(-4,22){$\Kup[0,1,0]\simeq K(2,0)$}
   \end{picture}}

   \put(105,75){\begin{picture}(45,45)
  \put(0,0){\circle*{1.5}}
  \put(0,12){\circle{1.2}}
  \put(0,24){\circle{1.2}}
  \put(12,9){\circle*{1.5}}
  \put(12,21){\circle{1.2}}
  \put(24,18){\circle*{1.5}}
  \put(12,9){\circle{2.5}}
  \put(0,12){\line(4,-1){21}}
  \put(0,24){\line(4,-1){33}}
  \put(0,0){\line(0,1){24}}
  \put(12,9){\line(0,1){12}}
  \put(9,7){\circle*{1.5}}
  \put(9,19){\circle{1.2}}
  \put(9,31){\circle{1.2}}
  \put(21,16){\circle*{1.5}}
  \put(21,28){\circle{1.2}}
  \put(33,25){\circle*{1.5}}
  \put(9,19){\line(4,-1){12}}
  \put(9,31){\line(4,-1){24}}
  \put(9,-2){\line(0,1){33}}
  \put(21,7){\line(0,1){21}}
  \put(9,-2){\circle{1.2}}
  \put(21,7){\circle{1.2}}
  \put(33,16){\circle{1.2}}
  \put(0,0){\line(4,-1){9}}
  \put(33,16){\line(0,1){9}}
  \put(14,-1){(0,0,1)}
 \put(17,3){\vector(-1,1){4}}
  \put(-3,16){$J$}
  \put(5.5,25){$K$}
  \put(12.5,13){$L$}
  \put(17,22){$M$}
  \put(25,18){$N$}
  \put(34,20){$O$}
  \put(0,35){$\Kup[0,0,1]\simeq K(1,2)$}
   \end{picture}}
   \put(10,45){\begin{picture}(30,25)
  \put(0,0){\circle*{1.5}}
  \put(12,-3){\circle{1.2}}
  \put(12,9){\circle*{1.5}}
  \put(12,9){\circle{2.5}}
  \put(0,0){\line(4,-1){12}}
  \put(12,-3){\line(0,1){12}}
  \put(-1,9){(1,1,0)}
  \put(-1,-4){$P$}
  \put(13,3){$Q$}
  \put(-4,17){$\Kup[1,1,0]\simeq K(1,0)$}
   \end{picture}}

   \put(18,0){\begin{picture}(40,30)
  \put(0,0){\circle*{1.5}}
  \put(0,12){\circle{1.2}}
  \put(0,24){\circle{1.2}}
  \put(12,9){\circle*{1.5}}
  \put(12,21){\circle{1.2}}
  \put(24,18){\circle*{1.5}}
  \put(12,9){\circle{2.5}}
  \put(0,12){\line(4,-1){12}}
  \put(0,24){\line(4,-1){24}}
  \put(0,0){\line(0,1){24}}
  \put(12,9){\line(0,1){12}}
  \put(9,4){(1,0,1)}
  \put(-3.5,16){$R$}
  \put(9,14){$S$}
  \put(25,18){$T$}
  \put(-4,27){$\Kup[1,0,1]\simeq K(0,2)$}
   \end{picture}}

   \put(62,25){\begin{picture}(50,50)
  \put(0,0){\circle*{1.5}}
  \put(12,-3){\circle{1.2}}
  \put(24,-6){\circle{1.2}}
  \put(24,6){\circle{1.2}}
  \put(12,9){\circle*{1.5}}
  \put(24,18){\circle*{1.5}}
  \put(0,0){\line(4,-1){24}}
  \put(12,9){\line(4,-1){12}}
  \put(12,-3){\line(0,1){21}}
  \put(24,-6){\line(0,1){33}}
  \put(9,7){\circle*{1.5}}
  \put(21,4){\circle{1.2}}
  \put(33,1){\circle{1.2}}
  \put(33,13){\circle{1.2}}
  \put(21,16){\circle*{1.5}}
  \put(33,25){\circle*{1.5}}
  \put(21,16){\circle{2.5}}
  \put(0,9){\line(4,-1){33}}
  \put(12,18){\line(4,-1){21}}
  \put(21,4){\line(0,1){12}}
  \put(33,1){\line(0,1){24}}
  \put(0,9){\circle{1.2}}
  \put(12,18){\circle{1.2}}
  \put(24,27){\circle{1.2}}
  \put(24,27){\line(4,-1){9}}
  \put(0,0){\line(0,1){9}}
  \put(7,23){(0,1,1)}
 \put(16,22){\vector(1,-1){4}}
  \put(-3.5,3){$U$}
  \put(7,3){$V$}
  \put(7.5,12){$W$}
  \put(17.5,10){$X$}
  \put(24.5,20){$Y$}
  \put(33.5,17){$Z$}
  \put(-4,32){$\Kup[0,1,1]\simeq K(2,1)$}
   \end{picture}}
   \put(110,10){\begin{picture}(40,30)
  \put(0,0){\circle*{1.5}}
  \put(12,-3){\circle{1.2}}
  \put(0,12){\circle{1.2}}
  \put(24,6){\circle{1.2}}
  \put(8,10){\circle*{1.5}}
  \put(12,21){\circle{1.2}}
  \put(12,13){\circle*{1.5}}
  \put(24,18){\circle*{1.5}}
  \put(24,18){\circle{2.5}}
  \put(0,0){\line(4,-1){12}}
  \put(0,12){\line(4,-1){24}}
  \put(12,21){\line(4,-1){12}}
  \put(0,0){\line(0,1){12}}
  \put(12,-3){\line(0,1){24}}
  \put(24,6){\line(0,1){12}}
  \put(25,18){(1,1,1)}
  \put(-4,6){$\Gamma$}
  \put(6,6){$\Phi$}
  \put(13,2){$\Delta$}
  \put(25,11){$\Psi$}
  \put(-4,25){$\Kup[1,1,1]\simeq K(1,1)$}
   \end{picture}}
 \end{picture}
 \end{center}
  \caption{The upper subcrystals in $K(1,1,1)$}
  \label{fig:upper111}
  \end{figure}
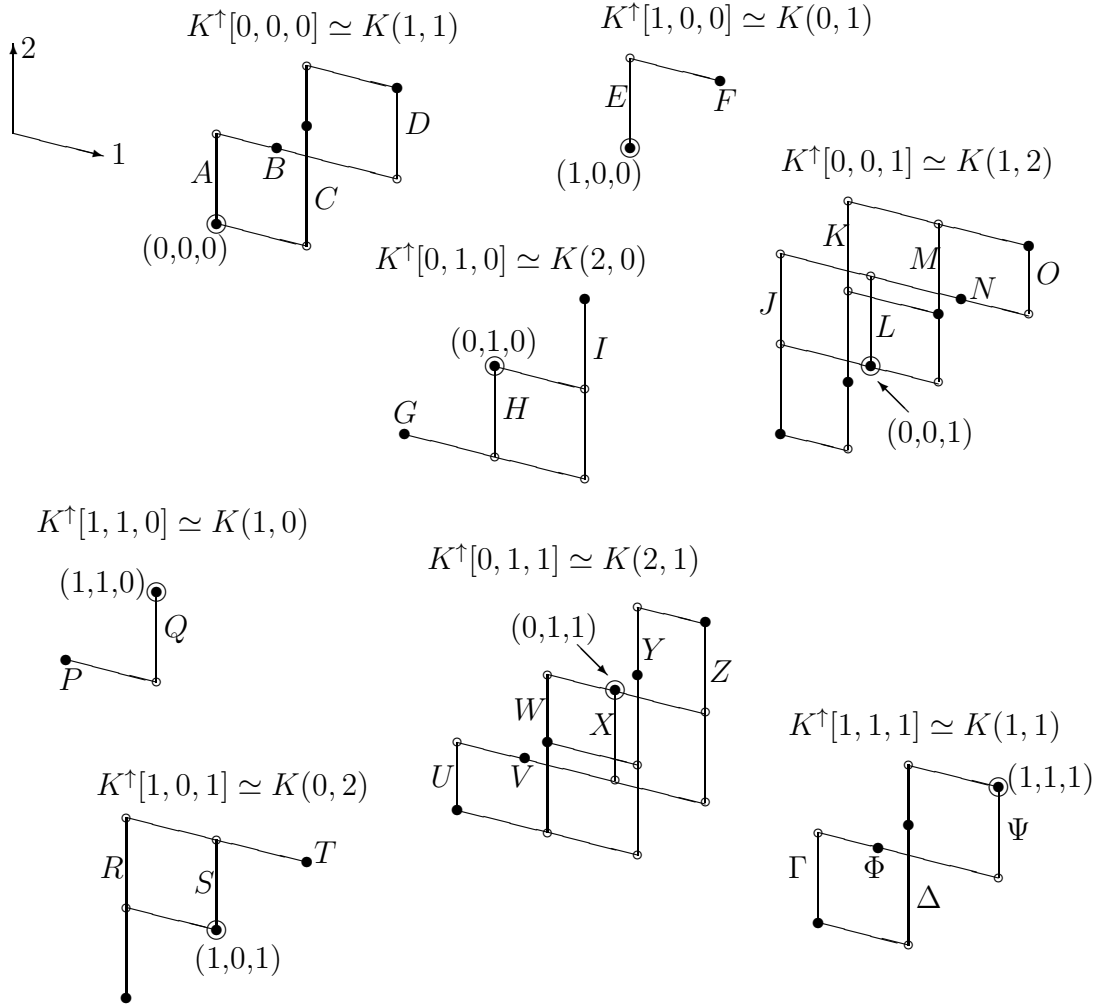

For each upper subcrystal $\Kup[i,j,k]$, its parameter $\parup$ and heart locus
$\heartup$, computed by~\refeq{par_up} and~\refeq{heart_up}, are as follows
(where $\tilde K,\tilde s,z$ denote the current subcrystal, its source, and its
heart, respectively):

$\bullet$ for $\Kup[0,0,0]$: $\parup_1=1-0+0=1$, $\parup_2=1-0+0=1$, and
$\heartup_1=\heartup_2=0$ (so $\tilde K$ is isomorphic to $K(1,1)$ and $z$
coincides with $\tilde s$);

$\bullet$ for $\Kup[1,0,0]$: $\parup_1=1-1+0=0$, $\parup_2=1-0+0=1$, and
$\heartup_1=\heartup_2=0$;

$\bullet$ for $\Kup[0,1,0]$: $\parup_1=1-0+1=2$, $\parup_2=1-1+0=0$,
$\heartup_1=1$, and $\heartup_2=0$ (so $\tilde K\simeq K(2,0)$ and $z$ is
located at $S_{2,1}(\tilde s)=F_2F_1(\tilde s)$);

$\bullet$ for $\Kup[0,0,1]$: $\parup_1=1-0+0=1$, $\parup_2=1-0+1=2$,
$\heartup_1=0$, and $\heartup_2=1$ (so $\tilde K\simeq K(1,2)$ and $z$ is
located at $S_{2,2}(\tilde s)=F_1F_2(\tilde s)$);

$\bullet$ for $\Kup[1,1,0]$: $\parup_1=1-1+1=1$, $\parup_2=1-1+0=0$,
$\heartup_1=1$, and $\heartup_2=0$;

$\bullet$ for $\Kup[1,0,1]$: $\parup_1=1-1+0=0$, $\parup_2=1-0+1=2$,
$\heartup_1=0$, and $\heartup_2=1$;

$\bullet$ for $\Kup[0,1,1]$: $\parup_1=1-0+1=2$, $\parup_2=1-1+1=1$, and
$\heartup_1=\heartup_2=1$ (so $\tilde K\simeq K(2,1)$ and $z$ is located at
$S_{2,2}S_{2,1}(\tilde s)=F_1F_2F_2F_1(\tilde s)$);

$\bullet$ for $\Kup[1,1,1]$: $\parup_1=1-1+1=1$, $\parup_2=1-1+1=1$, and
$\heartup_1=\heartup_2=1$.

  \begin{figure}[hbt]                  
  \begin{center}
  \unitlength=1mm
  \begin{picture}(145,148)
   \put(3,110){\begin{picture}(20,30)
  \put(0,10){\vector(3,1){9}}
  \put(0,10){\vector(0,1){12}}
  \put(1,20){2}
  \put(9.5,13){3}
   \end{picture}}
   \put(30,105){\begin{picture}(40,40)
  \put(0,0){\circle*{1.5}}
  \put(12,4){\circle{1.2}}
  \put(0,12){\circle{1.2}}
  \put(24,20){\circle{1.2}}
  \put(8,14.7){\circle*{1.5}}
  \put(12,28){\circle{1.2}}
  \put(12,20){\circle*{1.5}}
  \put(24,32){\circle*{1.5}}
  \put(0,0){\circle{2.5}}
  \put(0,0){\line(3,1){12}}
  \put(0,12){\line(3,1){24}}
  \put(12,28){\line(3,1){12}}
  \put(0,0){\line(0,1){12}}
  \put(12,4){\line(0,1){24}}
  \put(24,20){\line(0,1){12}}
  \put(-10,-4.5){(0,0,0)}
  \put(-3.5,5.5){$A$}
  \put(6,10){$G$}
  \put(13,9){$J$}
  \put(25,24.5){$U$}
  \put(-4,35){$\Klow[0,0,0]\simeq K(1,1)$}
   \end{picture}}
   \put(85,120){\begin{picture}(30,25)
  \put(0,0){\circle*{1.5}}
  \put(0,12){\circle{1.2}}
  \put(12,16){\circle*{1.5}}
  \put(0,0){\circle{2.5}}
  \put(0,12){\line(3,1){12}}
  \put(0,0){\line(0,1){12}}
  \put(-10,-4.5){(0,0,1)}
  \put(-3.5,5.5){$L$}
  \put(12,11.5){$V$}
  \put(-4,20){$\Klow[0,0,1]\simeq K(0,1)$}
   \end{picture}}
   \put(53,70){\begin{picture}(40,35)
  \put(0,0){\circle*{1.5}}
  \put(12,4){\circle{1.2}}
  \put(24,8){\circle{1.2}}
  \put(24,20){\circle{1.2}}
  \put(12,16){\circle*{1.5}}
  \put(24,32){\circle*{1.5}}
  \put(12,16){\circle{2.5}}
  \put(0,0){\line(3,1){24}}
  \put(12,16){\line(3,1){12}}
  \put(12,4){\line(0,1){12}}
  \put(24,8){\line(0,1){24}}
  \put(3,18){(0,1,0)}
  \put(-1.5,1.5){$B$}
  \put(12.5,10){$H$}
  \put(25,24){$W$}
  \put(-4,35){$\Klow[0,1,0]\simeq K(2,0)$}
   \end{picture}}

   \put(103,75){\begin{picture}(45,50)
  \put(0,0){\circle*{1.5}}
  \put(0,12){\circle{1.2}}
  \put(0,24){\circle{1.2}}
  \put(12,16){\circle*{1.5}}
  \put(12,28){\circle{1.2}}
  \put(24,32){\circle*{1.5}}
  \put(12,16){\circle{2.5}}
  \put(0,12){\line(3,1){21}}
  \put(0,24){\line(3,1){33}}
  \put(0,0){\line(0,1){24}}
  \put(12,16){\line(0,1){12}}
  \put(9,11.7){\circle*{1.5}}
  \put(9,23.7){\circle{1.2}}
  \put(9,35.7){\circle{1.2}}
  \put(21,28.2){\circle*{1.5}}
  \put(21,40.2){\circle{1.2}}
  \put(33,44.2){\circle*{1.5}}
  \put(9,24.2){\line(3,1){12}}
  \put(9,36.2){\line(3,1){24}}
  \put(9,3.2){\line(0,1){33}}
  \put(21,19.2){\line(0,1){21}}
  \put(9,3.2){\circle{1.2}}
  \put(21,19.2){\circle{1.2}}
  \put(33,35.2){\circle{1.2}}
  \put(0,0){\line(3,1){9}}
  \put(33,35.2){\line(0,1){9}}
  \put(15,7){(1,0,0)}
 \put(17.5,10.5){\vector(-1,1){4}}
  \put(-3.5,16){$C$}
  \put(5,30){$K$}
  \put(12.5,20){$E$}
  \put(17.5,34){$R$}
  \put(24.5,29){$P$}
  \put(34,39){$\Gamma$}
  \put(0,48){$\Klow[1,0,0]\simeq K(1,2)$}
   \end{picture}}
   \put(5,45){\begin{picture}(30,27)
  \put(0,0){\circle*{1.5}}
  \put(12,4){\circle{1.2}}
  \put(12,16){\circle*{1.5}}
  \put(12,16){\circle{2.5}}
  \put(0,0){\line(3,1){12}}
  \put(12,4){\line(0,1){12}}
  \put(-1,16){(0,1,1)}
  \put(-1,-4){$N$}
  \put(13,10){$X$}
  \put(-4,23){$\Klow[0,1,1]\simeq K(1,0)$}
   \end{picture}}

   \put(15,0){\begin{picture}(40,30)
  \put(0,0){\circle*{1.5}}
  \put(0,12){\circle{1.2}}
  \put(0,24){\circle{1.2}}
  \put(12,16){\circle*{1.5}}
  \put(12,28){\circle{1.2}}
  \put(24,32){\circle*{1.5}}
  \put(12,16){\circle{2.5}}
  \put(0,12){\line(3,1){12}}
  \put(0,24){\line(3,1){24}}
  \put(0,0){\line(0,1){24}}
  \put(12,16){\line(0,1){12}}
  \put(12,12){(1,0,1)}
  \put(-4,16){$M$}
  \put(12.5,21){$S$}
  \put(24,28){$\Phi$}
  \put(-4,35){$\Klow[1,0,1]\simeq K(0,2)$}
   \end{picture}}

   \put(62,13){\begin{picture}(50,50)
  \put(0,0){\circle*{1.5}}
  \put(12,4){\circle{1.2}}
  \put(24,8){\circle{1.2}}
  \put(24,20){\circle{1.2}}
  \put(12,16){\circle*{1.5}}
  \put(24,32){\circle*{1.5}}
  \put(0,0){\line(3,1){24}}
  \put(12,16){\line(3,1){12}}
  \put(12,4){\line(0,1){21}}
  \put(24,8){\line(0,1){33}}
  \put(9,12.1){\circle*{1.5}}
  \put(21,16.2){\circle{1.2}}
  \put(33,20.2){\circle{1.2}}
  \put(33,32.2){\circle{1.2}}
  \put(21,28.2){\circle*{1.5}}
  \put(33,44.2){\circle*{1.5}}
  \put(21,28.2){\circle{2.5}}
  \put(0,9){\line(3,1){33}}
  \put(12,25){\line(3,1){21}}
  \put(21,16.2){\line(0,1){12}}
  \put(33,20.2){\line(0,1){24}}
  \put(0,9){\circle{1.2}}
  \put(12,25){\circle{1.2}}
  \put(24,41){\circle{1.2}}
  \put(24,41){\line(3,1){9}}
  \put(0,0){\line(0,1){9}}
  \put(9,29.5){(1,1,0)}
  \put(-3.5,3){$D$}
  \put(7,8){$F$}
  \put(9.5,19){$I$}
  \put(17.5,21.5){$Q$}
  \put(24.5,35){$Y$}
  \put(33.5,38){$\Delta$}
  \put(-4,46){$\Klow[1,1,0]\simeq K(2,1)$}
   \end{picture}}
   \put(110,5){\begin{picture}(40,30)
  \put(0,0){\circle*{1.5}}
  \put(12,4){\circle{1.2}}
  \put(0,12){\circle{1.2}}
  \put(24,20){\circle{1.2}}
  \put(8,14.7){\circle*{1.5}}
  \put(12,28){\circle{1.2}}
  \put(12,20){\circle*{1.5}}
  \put(24,32){\circle*{1.5}}
  \put(24,32){\circle{2.5}}
  \put(0,0){\line(3,1){12}}
  \put(0,12){\line(3,1){24}}
  \put(12,28){\line(3,1){12}}
  \put(0,0){\line(0,1){12}}
  \put(12,4){\line(0,1){24}}
  \put(24,20){\line(0,1){12}}
  \put(25,31){(1,1,1)}
  \put(-3.5,5){$O$}
  \put(7,10){$T$}
  \put(8.5,22){$Z$}
  \put(24.5,24){$\Psi$}
  \put(0,38){$\Klow[1,1,1]\simeq K(1,1)$}
   \end{picture}}
 \end{picture}
 \end{center}
  \caption{The lower subcrystals in $K(1,1,1)$}
  \label{fig:lower111}
  \end{figure}
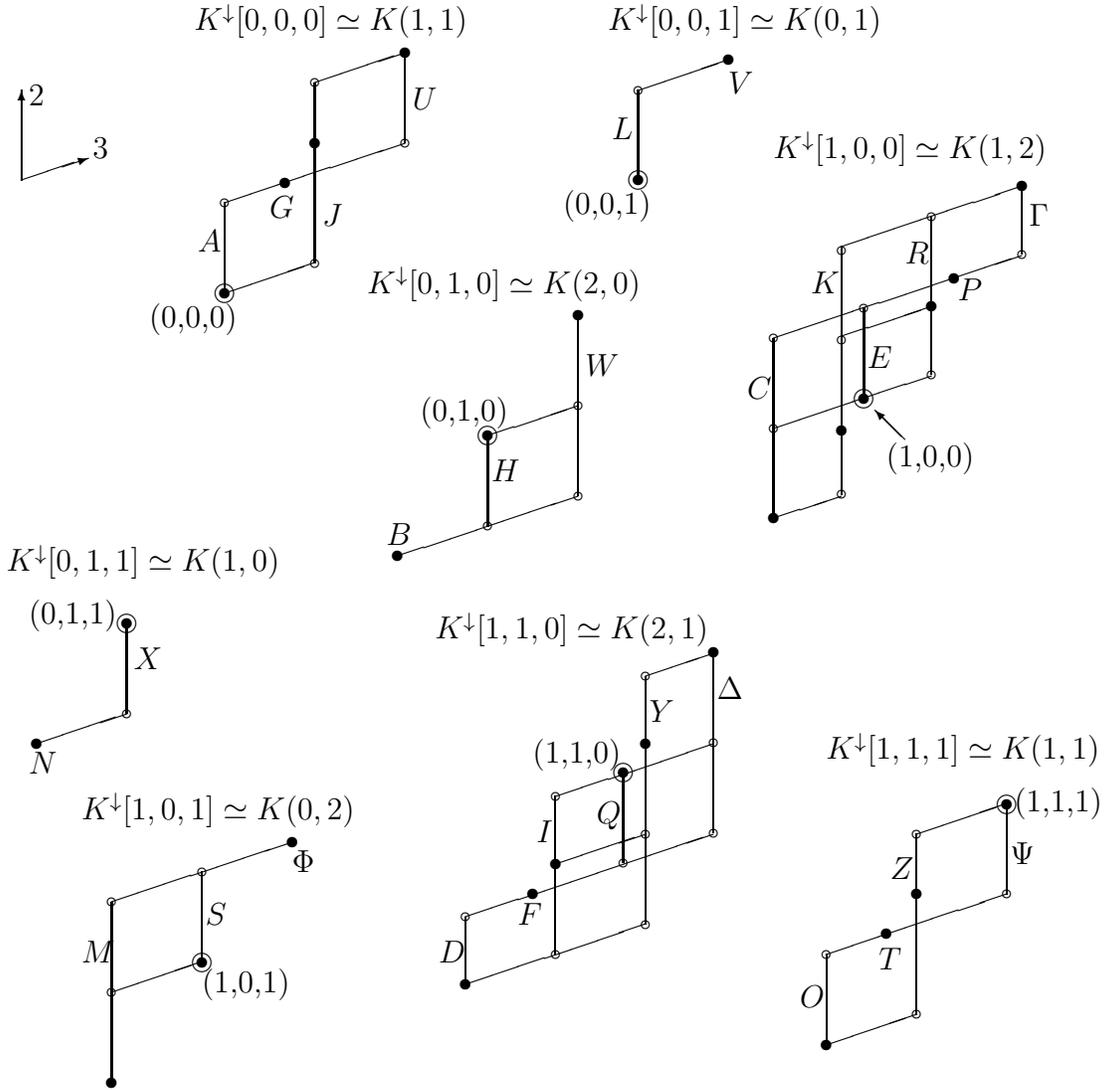

Since $K(1,1,1)$ is ``symmetric'', so are its upper and lower subcrystals, i.e.
each $\Klow[i,j,k]$ is obtained from $\Kup[k,j,i]$ by replacing color 1 by 3.
In Fig.~\ref{fig:lower111}, when writing $\Klow[i,j,k]\simeq K(\alpha,\beta)$,
the parameters $\alpha,\beta$ concern colors 3 and 2, respectively.

Now the desired $K(1,1,1)$ is assembled by gluing the fragments in
Figs.~\ref{fig:upper111},\ref{fig:lower111} along the 2-paths $A,\ldots,\Psi$.

 \section{Deriving $B_n$-crystals from symmetric $A_{2n-1}$-crystals} \label{sec:Bn}

We say that an $A_{2n-1}$-crystal $K=(V(K), E_1\sqcup\ldots\sqcup E_{2n-1})$
with parameter $c=(c_1,\ldots,c_{2n-1})$ is {\em symmetric} if $c_i=c_{2n-i}$
for each $i$. Equivalently: renumbering the colors $1,\ldots,2n-1$ as
$2n-1,\ldots,1$ makes the same $K$ (since any A-crystal is determined by its
parameter). Color $2n-i$ is regarded as {\em complementary} to color $i$ and
will usually be denoted with prime: we write $i'$ for $2n-i$. In particular,
$n'=n$. For an operator string $F_{i_1}^{\alpha_1}F_{i_2}^{\alpha_2}\ldots
F_{i_k}^{\alpha_k}$ (where each $F_{i_j}$ concerns the color class $E_{i_j}$
and $\alpha_{j}\in\Zset$), the complementary string is defined to be
$F_{i'_1}^{\alpha_1}F_{i'_2}^{\alpha_2}\ldots F_{i'_k}^{\alpha_k}$. This gives
a natural complementarity relation on the set of paths, including non-directed
ones, that begin at the source $s$ of $K$, which in turn yields the
complementarity bijection (involution) $\sigma:V(K)\to V(K)$. We extend
$\sigma$, in a natural way, to edges, paths and subgraphs of $K$. A vertex
$v\in V(K)$ is called {\em self-complementary} (or \emph{symmetric}) if
$v=\sigma(v)$; equivalently: for some (equivalently, \emph{any}) path $P$ from
$s$ to $v$, the complementary path $\sigma(P)$ terminates at $v$ as well. In
particular, the source and sink of $K$ are self-complementary.

Let $S$ be the set of self-complementary vertices in $K$. Clearly if a vertex
$u\in S$ has outgoing edge colored $i$, then $v$ has outgoing $i'$-edge as
well. When $i<n$, the colors $i,i'$ are not neighboring ($|i-i'|\ge 2$); so by
axiom~(A5) (from Section~\ref{ssec:typeA}), the operators $F_i$ and $F_{i'}$
commute at $v$, and the vertex $v=F_iF_{i'}(u)$ is again self-complementary. We
denote such a pair $(i,i')$ with $i<i'$ by $\bar i$. The pair $(u,v)$ of
vertices as above is regarded as an {\em edge with color} $\bar i$, or an $\bar
i$-{\em edge}; we denote the set of $\bar i$-edges by $E_{\bar i}$ and denote
the partial operator on $S$ related to $E_{\bar i}$ by $F_{\bar i}$. We also
refer to the four edges $(u,w),(w,v),(u,w'),(w',v)$ of $K$, where $w:=F_i(u)$
and $w':=F_{i'}(u)$, as the {\em underlying} edges of $(u,v)$.

As to color $n$, if $u\in S$ has outgoing $n$-edge $(u,v)$, then $v\in S$ as
well. We formally set $\bar n:=(n,n'=n)$, define $E_{\bar n}$ to be the set of
$n$-edges connecting pairs of self-complementary vertices (so $E_{\bar
n}\subseteq E_n$), and associate to $E_{\bar n}$ the partial operator $F_{\bar
n}$ on $S$.

As a result, we obtain the $n$-colored directed graph $\frakB=(S,E_{\bar
1}\sqcup\ldots\sqcup E_{\bar n})$, called the {\em symmetric extract} from $K$.
The colors in $\frakB$ are ordered as $\bar 1, \ldots, \bar n$, and different
colors $\bar i,\bar j$ are called {\em neighboring} if $|i-j|=1$.

Figure~\ref{fig:B-cryst} illustrates two ``simplest'' symmetric A-crystals for
$n=3$, namely, $K(1,0,1)$ and $K(0,1,0)$, and their symmetric extracts
$\frakB(1,0)$ and $\frakB(0,1)$.

  \begin{figure}[hbt]                  
  \begin{center}
  \unitlength=1mm
  \begin{picture}(140,60)
   \put(20,33){\begin{picture}(60,30)
  \put(0,0){\circle*{1}}
  \put(12,-3){\circle*{1}}
  \put(8,4){\circle*{1}}
  \put(20,1){\circle*{1}}
  \put(8,14){\circle*{1}}
  \put(18,11.5){\circle*{1}}
  \put(12,6){\circle*{1}}
  \put(19,9.5){\circle*{1}}
  \put(20,13){\circle*{1}}
  \put(20,21){\circle*{1}}
  \put(32,18){\circle*{1}}
  \put(28,25){\circle*{1}}
  \put(28,14){\circle*{1}}
  \put(32,8){\circle*{1}}
  \put(40,22){\circle*{1}}
  \put(0,0){\circle{2.5}}
  \put(20,1){\circle{2.5}}
  \put(20,13){\circle{2.5}}
  \put(20,21){\circle{2.5}}
  \put(40,22){\circle{2.5}}
  \put(0,0){\vector(4,-1){12}}
  \put(0,0){\vector(2,1){8}}
  \put(12,-3){\vector(2,1){7.2}}
  \put(8,4){\vector(4,-1){11}}
  \put(12,-3){\vector(0,1){9}}
  \put(8,4){\vector(0,1){10}}
  \put(8,14){\vector(4,-1){10}}
  \put(18,11.5){\vector(4,-1){14}}
  \put(12,6){\vector(2,1){7}}
  \put(19,9.5){\vector(2,1){9}}
  \put(20,1){\vector(0,1){12}}
  \put(20,13){\vector(0,1){7}}
  \put(20,21){\vector(2,1){8}}
  \put(20,21){\vector(4,-1){12}}
  \put(32,18){\vector(2,1){7.2}}
  \put(28,25){\vector(4,-1){11}}
  \put(28,14){\vector(0,1){11}}
  \put(32,8){\vector(0,1){10}}
  \put(-15,17){\vector(4,-1){8}}
  \put(-15,17){\vector(2,1){6}}
  \put(-15,17){\vector(0,1){8}}
  \put(-14.5,22){2}
  \put(-9,11.5){1}
  \put(-8.5,19){3}
  \put(35,0){$K(1,0,1)$}
  \put(-4,-1){$s$}
  \put(42,21){$t$}
  \put(55,15){$\Longrightarrow$}
    \end{picture}}
    \put(90,33){\begin{picture}(40,30)
  \put(3,1){\circle*{1}}
  \put(20,1){\circle*{1}}
  \put(20,11){\circle*{1}}
  \put(20,21){\circle*{1}}
  \put(37,21){\circle*{1}}
  \put(3,1){\circle{2.5}}
  \put(20,1){\circle{2.5}}
  \put(20,11){\circle{2.5}}
  \put(20,21){\circle{2.5}}
  \put(37,21){\circle{2.5}}
  \put(20,1){\vector(0,1){9}}
  \put(20,11){\vector(0,1){9}}
  \put(3,1){\vector(1,0){16}}
  \put(20,21){\vector(1,0){16}}
  \put(33,-1){$\frakB(1,0)$}
   \put(10,2){$\bar 1$}
   \put(21,4.5){$\bar 2$}
   \put(17,14.5){$\bar 2$}
   \put(28,16.5){$\bar 1$}
  \put(-1,0){$s$}
  \put(39,20){$t$}
   \end{picture}}
 \put(30,0){\begin{picture}(40,25)
  \put(0,0){\circle*{1}}
  \put(12,7){\circle*{1}}
  \put(8,14){\circle*{1}}
  \put(20,11){\circle*{1}}
  \put(0,10){\circle*{1}}
  \put(20,21){\circle*{1}}
  \put(0,0){\circle{2.5}}
  \put(20,11){\circle{2.5}}
  \put(0,10){\circle{2.5}}
  \put(20,21){\circle{2.5}}
  \put(0,10){\vector(4,-1){12}}
  \put(0,10){\vector(2,1){8}}
  \put(12,7){\vector(2,1){7.2}}
  \put(8,14){\vector(4,-1){11}}
  \put(0,0){\vector(0,1){9}}
  \put(20,11){\vector(0,1){9}}
  \put(20,0){$K(0,1,0)$}
  \put(-4,-1){$s$}
  \put(22,21){$t$}
  \put(45,15){$\Longrightarrow$}
    \end{picture}}
    \put(95,0){\begin{picture}(30,30)
  \put(0,0){\circle*{1}}
  \put(15,10){\circle*{1}}
  \put(0,10){\circle*{1}}
  \put(15,20){\circle*{1}}
  \put(0,0){\circle{2.5}}
  \put(15,10){\circle{2.5}}
  \put(0,10){\circle{2.5}}
  \put(15,20){\circle{2.5}}
  \put(0,0){\vector(0,1){9}}
  \put(15,10){\vector(0,1){9}}
  \put(0,10){\vector(1,0){14}}
  \put(20,0){$\frakB(0,1)$}
   \put(7,6){$\bar 1$}
   \put(-3,4){$\bar 2$}
   \put(16,14){$\bar 2$}
  \put(-4,-1){$s$}
  \put(17,20){$t$}
    \end{picture}}
 \end{picture}
 \end{center}
  \caption{Creation of $B(1,0)$ and $B(0,1)$.}
  \label{fig:B-cryst}
  \end{figure}
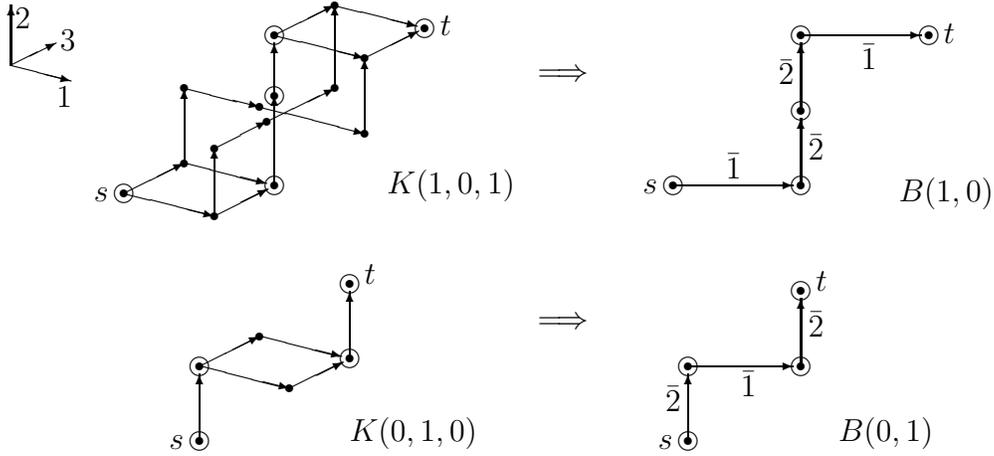

The following relation between $A$- and $B$-crystals can be concluded
from~\cite[Th.~3.2.4]{NS}.
  \begin{theorem} \label{tm:A-B}
Let $K(c)$ be a symmetric $A_{2n-1}$-crystal. Then the symmetric extract
$\frakB=(S,E_{\bar 1}\sqcup\ldots\sqcup E_{\bar n})$ from $K(c)$ is a
$B_n$-crystal.
  \end{theorem}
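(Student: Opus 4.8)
The plan is to verify that $\frakB$ satisfies the defining axioms (BC1)--(BC4) of a $B_n$-crystal, reducing each neighboring-color check to a statement about a small symmetric sub-$A$-crystal of $K$. Axiom (BC1) is immediate. For (A1): when $i<n$ the colors $i$ and $i'=2n-i$ are non-neighboring, so by (A5) in $K$ the partial operators $F_i,F_{i'}$ commute on $S$, and hence $F_{\bar i}=F_iF_{i'}$ is a well-defined partial injective operator whose orbits are finite paths (while $F_{\bar n}$ is just the restriction of $F_n$ to $S$). For (A5) in $\frakB$, take $|i-j|\ge 2$; one checks directly from $i'=2n-i$ that all the relevant pairs among $i,i',j,j'$ are non-neighboring in $K$, so the corresponding operators commute there and therefore $F_{\bar i},F_{\bar j}$ commute on $S$.

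Next I would treat (BC2), for neighboring $\bar i,\bar j$ with $i,j<n$. The four colors split into two neighboring pairs $\{i,j\}$ and $\{i',j'\}$, and (since then $i\le n-2$) these two blocks are mutually non-neighboring; thus by (A5) the sub-$A$-crystal of $K$ on these colors is an $A_2\times A_2$-crystal. The involution $\sigma$ interchanges the two $A_2$-factors, so the self-complementary vertices form the diagonal, which is isomorphic to a single $A_2$-crystal with $F_{\bar i},F_{\bar j}$ acting as its two color operators (using the intertwining $F_{i'}\sigma=\sigma F_i$). Hence each component of $(S,E_{\bar i}\sqcup E_{\bar j})$ is an $A_2$-crystal.

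The crucial reduction is (BC3), for the pair $\bar{n-1},\bar n$, whose underlying colors are $n-1,n,n+1$. I would argue that each component $\hat K$ of the $\{n-1,n,n+1\}$-colored subgraph of $K$ containing a self-complementary vertex is $\sigma$-invariant (since $\sigma$ permutes this color set and fixes some vertex of $\hat K$), hence a symmetric $A_3$-crystal whose canonical involution agrees with $\sigma|_{\hat K}$ by uniqueness of the color-reversing automorphism. Consequently $S\cap\hat K$ is exactly the self-complementary set of $\hat K$, and the component of $(S,E_{\bar{n-1}}\sqcup E_{\bar n})$ inside $\hat K$ is precisely the symmetric extract of the symmetric $A_3$-crystal $\hat K$; this is the $n=2$ instance of the theorem. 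Axiom (BC4)---the $B$-type normalization $m_{n-1,n}=-2$, $m_{n,n-1}=-1$ rather than the $C$-type one---falls out of the same $n=2$ analysis, reflecting that $\bar n$ traverses a single $n$-edge while $\bar{n-1}$ traverses a pair of edges.

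The main obstacle is exactly this $n=2$ case: that the symmetric extract of a symmetric $A_3$-crystal is a $B_2$-crystal. I would prove it through the worm model of Section~\ref{sec:worm}, realizing the self-complementary vertices of the $A_3$-crystal as lattice points of a $4$-dimensional polytope (Theorem~\ref{tm:S-constr}) and matching them with worm configurations so that the $\bar 1,\bar 2$ edges transport correctly; this concrete verification is carried out in Section~\ref{sec:proofB3-B4}. A recurring technical point underlying both (BC2) and (BC3) is the compatibility $\sigma|_{\hat K}=\sigma_{\hat K}$ between the global involution and the canonical involution of a symmetric sub-$A$-crystal, which I would establish from the uniqueness of the color-reversing automorphism of a symmetric A-crystal.
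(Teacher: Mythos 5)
Your proposal is correct and shares the paper's overall architecture: dispose of (BC1) via non-neighboring commutation, establish (BC2) for neighboring pairs below $n$, and reduce (BC3)/(BC4) to the case $n=2$, which is then settled by the worm model and the polyhedral description of the self-complementary vertices (Theorem~\ref{tm:S-constr}), exactly as in Sections~\ref{sec:worm}--\ref{sec:proofB3-B4}. Where you genuinely diverge is in the proof of (BC2). The paper's Lemma~\ref{lm:B2} verifies the Stembridge axioms one by one: it computes edge labels $\ell_j,\ell_{j'}$ on the four underlying edges of an $\bar i$-edge to get (A2), chases a commutation diagram through $F_{j'}F_jF_{i'}F_i$ to get (A3), and then invokes the criterion that (A4) follows from (A1)--(A3) plus uniqueness of the source (using the $A_2\times A_2$ product structure only for that last step). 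You instead exploit the product structure from the outset: since $\{i,j\}$ and $\{i',j'\}$ are mutually non-neighboring blocks, the relevant component of $K$ is $K_1\times K_2$ with $\sigma$ swapping the factors, so $S$ meets it in the diagonal, on which $F_{\bar i},F_{\bar j}$ act as the operators of a single $A_2$-factor. This is a cleaner, more conceptual argument that gets all of (A2)--(A4) at once; the paper's version buys explicit label identities such as~\refeq{commonlab}, which it reuses later, whereas yours does not produce them. One small omission: the paper also checks (observation~3 of Section~\ref{sec:Bn}) that $\frakB$ is connected with a unique zero-indegree vertex -- needed both for $\frakB$ to be a crystal in the sense of Section~\ref{sec:prelim} and for the appeal to the two-colored regularity criterion of~\cite{KKM-92} -- and your sketch should include that step (it follows from gradedness of $K$ by the same induction the paper uses).
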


In Sections~\ref{sec:Bn}--\ref{sec:proofB3-B4} we give a combinatorial proof of
this theorem, based on our knowledge of the structure of $A_n$- and
$B_2$-crystals. We start with simple observations.

 \medskip
1. An important property of $K=K(c)$ is that it is \emph{graded} at each color
$i$, which means that in any closed route in $K$, the numbers of forward and
backward $i$-edges are equal (equivalently, $V(K)$ admits a map to $\Zset^n$
under which each $i$-edge corresponds to a shift by $i$-th base vector). Clearly
such a property remains valid for $\frakB$ as well. In particular, $\frakB$ is
acyclic. Also for $i=1,\ldots,n$, each vertex of $\frakB$ has at most one
outgoing $\bar i$-edge and at most one incoming $\bar i$-edge. So each component
of $(S,E_{\bar i})$ is a path (as required in (A1)). Another known fact is that
the graph $K^{rev}$ obtained from (a not necessarily symmetric A-crystal) $K$ by
reversing its edges and changing each edge color to the complementary one is
isomorphic to $K$ (this operation swaps the source and sink). This implies that
the ``reversed'' graph $\frakB^{rev}$ (with preserved edge colors) is isomorphic
to $\frakB$; this fact will be used in Section~\ref{ssec:B3}.

  \medskip
2. For a vertex $v\in S$ and color $\bar i$, let $h_{\bar i}(v)$ (resp.
$t_{\bar i}(v)$) denote the length of the maximal $\bar i$-path beginning
(resp. ending) at $v$. Then
  \begin{equation} \label{eq:barlength}
  h_{\bar i}(v)=h_i(v)=h_{i'}(v)\quad\mbox{and}\quad
          t_{\bar i}(v)=t_i(v)=t_{i'}(v).
   \end{equation}
This is trivial when $i=n$. If $i<n$, consider the component $K'$ of
$(V(K),E_i\sqcup E_{i'})$ that contains $v$. Since $|i-i'|\ge 2$, $K'$ is the
Cartesian product of an $i$-path $P$ and an $i'$-path $P'$. Since $K'$ contains
a self-complementary vertex, it easily follows that $K'=\sigma(K')$. This
implies that the lengths of $P$ and $P'$ are equal, and further, that
$V(K')\cap S$ consists of the vertices of the form $F_i^\alpha
F_{i'}^\alpha(s')$, where $s'$ is the source of $K'$ (the ``diagonal'' of
$K'$). Now~\refeq{barlength} easily follows.

  \medskip
3. Each vertex $v\in S$ is reachable by a (directed) path in $\frakB$ beginning
at the source $s$ of $K$; in particular, $\frakB$ is connected and $s$ is the
source of $\frakB$. This can be shown by induction on the length $|P|$ of a
path $P$ from $s$ to $v$ in $K$. Indeed, for such a $P$, take the complementary
path $P'$ (also going from $s$ to $v$). Let the last edge $(u,v)$ of $P$ have
color $i$; then the last edge $(u',v)$ of $P'$ has color $i'$. If $i=n$, then
$u=u'$, implying $u\in S$, and we can apply induction. And if $i\ne n$, then
$F_i^{-1}$ and $F_{i'}^{-1}$ commute at $v$ and the vertex
$w:=F_i^{-1}F_{i'}^{-1}(v)$ is self-complementary. Since $K$ is graded, the
length of a path from $s$ to $w$ in $K$ is less than $|P|$. So by induction $w$
is reachable by a path from $s$ in $\frakB$, implying a similar property for
$v$.

 \medskip
4. By~\refeq{barlength} applied to $v=s$, we have $h_{\bar i}(s)=c_i$ for each
$i=1,\ldots,n$. So one can regard the $n$-tuple $\bar c=(c_1,\ldots,c_n)$ as
the {\em parameter} of $\frakB$ and denote $\frakB$ as $\frakB(\bar c)$ (the
set of such tuples $\bar c$ is $\Zset_+^n$, and there is a unique $\frakB$ for
each $\bar c$ in our construction).

 \medskip
5. Let $i,j<n$ and $|i-j|\ge 2$. Then any two colors among $i,i',j,j'$ are not
neighboring, and therefore (by~(A5)), each subcrystal $K'$ of $K$ with these
four colors is the Cartesian product of four monochromatic paths. This implies
that if two operators among $F_{\bar i},F_{\bar i}^{-1},F_{\bar j},F_{\bar
j}^{-1}$ act at $v\in S$, then these operators commute at $v$, whence each
component of $(S,E_{\bar i}\sqcup E_{\bar j})$ is the Cartesian product of an
$\bar i$-path and a $\bar j$-path, i.e. an $A_1\times A_1$-crystal. A similar
fact is shown for $(S,E_{\bar i}\sqcup E_{\bar n})$ when $i\le n-2$. Thus,
$\frakB$ satisfies axiom~(BC1) (from Section~\ref{ssec:typeBC}).

 \medskip
It remains to verify axioms~(BC2),(BC3),(BC4) for $\frakB$. We start
with~(BC2).
  \begin{lemma} \label{lm:B2}
~Let $i,j<n$ and $|i-j|=1$. Then each component of the subgraph $\frakB':=(S,
E_{\bar i}\sqcup E_{\bar j})$ is an $A_2$-crystal.
  \end{lemma}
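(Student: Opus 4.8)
We have a symmetric $A_{2n-1}$-crystal $K=K(c)$ and its symmetric extract $\frakB=(S,E_{\bar 1}\sqcup\ldots\sqcup E_{\bar n})$ on the self-complementary vertices $S$. We want to show: for neighboring colors $i,j<n$ (so $|i-j|=1$), each component of $(S,E_{\bar i}\sqcup E_{\bar j})$ is an $A_2$-crystal.

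Let me think about what $\bar{i}$-edges and $\bar{j}$-edges look like. An $\bar{i}$-edge from $u$ to $v$ (where $i<n$) means $v = F_i F_{i'}(u)$ with $i' = 2n-i$, and since $|i-i'|\geq 2$ these operators commute. Similarly $\bar{j}$-edge is $F_j F_{j'}(u)$.

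Now the colors involved are $i, j, i', j'$ where $i'=2n-i$, $j'=2n-j$. Since $|i-j|=1$ and both are $<n$, we have... let me figure out the relationships. Say $j = i+1$. Then $i' = 2n-i$ and $j' = 2n-i-1 = i'-1$. So $i'$ and $j'$ are also neighboring! And what about $i$ vs $j'$? $|i - j'| = |i - (2n-i-1)| = |2i+1-2n|$. Since $i < n$, $2i < 2n$, so $2i+1 \leq 2n-1$, giving $|2i+1-2n| \geq 1$. When is it equal to 1? When $2i+1 = 2n-1$, i.e., $i = n-1$. So if $i=n-1$, then $j = n$... but wait we need $j<n$. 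So actually $i,j < n$ means $i \leq n-2$ and $j \leq n-1$. Hmm, let me reconsider: $i,j<n$ and $|i-j|=1$. The largest neighboring pair with both $<n$ is $(n-2, n-1)$.

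\textbf{The plan.} Assume without loss of generality that $j=i+1$ (the case $j=i-1$ being symmetric). Writing $i'=2n-i$ and $j'=2n-j$, the first step is purely arithmetical: since $j<n$ forces $i\le n-2$, I would record the complementary colors as $j'=2n-i-1=i'-1$ and check the six pairwise distances. The pairs $\{i,j\}$ and $\{i',j'\}$ are each neighboring, while all four ``cross'' pairs are non-neighboring, namely $|i-i'|=2(n-i)\ge 4$, $|j-j'|=2(n-i-1)\ge 2$, and $|i-j'|=|j-i'|=2(n-i)-1\ge 3$. Thus the four colors split into two neighboring pairs separated by a gap, i.e.\ $i<j<j'<i'$ with $j$ and $j'$ non-neighboring.

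The second step is to localize. Every underlying edge of an $\bar i$-edge has color in $\{i,i'\}$ and of a $\bar j$-edge in $\{j,j'\}$, so following $\bar i$- and $\bar j$-edges never leaves a single connected subcrystal $K'$ of $K$ with colors $\{i,j,i',j'\}$. Since this color set is $\sigma$-invariant and $K'$ contains self-complementary vertices, we have $\sigma(K')=K'$. Because the two color groups $\{i,j\}$ and $\{i',j'\}$ have no neighboring cross-pair, axiom~(A5) gives that every operator among $F_i^{\pm1},F_j^{\pm1}$ commutes with every operator among $F_{i'}^{\pm1},F_{j'}^{\pm1}$ at each vertex where both act; hence $K'$ is the Cartesian product $X\times Y$ of an $A_2$-crystal $X$ with colors $i,j$ and an $A_2$-crystal $Y$ with colors $i',j'$. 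The involution $\sigma\rest{K'}$ interchanges the two color groups, so it induces an isomorphism $\tau\colon X\to Y$ (sending color $i$ to $i'$ and $j$ to $j'$, so $\tau F_i=F_{i'}\tau$ and $\tau F_j=F_{j'}\tau$) with $\sigma(x,y)=(\tau^{-1}y,\tau x)$.

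The third step is to read off the extract on the diagonal. A vertex $(x,y)\in K'$ is self-complementary precisely when $y=\tau x$, so $S\cap V(K')$ is the diagonal $\{(x,\tau x)\colon x\in X\}$, bijective to $X$ via $x\mapsto(x,\tau x)$. Applying $F_iF_{i'}$ to $(x,\tau x)$ yields $(F_ix,F_{i'}\tau x)=(F_ix,\tau F_ix)$, again a diagonal point; thus under the bijection the $\bar i$-edge $(x,\tau x)\to(F_ix,\tau F_ix)$ corresponds exactly to the $i$-edge $x\to F_ix$ of $X$, and likewise $\bar j$-edges correspond to $j$-edges of $X$. Since $X$ is connected via its $i,j$-edges, the diagonal is a single component of $\frakB'$, and this component is isomorphic (with $\bar i,\bar j$ playing the roles of $i,j$) to the $A_2$-crystal $X$. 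As every $\frakB'$-component arises this way, each is an $A_2$-crystal.

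\textbf{Main obstacle.} The routine color arithmetic and the diagonal bookkeeping are straightforward; the conceptual crux is the product decomposition $K'\cong X\times Y$. Axiom~(A5) only directly provides the $A_1\times A_1$ structure for a single non-neighboring pair, so the step I would take most care over is upgrading this to the full factorization of the four-colored subcrystal into two non-interacting $A_2$-blocks, and verifying that $\sigma$ acts as the factor-swap $(x,y)\mapsto(\tau^{-1}y,\tau x)$ compatibly with the operators.
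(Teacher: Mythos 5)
Your proof is correct, but it takes a genuinely different route from the paper's. The paper verifies Stembridge's axioms for $\frakB'$ one at a time: it checks (A2) by computing the labels of the four underlying edges of an $\bar i$-edge and showing they all coincide (their common value becoming $\ell_{\bar j}$ of the $\bar i$-edge), checks (A3) by an explicit chain of commutations among $F_i,F_j,F_{i'},F_{j'}$, and, rather than verifying (A4) directly, invokes the cited result that for a connected two-colored graph (A1)--(A3) together with uniqueness of the source imply (A4), establishing source-uniqueness via the product decomposition of the four-colored subcrystal. You instead put that product decomposition $K'\cong X\times Y$ at the center of the whole argument: you identify $S\cap V(K')$ with the diagonal $\{(x,\tau x)\}$ and observe that $\bar i$- and $\bar j$-edges on the diagonal correspond exactly to $i$- and $j$-edges of the factor $X$, so each component of $\frakB'$ is isomorphic to the $A_2$-crystal $X$ outright, with no axiom-checking at all. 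This is shorter and more structural; its only load-bearing input --- the factorization of a connected subcrystal whose color set splits into two mutually non-neighboring groups into the Cartesian product of the two corresponding subcrystals --- is exactly the fact the paper itself uses in its (A4) step, and it follows from property (ii) of regular crystals (or from axioms (A2) and (A5): non-neighboring operators commute and leave each other's head/tail lengths unchanged). The one point worth spelling out, which you flag yourself, is why $\sigma\rest{K'}$ must have the form $(x,y)\mapsto(\tau^{-1}y,\tau x)$: since $\sigma$ carries $\{i,j\}$-components (the fibers $X\times\{y\}$) to $\{i',j'\}$-components (the fibers $\{x\}\times Y$) and vice versa, each coordinate of $\sigma(x,y)$ depends on only one of $x,y$, so $\sigma(x,y)=(\alpha(y),\beta(x))$ with $\alpha,\beta$ isomorphisms, and $\sigma^2=\mathrm{id}$ forces $\alpha=\beta^{-1}$. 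With that filled in, the argument is complete.
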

  \begin{proof}
To verify axiom~(A2) for $\frakB'$, consider an $\bar i$-edge $(u,v)$. Let
$x:=F_i(u)$ and $y:=F_{i'}(u)$; then $(u,x),(y,v)$ are the $i$-edges and
$(u,y),(x,v)$ are the $i'$-edges of $K$ underlying $(u,v)$. Since
$|i-j|=|i'-j'|=1$ and $|i-j'|=|i'-j|\ge 2$, we obtain (using~(A2),(A5) for
$K$):
  \begin{gather*}
  h_j(v)=h_j(x)=h_j(u)+\ell_j(u,x); \quad
  h_j(v)=h_j(y)+\ell_j(y,v)=h_j(u)+\ell_j(y,v); \\
  h_{j'}(v)=h_{j'}(x)+\ell_{j'}(x,v)=h_{j'}(u)+\ell_{j'}(x,v); \quad
  h_{j'}(v)=h_{j'}(y)=h_{j'}(u)+\ell_{j'}(u,y)
   \end{gather*}
(labels $\ell$ are defined for A-crystals in~Section~\ref{ssec:typeA}). These
and the equalities $h_j(u)=h_{j'}(u)=h_{\bar j}(u)$ and
$h_j(v)=h_{j'}(v)=h_{\bar j}(v)$ (cf.~\refeq{barlength}) imply
  $$
 \ell_j(u,x)=\ell_j(y,v)=\ell_{j'}(x,v)=\ell_{j'}(u,y)=:\alpha,
  $$
and
  $$
  h_{\bar j}(v)=h_{\bar j}(u)+\alpha.
   $$

Handling lengths $t_j,t_{j'}$ in a similar way, we obtain
$t_{\bar j}(v)=t_{\bar j}(u)+1-\alpha$.

Therefore, when traversing $(u,v)$, the lengths $h_{\bar j}$ and $t_{\bar
j}$ behave as required for A-crystals, and
  \begin{numitem1}
all underlying edges of an $\bar i$-edge $e$ have one and the same label
$\alpha$ w.r.t. their neighboring colors in $\{j,j'\}$, and $e$ inherits
just this label: $\ell_{\bar j}(e)=\alpha$.
  \label{eq:commonlab}
  \end{numitem1}

To check the convexity condition in~(A2), consider consecutive $\bar i$-edges
$(u,v),(v,w)$ and take their underlying $i$-edges $(x,v)$ and $(v,y')$, where
$x:=F_i^{-1}(v)$ and $y':=F_i(v)$. Then $\ell_j(x,v)\le \ell_j(v,y')$ (by~(A2)
for $K$) implies $\ell_{\bar j}(u,v)\le\ell_{\bar j}(v,w)$,
by~\refeq{commonlab}. Thus, $\frakB'$ satisfies axiom~(A2).

 \smallskip
Next, let $u\in S$ have outgoing $\bar i$-edge $(u,v)$ and outgoing $\bar
j$-edge $(u,v')$ in $\frakB$, and let $\ell_{\bar j}(u,v)=0$. By~(A2), $h_{\bar
j}(v)\ge h_{\bar j}(u)$; so $v$ has outgoing $\bar j$-edge $(v,w)$. Similarly,
$v'$ has outgoing $\bar i$-edge $(v',w')$. We assert that $w=w'$ (as required
in~(A3)), i.e.
  $$
  F_{\bar j}F_{\bar i}(u)=F_{\bar i}F_{\bar j}(u).
  $$

To show this, let $x:=F_{i'}(u)$ and $y:=F_j(u)$ (these vertices are not in
$S$). Then $u,x,y$ and $z:=F_j(x)$ are connected by the $i'$-edges
$(u,x),(y,z)$ and the $j$-edges $(u,y),(x,z)$ (since $|i'-j|\ge 2$).
By~\refeq{commonlab}, $\ell_{\bar j}(u,v)=0$ implies $\ell_{j'}(u,x)=0$.
Therefore, $h_{j'}(x)=h_{j'}(u)$. This together with the trivial equalities
$h_{j'}(y)=h_{j'}(u)$ and $h_{j'}(z)=h_{j'}(x)$ (as $|j-j'|\ge 2$) implies
$h_{j'}(y)=h_{j'}(z)$, which means that $\ell_{j'}(y,z)=0$. So the operators
$F_{i'},F_{j'}$ commute at $y$; note that $F_{j'}(y)=v'$. We have
  \begin{multline*}
  w=F_{j'}F_jF_{i'}F_i(u)=F_{j'}F_{i'}F_jF_i(u)=F_{j'}F_{i'}F_iF_j(u) \\
   =F_{j'}F_iF_{i'}F_j(u)=F_iF_{j'}F_{i'}F_j(u)=F_iF_{i'}F_{j'}F_j(u)=w'
   \end{multline*}
(since: $F_i,F_j$ commute at $u$; ~$F_{i'},F_{j'}$ commute at $y=F_j(u)$; and
$F_{p},F_{q}$ with $|p-q|\ge 2$ are permutable). Also the fact that
$\ell_i(u,y)=1$ (by~(A3) for $K$) implies $\ell_{\bar i}(u,v')=1$. Thus,
$\frakB'$ satisfies the part of~(A3) concerning the forward operators $F_{\bar
i},F_{\bar j}$. The claim for the backward operators $F_{\bar i}^{-1},F_{\bar
j}^{-1}$ follows by reversing the edges of $K$ and $\frakB$.

 \smallskip
Finally, instead of a direct (and tiresome) verification of axiom~(A4) for
$\frakB'$, we can appeal to the result in~\cite[Proposition~5.3]{A2} saying
that for a connected two-colored graph $K'$,~(A4) follows from~(A1),(A2),(A3)
and the condition that $K'$ has exactly one source (zero-indegree vertex).

In light of this, consider a component $\tilde \frakB$ of $\frakB'$. Let
$\tilde K$ be the component of $(V(K),E_i\sqcup E_{i'}\sqcup E_j\sqcup E_{j'})$
containing the vertices of $\tilde \frakB$. This $\tilde K$ is the Cartesian
product of two $A_2$-crystals (with colors $i,j$ and colors $i',j'$), whence
$\tilde K$ has a unique source $\tilde s$. We claim that $\tilde s$ is the
unique source of $\tilde \frakB$.

Indeed, since $\tilde \frakB$ is finite and acyclic, it has a source $s'$.
Suppose $s'\ne\tilde s$. Then $s'$ has an incoming edge $e$ of some color among
$\{i,i',j,j'\}$. Let for definiteness $e$ be an $i$-edge. Then $F_i^{-1}$ acts
at $s'$, and by the symmetry, so does $F_{i'}^{-1}$. These operators commute,
and $v:=F_{i'}^{-1}F_{i}^{-1}(s')$ is a self-complementary vertex. Then
$(v,s')$ is an edge of $\tilde \frakB$ entering $s'$, contrary to the choice of
$s'$. Thus, $s'=\tilde s$, and~(A4) for $\frakB'$ follows.

This completes the proof of the lemma. \hfill
 \end{proof}

The crucial point is to show validity of~(BC3) and~(BC4). Since these axioms
concern only colors $n-1,n,n+1$ in $K$, we may assume that $n=2$. Here we use
the simple fact that if a component $K'$ of $(V(K),E_{n-1}\sqcup E_n\sqcup
E_{n+1})$ contains a self-complementary vertex, then for each $v\in V(K')$, the
vertex $\sigma(v)$ belongs to $K'$ as well. Hence $K'$ is symmetric.

  \begin{theorem} \label{tm:B3-B4}
Let $K=(V(K),E_1\sqcup E_2\sqcup E_3)$ be a symmetric $A_3$-crystal with
parameter $c=(c_1,c_2,c_3=c_1)$. Then the symmetric extract $\frakB(\bar
c)=(S,E_{\bar 1}\sqcup E_{\bar 2})$ from $K$ is the $B_2$-crystal with
parameter $\bar c=(c_1,c_2)$ respecting the Cartan coefficients $m_{12}=-2$ and
$m_{21}=-1$.
  \end{theorem}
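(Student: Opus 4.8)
The plan is to realize $\frakB(\bar c)$ as the $B_2$-crystal produced by the worm model of~\cite{B2}. Because a $B_2$-crystal is determined by its parameter, and Observation~4 of Section~\ref{sec:Bn} already gives $h_{\bar 1}(s)=c_1$ and $h_{\bar 2}(s)=c_2$, it suffices to exhibit a color- and edge-preserving isomorphism between $\frakB(\bar c)$ and the worm-model realization of the $B_2$-crystal of parameter $\bar c$, checking en route that the Cartan coefficients come out as $m_{12}=-2$ and $m_{21}=-1$. The route to this isomorphism passes through an explicit coordinatization of the self-complementary set $S$.

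First I would coordinatize $S$. In the crossing model for $K=K(c_1,c_2,c_1)$ (Theorem~\ref{tm:cross}), the involution $\sigma$, which renumbers the colors $1,2,3$ as $3,2,1$, acts as a reflection of the supporting graph $G_3$, and a vertex lies in $S$ exactly when its feasible function is fixed by this reflection. Using the principal lattice together with the upper/lower-subcrystal machinery of Section~\ref{ssec:pr_lat} and the relations of Theorem~\ref{tm:mainA} (Remark~2 in particular), I would translate this invariance into a bounded system of linear inequalities in a handful of integer parameters, thereby identifying $S$ with the integer points of a $4$-dimensional polytope. This is the content of the auxiliary Theorem~\ref{tm:S-constr}, which I would prove first; the symmetry $c_3=c_1$ is exactly what collapses the six parameters of a general $A_3$-crystal down to the four appropriate for $B_2$.

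Next I would recall the worm model of Section~\ref{sec:worm}, whose vertices are likewise encoded by a bounded family of integer data — a pair of line-segments in a rectangle whose size is governed by $c_1,c_2$. Matching the two pictures then reduces to identifying the $4$-dimensional polytope of Theorem~\ref{tm:S-constr} with the parameter region of the worm model via an explicit affine change of coordinates. With the bijection on vertices in hand, the decisive step is to check that it carries edges to edges of the right color: the operator $F_{\bar 1}=F_1F_3$ on $S$ must correspond to the color-$1$ operator of the worm model, and $F_{\bar 2}=F_2$ to its color-$2$ operator. Here I would invoke~\refeq{barlength} and the label calculus of Section~\ref{ssec:typeA}, propagating the labels of the four underlying A-edges of each $\bar 1$-edge — as already done in the proof of Lemma~\ref{lm:B2} — through the composite $F_1F_3$.

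The main obstacle, as anticipated, is this edge-matching step, and specifically the appearance of the asymmetric coefficients. The asymmetry is structural: a $\bar 1$-edge is a simultaneous move in the two A-colors $1$ and $3$, whereas a $\bar 2$-edge is a single A-move in color $2$, and $2$ is neighboring to both $1$ and $3$. Consequently, traversing a $\bar 1$-edge perturbs $h_{\bar 2}-t_{\bar 2}$ by twice the amount that traversing a $\bar 2$-edge perturbs $h_{\bar 1}-t_{\bar 1}$, which is precisely $m_{12}=-2$ against $m_{21}=-1$. Turning this heuristic into an edge-by-edge agreement with the head- and tail-length behaviour prescribed by the worm model is the technical heart of the argument, and is what necessitates the detailed case analysis carried out in Section~\ref{sec:proofB3-B4} and the Appendix.
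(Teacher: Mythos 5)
Your plan coincides with the paper's actual proof: coordinatize $S$ by the quadruples $(a_1,a_2,\delta,\ell)$ via the principal-lattice machinery and Theorem~\ref{tm:mainA} to get the $4$-dimensional polytope of Theorem~\ref{tm:S-constr}, biject its integer points with the worms of $W(\bar c)$ so that $\bar 2$-edges go to $\tilde 2$-edges (Proposition~\ref{pr:S-worm}), and then carry out the hard case analysis matching $F_{\bar 1}=F_1F_3$ with the worm operator $\tilde{\bf 1}$ (Proposition~\ref{pr:edg_color1} and the Appendix claims). The only cosmetic difference is your passing suggestion that $\sigma$ is realized by a reflection of the supporting graph $G_3$ — the paper instead characterizes $S$ through complementary operator strings and Lemma~\ref{lm:relat} — but since you immediately fall back on exactly that machinery, this does not change the argument.
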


 \section{The worm model} \label{sec:worm}

Our method of proof of Theorem~\ref{tm:B3-B4} (given in the next section)
consists in showing that the graph $\frakB$ figured there is isomorphic to the
graph generated by the so-called {\em worm model} for the given parameter $\bar
c=(c_1,c_2)$. This relies on the fact that the latter graph is just the
$B_2$-crystal for $\bar c$. In this section we review the construction of
\emph{worm graphs} and operations on them given in~\cite{B2}.
\medskip

Given a parameter $\bar c=(c_1,c_2)\in\Zset_+^2$, the worm model produces a
two-colored directed graph $W=W(\bar c)$, called the {\em worm graph} for $\bar
c$. The vertices of $W$ are the admissible six-tuples $w=(x',y, x''\, ;\,
y',x,y'')$ of integers satisfying
  \begin{equation} \label{eq:wormpar}
  0\le x,x',x''\le 2c_1 \quad\mbox{and} \quad 0\le y,y',y''\le c_2.
  \end{equation}
Here the six-tuple $w$ is called {\em admissible} if the following three
conditions hold:
  \begin{numitem1}
 (i) $x'$ and $x''$ are even; \\
 (ii) $y'\le y\le y''$ and $x'\le x\le x''$; \\
 (iii) if $y'<y$ then $x'=x$, and if $y<y''$ then $x=x''$.
  \label{eq:admiss}
 \end{numitem1}
It is convenient to visualize $w$ by taking four points in the rectangle
$R(c_1,c_2):=\{(\alpha,\beta)\in\Rset^2\colon 0\le \alpha \le 2c_1,
~0\le\beta\le c_2\}$, namely:
                        $$
        X'=(x',y), \quad X''=(x'',y), \quad Y'=(x,y') \quad
        \mbox{and}\quad Y''=(x,y''),
         $$
and drawing the horizontal line-segment $X'X''$ connecting $X'$ and $X''$ and
the vertical line-segment $Y'Y''$ connecting $Y'$ and $Y''$.
Then~\refeq{admiss} is equivalent to the following:
   \begin{numitem1}
 (i) the first coordinates of the points $X'$ and $X''$ are even; \\
 (ii) the point $X''$ lies to the right of $X'$, and the point
$Y''$ lies above $Y'$; \\
 (iii) the segments $X'X''$ and $Y'Y''$ have nonempty intersection; \\
 (iv) at least one of $X'=X''$, $X'=Y''$, $Y'=Y''$, $Y'=X''$ holds.
    \label{eq:vizual}
  \end{numitem1}
Depending on the equality in~\refeq{vizual}(iv), we distinguish between four
sorts of vertices of $W$, also called {\em worms}:
  \begin{itemize}
  \item[]
  {\em V-worm} (viz. {\em vertical} worm) appears when \;\;$Y'\le Y''$ and $X'=X''$; \\
  {\em VH-worm}: \;\;$Y'\le Y''=X'\le X''$; \\
  {\em HV-worm}: \;\;$X'\le X''=Y'\le Y''$; \\
  {\em H-worm} (viz. {\em horizontal} worm): \;\;$X'\le X''$ and $Y'=Y''$. \\
  \end{itemize}
(When, e.g., $X'=X''=Y'$, we regard $w$ as a V-worm and an HV-worm
simultaneously.) These cases are illustrated (from left to right) in the
picture; hereinafter $X$ stands for the point $X'=X''$, and $Y$ for $Y'=Y''$.

\unitlength=.800mm \special{em:linewidth 0.4pt} \linethickness{0.4pt}
 \begin{picture}(175.00,48.00)(0,5)
 \put(19.00,10.00){\circle{2.00}}
 \put(19.00,46.00){\circle{2.00}}
 \put(19.00,33.00){\circle{1.50}}
 \put(19.00,33.00){\circle{3.00}}
 \put(19,10){\line(0,1){36}}
 \put(14.00,10.00){\makebox(0,0)[cc]{$Y'$}}
 \put(14.00,46.00){\makebox(0,0)[cc]{$Y''$}}
 \put(14.00,33.00){\makebox(0,0)[cc]{$X$}}
 \put(45.00,25.00){\circle{2.00}}
 \put(45.00,40.00){\circle{1.50}}
 \put(45.00,40.00){\circle{3.00}}
 \put(75.00,40.00){\circle{2.00}}
 \put(45,25){\line(0,1){15}}
 \put(45,40){\line(1,0){30}}
 \put(46.00,21.00){\makebox(0,0)[cc]{$Y'$}}
 \put(45.00,44.00){\makebox(0,0)[cc]{$Y''=X'$}}
 \put(79.00,44.00){\makebox(0,0)[cc]{$X''$}}
 \put(90.00,15.00){\circle{2.00}}
 \put(110.00,15.00){\circle{1.50}}
 \put(110.00,15.00){\circle{3.00}}
 \put(110.00,40.00){\circle{2.00}}
 \put(90,15){\line(1,0){20}}
 \put(110,15){\line(0,1){25}}
 \put(85.00,11.00){\makebox(0,0)[cc]{$X'$}}
 \put(112.00,11.00){\makebox(0,0)[cc]{$X''=Y'$}}
 \put(113.00,44.00){\makebox(0,0)[cc]{$Y''$}}
 \put(140.00,29.00){\circle{2.00}}
 \put(150.00,29.00){\circle{1.50}}
 \put(150.00,29.00){\circle{3.00}}
 \put(170.00,29.00){\circle{2.00}}
 \put(140,29){\line(1,0){30}}
 \put(137.00,33.00){\makebox(0,0)[cc]{$X'$}}
 \put(150.00,33.00){\makebox(0,0)[cc]{$Y$}}
 \put(173.00,33.00){\makebox(0,0)[cc]{$X''$}}
     \end{picture}

 \smallskip
A worm $w$ is called {\em proper} if three points among $X',X'',Y',Y''$ are
different. When a worm degenerates into one point, we say that the worm is {\em
principal} (which matches a principal vertex in the related $B_2$-crystal). The
horizontal line-segment $X'X''$ is called the {\em horizontal limb} of $w$
(which degenerates into the single point $X$ in the V-worm case). Also when
$Y'$ (resp. $Y''$) does not lie in the line-segment $X'X''$, we say that the
vertical line-segment $Y'X'$ is the {\em lower limb} (resp. $X''Y''$ is the
{\em upper limb}) of $w$.

Next we explain the construction of edges of $W$. We denote the edge colors by
$\tilde 1$ and $\tilde 2$, and write $\tilde{\bf 1}$ and $\tilde{\bf 2}$ for
the partial operators on the worms associated to these colors, respectively.
The action of $\tilde{\bf 1}$ on a worm $v=(x',y, x''\,;\,y',x,y'')$ is as
follows:
  \begin{numitem1}
   (i) if $2x>x'+x''$ then $x'$ increases by 2; \\
   (ii) if $x=x'=x''$ and $y''>y$ then $y$ increases by 1; \\
   (iii) otherwise $x''$ increases by 2
    \label{eq:edges1}
  \end{numitem1}
(preserving the other entries). The operator does not act if the new six-tuple
would violate the boundary condition~\refeq{wormpar}. So in case of a proper
HV-worm, the point $X'$ moves by two positions to the right; in case of a
VH-worm, the point $X''$ moves by two positions to the right; in case of a
V-worm with $X\ne Y''$, the point $X$ moves by one position up. The case of
H-worms is a bit tricky: one should move (by two positions to the right) that
of the points $X',X''$ which is farther from $Y$; if they are equidistant from
$Y$, then the point $X''$ moves.

In its turn, the action of $\tilde{\bf 2}$ on $v$ is as follows:
  \begin{numitem1}
     (iv) if $2y>y'+y''$, then $y'$ increases by 1; \\
     (v) if $y''=y=y'$ and $x''>x$, then $x$ increases by 1; \\
     (vi) otherwise $y''$ increases by 1.
    \label{eq:edges2}
  \end{numitem1}
So the operator $\tilde{\bf 2}$ shifts $Y'$ ($Y''$) by one position up in the
proper VH-case (resp. in the HV-case) and shifts $Y$ by one position to the
right in the H-case with $Y\ne X''$. In the V-case, $\tilde{\bf 2}$ shifts (by
one position up) that of the points $Y',Y''$ which is farther from $X$; if they
are equidistant from $X$, then $Y''$ moves.
   \begin{theorem} {\rm \cite{B2}} \label{tm:worm-B2}
For each $\bar c\in\Zset_+^2$, the worm graph $W(\bar c)$ is isomorphic to the
$B_2$-crystal $B(\bar c)$ (where colors $\tilde 1,\tilde 2$ correspond to
$1,2$, respectively).
   \end{theorem}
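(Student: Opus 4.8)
The plan is to reduce the theorem to the uniqueness statement recalled in Section~\ref{ssec:typeBC}: a regular $B_2$-crystal is determined up to isomorphism by the pair of monochromatic path-lengths $h_1(s),h_2(s)$ at its source. Consequently it suffices to prove two things about the worm graph $W=W(\bar c)$, after which the claimed isomorphism (with colors $\tilde 1,\tilde 2$ in the roles of $1,2$) is forced. First, that $W$ satisfies the defining axioms of a $B_2$-crystal, that is, (A1) together with the requirement that its unique two-colored component be a $B_2$-crystal with Cartan coefficients $m_{12}=-2$ and $m_{21}=-1$. Second, that the source of $W$ has $h_{\tilde 1}=c_1$ and $h_{\tilde 2}=c_2$.

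First I would dispose of the elementary structural facts. Reading off~\refeq{edges1} and~\refeq{edges2}, each operator $\tilde{\bf 1},\tilde{\bf 2}$ increments a single entry of the six-tuple $w=(x',y,x''\,;\,y',x,y'')$, so it is a single-valued partial injection; one checks that the increment preserves admissibility~\refeq{admiss} and that the inverse operator is likewise single-valued. This gives axiom~(A1): every monochromatic component of $W$ is a path. The origin worm $(0,0,0\,;\,0,0,0)$ admits no incoming edge of either color and is the unique source, while the worm concentrated at the far corner $(2c_1,c_2)$ is the unique sink; connectivity follows by induction on $x+y$, checking that every admissible $w$ is reachable from the origin. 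Running the origin worm forward under $\tilde{\bf 1}$ uses only case~\refeq{edges1}(iii), pushing $x''$ from $0$ to $2c_1$ in steps of $2$, hence $h_{\tilde 1}(s)=c_1$; running it under $\tilde{\bf 2}$ gives $h_{\tilde 2}(s)=c_2$. This is the parameter computation.

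The substance is the verification of the crystal axioms. I would introduce explicit formulas for the four length functions $h_{\tilde 1},t_{\tilde 1},h_{\tilde 2},t_{\tilde 2}$ at a worm $w$, read off from the positions of the limbs relative to the walls of the rectangle $R(c_1,c_2)$ (e.g. $t_{\tilde 1}(w)$ and $h_{\tilde 1}(w)$ count the backward and forward $\tilde 1$-moves available at $w$, expressed through $x',x,x''$ and the bound $2c_1$). With these formulas the interrelation condition — that traversing a $\tilde 1$-edge changes $h_{\tilde 2}-t_{\tilde 2}$ by $m_{12}=-2$ while traversing a $\tilde 2$-edge changes $h_{\tilde 1}-t_{\tilde 1}$ by $m_{21}=-1$ — together with the convexity of the head-lengths along monochromatic paths, becomes a direct computation. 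The degenerate (principal) worms, where three or four of $X',X'',Y',Y''$ coincide, turn out to be exactly the critical vertices of the colored pair, and the remaining $B_2$-specific axioms (the commutation rules and the longer Verma-type relation that separates $B_2$ from $A_2$) are checked by running the relevant operator strings through the four sorts of worms V, VH, HV, H.

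The main obstacle is precisely this case analysis, which is delicate for two reasons. One is the tie-breaking built into~\refeq{edges1} and~\refeq{edges2}: for an H-worm the operator $\tilde{\bf 1}$ moves whichever of $X',X''$ is farther from the crossing point $Y$ (and $X''$ when they are equidistant), and dually $\tilde{\bf 2}$ on a V-worm, so each verification splits according to these comparisons. The other is that a single application can change the sort of the worm — an H-worm can become a VH- or HV-worm, a V-worm can reach $X=Y''$ — so one must track how the sort evolves along each monochromatic line and across the square and hexagon configurations that encode the $B_2$ relations. Once the length formulas are pinned down and this sort-transition bookkeeping is established, the axiom checks and the parameter computation are routine, and the uniqueness statement closes the argument.
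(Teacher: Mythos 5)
First, a point of reference: this paper does not prove Theorem~\ref{tm:worm-B2} at all. It is imported from~\cite{B2}, where the worm graph is identified with the $B_2$-crystal by matching it against an established model of regular $B_2$-crystals (Littelmann's path model, equivalently the explicit combinatorial construction of~\cite{B2}). So your proposal is necessarily a different route, and the question is whether it closes.

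As written, it does not. The reduction ``verify the defining axioms of a $B_2$-crystal, then invoke uniqueness given the parameter'' has a gap at its first step. The axiom you propose to verify --- that the unique two-colored component of $W(\bar c)$ be a $B_2$-crystal with $m_{12}=-2$ and $m_{21}=-1$ --- is circular for a two-colored graph: that component is the whole graph, and ``being a $B_2$-crystal'' is exactly what must be established. When you make the condition concrete (monochromatic paths, the Cartan increments, convexity of head-lengths, commutation rules, and ``the longer Verma-type relation''), you are implicitly assuming that a Stembridge-style short list of local axioms characterizes regular $B_2$-crystals. No such list is available in the doubly-laced case: as Section~\ref{ssec:typeBC} records, the local characterization obtained in~\cite{B2} consists of 14 axioms, some of them only ``almost local'', and it is precisely because that list is unwieldy that the present paper works with the worm model instead of axioms. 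Checking the conditions you name would establish some necessary properties of $W(\bar c)$, but not regularity; and the uniqueness statement you invoke at the end (a $B_2$-crystal is determined by $h_1(s),h_2(s)$) applies only to \emph{regular} crystals, so it cannot be applied until regularity is known. The elementary parts of your argument are fine --- axiom (A1), the unique source and sink, connectivity, and the computation $h_{\tilde 1}(s)=c_1$, $h_{\tilde 2}(s)=c_2$ all check out --- but the missing ingredient is either a complete axiomatic characterization of $B_2$-crystals against which to test $W(\bar c)$, or a direct isomorphism with one of the established models, which is what~\cite{B2} actually constructs.
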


\section{Symmetric extracts from $A_3$-crystals are $B_2$-crystals} \label{sec:proofB3-B4}

In this section we use the above worm model to prove Theorem~\ref{tm:B3-B4}
(thus completing the proof of Theorem~\ref{tm:A-B}). The proof falls into three
stages, described in Sections~\ref{ssec:B1}--\ref{ssec:B3} below.

Let $K=K(c)$ be a symmetric $A_3$-crystal with parameter $c=(c_1,c_2,c_3=c_1)$,
and $\frakB=\frakB(\bar c)$ the symmetric extract from $K$, where $\bar
c=(c_1,c_2)$. As before, $S$ denotes the set of self-complementary vertices in
$K$, or the vertices of $\frakB$. For brevity the partial operators
$F_1,F_2,F_3$ on the vertices of $K$ are denoted as $\bfone,\bftwo,\bfthree$,
and the corresponding operators $F_{\bar 1},F_{\bar 2}$ for $\frakB$ by
$\bar\bfone,\bar\bftwo$ (respectively).

 \subsection{Additional relations} \label{ssec:B1}

Our first goal is to establish additional facts (in Lemma~\ref{lm:relat}) about
self-complementary vertices of $K$ which will be needed to relate $\frakB$ to
the worm graph for $(c_1,c_2)$ defined in Section~\ref{sec:worm}.

In case $n=3$, relations in~\refeq{string} on the principal lattice $\Pi$ of
$K$ are specified as:
  $$
  \begin{array}{rl}
 S_{3,1}&=w_{3,1,3}w_{3,1,2}w_{3,1,1}=\bfthree\bftwo\bfone; \\
 S_{3,2}&=w_{3,2,2}w_{3,2,1}=\bftwo\bfthree\bfone\bftwo=\bftwo\bfone\bfthree\bftwo; \\
 S_{3,3}&=w_{3,3,1}=\bfone\bftwo\bfthree.
  \end{array}
  $$
These relations together with~\refeq{prin_str} and the fact that any operator
strings $S_{3,k}$ and $S_{3,k'}$ commute within $\Pi$ imply that for any
principal vertex $\prv[a=(a_1,a_2,a_3)]$ of $K$, its complementary vertex
$\sigma(\prv[a])$ is also principal and has the form $\prv[(a_3,a_2,a_1)]$. (To
see this, take the path from the source $s$ of $K$ to $\prv[a]$ corresponding
to the string $S^{a_3}_{3,3}S^{a_2}_{3,2}S^{a_1}_{3,1}$. Its complementary path
goes to $\sigma(\prv[a])$ and corresponds to the string
$S^{a_3}_{3,1}S^{a_2}_{3,2}S^{a_1}_{3,3}$.) Furthermore, the lower subcrystal
$\Klow[(a_3,a_2,a_1)]$ (with colors 2,3) is complementary to the upper
subcrystal $\Kup[a]$ (with colors 1,2).

Consider a self-complementary vertex $v\in S$ of $K$. It belongs to some upper
subcrystal $\Kup[a]$ and some lower subcrystal $\Klow[b]$. The component
(middle subcrystal) of $\Kup[a]\cap \Klow[b]$ containing $v$ is a path $P$ of
color 2. By a general fact (cf. Proposition~\ref{pr:prlat-subcryst}), $P$ has
exactly one vertex, $z$ say, in the upper lattice $\Piup[a]$; similarly, $P$
has exactly one vertex, $z'$ say, in the lower lattice $\Pilow[b]$. Let
$\Delta=(\Delta_1,\Delta_2)$ be the deviation of $z$ in $\Piup[a]$ (from the
heart $\prv[a]$ of $\Kup[a]$), and $\nabla=(\nabla_2,\nabla_3)$ the deviation
of $z'$ in $\Pilow[b]$. Using notation from Section~\ref{sec:ass_A}, we denote
$z$ and $z'$ as $\vup[a|\Delta]$ and $\vlow[b|\nabla]$, respectively. The next
lemma exhibits important features of $v$.
  \begin{lemma} \label{lm:relat}
For $a,b,\Delta,\nabla$ as above, the following properties hold:

{\rm (i)} $(b_1,b_2,b_3)=(a_3,a_2,a_1)$;

{\rm (ii)} $\Klow[b]$ is complementary to $\Kup[a]$;

{\rm(iii)} $\Delta_1=-\Delta_2=\nabla_3=-\nabla_2$;

{\rm(iv)} $\vup[a|\Delta]=\vlow[b|\nabla]$.
  \end{lemma}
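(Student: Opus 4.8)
The plan is to establish the four properties in a chain, since (i)--(ii) are essentially already available from the complementarity discussion preceding the lemma, while (iii)--(iv) will follow from Theorem~\ref{tm:mainA} together with the symmetry of $v$. I begin with property~(i). Since $v\in S$ is self-complementary, applying $\sigma$ to a path from $v$ to the heart $\prv[a]$ of $\Kup[a]$ (a path using colors $1,2$ only) yields a path from $v=\sigma(v)$ to $\sigma(\prv[a])$ using the complementary colors $3,2$. As noted just before the lemma, $\sigma(\prv[a])=\prv[(a_3,a_2,a_1)]$, and since this vertex is principal and reachable from $v$ by $\{2,3\}$-moves, it must be the heart of the lower subcrystal $\Klow[b]$ containing $v$. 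Hence $b=(a_3,a_2,a_1)$, giving~(i). Property~(ii) is then immediate: $\sigma(\Kup[a])$ is a maximal connected subgraph with colors $3,2$ containing $v$, so it coincides with $\Klow[b]$.

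For property~(iii), the key observation is that $z=\vup[a|\Delta]$ and $z'=\vlow[b|\nabla]$ lie on the \emph{same} middle subcrystal $P=\Kmid$ (the $2$-colored component of $\Kup[a]\cap\Klow[b]$ through $v$). Therefore the pair $(b,\nabla)$ is exactly $\zeta(a,\Delta)$, and I can invoke Theorem~\ref{tm:mainA} directly. For $n=3$, relation~\refeq{nablai} reads $\nabla_2=-\Delta_1$ and $\nabla_3=-\Delta_2$, and relation~\refeq{bi} gives $b_1=a_1+\Delta_1^-$, $b_2=a_2+\Delta_2^++\Delta_1^-$, $b_3=a_3+\Delta_2^+$ (using $\Delta_0=\Delta_3=0$). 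Now I substitute the constraint $b=(a_3,a_2,a_1)$ from~(i). Comparing first and third coordinates forces $a_3=a_1+\Delta_1^-$ and $a_1=a_3+\Delta_2^+$, whence $\Delta_1^-=a_3-a_1=-\Delta_2^+$, so $\Delta_1^-=-\Delta_2^+\le 0$. Comparing the middle coordinate $a_2=a_2+\Delta_2^++\Delta_1^-$ gives $\Delta_2^++\Delta_1^-=0$, which is the same relation and is automatically consistent. The remaining task is to upgrade these sign-restricted equalities to the full chain $\Delta_1=-\Delta_2$. Here I use that $\Delta_1^-=-\Delta_2^+$ with $\Delta_1^-\le 0\le\Delta_2^+$ forces both to have the common absolute value $|a_1-a_3|$; a short case analysis on the sign of $a_1-a_3$ (equivalently on which of $\Delta_1,\Delta_2$ is nonnegative) then shows $\Delta_1=-\Delta_2$. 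Combining with $\nabla_2=-\Delta_1$ and $\nabla_3=-\Delta_2$ yields $\Delta_1=-\Delta_2=\nabla_3=-\nabla_2$, which is~(iii).

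Finally, property~(iv) asserts that the two principal-lattice representatives $z$ and $z'$ of the middle subcrystal $P$ actually coincide. My plan is to argue that under the involution $\sigma$, the vertex $z=\vup[a|\Delta]$ maps to the vertex $\vlow[b|\nabla']$ of $\Pilow[b]$ whose deviation $\nabla'$ is the ``reflection'' of $\Delta$; using~(iii) one checks $\nabla'=\nabla$, so $\sigma(z)=z'$. Since $z$ and $z'$ both lie on the single $2$-path $P$ and $P$ is itself $\sigma$-invariant (being the intersection of the $\sigma$-complementary subcrystals $\Kup[a]$ and $\Klow[b]$), the involution $\sigma$ restricted to $P$ is an order-reversing or order-preserving automorphism of a path; I will show it fixes the distinguished principal vertex, forcing $z=\sigma(z)=z'$. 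The main obstacle I anticipate is precisely this last step: making rigorous that $\sigma$ sends the $\Piup$-representative of $P$ to the $\Pilow$-representative, which requires tracking how $\sigma$ acts on the principal lattices $\Piup[a]$ and $\Pilow[b]$ and on deviations. The cleanest route is likely to express $z$ via an operator string from $\prv[a]$ (as in Proposition~\ref{pr:fund_string}), apply $\sigma$ to turn it into the complementary string acting from $\prv[b]=\sigma(\prv[a])$, and read off that the result is the deviation-$\nabla$ vertex of $\Pilow[b]$; the sign bookkeeping from~(iii) is what makes the two descriptions agree.
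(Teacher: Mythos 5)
There is a concrete error in your derivation of (iii): you transcribe \refeq{bi} with the wrong signs. For $n=3$ that relation reads $b_1=a_1+\Delta_1^{+}$, $b_2=a_2+\Delta_2^{+}+\Delta_1^{-}$, $b_3=a_3+\Delta_2^{-}$, whereas you wrote $b_1=a_1+\Delta_1^{-}$ and $b_3=a_3+\Delta_2^{+}$. With your signs the three coordinate equations obtained from $b=(a_3,a_2,a_1)$ collapse to the single relation $\Delta_1^{-}=-\Delta_2^{+}$; since the left side is $\le 0$ and the right side is $\le 0$, both must vanish, leaving only $\Delta_1\ge 0\ge\Delta_2$ with no link between $|\Delta_1|$ and $|\Delta_2|$ (e.g.\ $\Delta=(5,-3)$ with $a_1=a_3$ passes all of your constraints), so the announced case analysis on the sign of $a_1-a_3$ cannot recover $\Delta_1=-\Delta_2$. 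With the correct signs your route not only works but needs no case analysis at all: $b_1=a_3$ and $b_3=a_1$ give $\Delta_1^{+}+\Delta_2^{-}=0$, the middle coordinate gives $\Delta_2^{+}+\Delta_1^{-}=0$, and summing the two identities yields $\Delta_1+\Delta_2=0$ outright. Once repaired, this is a legitimate alternative to the paper's derivation of (iii), which instead notes that $\sigma$ carries the deviation $\Delta=(\Delta_1,\Delta_2)$ of $z$ in $\Piup[a]$ to the complementary deviation $(\nabla_2,\nabla_3)=(\Delta_2,\Delta_1)$ of $\sigma(z)$ in $\Pilow[b]$ and combines this with \refeq{nablai}.

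The rest of the proposal is sound. Parts (i) and (ii) coincide with the paper's argument (the paper reaches $v$ by a concatenation of paths $P_1P_2P_3$ and compares it with the complementary concatenation; your phrasing via $\sigma(\prv[a])$ and $\sigma(\Kup[a])$ is the same idea). For (iv), the ``main obstacle'' you anticipate is not a real one: since $2'=2$, the involution $\sigma$ sends every $2$-edge to a $2$-edge \emph{with the same orientation}, so its restriction to the finite directed path $P=\sigma(P)$ is an orientation-preserving automorphism, hence the identity, and every vertex of $P$ is self-complementary; combined with the fact that $\sigma$ carries the unique vertex of $P$ in $\Piup[a]$ to the unique vertex of $\sigma(P)=P$ in $\Pilow[b]$ (because $\sigma(\Kup[a])=\Klow[b]$ by (ii)), this gives $z'=\sigma(z)=z$ immediately, with no operator-string bookkeeping. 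The paper's own shortcut is equivalent: the color-$2$ subpath $P_3$ from $z$ to $v$ equals its complement $P_3'$, so their initial vertices coincide.
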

  \begin{proof}
We can reach the vertex $v$ from the source $s$ of $K$ by moving along the
concatenation of three paths $P_1,P_2,P_3$, where: $P_1$ goes from $s$ to
$\prv[a]$; $P_2$ is a path from $\prv[a]$ to $\vup[a|\Delta]$ in $\Kup[a]$; and
$P_3$ is a path from $\vup[a|\Delta]$ to $v$ in $P$. (The paths $P_2,P_3$ are
not necessarily directed.) Take the complementary paths $P'_1,P'_2,P'_3$ to
$P_1,P_2,P_3$, respectively.

The vertex $v$ belongs to the lower subcrystal $\Klow[b]$. On the other hand,
$v$ belongs to the lower subcrystal $K'$ complementary to $\Kup[a]$, which is
expressed as $\Klow[(a_3,a_2,a_1)]$ (since the end of $P_1$ is the principle
vertex $\prv[a]$ of $K$ and the end of $P'_1$ is the complementary principle
vertex $\sigma(\prv[a])=\prv[(a_3,a_2,a_1)]$). This yields~(i),(ii).

By relation~\refeq{nablai} in Theorem~\ref{tm:mainA}, we have
$\nabla_2=-\Delta_1$ and $\nabla_3=-\Delta_2$. Also since the end
$\vlow[b|\nabla]$ of $P'_2$ is complementary to the end $\vup[a|\Delta]$ of
$P_2$, the deviation $\nabla$ is complementary to $\Delta$, i.e.
$\nabla_2=\Delta_2$ and $\nabla_3=\Delta_1$. This yields~(iii).

Finally, since the monochromatic paths $P_3$ and $P'_3$ have the same color 2
and end at the same vertex $v$, we have $P'_3=P_3$. Therefore, the beginning
vertices $\vup[a|\Delta]$ and $\vlow[b|\nabla]$ of these paths coincide,
yielding~(iv). \hfill
  \end{proof}

 \subsection{A correspondence between symmetric vertices and worms}
 \label{ssec:B2}

In this subsection we explain how to associate the elements of $S$ to the
vertices (worms) of the worm graph $W(\bar c)$.

For $v\in S$ and its corresponding $a,\Delta$ as above, we will write $a(v)$
for $a$, and $\Delta(v)$ for $\Delta$. By (iii) in Lemma~\ref{lm:relat}, the
deviation $\Delta$ is of the form
  $$
  \Delta=(\Delta_1,\Delta_2)=(\delta,-\delta)
   $$
for some $\delta=\delta(v)\in\Zset$. By (i) in that lemma, $b_1=a_3$; this
together with the equality $b_1=a_1+\Delta^+_1$ (cf.~\refeq{bi} in
Theorem~\ref{tm:mainA}) implies $a_3=a_1+\Delta^+_1$. This gives certain
constrains on $a,\delta$, namely:
  \begin{numitem1}
one always holds $a_3\ge a_1$; furthermore, if $a_3>a_1$ then $\delta=a_3-a_1$
($>0$), and if $a_3=a_1$ then $\delta\le 0$.
  \label{eq:a-delta}
  \end{numitem1}

So $a(v),\Delta(v)$ are determined by $a_1(v),a_2(v),\delta(v)$. The latter
triple together with the coordinate $\ell=\ell(v)$ of $v$ in the corresponding
middle subcrystal (path of color 2) $P=P(v)$ determine $v$ in $S$. We will
refer to the quadruple $(a_1(v),a_2(v),\delta(v),\ell(v))$ as the
\emph{description} of $v$. In the monochromatic subcrystal $P$, the principal
lattice consists of all vertices of $P$; the heart is the vertex
$\vup[a|\Delta]$, further denoted by $z=z(v)$; and $\ell$ is the length of the
subpath in $P$ from the beginning to $v$ (the \emph{tail length} for $v$).

The description $(a_1,a_2,\delta,\ell)$ of $v$ satisfies the following linear
constrains:
   \begin{gather}
   0\le a_1\le c_1;\qquad 0\le a_2\le c_2; \label{eq:constr1} \\
   -a_2,\/\/\/-c_2+a_2\le\delta\le c_1-a_1; \label{eq:constr2} \\
   0\le \ell\le c_2+2\delta. \label{eq:constr3}
   \end{gather}
Here~\refeq{constr1} is clear. When $\delta\ge 0$, the right inequality
in~\refeq{constr2} follows from $a_1+\delta=a_3\le c_1$; cf.~\refeq{a-delta}.
The upper subcrystal $\Kup[a]$ has parameter $\parup$ with
  \begin{equation} \label{eq:parup3}
  \parup_1=c_1-a_1+a_2, \qquad\parup_2=c_2-a_2+a_3
  \end{equation}
(by~\refeq{par_up}) and heart coordinate $\heartup$ with
  \begin{equation} \label{eq:heart3}
\heartup_1=a_2, \qquad \heartup_2=a_3
  \end{equation}
(by~\refeq{heart_up}). When $\delta\le 0$, we obtain
$-\delta=-\Delta_1\le\heartup_1=a_2$ and
$-\delta=\Delta_2\le\parup_2-\heartup_2= c_2-a_2$, yielding the left
inequalities in~\refeq{constr2}. Finally, for the middle subcrystal (path) $P$,
its parameter (length) $\parmid_2$ and coordinate $\heartmid_2$ of its heart
$z$ are:
  \begin{equation} \label{eq:par-centr-P}
\parmid_2=c_2-\Delta_2+\Delta_1=c_2+2\delta, \qquad \heartmid_2=a_2+\Delta_1=a_2+\delta
  \end{equation}
(by~\refeq{par_mid} and~\refeq{heart_mid}). The first relation
in~\refeq{par-centr-P} yields~\refeq{constr3}.

Conversely, let $a_1,a_2,\delta,\ell$ be integers
satisfying~\refeq{constr1}--\refeq{constr3}. Put $a_3:=a_1+\delta^+$,
~$a:=(a_1,a_2,a_3)$ and $b:=(a_3,a_2,a_1)$.
Comparing~\refeq{constr1},\refeq{constr2} with~\refeq{parup3},\refeq{heart3},
one can conclude that the upper lattice $\Piup[a]$ contains a vertex whose
deviation equals $(\delta,-\delta)$. Moreover, this vertex
$\vup[a|(\delta,-\delta)]=:z$ coincides with the vertex
$\vlow[b|(-\delta,\delta)]$ in the lower lattice $\Pilow[b]$ and is
self-complementary. Then all vertices of the middle subcrystal $P$ containing
$z$ are self-complementary. Now comparing~\refeq{constr3}
with~\refeq{par-centr-P}, we can conclude that $P$ has a vertex $v$ whose
coordinate equals $\ell$. This $v$ is just the self-complementary vertex
corresponding to $a_1,a_2,\delta,\ell$.

Thus, we obtain the following
  \begin{theorem} \label{tm:S-constr}
~For each integer solution $(a_1,a_2,\delta,\ell)$
to~\refeq{constr1}--\refeq{constr3}, there is a vertex $v\in S$ such that
$(a_1,a_2,\delta,\ell)=(a_1(v),a_2(v),\delta(v),\ell(v))$, and vice versa.
\hfill\qed
  \end{theorem}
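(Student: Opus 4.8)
The plan is to prove the statement by exhibiting the map $v\mapsto(a_1(v),a_2(v),\delta(v),\ell(v))$ and its inverse explicitly, so that bijectivity becomes transparent. The forward direction (the ``vice versa'' part) is essentially already assembled: for $v\in S$ sitting in $\Kup[a]\cap\Klow[b]$, Lemma~\ref{lm:relat}(iii) forces $\Delta(v)=(\delta,-\delta)$, and then \refeq{constr1} records $a\in\Bscr(c)$, \refeq{constr2} is exactly the box bound $-\heartup\le\Delta\le\parup-\heartup$ read off from \refeq{parup3}--\refeq{heart3}, and \refeq{constr3} is the constraint $0\le\ell\le\parmid_2$ with $\parmid_2=c_2+2\delta$ computed in \refeq{par-centr-P}. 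So the substance lies in the reverse direction: from an integer solution $(a_1,a_2,\delta,\ell)$ I must manufacture a genuinely self-complementary vertex whose description is exactly that quadruple.

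Given such a solution, I would set $a_3:=a_1+\delta^+$, $a:=(a_1,a_2,a_3)$, $b:=(a_3,a_2,a_1)$, and $\Delta:=(\delta,-\delta)$. The first task is to verify that $\Delta$ is an admissible deviation in $\Piup[a]$; comparing \refeq{constr2} with the box $-\heartup\le\Delta\le\parup-\heartup$ furnished by \refeq{parup3}--\refeq{heart3} shows that the two inequalities of \refeq{constr2} are precisely the admissibility conditions on $(\delta,-\delta)$. Theorem~\ref{tm:mainA} then applies to $(a,\Delta)$: formula \refeq{bi} returns $b=(a_3,a_2,a_1)$ (using $\Delta_0=\Delta_3:=0$ and the identities $(-\delta)^{\pm}=-\delta^{\mp}$), and \refeq{nablai} returns $\nabla=(-\Delta_1,-\Delta_2)=(-\delta,\delta)$. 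Consequently the vertex $z:=\vup[a|\Delta]$ is identified with $\vlow[b|\nabla]$ and lies on the middle $2$-subcrystal $P$ encoded by $(a,\Delta)$.

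The crux, and the step I expect to be the main obstacle, is showing that $z$ is self-complementary. Since $\sigma$ interchanges colors $1$ and $3$, fixes color $2$, and preserves edge orientation, it carries $\Kup[a]$ (colors $1,2$) to the lower subcrystal whose heart is $\sigma(\prv[a])=\prv[(a_3,a_2,a_1)]=\prv[b]$, that is, to $\Klow[b]$, and it sends $z=\vup[a|\Delta]$ to the vertex of $\Pilow[b]$ whose deviation is the $\sigma$-image of $\Delta$, namely $(\Delta_2,\Delta_1)$ in the color order $(2,3)$. The delicate point is to reconcile two a priori different descriptions of the target deviation: the one dictated by the combinatorics of $\zeta$ in Theorem~\ref{tm:mainA}, $\nabla=(-\Delta_1,-\Delta_2)$, and the one dictated by the geometry of $\sigma$, $(\Delta_2,\Delta_1)$. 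These coincide exactly when $\Delta_2=-\Delta_1$, which is guaranteed by $\Delta=(\delta,-\delta)$; hence $\sigma(z)=\vlow[b|(-\delta,\delta)]=\vlow[b|\nabla]=z$. This is precisely why the diagonal constraint $\Delta=(\delta,-\delta)$ is the correct one, and it explains the shape of the feasible region \refeq{constr1}--\refeq{constr3}.

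Finally I would propagate self-complementarity from $z$ to all of $P$. Because $\sigma$ fixes color $2$ and preserves orientation, $\sigma(P)$ is again a maximal $2$-path through $\sigma(z)=z$, so $\sigma(P)=P$, and $\sigma\rest{P}$ is an orientation-preserving automorphism of a finite directed path with a fixed vertex, hence the identity; thus $P\subseteq S$. By \refeq{par-centr-P} and \refeq{constr3} we have $|P|=\parmid_2=c_2+2\delta\ge\ell$, so $P$ contains the vertex $v$ at tail length $\ell$, and by construction $(a_1(v),a_2(v),\delta(v),\ell(v))=(a_1,a_2,\delta,\ell)$. Since each of $a$, $b$, $\Delta$, $\nabla$, $z$, and $\ell$ was recovered canonically from the data, the two maps are mutually inverse, which yields the asserted bijection.
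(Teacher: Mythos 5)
Your proposal is correct and follows essentially the same route as the paper: both directions are read off from the derivation of \refeq{constr1}--\refeq{constr3}, and the converse is handled by setting $a_3:=a_1+\delta^+$, applying Theorem~\ref{tm:mainA} to $(a,(\delta,-\delta))$ to identify $z=\vup[a|\Delta]$ with $\vlow[b|\nabla]$, deducing $\sigma(z)=z$, propagating self-complementarity along the $2$-path $P$, and locating $v$ at tail length $\ell$ via \refeq{par-centr-P}. The only difference is that you spell out the comparison of the $\zeta$-deviation $(-\Delta_1,-\Delta_2)$ with the $\sigma$-image deviation $(\Delta_2,\Delta_1)$ to justify $\sigma(z)=z$, a step the paper merely asserts (implicitly reversing Lemma~\ref{lm:relat}); this is a welcome clarification, not a deviation in method.
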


This correspondence is crucial in our construction of worms for the elements of
$S$. It is convenient to consider the prism $\{a\in\Rset^3\colon 0\le a_1\le
a_3\le c_1, ~0\le a_2\le c_2\}$; the integer points in it are exactly the
coordinates of principal vertices $\prv[a]$ of $K$ with $a_1\le a_3$. The
ground rectangle for the worms that we construct is identified with the facet
$\Phi$ of the prism formed by the points $a$ satisfying $a_1=a_3$. We modify
the coordinates on $\Phi$ by $(a_1,a_2,a_3=a_1)\mapsto (2a_1,a_2)$; then the
first coordinate runs from $0$ to $2c_1$, and the second from $0$ to $c_2$.

Consider $v\in S$ and let $a,\delta,\ell$ stand for $a(v),\delta(v),\ell(v)$,
respectively. The desired worm $w=w(v)=(X',X'',Y',Y'')$ on $\Phi$ is assigned
by the following three rules (see Fig.~\ref{fig:wormsB} where the corresponding
worms are drawn in bold and $\delta\ne 0$). We denote the $\ell_1$-distance of
points $A,A'$ by $\Vert AA'\Vert$.
  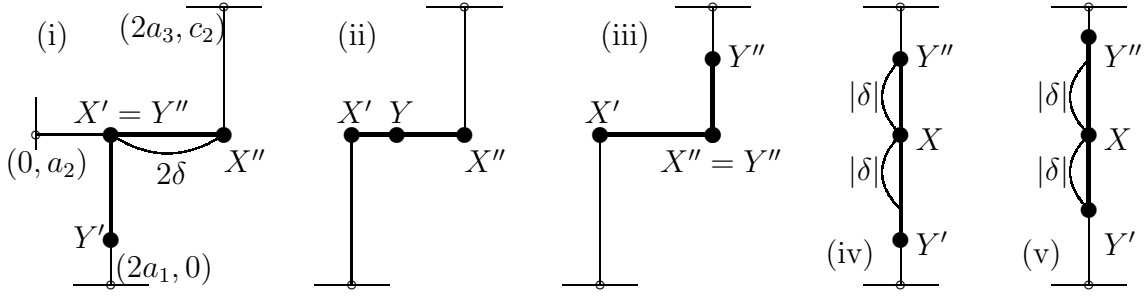
\begin{figure}[hbt]                  
  \begin{center}
  \unitlength=1mm
  \begin{picture}(145,35)
   \put(10,-3){\begin{picture}(25,37)
   \put(0,0){\circle{1}}
   \put(15,37){\circle{1}}
   \put(-10,20){\circle{1}}
   \put(0,20){\circle*{2}}
   \put(15,20){\circle*{2}}
   \put(0,6){\circle*{2}}
  \put(0,0){\line(0,1){20}}
  \put(15,20){\line(0,1){17}}
  \put(-5,0){\line(1,0){10}}
  \put(10,37){\line(1,0){10}}
  \put(-10,18){\line(0,1){7}}
  \put(-10,20){\line(1,0){10}}
   \put(-5,21.5){$X'=Y''$}
  \put(-5,5){$Y'$}
  \put(15,15){$X''$}
 \put(-14,15){$(0,a_2)$}
  \put(0.5,1){$(2a_1,0)$}
  \put(1,33){$(2a_3,c_2)$}
  \put(-10,32){(i)}
{\thicklines
  \put(0,20.1){\line(1,0){15}}
  \put(0.1,6){\line(0,1){14}}
  \put(0,19.9){\line(1,0){15}}
  \put(-0.1,6){\line(0,1){14}}
 }
 \qbezier(0,20)(7.5,15)(15,20)
 \put(6,13.5){$2\delta$}
     \end{picture}}
%
   \put(42,-3){\begin{picture}(25,37)
   \put(0,0){\circle{1}}
   \put(15,37){\circle{1}}
   \put(0,20){\circle*{2}}
   \put(15,20){\circle*{2}}
   \put(6,20){\circle*{2}}
  \put(0,0){\line(0,1){20}}
  \put(15,20){\line(0,1){17}}
  \put(-5,0){\line(1,0){10}}
  \put(10,37){\line(1,0){10}}
  \put(-2,21.5){$X'$}
  \put(5,21.5){$Y$}
  \put(15,15){$X''$}
    \put(-2,32){(ii)}
{\thicklines
  \put(0,20.1){\line(1,0){15}}
  \put(0,19.9){\line(1,0){15}}
 }
    \end{picture}}
%
   \put(75,-3){\begin{picture}(25,37)
   \put(0,0){\circle{1}}
   \put(15,37){\circle{1}}
   \put(0,20){\circle*{2}}
   \put(15,20){\circle*{2}}
   \put(15,30){\circle*{2}}
  \put(0,0){\line(0,1){20}}
  \put(15,20){\line(0,1){17}}
  \put(-5,0){\line(1,0){10}}
  \put(10,37){\line(1,0){10}}
  \put(-2,21.5){$X'$}
  \put(17,29){$Y''$}
  \put(8,15){$X''=Y''$}
  \put(0,32){(iii)}
   \put(15,20){\circle*{2}}
{\thicklines
  \put(0,20.1){\line(1,0){15}}
  \put(15.1,20){\line(0,1){10}}
  \put(0,19.9){\line(1,0){15}}
  \put(14.9,20){\line(0,1){10}}
 }
    \end{picture}}
%
   \put(115,-3){\begin{picture}(10,37)
   \put(0,0){\circle{1}}
   \put(0,37){\circle{1}}
   \put(0,20){\circle*{2}}
   \put(0,6){\circle*{2}}
   \put(0,30){\circle*{2}}
  \put(0,0){\line(0,1){37}}
  \put(-5,0){\line(1,0){10}}
  \put(-5,37){\line(1,0){10}}
  \put(2,4){$Y'$}
  \put(2,18){$X$}
  \put(2,29){$Y''$}
  \put(-10,3){(iv)}
{\thicklines
  \put(0.1,6){\line(0,1){24}}
  \put(-0.1,6){\line(0,1){24}}
 }
 \qbezier(0,20)(-5,25)(0,30)
 \put(-7,24){$|\delta|$}
 \qbezier(0,10)(-5,15)(0,20)
 \put(-7,14){$|\delta|$}
    \end{picture}}
%
   \put(140,-3){\begin{picture}(10,37)
   \put(0,0){\circle{1}}
   \put(0,37){\circle{1}}
   \put(0,20){\circle*{2}}
   \put(0,10){\circle*{2}}
   \put(0,33){\circle*{2}}
  \put(0,0){\line(0,1){37}}
  \put(-5,0){\line(1,0){10}}
  \put(-5,37){\line(1,0){10}}
  \put(2,4){$Y'$}
  \put(2,18){$X$}
  \put(2,29){$Y''$}
  \put(-9,3){(v)}
{\thicklines
  \put(0.1,10){\line(0,1){23}}
  \put(-0.1,10){\line(0,1){23}}
 }
 \qbezier(0,20)(-5,25)(0,30)
 \put(-7,24){$|\delta|$}
 \qbezier(0,10)(-5,15)(0,20)
 \put(-7,14){$|\delta|$}
     \end{picture}}
 \end{picture}
 \end{center}
  \caption{(i) $\delta>0$, ~$\ell<a_2$; ~(ii) $\delta>0$, ~$a_2< \ell< a_2+2\delta$;
~(iii) $\delta>0$, ~$\ell>a_2+2\delta$; ~(iv) $\delta<0$, ~$\ell<a_2+\delta$;
~(v) $\delta<0$, ~$\ell>a_2+\delta$.}
  \label{fig:wormsB}
  \end{figure}
  \begin{numitem1}
The horizontal limb of $w$ connects the points $X':=(2a_1,a_2)$ and
$X'':=(2a_3,a_2)$ (degenerating into the single point $X=(2a_1,a_2)$ when
$\delta\le 0$; cf.~\refeq{a-delta}).
   \label{eq:rule1}
   \end{numitem1}
  \begin{numitem1}
Let $\delta\ge 0$. Then (see Fig.~\ref{fig:wormsB}(i),(ii),(iii)):
  \begin{itemize}
 \item[(i)]
if $\ell<a_2$, then $w$ is the VH-worm in which the lower limb connects the
points $Y':=(2a_1,\ell)$ and $Y''=X'=(2a_1,a_2)$;
 \item[(ii)]
if $a_2\le \ell\le a_2+2\delta$, then $w$ is the H-worm in which the point $Y$
is located at $(2a_1+\ell-a_2,\,a_2)$;
 \item[(iii)]
if $\ell>a_2+2\delta$, then $w$ is the HV-worm in which the upper limb connects
the points $Y'=X''=(2a_3,a_2)$ and $Y'':=(2a_3,\,\ell-2\delta)$.
  \end{itemize}
   \label{eq:rule2}
  \end{numitem1}
  \begin{numitem1}
Let $\delta\le 0$. Then $w$ is the V-worm and (see
Fig.~\ref{fig:wormsB}(iv),(v)):
  \begin{itemize}
 \item[(i)] if $\ell\le a_2+\delta$, then $Y':=(2a_1,\ell)$ and $Y'':=(2a_1,\,a_2+|\delta|)=X+(0,|\delta|)$;
 \item[(ii)] if $\ell> a_2+\delta$, then $Y':=(2a_1,\,a_2+\delta)=X+(0,\delta)$ and
 $Y'':=(2a_1,\,\ell+2|\delta|)$.
  \end{itemize}
  \label{eq:rule3}
  \end{numitem1}

First of all we have to check that each worm constructed
by~\refeq{rule1}--\refeq{rule3} is well-defined and their set is complete
(coincides with the set of worms in $W(\bar c)$).

By~\refeq{rule1}, both points $X',X''$ lie in $\Phi$ and their first
coordinates are even (as required in~\refeq{admiss}(i)). Also any horizontal
line-segment in $\Phi$ connecting points $(x',y)$ and $(x'',y)$ with $x',x''$
even and $y$ integer is present as the horizontal limb of $w(v)$ for some $v\in
S$ (namely, with $a_1(v)=x'/2$, $a_2(v)=y$, $a_3(v)=x''/2$). To verify other
properties, it is useful to partition $S$ into subsets ({\em groups})
$S(a,\delta)$, each depending on a pair $(a,\delta)$ as in~\refeq{a-delta} and
consisting of all $v\in S$ such that $a(v)=a$ and $\delta(v)=\delta$. That is,
$S(a,\delta)$ is formed by the vertices of the corresponding middle subcrystal,
denoted as $P(a,\delta)$; so $P(a,\delta)=P(v)$ for all $v\in S(a,\delta)$. We
consider two cases.
\medskip

{\em Case I}: $\delta\ge 0$. ~Define $Q=Q(a,\delta)$ to be the union of the
vertical line-segments $Y_0X'$ and $X''Y_1$ and the horizontal line-segment
$X'X''$, where $Y_0:=(2a_1,0)$ and $Y_1:=(2a_3,c_2)$. Note that the sum of
lengths of these segments is $a_2+(2a_3-2a_1)+(c_2-a_2)=c_2+2\delta$ (in view
of $a_3-a_1=\delta$); this is equal to the length $\parmid_2$ of the path
$P=P(a,\delta)$ (cf.~\refeq{par-centr-P}). Regarding $Q$ as the corresponding
path from $Y_0$ to $Y_1$, we can identify it with $P$.
Comparing~\refeq{constr3} with~\refeq{rule2}, we observe that: the first vertex
of $P$ (where $\ell=0$) is identified with $Y_0$, and the last vertex of $P$
(where $\ell=c_2+2\delta$) with $Y_1$. For the first vertex, the arising worm
$w$ has the $Y'$ point at $Y_0$ and is the largest VH-worm for $S(a,\delta)$,
whereas for the last vertex, $w$ has the $Y''$ point at $Y_1$ and is the
largest HV-worm for $S(a,\delta)$. When moving along $P$ step by step, the
current worm $w=w(v)$ evolves as follows: while $\ell(v)<a_2$, ~$w$ is a
VH-worm whose lower limb $Y'X'$ shortens by 1 at each step; while $a_2\le
\ell(v)<a_2+2\delta$, ~$w$ is an H-worm in which the point $Y$ shifts to the
right by 1 at each step; and while $a_2+2\delta\le \ell(v)<c_2+2\delta$, ~$w$
is an HV-worm whose upper limb $X''Y''$ increases by 1 at each step. This
behavior matches the action of operator $\tilde\bftwo$ on the worm graph.
\medskip

{\em Case II}: $\delta\le 0$. ~Let $Y_0:=(2a_1,0)$ and $Y_1:=(2a_1,c_2)$ (cf.
the previous case) and define $\tilde Y_0:=(2a_1,a_2+\delta)$ and $\tilde
Y_1:=(2a_1,a_2+|\delta|)$. (Recall that $\delta\le 0$ implies $a_1=a_3$.)
Comparing~\refeq{constr3} with~\refeq{rule3}, we observe that: for the first
vertex of $P=P(a,\delta)$ (where $\ell=0$), the arising worm $w$ has the $Y'$
point at $Y_0$ and the $Y''$ point at $\tilde Y_1$, whereas for the last vertex
of $P$ (where $\ell=c_2+2\delta$), ~$w$ has $Y'$ at $\tilde Y_0$ and $Y''$ at
$Y_1$. When moving along $P$, the current H-worm evolves as follows: while
$\ell(v)<a_2+\delta$ (and therefore, $\Vert Y'X\Vert>|\delta|=\Vert
XY''\Vert$), the lower limb $Y'X$ shortens by 1 at each step and $Y''$ rests at
$\tilde Y_1$, and while $a_2+\delta\le \ell(v)<c_2+2\delta$ (and therefore,
$\Vert Y'X\Vert=|\delta|\le\Vert XY''\Vert$) , the upper limb $XY''$ increases
by 1 at each step and $Y'$ rests at $\tilde Y_0$. (Note that $\min\{\Vert
Y'X\Vert,\Vert XY''\Vert\}$ is invariant and equal to $|\delta|$.) Again, this
matches the action of $\tilde\bftwo$ on the worm graph.

Thus, we come to the following
  \begin{prop}  \label{pr:S-worm}
By the above construction, the correspondence $v\mapsto w(v)$ is a bijection
between the vertices of the symmetric extract $\frakB$ from $K(c)$ and the
vertices of the worm graph $W=W(\bar c_1,\bar c_2)$. Under this bijection, the
edges of second color $\bar 2$ of $\frakB$ are transferred to the edges of
second color $\tilde 2$ of $W$.
  \end{prop}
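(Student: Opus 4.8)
The plan is to build the bijection as the composition of two identifications: the arithmetic parameterization of $S$ supplied by Theorem~\ref{tm:S-constr}, and an identification of the quadruples $(a_1,a_2,\delta,\ell)$ with the vertices of $W(\bar c)$ furnished by rules~\refeq{rule1}--\refeq{rule3}. Thus I would first fix $v\in S$, pass to its description $(a_1(v),a_2(v),\delta(v),\ell(v))$, and read off the six-tuple $w(v)$ from the rules. The first task is well-definedness: I must check that the produced six-tuple is an admissible worm, i.e. that it lies in $R(c_1,c_2)$ and satisfies~\refeq{wormpar} and~\refeq{admiss}. Here the points $X'=(2a_1,a_2)$, $X''=(2a_3,a_2)$ have even first coordinates by construction, and the inequalities $0\le 2a_1\le 2a_3\le 2c_1$ and $0\le a_2\le c_2$ follow from~\refeq{constr1}--\refeq{constr2} together with $a_3=a_1+\delta^+$; the bounds on the heights of the vertical endpoints $Y',Y''$ reduce to~\refeq{constr3}. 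The structural requirements~\refeq{admiss}(ii)--(iii) hold because rules~\refeq{rule2}--\refeq{rule3} always place $Y'$ or $Y''$ (or the point $Y$) on the horizontal limb, so the two limbs meet and the worm is of one of the four admissible sorts.

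For the bijection itself I would exhibit an explicit inverse, reading the description back off a worm. Partition $S$ into the groups $S(a,\delta)$ (the vertex sets of the middle subcrystals $P(a,\delta)$), and correspondingly partition the worms of $W(\bar c)$: a proper horizontal limb $X'X''$ with $a_1<a_3$ recovers $a_1,a_2,a_3$ and hence $\delta=a_3-a_1>0$, while a degenerate limb (a V-worm) gives $a_1,a_2$ and $\delta=-\min\{\Vert Y'X\Vert,\Vert XY''\Vert\}\le 0$; in either case $\ell$ is recovered from the position of the vertical limb(s). The content of the Case~I and Case~II sweep analyses preceding the proposition is precisely that, as $\ell$ runs through $\{0,\ldots,c_2+2\delta\}$, the worm $w(v)$ runs exactly once through the worms of the matching block --- sweeping VH-worms, then the H-worm phase, then HV-worms when $\delta>0$, and sweeping the V-worms of minimal limb $|\delta|$ when $\delta\le0$. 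Since these blocks partition $W(\bar c)$ and each group $S(a,\delta)$ maps bijectively onto its block, surjectivity and injectivity follow.

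For the edge statement I would use that each $\bar 2$-edge of $\frakB$ is a color-$2$ edge of $K$ joining two self-complementary vertices, and hence lies inside a single middle subcrystal $P(a,\delta)$; indeed, since $\sigma$ fixes color $2$ and acts as a direction-preserving automorphism of the color-$2$ line through a symmetric vertex, that whole line consists of symmetric vertices. Traversing such an edge sends $v$ to the next vertex of $P(a,\delta)$, i.e. increments $\ell$ by $1$ while fixing $(a_1,a_2,\delta)$. By the Case~I/II analysis this increment is exactly the action of $\tilde\bftwo$ on $w(v)$, so $w(F_{\bar 2}(v))=\tilde\bftwo(w(v))$ whenever the left side is defined, and conversely every $\tilde 2$-edge of $W$ is obtained this way; this yields the claimed transfer of second-color edges.

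The main obstacle, as usual with the worm formalism, is the bookkeeping at the boundaries between worm types and at the seam $\delta=0$ between Cases~I and~II. I would need to check that the VH/H and H/HV transitions (at $\ell=a_2$ and $\ell=a_2+2\delta$) and the lower-limb/upper-limb transition in the V-case (at $\ell=a_2+\delta$) are handled consistently by~\refeq{rule2}--\refeq{rule3}, so that no worm is produced twice and none is omitted, and that the two rule-sets agree on $\delta=0$ (where both describe the same V-worms). Once this boundary accounting is verified --- together with the admissibility check above --- the bijection and the edge transfer are immediate, and with Theorem~\ref{tm:worm-B2} this identifies $\frakB$ with the $B_2$-crystal $B(\bar c)$ at the level of vertices and $\tilde 2$-edges.
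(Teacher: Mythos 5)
Your proposal follows essentially the same route as the paper: use the parameterization of $S$ by quadruples $(a_1,a_2,\delta,\ell)$ from Theorem~\ref{tm:S-constr}, check admissibility of the worms produced by rules~\refeq{rule1}--\refeq{rule3}, partition $S$ into the middle-subcrystal groups $S(a,\delta)$ and sweep each over $\ell$ to match its block of worms, and observe that incrementing $\ell$ (a $\bar 2$-edge) is exactly the action of $\tilde\bftwo$. The details you flag as remaining (the VH/H/HV and lower/upper-limb transition bookkeeping, and agreement of the two rule-sets at $\delta=0$) are precisely what the paper's Case~I/Case~II analysis carries out, so the argument is sound and equivalent.
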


 \subsection{Verification of edges of color 1} \label{ssec:B3}

To finish the proof of Theorem~\ref{tm:B3-B4} it remains to show that under the
above correspondence $v\mapsto w(v)$, the edges of color $\bar 1$ in the
symmetric extract $\frakB$ from $K=K(c_1,c_2,c_1)$ are transferred one-to-one
to the edges of color $\tilde 1$ in the worm graph $W(c_1,c_2)$. This involves
additional ideas and technical tools.
   \begin{prop} \label{pr:edg_color1}
For each $v\in S$, the following properties hold:
  \begin{itemize}
\item[\rm(i)] if $\tilde\bfone$ does not act at the worm $w(v)$, then
$\bar\bfone$ does not act at $v$;
 \item[\rm(ii)] if $\tilde\bfone$ acts at $w(v)$, then $\bar\bfone$ acts at $v$
and $w(\bar\bfone v)=\tilde\bfone w(v)$.
\end{itemize}
  \end{prop}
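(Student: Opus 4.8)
The plan is to reduce everything to the \emph{description} $(a_1(v),a_2(v),\delta(v),\ell(v))$ introduced for Theorem~\ref{tm:S-constr}: I will compute how $\bone=\bfone\bfthree$ transforms this quadruple and match it, regime by regime, against the transformation that $\tone$ induces on the worm $w(v)$ via~\refeq{edges1} together with the assignment rules~\refeq{rule1}--\refeq{rule3}. Since Proposition~\ref{pr:S-worm} already shows that $v\mapsto w(v)$ carries $\btwo$-edges to $\ttwo$-edges, only the first color remains. A preliminary structural remark organizes the blocking condition: because $|1-3|=2$, axiom~(A5) makes $\bfone,\bfthree$ commute and the $\{1,3\}$-subcrystal through $v$ a rectangular grid; by the symmetry of $K$ and~\refeq{barlength} this grid is a square with $v$ on its diagonal, so $\bone=\bfone\bfthree=\bfthree\bfone$ advances $v$ one step along the diagonal and keeps it in $S$. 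Consequently $\bone$ fails to act at $v$ exactly when $v$ is the top corner of this diagonal, i.e. when $h_1(v)=h_3(v)=0$, equivalently (by~\refeq{barlength}) when $v$ has no outgoing $\bar 1$-edge.

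Next I would read off the action of $\tone$ on $w(v)$, which is routine. Writing $w(v)$ as the six-tuple with $x'=2a_1$, $x''=2a_3$, $y=a_2$ (by~\refeq{rule1}) and substituting the $Y$-coordinates from~\refeq{rule2} (for $\delta\ge0$) or~\refeq{rule3} (for $\delta<0$), one applies~\refeq{edges1} in each regime. For instance, in the H-worm range $a_2\le\ell\le a_2+2\delta$ the point $X''$ is the farther one precisely when $\ell\le a_2+\delta$, giving $(a_1,a_2,\delta,\ell)\mapsto(a_1,a_2,\delta+1,\ell)$, whereas $\ell>a_2+\delta$ gives $(a_1,a_2,\delta,\ell)\mapsto(a_1+1,a_2,\delta-1,\ell-2)$; the VH- and HV-worms produce the same two outcomes, and for a V-worm ($\delta<0$) the coordinate $y=a_2$ increases by $1$ with $\delta,\ell$ adjusted according to which limb is shorter. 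A useful bookkeeping observation is that the threshold $a_2+\delta$ separating the two outcomes is exactly the heart position $\heartmid_2$ of the middle path (cf.~\refeq{par-centr-P}). One also checks here that $\tone$ is blocked by~\refeq{wormpar} only in branch~\refeq{edges1}(iii) with $x''=2c_1$, that is, only when $a_3=c_1$.

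The crux is to show that $\bone$ induces exactly these transformations of $(a_1,a_2,\delta,\ell)$. I would compute this through the crossing model: representing $v$ by its feasible function $f=f_v$ on $V(G)$ with $n=3$, one has $a_k=f(v^k_3(4-k))$ in level $3$ and $b_k=f(v^k_1(1))$ in level $1$ (cf.~\refeq{fn1}), while $\delta$ and $\ell$ are recovered from the deviation of the heart $z(v)$ and the tail length of $v$ in the middle $2$-path via~\refeq{par-centr-P}. Since $\bfone=\phi_1$ changes $f$ only in level $1$ and $\bfthree=\phi_3$ only in level $3$, locating the switch-nodes and the total/reduced slacks on these two levels pins down how $a_1,a_3$ (hence $\delta$) and $\ell$ move. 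Equivalently, one may work inside the nested $A_2$-crystals: $\bfone$ moves $v$ within $\Kup[a]$ along its $1$-line (its effect on the $2$-line coordinate $\ell$ being governed by the $A_2$ sail structure of Section~\ref{ssec:A2}), while $\bfthree$ shifts the upper-subcrystal locus $a$ according to Theorem~\ref{tm:mainA} and the string relations of Proposition~\ref{pr:fund_string}, with Lemma~\ref{lm:relat} and~\refeq{bi} forcing the two half-moves to be compatible. Matching the resulting quadruple against the list from the previous paragraph yields statement~(ii), including the commutation $w(\bone v)=\tone w(v)$, and aligning the blocking conditions ($\tone$ blocked $\iff a_3=c_1$ in branch~(iii) $\iff h_1(v)=0$) yields statement~(i).

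The main obstacle is precisely this last computation. Although $\bfone$ and $\bfthree$ are individually transparent, their composite must be tracked simultaneously across the change of upper subcrystal caused by $\bfthree$ and the change of $2$-line and tail length caused by $\bfone$, and the outcome genuinely branches on the sign of $\delta$ and on the position of $\ell$ relative to the thresholds $a_2,\,a_2+\delta,\,a_2+2\delta$ — the very thresholds separating the VH/H/HV/V regimes of the worm. I expect the delicate point to be the determination of where the level-$1$ and level-$3$ switch-nodes lie for $f_v$ (equivalently, the precise effect of $\bfone\bfthree$ on the heart position and on $\delta$); it is natural to isolate these slack/switch-node computations as separate technical lemmas of the kind deferred to the Appendix. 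Once they are in hand, the remaining case-by-case comparison with~\refeq{edges1} is mechanical.
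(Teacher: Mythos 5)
Your reduction to the description quadruple $(a_1,a_2,\delta,\ell)$ is the right coordinate system, your analysis of when and how $\tone$ transforms the worm is correct (including the observation that $\tone$ is blocked only in branch \refeq{edges1}(iii) with $x''=2c_1$), and the remark that $\bone$ acts at $v$ iff $h_1(v)>0$ is fine. But the proposal has a genuine gap at exactly the point you flag as "the main obstacle": you never compute the effect of $\bone=\bfone\bfthree$ on $(a_1,a_2,\delta,\ell)$, and this computation is not mechanical. The feasible function $f_v$ of a general self-complementary vertex is not one of the special functions $f_{a,\Delta}$ whose slacks and switch-nodes are analyzed in Section~\ref{sec:proof}; it is obtained from such a function by a string of $\phi_2$-moves, and locating the level-1 and level-3 switch-nodes after those moves is an analysis comparable in difficulty to the whole proof of Lemma~\ref{lm:faDelta}, now for the composite $\phi_1\phi_3$. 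Likewise the "nested $A_2$" variant requires tracking how the $3$-line through $v$ crosses middle subcrystals inside $\Klow[b]$ and then how the $1$-line behaves inside the \emph{new} upper subcrystal $\Kup[a']$ — two sail-model computations that must be spliced together across the change of subcrystal. Deferring all of this to unstated "technical lemmas of the kind deferred to the Appendix" leaves the substance of both (i) and (ii) unproved; in particular statement (i) needs the nontrivial implication that $a_3=c_1$ (in the branch-(iii) regimes) forces $h_1(v)=0$, which is precisely the content of the paper's Claims~2 and~5 and their $\omega$-computation.

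It is worth noting that the paper deliberately avoids the direct computation you propose. It only computes \emph{whether} $\bone$ acts (the quantity $\omega=h_{\bar 1}(v)$, via a chain of applications of Theorem~\ref{tm:mainA} and the parameter/heart formulas in the Appendix), and then identifies the target $\bone v$ \emph{indirectly}: by induction on the size $\eta(v)$ of the worm's domain $R_v$ (using the reversal isomorphism $\frakB^{rev}\simeq\frakB$ to dispose of the case $R_u\subset R_v$), and, when $R_u\supseteq R_v$, by constructing explicit routes between principal vertices and counting forward-minus-backward edges of each color in the graded graph $\frakB$ to exclude every candidate for $\bone v$ other than $u$. If you want to pursue your direct route, you would need to actually prove the switch-node lemmas for the composite $\phi_1\phi_3$ on the general functions $f_v$, regime by regime in $\delta$ and $\ell$; as it stands, the argument reduces the proposition to an equally hard unproved claim.
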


   \begin{proof}
We will use induction on the \emph{length} of $w(v)$, which is defined below.
When needed, handling one or another object related to $v$ (or another vertex
in $S$), we will include $v$ as argument in corresponding notation (on the
other hand, we often omit $v$ when it is clear from the context). We associate
to $v$ the integers $a_1(v),\,a_2(v),\,a_3(v),\,\delta(v),\,\ell(v)$ as before.
Considering the worm $w(v)$ in the form of six-tuple $(x'(v),y(v),x''(v);
y'(v),x(v),y''(v))$ as defined in Section~\ref{sec:worm}, we introduce the
following values:
   \begin{gather}
p_1(v):=x'(v)/2,\qquad p_2(v):=y'(v), \qquad q_1(v):= x''(v)/2, \label{eq:ppqq} \\
 q_2(v):=y''(v) \quad \mbox{and} \quad \eta(v):= q_1(v)+q_2(v)-p_1(v)-p_2(v).
 \nonumber
  \end{gather}
Then $p_i\le q_i$, $i=1,2$, and the corresponding points $X'(v),\, X''(v),\,
Y'(v),\, Y''(v)$ are located within the rectangle $R_v:=
\{(\alpha,\beta)\in\Rset^2 \colon 2p_1(v)\le\alpha\le 2q_1(v),\, p_2\le\beta\le
q_2(v)\}$. Moreover, at least one of $X'(v),\,Y'(v)$ lies at the south-west
corner $(2p_1(v),p_2(v))$ of $R_v$ and at least one of $X''(v),\,Y''(v)$ lies
at the north-east corner $(2q_1(v),q_2(v))$; one may say that $w(v)$
\emph{spans} $R_v$. We also call $R_v$ the \emph{domain} of $w(v)$. It
degenerates into a horizontal segment (a vertical segment, a single point) when
$w(v)$ is an H-worm (resp. a V-worm, a principal point).

In addition, extending $p(v)$ and $q(v)$ to the self-complementary triples
$\hat p=\hat p(v):=(p_1(v),p_2(v),p_1(v))$ and $\hat q=\hat
q(v):=(q_1(v),q_2(v),q_1(v))$, we consider the self-complementary vertices
$\prv[\hat p]$ and $\prv[\hat q]$ in the principal lattice $\Pi$ of $K$ and
define the graph $B_v$ to be the interval of $\frakB$ from $\prv[\hat p]$ to
$\prv[\hat q]$. We will use the following easy corollary from
Proposition~\ref{pr:int_prlat}.
   \begin{corollary} \label{cor:Bv}
~$B_v$ is isomorphic to $\frakB(q(v)-p(v))$ (the symmetric extract from $K(\hat
q(v)-\hat p(v))$. \hfill\qed
 \end{corollary}

The number $\eta(v)$ in~\refeq{ppqq} is just what we call the \emph{length} of
$w(v)$. We assume by induction that the required  properties~(i),(ii) are valid
for each $v'\in S$ with $\eta(v')<\eta(v)$. When $\tilde\bfone$ acts at $w(v)$,
we denote by $u$ the element of $S$ such that $w(u)=\tilde\bfone w(v)$
(existing by Proposition~\ref{pr:S-worm}). (So our goal in this case is to show
that $u=\bar\bfone v$.) From the description of the worm model one can see that
for the domain $R_u$, only three situations are possible: (a) $R_u\subset R_v$
(where the inclusion is strict), (b) $R_u\supset R_v$, and (c) $R_u=R_v$. The
first case is easy. \medskip

   \noindent\textbf{Claim 1}
~{\em {\rm(i)} If $R_u\subset R_v$, then $\bone$ acts at $v$ and $u=\bar\bfone
v$. ~{\rm(ii)} Suppose $\bone$ acts at $v$ and let $v':=\bone v$. If
$\eta(v')<\eta(v)$, then $\tone$ acts at $w(v)$ and $u=v'$.}
\medskip

 \begin{proof}
~(i) Clearly $R_u\subset R_v$ implies $\eta(u)<\eta(v)$. Applying the induction
to the vertex $u$ in the reversed graph $\frakB^{rev}$ (which is isomorphic to
$\frakB$), one can conclude that the operator reverse to $\bar\bfone$ transfers
$u$ to $v$. Since $\bar\bfone$ is invertible, $u=\bar\bfone v$.

Part (ii) is proved in a similar way. \hfill
 \end{proof}

The situation $R_u\supseteq R_v$ is less trivial. We examine three cases.
\medskip

 \noindent
\underline{\em Case 1}: ~$w(v)$ is an H-worm, i.e. $p_2=q_2=:r$ (hereinafter
$p_i$ stands for $p_i(v)$, and $q_i$ for $q_i(v)$). Then $q_1-p_1=\delta\ge 0$,
~$X'(v)=(2p_1,r)$, ~$X''(v)=(2q_1,r)$ and $Y(v)=(x,r)$ for some $2p_1\le x\le
2q_1$. For $i=0,1,\ldots,2\delta$, let $v_i$ denote the vertex in $S$ such that
$w(v_i)$ is the H-worm with $X'(v_i)=X'(v)$, $X''(v_i)=X''(v)$ and
$Y(v_i)=(2p_1+i,r)$. These worms have the same domain, and $v=v_j$ for some
$j$.

If $j>\delta$, then $\Vert X'(v)Y(v)\Vert=j>2\delta-j=\Vert Y(v)X''(v)\Vert$;
therefore (cf.~\refeq{edges1}) $\tilde\bfone$ acts at $w(v)$ and moves the $X'$
point by two units to the right. Then $R_u\subset R_v$, and we are done by
Claim~1(i).

Now let $j\le\delta$. We rely on the following claim; it will be proved in the
Appendix.\medskip

   \noindent\textbf{Claim 2}
~{\em When $j\le\delta$, operator $\bar\bfone$ acts at $v$ if and only if $q_1<
c_1$.}
\medskip

If $q_1=c_1$ (and therefore $X''(v)$ lies on the right boundary of the entire
region $R(c_1,c_2)$), then $\tilde\bfone$ does not act at $w(v)$. By Claim~2,
$\bar\bfone$ does not act at $v$ as well, and we obtain~(i) in the proposition.

Thus, we may assume that $q_1<c_1$. This and $\Vert X'(v)Y(v)\Vert=j \le
2\delta-j=\Vert Y(v)X''(v)\Vert$ imply that $\tilde\bfone$ acts at $w(v)$ and
moves the $X''$ point by two units to the right. Then $w(u)=\tilde\bfone w(v)$
is the H-worm with $X'(u)=X'(v)$, ~$Y(u)=Y(v)$ and
$X''(u)=X''(v)+(2,0)=(2q_1+2,\,r)$. In particular, $R_u\supset R_v$.

By Claim~2, $\bar\bfone$ acts at $v$; let $v':=\bar\bfone v$. Notice that the
assertion in this claim depends on $\delta-j$ and $c_1-q_1$, but not on
$p_1,q_1,r,c_1$. So we can apply it to the subgraph $B_u$ of $\frakB$ (the
interval between the principal vertices $\prv[(p_1,r,p_1)]$ and
$\prv[(q_1+1,r,q_1+1)]$), appealing to Corollary~\ref{cor:Bv}. This implies the
important fact that the \emph{vertex $v'$ belongs to $B_u$}, whence $w(v')$ is
an H-worm with $R_{v'}\subseteq R_u$. Let
  $$
  X'(v')=(2p'_1,r),\quad X''(v)=(2q'_1,r) \quad\mbox{and}\quad Y(v')=(2p_1+j',r).
  $$
We have to show that $v'=u$, i.e.
  \begin{equation} \label{eq:ppqpjp}
  p'_1=p_1,\quad q'_1=q_1+1 \quad\mbox{and}\quad j'=j.
  \end{equation}

To show this, we will construct certain routes in $B_u$ and appeal to the fact
that $B_u$ is graded (i.e. for any route $P$ in $B_u$ and each color $\zeta$,
the difference $k_\zeta(P)$ between the number of forward and backward edges of
color $\zeta$ in $P$ depends only on the beginning and end of $P$; cf.~1 in
Section~\ref{sec:Bn}). (Hereinafter by a \emph{route} we mean a path with
possible backward edges.)

The case $\eta(v')<\eta(v)$ is impossible, by Claim~1(ii) and the inclusion
$R_u\supset R_v$. Hence either (a) $p'_1=p_1$ and $q'_1=q_1$, or (b)
$p'_1=p_1+1$ and $q'_1=q_1+1$, or (c $p'_1=p_1$ and $q'_1=q_1+1$. First of all
we exclude (a) and (b). \medskip

1) Suppose $p'_1=p_1$ and $q'_1=q_1$. Then the worm $w(v')$ is obtained from
$w(v)$ by moving the $Y$ point by $j'-j$ units (to the right or left depending
on the sign of $j'-j$). Hence $w(v')=\tilde\bftwo^{j'-j}w(v)$, implying
$v'=\bar\bftwo^{j'-j}v$ (since the ``operator'' $w$ and the second color
commute, by Proposition~\ref{pr:S-worm}). The latter is impossible since
$v'=\bar\bfone v$ and $B_u$ is graded. \medskip

2) Suppose $p'_1=p_1+1$ and $q'_1=q_1+1$. We can find in $B_u$ a route $P_1$
from $\check v_0:=\prv[(p_1,r,p_1)]$ to $v$ and a route $P_2$ from $v$ to
$\check v_1:= \prv[(q_1+1,r,q_1+1)]$ such that
   \begin{equation} \label{eq:kP1P2}
k_{\bar\bfone}(P_1)=\delta,\quad k_{\bar\bftwo}(P_1)=j,\quad
k_{\bar\bfone}(P_2)=\delta+2,\quad k_{\bar\bftwo}(P_2)=2\delta+2-j.
  \end{equation}
To get $P_1$, we construct a sequence of worms as follows. Starting with the
principal worm $w(\check v_0)$, we first move the $X''$ point (by two units per
move) $\delta$ times, obtaining $X''=(2p_1+2\delta,\, r)$. Then move $Y$ (by
one unit per move) $j$ times, obtaining $Y=(2p_1+j,r)$. And to get $P_2$, we
start with $w(v)$ and first make $2\delta-j$ moves with $Y$ (obtaining
$Y=X''=(2q_1,r)$), then $q_1$ moves with $X'$ (obtaining $X'=(2q_1,r)$), and
finally one move with $X''$, two moves with $Y$ and one move with $X'$. This
results in $X'=X''=Y=(2q_1+2,\,r)$, giving the worm $w(\check v_1)$. Each of
these moves on worms induces passing through the corresponding edge in $B_u$:
this is clear when operator $\tilde\bftwo$ applies and follow by the induction
when $\tilde\bfone$ applies (since at least one of the two worms involved in
the operation has the length less than $\eta(v)$). This gives $P_1,P_2$
satisfying~\refeq{kP1P2}. (Note that the construction of $P_2$ remains correct
when $\eta(v)=0$, i.e. $p_1=q_1$ and $v=\check v_0$. In this case there is a
unique directed path $P$ from $\check v_0$ to $\check
v_1=\prv[(p_1+1,r,p_1+1)]$ in $\frakB$; see the illustrations of $K(1,0,1)$ and
$B(1,0)$ in Section~\ref{sec:Bn}. It corresponds to the relation $w(\check
v_1)=\tilde\bfone\tilde\bftwo\tilde\bftwo \tilde\bfone w(\check v_0)$ in $W$.)

Using a similar procedure, we construct a route $Q_2$ from $v'$ to $\check v_1$
such that
  $$
k_{\bar\bfone}(Q_2)=\delta \quad\mbox{and}\quad
k_{\bar\bftwo}(Q_2)=2\delta+2-j'.
  $$
Then concatenating the route $P_1$, the move by $\bar\bfone$ from $v$ to $v'$,
and the route $Q_2$, we obtain a route $P'$ from $\check v_0$ to $\check v_1$
for which $k_{\bar\bfone}(P')=\delta+1+\delta$. On the other hand, the path
from $\check v_0$ to $\check v_1$ that is the concatenation of $P_1$ and $P_2$,
denoted as $P_1\cdot P_2$, gives $k_{\bar\bfone}(P_1\cdot P_2)=2\delta+2$. A
contradiction with the gradedness of $B$.
\medskip

3) Now let $p'_1=p_1$ and $q'_1=q_1+1$. To show the desired equality $j'=j$,
consider the vertex $z$ in $B_u$ such that $w(z)$ is the H-worm with
   $$
   X'(z)=(2p_1+2,\,r),\quad X''(z)=(2q_1+2,\,r)\quad\mbox{and}\quad
   Y(z)=(2q_1+2-j,\,r).
   $$
Then the isomorphism between $B_u^{rev}$ and $B_u$ swaps $z$ and $v$ (as well
as $\check v_0$ and $\check v_1$). This implies that $\bar\bfone^{-1}$ acts at
$z$ and the worm of $z':=\bar\bfone^{-1}z$ should be ``symmetric'' to the worm
of $v'$, namely:
  $$
  X'(z')=(2p_1,\,r),\quad X''(z')=(2q_1+2,\,r)\quad\mbox{and} \quad
  Y(z')=(2q_1+2-j',\,r).
  $$
Then $w(v')$ is transformed into $w(z')$ by moving the $Y$ point by
$(2\delta+2-j')-j'=2\delta+2-2j'$ units. Now concatenating the above route
$P_1$ from $\check v_0$ to $v$, the $\bar 1$-edge from $v$ to $v'$, the
corresponding route from $v'$ to $z'$, the $\bar 1$-edge from $z'$ to $z$, and
the route from $z$ to $\check v_1$ that is ``symmetric'' to $P_1$, we obtain a
route $Q$ from $\check v_0$ to $\check v_1$ such that
  $$
  k_{\bar\bftwo}(Q)=j+(2\delta+2-2j')+j=2\delta+2+2j-2j'.
  $$
Since $B$ is graded, we must have $k_{\bar\bftwo}(Q)=k_{\bar\bftwo}(P_1)
+k_{\bar\bftwo}(P_2)=2\delta+2$ (cf.~\refeq{kP1P2}). Hence $j'=j$,
yielding~\refeq{ppqpjp}. Thus, we obtain~(ii) in the proposition. \medskip

 \noindent
 \underline{\em Case 2}: ~$w(v)$ is a V-worm with $X(v)\ne Y''(v)$.
Then $R_v$ is the vertical segment connecting the points $Y'(v)=(f,p_2)$ and
$Y''(v)=(f,q_2)$, where $f:=p_1=q_1$. Also $X(v)=(f,p_2+j)$ for some $0\le
j<h:=q_2-p_2$. For $i=0,\ldots,h$, let $z_i$ denote the vertex of $\frakB$ such
that $R_{z_i}=R_v$ and $X(z_i)=(f,p_2+i)$; then $v=z_j$.

By~\refeq{edges1}, for $i=0,\ldots,h-1$, the worm $w(z_i)$ is transformed by
$\tone$ into the worm $w(z_{i+1})$. Define $z'_{i}:=\bone z_i$. Our goal is to
show that $z'_i=z_{i+1}$  for $i=0,\ldots,h-1$. We rely on the following claim
which will be proved in the Appendix. \medskip

   \noindent\textbf{Claim 3}
~{\em Operator $\bone$ acts at each $z_i$ with $0\le i<h$.}
\medskip

This claim does not impose any conditions on $f,p_2,c_1,c_2$; so it is
applicable to the subgraph $B_v$ of $\frakB$ (taking into account
Corollary~\ref{cor:Bv}). This implies $R_{z'_{i}}\subseteq R_{z_i}=R_v$ for
each $i<h$. The strict inclusion here is excluded by Claim~1(ii). Hence
$R_{z'_{i}}=R_v$, implying $z'_i=z_{i'}$ for some $i'\in\{0,\ldots,h\}$.

To show the desired equality $i'=i+1$ for each $i$, we use the next claim,
denoting the principal vertices $\prv[(f,p_2,f)]$ and $\prv[(f,q_2,f)]$ by
$\check v_0$ and $\check v_1$, respectively.
\medskip

   \noindent\textbf{Claim 4}
~{\em Let $i\ne h/2$. Let $P_1$ be a route (in $\frakB$) from $\check v_0$ to
$z_i$, and $P_2$ a route from $z_i$ to $\check v_1$. Then $k_{\bone}(P_1)=i$
and $k_{\bone}(P_2)=h-i$. In particular, $k_{\bone}(P_1\cdot P_2)=h$.}
\medskip

 \begin{proof}
~We construct a sequence of worms starting with $w(z_i)$ and ending with
$w(\check v_1)$ (aiming to obtain $P_2$ as above). Two cases are possible: (a)
$i>h/2$, and (b) $i<h/2$. In case~(a), we first apply to $w(z_i)$ operator
$\ttwo$ which, in view of $\Vert Y'(z_i)X(z_i)\Vert=i>h-i=\Vert
X(z_i)Y''(z_i)\Vert$, moves $Y'$ by one unit up (obtaining $Y'=(f,p_2+1)$).
Then we make $h-i$ moves with $X$ (obtaining $X=(f,q_2)$), followed by $h-1$
moves with $Y'$. This results in $Y'=Y''=X=(f,q_2)$. And in case~(b), we first
apply operator $\ttwo^{-1}$ which, in view of $\Vert Y'(z_i)X(z_i)\Vert< \Vert
X(z_i)Y''(z_i)\Vert$, moves $Y''$ by one unit down (obtaining $Y''=(f,q_2-1)$).
Then we make $h-i-1$ moves with $X$ (obtaining $X=(f,q_2-1)$), followed by
$h-1$ moves with $Y'$ (obtaining $Y'=(f,q_2-1)$), and finally make one move
with each of $Y'',X,Y'$ (in this order); this results in $Y'=Y''=X=(f,q_2)$.

Since every time we either apply operator $\ttwo$ or $\ttwo^{-1}$, or apply
$\tone$ to a worm $w(z')$ with $R_{z'}\subset R_v$, we can conclude that in
each case the constructed sequence of worms induces a route $P$ from $z_i$ to
$\check v_1$ in $B_v$. The equality $k_{\bone}(P)=h-i$ follows by counting the
number of applications of $\tone$ in the sequence.

A route $P'$ from $\check v_0$ to $z_i$, giving $k_{\bone}(P')=i$, is
constructed in a similar way (by applying the above procedure to $z_i$ and
$\check v_0$ in $B_v^{rev}$). \hfill
\end{proof}

Consider $i\ne h/2,h$. If $i'\ne h/2$, take a route $P_1$ from $\check v_0$ to
$z_i$ and a route $P'_2$ from $z_{i'}$ to $\check v_1$. Let $Q$ be the
concatenation of $P_1$, the $\bar 1$-edge from $z_i$ to $z_{i'}$, and $P'_2$.
By Claim~4, $k_{\bone}(P_1)=i$ and $k_{\bone}(P'_2)=h-i'$; therefore,
$k_{\bone}(Q)=i+1+h-i'$. On the other hand, $k_{\bone}(Q)$ must be equal to $h$
(since $\frakB$ is graded and $k_{\bone}(P_1\cdot P_2)=h$). This implies
$i'=i+1$, as required.

Now let $i=h/2-1$ (when $h$ is even). Then the case $i'=h/2$ ($=i+1$) is only
possible (since $i'\ne h/2$ would imply $i'\ne i+1$, leading to a contradiction
with the argument above). Finally, let $i=h/2$. Considering $B_v^{rev}$ and
$i''=h/2+1$, we have $\bone^{-1} z_{i''}=z_{h/2}$. Hence $i'=i''=h/2=i+1$, as
required.

Thus, (ii) in the proposition is valid for each $v=z_i$ with $i<h$. \medskip

\noindent \underline{\em Case 3}: ~$w(v)$ is a proper VH-worm or a proper
HV-worm or a V-worm with $X(v)=Y''(v)\ne Y'(v)$. (This is the simplest case in
our analysis.) When $w(v)$ is an HV-worm, we have $p_1<q_1$,
~$X'(v)=(2p_1,p_2)$ and $X''(v)=Y'(v)=(2q_1,p_2)$. Then operator $\tone$ moves
$X'$ to $(2p_1+2,\,p_2)$; this gives $R_u\subset R_v$, and (ii) in the
proposition follows from Claim~1(i).

So we may assume that $w(v)$ is the VH-worm with $Y'(v)=(2p_1,p_2)$,
~$Y''(v)=X'(v)=(2p_1,q_2)$ and $X''(v)=(2q_1,q_2)$; it degenerates into a
V-worm when $p_1=q_1$. The following claim will be proved in the Appendix.
\medskip

   \noindent\textbf{Claim 5}
~{\em When $w(v)$ is a VH-worm, $\bone$ acts at $v$ if and only if $q_1<c_1$.}
\medskip

In case $q_1=c_1$, we have (i) in the proposition.

Let $q_1<c_1$. Then $\tone$ acts at $w(v)$ and moves $X''$ by two units to the
right, making the VH-worm $w(u)$ with $Y'(u)=Y'(v)$ and
$X''(u)=(2q_1+2,\,q_2)$. Claim~5 is applicable to $B_u$, whence the vertex
$v':=\bone v$ satisfies $R_{v'}\subseteq R_u$. The case $R_{v'}\ne R_u$ is
excluded (by repeating some reasonings from Case~1 and using Claim~1(ii)).

Let $R_{v'}=R_u$. Then $w(v')$ is either the VH-worm $w(u)$ (yielding (ii) in
the proposition), or the HV-worm with $X'(v')=(2p_1,p_2)$ and
$Y''(v')=(2q_1+2,q_2)$. One easily shows that the latter is impossible.

Thus, the proposition is valid in all cases. \hfill\qed
   \end{proof}

This completes the proofs of Theorems~\ref{tm:B3-B4} and~\ref{tm:A-B}. \medskip

  \noindent
{\bf Remark 5.} Return to a symmetric $A_{2n-1}$-crystal $K=K(c)$ and its
symmetric extract $B=B(\bar c)$. Let $S\Bscr(c)$ be the set of tuples
$a\in\Bscr(c)$ satisfying $a_{2n-i}=a_i$. Then $S\Pi:=\{\prv[a]\colon a\in
S\Bscr(c)\}$ is the set of self-complementary principal vertices in $K$. The
projection $a\mapsto (a_1,\ldots,a_n)=:\bar a$  gives a bijection between
$S\Pi$ and the integer $n$-box $\Bscr(\bar c)$. Also for any $a,a'\in
S\Bscr(c)$ with $a\le a'$, the interval $I(a,a')$ of $K$ between the principal
vertices $\prv[a]$ and $\prv[a']$ is isomorphic to the symmetric
$A_{2n-1}$-crystal $K(a'-a)$; this implies that the symmetric extract from
$I(a,a')$ is isomorphic to the $B_n$-crystal $B(\bar a'-\bar a)$. Thus, $S\Pi$
possesses properties similar to~(P1)--(P2) mentioned in the Introduction for
A-crystals, due to which this set can be regarded as the \emph{principal
lattice} of the $B_{n}$-crystal $B$. Note, however, that $S\Pi$ need not
satisfy property~(P3). This is seen already for $n=2$. Indeed, in this case any
principal vertex $\prv$ is represented by a single point $(x,y)$ (a principal
worm) in the worm model; therefore, in the subcrystal of color 1 (color 2)
containing $\prv$, all vertices correspond to horizontal (resp. vertical) worms
covering $(x,y)$, and none of proper VH- or HV-worms can be used.

A similar construction of principal lattices can be given for C-crystals.

 \section{Deriving $C_n$-crystals from symmetric $A_{2n}$-crystals} \label{sec:A2n}


By an analogue with the construction and results in
Sections~\ref{sec:Bn}--\ref{sec:proofB3-B4}, we can construct and examine the
$n$-colored graphs being symmetric extracts from symmetric $A_{2n}$-crystals.
In this section we show that these graphs are $C_n$-crystals.

Using terminology and notation similar to those in Section~\ref{sec:Bn}, we
consider an $A_{2n}$-crystal $K=K(c)=(V(K),\, E_1\sqcup\ldots \sqcup E_{2n})$
with a parameter $c$ satisfying $c_i=c_{2n+1-i}$, denote by $\bar i$ the pair
of complementary colors $(i,\,i':=2n+1-i)$ for $i=1,\ldots,n$, and consider the
corresponding complementarity involution $\sigma: V(K)\to V(K)$ and the set $S$
of self-complementary vertices $v=\sigma(v)$ of $K$. When $i<n$, the colors $i$
and $i'$ are not neighboring, and, as before, we draw an edge of color $\bar i$
from a vertex $u\in S$ to a vertex $v\in S$ if and only if $v=F_iF_{i'}(u)$
($=F_{i'}F_i(u)$). The edges of color $\bar n=(n,\,n'=n+1)$ are assigned in a
different way: we draw an $\bar n$-edge from $u\in S$ to $v\in S$ if and only
if $v=F_nF_{n'}F_{n'}F_n(u)$ (see an explanation in Remark~6 below).

Let $\frakC=(S,\,E_{\bar 1}\sqcup\ldots\sqcup E_{\bar n})$ be the obtained
$n$-colored graph. One can see that a majority of reasonings of
Section~\ref{sec:Bn} remain applicable to our case, yielding common properties
of $\frakC$ and $\frakB$.
  \begin{prop} \label{pr:Gn}
The symmetric extract $\frakC$ from a symmetric $A_{2n}$-crystal $K(c)$
satisfies axioms (BC1) and (BC2) (from Section~\ref{ssec:typeBC}).
  \end{prop}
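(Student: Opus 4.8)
The plan is to mirror the development of Section~\ref{sec:Bn}, since axioms (BC1) and (BC2) involve only colors $\bar i$ with $i<n$ (for (BC2)) or pairs $\bar i,\bar j$ of non-neighboring colors (for (BC1)), so that the modified color $\bar n=(n,n+1)$ enters at most through its commutation with far-away colors. First I would record the preliminaries of items~1 and~2 of Section~\ref{sec:Bn}: $K=K(c)$ is graded at each color, hence $\frakC$ is graded at each $\bar i$ and, in particular, acyclic. For $i<n$ the complementary color $i'=2n+1-i$ satisfies $|i-i'|=2n+1-2i\ge 3$, so $i$ and $i'$ are non-neighboring; thus the component of $(V(K),E_i\sqcup E_{i'})$ through any $v\in S$ is the Cartesian product of an $i$-path and an $i'$-path, and the argument of item~2 yields the length identities $h_{\bar i}(v)=h_i(v)=h_{i'}(v)$ and $t_{\bar i}(v)=t_i(v)=t_{i'}(v)$ (the analogue of~\refeq{barlength}) for all $i<n$.

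Next I would verify (BC1), that is, (A1) and (A5). For (A1): each operator $F_{\bar i}=F_iF_{i'}$ with $i<n$, and also $F_{\bar n}=F_nF_{n+1}F_{n+1}F_n$, is a composition of invertible partial operators, so every vertex of $\frakC$ has at most one outgoing and at most one incoming $\bar i$-edge; together with acyclicity this makes each monochromatic component a path. For (A5), take $\bar i,\bar j$ with $|i-j|\ge 2$. If $i,j<n$, then the four colors $i,i',j,j'$ are pairwise non-neighboring: indeed $i+j\le 2n-2$ gives $|i-j'|=|i'-j|=2n+1-i-j\ge 3$, while $|i-i'|,|j-j'|\ge 3$ and $|i'-j'|=|i-j|\ge 2$. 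Hence the four-colored subcrystal is a product of four monochromatic paths and $F_{\bar i},F_{\bar j}$ commute wherever both act. If instead $j=n$ and $i\le n-2$, then $i,i'$ are each non-neighboring to both $n$ and $n+1$, so the subcrystal on $\{i,i',n,n+1\}$ factors as an $i$-path times an $i'$-path times an $A_2$-crystal on $\{n,n+1\}$; thus $F_i,F_{i'}$ commute with $F_n,F_{n+1}$ and hence $F_{\bar i}$ commutes with $F_{\bar n}$. In either case each component of $(S,E_{\bar i}\sqcup E_{\bar j})$ is an $A_1\times A_1$-crystal, which is (A5).

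Finally, (BC2) concerns colors $i,j<n$ with $|i-j|=1$, and here the key point is that the modified color $\bar n$ does not appear at all. The complementary colors now satisfy $|i-j|=|i'-j'|=1$ and $|i-j'|=|i'-j|=2n+1-i-j\ge 3$ (using $i+j\le 2n-2$), which are precisely the relations exploited in the proof of Lemma~\ref{lm:B2}. Since that proof uses only these arithmetic relations together with the length identities~\refeq{barlength} for $j<n$ (both of which I have just re-established in the $A_{2n}$ setting), and since the (A4) step there only needs the component of $(V(K),E_i\sqcup E_{i'}\sqcup E_j\sqcup E_{j'})$ to factor as the product of two $A_2$-crystals (on $\{i,j\}$ and on $\{i',j'\}$), the whole argument carries over verbatim, showing that each component of $(S,E_{\bar i}\sqcup E_{\bar j})$ is an $A_2$-crystal.

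The point I would stress is that there is essentially no obstacle at the level of this proposition: the genuinely new ingredient of the $C_n$ construction---the color $\bar n$ defined by $F_nF_{n+1}F_{n+1}F_n$---interacts with (BC1) only through the trivial commutation of distant colors, and is absent from (BC2). The real difficulty, analogous to Theorem~\ref{tm:B3-B4} for the $B$ case, is the verification of the two-colored structure at the neighboring pair $(n-1,n)$, that is, axioms (BC3) and (BC4$'$); that is deferred to a separate argument (a $C_2$ analogue via a worm-type model) and is not claimed here.
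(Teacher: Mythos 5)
Your proposal is correct and takes essentially the same route as the paper, which proves Proposition~\ref{pr:Gn} simply by observing that the reasonings of Section~\ref{sec:Bn} (gradedness, the analogue of~\refeq{barlength}, and Lemma~\ref{lm:B2}) carry over to the $A_{2n}$ setting; your explicit arithmetic checks that $|i-i'|\ge 3$ for $i<n$, that the relevant color pairs remain non-neighboring, and that $F_{\bar n}=F_nF_{n+1}^2F_n$ only enters (BC1) through commutation with distant colors are exactly the verifications the paper leaves implicit.
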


Thus, like the $A_{2n-1}$ case, the key problem is to characterize the
components of the two-colored subgraph of $\frakC$ with colors $\overline{n-1}$
and $\bar n$. This is equivalent to characterizing the extract $(S,E_{\bar
1}\sqcup E_{\bar 2})$ from a symmetric $A_4$-crystal $K(c)$ with colors
1,2,3,4. Our goal is to show the following
  \begin{theorem} \label{tm:C3-C4}
Let $K(c)$ be a symmetric $A_4$-crystal. Then the symmetric extract $\frakC$
from $K$ is the $C_2$-crystal with parameter $\bar c=(c_1,c_2)$ respecting the
Cartan coefficients $m_{12}=-1$ and $m_{21}=-2$.
  \end{theorem}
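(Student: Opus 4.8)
The plan is to follow the template of the proof of Theorem~\ref{tm:B3-B4} in spirit, replacing the $A_3$/worm analysis by its $A_4$ counterpart. Since Proposition~\ref{pr:Gn} already supplies axioms~(BC1) and~(BC2), and since (BC3),(BC4$'$) involve only the colors $\overline{n-1},\bar n$, it suffices to treat $n=2$ and to show that each component of $\frakC=(S,E_{\bar 1}\sqcup E_{\bar 2})$ is a $C_2$-crystal. Because a $C_2$-crystal (with $m_{12}=-1$, $m_{21}=-2$) is a $B_2$-crystal with its two colors interchanged, I would prove the isomorphism against the worm graph of Section~\ref{sec:worm}: concretely, I aim to build a bijection of $\frakC$ onto $W(c_2,c_1)$ under which the color $\bar 2$ goes to $\tilde 1$ and the color $\bar 1$ goes to $\tilde 2$, so that Theorem~\ref{tm:worm-B2} yields the result.

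First I would establish the structural relations for a self-complementary vertex $v\in S$, the exact analogue of Lemma~\ref{lm:relat}. Here $v$ lies in an upper subcrystal $\Kup[a]$ (colors $1,2,3$) and in the complementary lower subcrystal $\Klow[b]$ (colors $2,3,4$), whose intersection is now a two-colored middle $A_2$-crystal $\tilde K$ (colors $2,3$). Applying Theorem~\ref{tm:mainA} for $A_4$ together with the complementarity $b=(a_4,a_3,a_2,a_1)$ and $\nabla_4=\Delta_1$, $\nabla_3=\Delta_2$, $\nabla_2=\Delta_3$, the relation $\nabla_i=-\Delta_{i-1}$ forces $\Delta_2=0$ and $\Delta_3=-\Delta_1=:-\delta$, i.e. $\Delta=(\delta,0,-\delta)$; by~\refeq{par_mid} this makes $\tilde K$ the \emph{symmetric} $A_2$-crystal $K(c_2+\delta,\,c_2+\delta)$, and it pins down $a$ up to two degrees of freedom via $a_4-a_1=\delta^+$ and $a_2-a_3=(-\delta)^+$. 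The involution $\sigma$ restricts to the color-swapping involution of $\tilde K$, so $S\cap\tilde K$ is exactly its fixed-point set.

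The genuinely new ingredient, relative to the $B$ case, is the analysis of these fixed points: whereas for $A_3$ the middle subcrystals were single $2$-paths, here I must describe the color-swap fixed points inside the symmetric $A_2$-crystal $K(p,p)$ with $p:=c_2+\delta$ and the way the double operator $F_{\bar 2}=F_2F_3F_3F_2$ moves among them. This is precisely the $n=1$ situation, and I would resolve it with the sail model recalled in Section~\ref{ssec:A2}: the fixed points form a single directed path, traversed step by step by $F_{\bar 2}$, whose vertices I parametrize by a tail length $\ell$ subject to an explicit linear bound (the analogue of~\refeq{constr3}). Collecting the box conditions on $a$ as in~\refeq{constr1}, the sign constraints tying $a$ to $\delta$, and the bound on $\ell$, I would then prove the $C$-analogue of Theorem~\ref{tm:S-constr}: the quadruples $(a_1,a_2,\delta,\ell)$ satisfying these inequalities are in bijection with $S$, realizing $S$ as the integer points of a $4$-dimensional polytope.

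With this parametrization in hand I would define a map $v\mapsto w(v)$ into $W(c_2,c_1)$ by rules analogous to~\refeq{rule1}--\refeq{rule3} and verify, exactly as in Proposition~\ref{pr:S-worm}, that it is a bijection carrying the within-middle color $\bar 2$ onto the color $\tilde 1$ of the worm graph. The main obstacle, as in the $B$ case, is the crossing color $\bar 1=(1,4)$: showing that $v\mapsto w(v)$ transports $\bar 1$-edges to $\tilde 2$-edges. Here I expect to reproduce the inductive scheme of Proposition~\ref{pr:edg_color1}, running induction on the length of $w(v)$, distinguishing the worm types (H-, V-, VH-, HV-worms), and reducing the delicate cases to gradedness arguments on the intervals $B_v$ via the analogue of Corollary~\ref{cor:Bv}; the supporting combinatorial claims would be relegated to the Appendix. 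The two steps I expect to cost the most work are the sail-model description of the double move $F_{\bar 2}$ on the symmetric middle $A_2$-crystal, and this inductive verification of the short crossing color $\bar 1$.
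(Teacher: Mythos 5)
Your proposal follows essentially the same route as the paper's own proof: the derivation of $\Delta=(\delta,0,-\delta)$ from Theorem~\ref{tm:mainA} plus complementarity, the identification of the self-complementary vertices of the symmetric middle $A_2$-crystal with the diagonal of its principal lattice (the paper's Lemma~\ref{lm:v-diag}, with your $\ell$ playing the role of the paper's $\rho$), the resulting four-parameter polytope description (Theorem~\ref{tm:poly-4}), the color-swapped bijection onto $W(c_2,c_1)$ sending $\bar 2$ to $\tilde 1$, and the inductive gradedness argument for the crossing color $\bar 1$ with the technical claims deferred to the Appendix. This matches the paper's Sections~\ref{ssec:C1}--\ref{ssec:C3} in both structure and substance.
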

(Cf. Theorem~\ref{tm:B3-B4}.) The proof will consist of several stages, which
are analogous, to some extent, to those in Section~\ref{sec:proofB3-B4}. We
will identify the vertices of $\frakC$ with certain quadruples (somewhat
different from those in the $A_3\to B_2$ reduction), describe the polytope
spanned by these quadruples and show their one-to-one correspondence to the
vertices of the worm graph $W(c_2,c_1)$. This gives a counterpart of
Proposition~\ref{pr:S-worm} (but with colors $\tilde 1,\tilde 2$ swapped).
Theorem~\ref{tm:C3-C4} will be obtained by comparing the edges in both graphs.
As a result, the desired relation between A- and C-crystals follows.
  \begin{corollary} \label{cor:A-C}
\rm{(Cf.~\cite[Th.~3.2.4]{NS}.)} The symmetric extract from a symmetric
$A_{2n}$-crystal is a $C_n$-crystal, and any $C_n$-crystal is obtained in this
way.
  \end{corollary}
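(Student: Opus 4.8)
I would deduce Corollary~\ref{cor:A-C} from the $n=2$ case, Theorem~\ref{tm:C3-C4}, in precisely the way that Theorem~\ref{tm:A-B} was obtained from Theorem~\ref{tm:B3-B4} in Sections~\ref{sec:Bn}--\ref{sec:proofB3-B4}. By Proposition~\ref{pr:Gn} the extract $\frakC$ already satisfies (BC1) and (BC2) for every $n$, so the forward assertion reduces to describing the two-colored components with colors $\overline{n-1}$ and $\bar n$. Any component of $(V(K),E_{n-1}\sqcup E_n\sqcup E_{n+1}\sqcup E_{n+2})$ that meets $S$ is itself symmetric, so this is exactly the symmetric $A_4$ situation; invoking Theorem~\ref{tm:C3-C4} then yields (BC3) and (BC4$'$), and hence $\frakC$ is a $C_n$-crystal. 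For the converse I would check, as in item~4 of Section~\ref{sec:Bn}, that the extract of the symmetric $A_{2n}$-crystal with $c_i=c_{2n+1-i}=\bar c_i$ has parameter $\bar c$; since $\bar c$ ranges over all of $\Zset_+^n$ and $C_n$-crystals are determined by their parameter, every $C_n$-crystal arises as such an extract.

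\textbf{The heart: Theorem~\ref{tm:C3-C4}.} The real work is the $n=2$ case, and I would split it into the three stages of Sections~\ref{ssec:B1}--\ref{ssec:B3}. Stage~1 is a structural lemma for a self-complementary $v\in S$ lying in an upper subcrystal $\Kup[a]$ (colors $1,2,3$) and a lower subcrystal $\Klow[b]$ (colors $2,3,4$). Combining relation~\refeq{nablai} of Theorem~\ref{tm:mainA} with the identities $\nabla_{i'}=\Delta_i$ forced by $v=\sigma(v)$ pins the deviation of $v$ in $\Piup[a]$ down to $\Delta=(\delta,0,-\delta)$, and~\refeq{bi} then gives $b=(a_4,a_3,a_2,a_1)$ with $a_4=a_1+\delta^+$ and $a_3=a_2+\delta^-$. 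By~\refeq{par_mid} the middle subcrystal $\Kmid$ with colors $2,3$ has the symmetric parameter $\parmid_2=\parmid_3=c_2+\delta$, so $\Kmid$ is itself a symmetric $A_2$-crystal, and the $\bar 2$-edges of $\frakC$ (given by $F_2F_3F_3F_2$) run among its self-complementary vertices.

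\textbf{Quadruples and worms.} The genuinely new feature --- absent in the B-case, where $\Kmid$ was a plain $2$-path --- is that here $\Kmid$ is two-dimensional; its self-complementary vertices nevertheless form a single $\bar 2$-path (the degenerate $C_1$-extract), and I would record the position of $v$ on it by a tail length $\ell$. Each $v\in S$ is then described by a quadruple $(a_1,a_2,\delta,\ell)$, and I would carve out the polytope of admissible quadruples from the box bounds on $a$ and $\Delta$ together with the appropriate length constraints on $\ell$, giving the analogue of Theorem~\ref{tm:S-constr}. Mapping these quadruples to worms in the rectangle $R(c_2,c_1)$ --- with the horizontal axis now carrying the $\bar 2$-data and the vertical axis the $\bar 1$-data --- I would prove the counterpart of Proposition~\ref{pr:S-worm}: a bijection $v\mapsto w(v)$ onto the vertices of $W(c_2,c_1)$ under which the $\bar 2$-edges correspond to the $\tilde{\mathbf 1}$-edges (the roles of $\tilde 1$ and $\tilde 2$ being swapped relative to the B-case, so as to realize $m_{12}=-1$, $m_{21}=-2$).

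\textbf{Main obstacle.} The difficult final step, mirroring Section~\ref{ssec:B3} and Proposition~\ref{pr:edg_color1}, is to verify that $v\mapsto w(v)$ carries the $\bar 1$-edges of $\frakC$ exactly onto the $\tilde{\mathbf 2}$-edges of $W(c_2,c_1)$. I would run an induction on the length of the worm, using the gradedness of the principal intervals $B_v$ (the analogue of Corollary~\ref{cor:Bv}) and the self-isomorphism $\frakC\cong\frakC^{rev}$ to reduce to a case analysis over H-, V-, VH- and HV-worms, isolating the delicate ``when does $F_{\bar 1}$ act'' statements as claims to be settled in the Appendix. I expect this edge-matching to be the principal difficulty; it is made heavier than in the B-case by the doubly-laced definition of the $\bar 2$-edge through $F_2F_3F_3F_2$, whose bookkeeping must be threaded consistently through Stages~1--3 and the induction.
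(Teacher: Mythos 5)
Your proposal is correct and follows essentially the same route as the paper: Proposition~\ref{pr:Gn} plus the reduction of (BC3),(BC4$'$) to the symmetric $A_4$ case, then the three-stage proof of Theorem~\ref{tm:C3-C4} via the deviation computation $\Delta=(\delta,0,-\delta)$, the quadruple description $(a_1,a_2,\delta,\rho)$ of self-complementary vertices (the paper's $\rho$ is your $\ell$, the diagonal coordinate in the symmetric middle $A_2$-subcrystal), the bijection with $W(c_2,c_1)$ carrying $\bar 2$-edges to $\tilde 1$-edges, and the inductive edge-matching for color $\bar 1$ with the delicate acting-criteria deferred to the Appendix. The converse via the parameterization of $C_n$-crystals is also as in the paper.
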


Figure~\ref{fig:C-cryst} illustrates the ``simplest''  symmetric $A_4$-crystals
$K(1,0,0,1)$ and its symmetric extract $C(1,0)$.

  \begin{figure}[hbt]                  
  \begin{center}
  \unitlength=1mm
  \begin{picture}(120,55)
   \put(10,0){\begin{picture}(70,60)
  \put(0,0){\circle*{1}}
  \put(12,-4){\circle*{1}}
  \put(12,4){\circle*{1}}
  \put(24,0){\circle*{1}}
  \put(16,8){\circle*{1}}
  \put(8,16){\circle*{1}}
  \put(28,12){\circle*{1}}
  \put(23,27){\circle*{1}}
  \put(20,12){\circle*{1}}
  \put(25,27){\circle*{1}}
  \put(20,36){\circle*{1}}
  \put(28,36){\circle*{1}}
  \put(24,48){\circle*{1}}
  \put(36,44){\circle*{1}}
  \put(36,52){\circle*{1}}
  \put(32,40){\circle*{1}}
  \put(40,32){\circle*{1}}
  \put(11,25){\circle*{1}}
  \put(20,22){\circle*{1}}
  \put(35,17){\circle*{1}}
  \put(12.5,18.5){\circle*{1}}
  \put(27.5,23.5){\circle*{1}}
  \put(36.5,26.5){\circle*{1}}
  \put(48,48){\circle*{1}}
  \put(0,0){\circle{2.5}}
  \put(24,0){\circle{2.5}}
  \put(24,48){\circle{2.5}}
  \put(48,48){\circle{2.5}}
  \put(0,0){\vector(3,-1){12}}
  \put(0,0){\vector(3,1){12}}
  \put(12,-4){\vector(3,1){11}}
  \put(12,4){\vector(3,-1){11}}
  \put(12,-4){\vector(1,3){4}}
  \put(12,4){\vector(-1,3){4}}
  \put(24,0){\vector(1,3){4}}
  \put(24,0){\vector(-1,3){4}}
  \put(16,8){\vector(3,1){12}}
  \put(8,16){\vector(3,-1){12}}
  \put(28,12){\vector(-1,3){5}}
  \put(23,27){\vector(-1,3){3}}
  \put(20,12){\vector(1,3){5}}
  \put(25,27){\vector(1,3){3}}
  \put(20,36){\vector(1,3){3.7}}
  \put(28,36){\vector(-1,3){3.7}}
  \put(24,48){\vector(3,-1){12}}
  \put(24,48){\vector(3,1){12}}
  \put(36,44){\vector(3,1){11}}
  \put(36,52){\vector(3,-1){11}}
  \put(20,36){\vector(3,1){12}}
  \put(32,40){\vector(1,3){4}}
  \put(28,36){\vector(3,-1){12}}
  \put(40,32){\vector(-1,3){4}}
  \put(8,16){\vector(1,3){3}}
  \put(11,25){\vector(3,-1){9}}
  \put(20,22){\vector(3,-1){15}}
  \put(35,17){\vector(1,3){5}}
  \put(16,8){\vector(-1,3){3.5}}
  \put(12.5,18.5){\vector(3,1){15}}
  \put(27.5,23.5){\vector(3,1){9}}
  \put(36.5,26.5){\vector(-1,3){4.5}}
  \put(-10,35){\vector(3,-1){9}}
  \put(-10,35){\vector(3,1){9}}
  \put(-10,35){\vector(1,3){3}}
  \put(-10,35){\vector(-1,3){3}}
  \put(-2.5,28.5){1}
  \put(0,37){4}
  \put(-7,43){2}
  \put(-14,45){3}
  \put(35,0){$K(1,0,0,1)$}
  \put(60,20){$\Longrightarrow$}
    \end{picture}}
    \put(90,5){\begin{picture}(40,40)
  \put(0,0){\circle*{1}}
  \put(20,0){\circle*{1}}
  \put(20,30){\circle*{1}}
  \put(40,30){\circle*{1}}
  \put(0,0){\circle{2.5}}
  \put(20,0){\circle{2.5}}
  \put(20,30){\circle{2.5}}
  \put(40,30){\circle{2.5}}
  \put(0,0){\vector(1,0){19}}
  \put(20,0){\vector(0,1){29}}
  \put(20,30){\vector(1,0){19}}
  \put(30,5){$C(1,0)$}
  \put(9,-4.5){$\bar 1$}
  \put(21.5,14){$\bar 2$}
  \put(29,31){$\bar 1$}
 \end{picture}}
 \end{picture}
 \end{center}
  \caption{Creation of $C(1,0)$.}
  \label{fig:C-cryst}
  \end{figure}
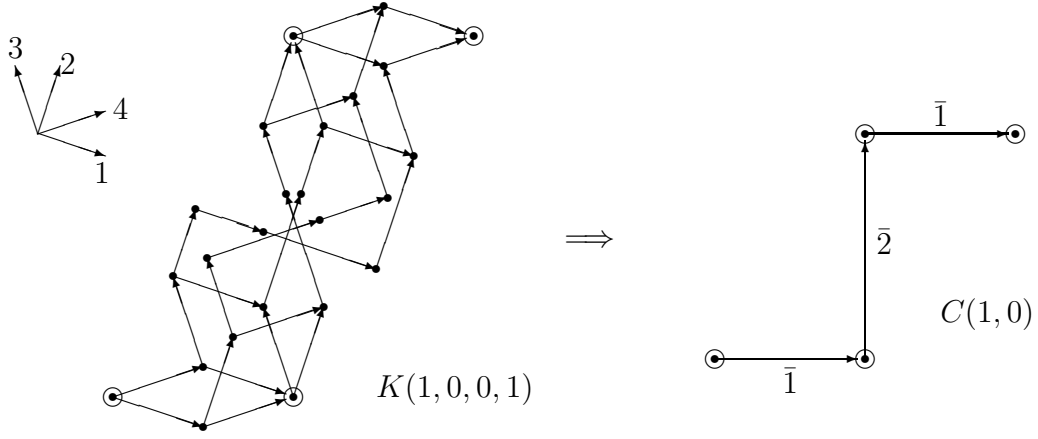

 \subsection{A polyhedral description for the vertices of $\frakC$ in case $n=2$}
 \label{ssec:C1}

Consider a self-complementary vertex $v\in S$. Take the corresponding objects
related to $v$: the upper subcrystal $\Kup[a]$ (with colors 1,\,2,\,3) and the
lower subcrystal $\Klow[b]$ (with colors 2,\,3,\,4), the corresponding
deviations $\Delta=(\Delta_1,\Delta_2,\Delta_3)$ and
$\nabla=(\nabla_2,\nabla_3,\nabla_4)$, the hearts $z:=\vup[a|\Delta]$ and
$z':=\vlow[b|\nabla]$, and the middle subcrystal $\Kmid$ (with colors 2,\,3).
(So $\Kmid$ contains $v$ and has the heart $z$ in $\Piup[a]$, and $z'$ in
$\Pilow[b]$.) Denoting $\delta:=\Delta_1$, we observe that
  \begin{equation} \label{eq:Delta-nabla4}
\Delta=(\delta,0,-\delta)\qquad\mbox{and} \qquad \nabla=(-\delta,0,\delta).
  \end{equation}
(Cf. Lemma~\ref{lm:relat}(iii).) Indeed, by the complementarity, we have
$(\nabla_2,\nabla_3,\nabla_4)= (\Delta_3,\Delta_2,\Delta_1)$. This together
with the relations $\nabla_i=-\Delta_{i-1}$ for $i=2,3,4$ (by~\refeq{nablai})
give $\Delta_1=\nabla_4=-\Delta_3=-\nabla_2$ and $\Delta_2=\nabla_3=-\Delta_2$,
yielding~\refeq{Delta-nabla4}.

We also have $b=(b_1,b_2,b_3,b_4)=(a_4,a_3,a_2,a_1)$ (by the complementarity).
Comparing this with the relations $b_1=a_1+\Delta_1^+$ and $b_2=a_2+\Delta_2^+
+\Delta_1^-$ (by~\refeq{bi}) and using~\refeq{Delta-nabla4}, we obtain
  \begin{equation} \label{eq:a4a3}
  a_4=a_1+\delta^+\qquad \mbox{and} \qquad a_3=a_2+\delta^-.
  \end{equation}
Thus, $a$ and $\Delta$ (as well as $b,\nabla$) are determined by
$a_1,a_2,\delta$. The latter triple obeys:
   \begin{gather}
   0\le a_1\le c_1;\qquad 0\le a_2\le c_2; \label{eq:c1-4} \\
   -a_2 \le\delta\le c_1-a_1. \label{eq:c2-4}
   \end{gather}
Here~\refeq{c1-4} is identical to~\refeq{constr1} (and is obvious),
but~\refeq{c2-4} is somewhat different from~\refeq{constr2}. To
see~\refeq{c2-4}, take the parameter $\parup$ and the heart coordinate
$\heartup$ of $\Kup[a]$; they are $\parup_i=c_i-a_i+a_{i+1}$ and
$\heartup_i=a_{i+1}$ for $i=1,2,3$ (cf.~\refeq{par_up},\refeq{heart_up}). Since
$\Delta=(\delta,0,-\delta)$, the evident relations $-\heartup_i\le\Delta_i\le
\parup_i-\heartup_i$, ~$i=1,2,3$, are reduced to
  $$
  -a_2\le\delta\le c_1-a_1\qquad \mbox{and}\qquad -a_4\le-\delta\le c_3-a_3.
  $$
The left expression is exactly~\refeq{c2-4} (and both inequalities in it are
essential), whereas the right expression is redundant (since~\refeq{a4a3}
implies $a_4\ge a_1+\delta\ge\delta$ and $a_3-\delta\le a_2\le c_2=c_3$).

(An additional fact is that~\refeq{a4a3},\refeq{c1-4},\refeq{c2-4} imply $0\le
a\le c$. This is because $0\le a_1\le a_4=a_1+\delta^+\le c_1=c_4$ and $0\le
a_2+\delta^-=a_3\le a_2\le c_2=c_3$.)

One more, fourth, ingredient in the description of $v$ comes up when we
consider the location of $v$ in the middle subcrystal $\Kmid$ (this is a
symmetric $A_2$-crystal, and our analysis becomes more involved compared with
the $A_3\to B_2$ case where the corresponding middle subcrystal is a path). We
rely on the following (using terminology from Section~\ref{ssec:A2}).
  \begin{lemma} \label{lm:v-diag}
~$\Kmid$ is symmetric, and a vertex $v$ of $\Kmid$ belongs to $S$ if and only
if $v$ lies on the diagonal $D=\{(i,i)\}$ of the critical lattice
$\Pi^{\uparrow\downarrow}$ of $\Kmid$. In particular, $\Kmid$ contains exactly
$|D|$ self-complementary vertices.
  \end{lemma}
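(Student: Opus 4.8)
The plan is to reduce the claim to a fact about symmetric $A_2$-crystals and then settle that fact with the sail model of Section~\ref{ssec:A2}.

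\emph{Reduction.} First I would check that $\Kmid$ is symmetric. Since $2'=3$, the involution $\sigma$ interchanges colors $2$ and $3$, so $\sigma(\Kmid)$ is again a connected subcrystal with colors $2,3$, and it contains $\sigma(v)=v$; as $\{2,3\}$ is $\sigma$-stable, $\sigma$ carries maximal $(2,3)$-subcrystals to maximal ones, and uniqueness of the $(2,3)$-subcrystal through $v$ forces $\sigma(\Kmid)=\Kmid$. Because $\sigma$ preserves edge directions it sends the unique source of $\Kmid$ to itself, so that source is self-complementary and $\sigma|_{\Kmid}$ is precisely the complementarity involution of $\Kmid$ regarded as a symmetric $A_2$-crystal. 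Symmetry is confirmed by~\refeq{par_mid}: with $\Delta=(\delta,0,-\delta)$ from~\refeq{Delta-nabla4} one gets $\parmid_2=c_2+\delta$ and $\parmid_3=c_3+\delta$, equal because $c_2=c_3$. Writing $m:=c_2+\delta$, we have $\Kmid\cong K(m,m)$.

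\emph{Principal vertices.} Next I would pin down $\sigma$ on the principal lattice $\Pi^{\uparrow\downarrow}$ using the operator strings of Proposition~\ref{pr:fund_string}. For the two colors $2<3$ these are $S_{2,1}=F_3F_2$ and $S_{2,2}=F_2F_3$, so $\sigma$ (interchanging $F_2,F_3$) swaps them: $\sigma(S_{2,1})=S_{2,2}$ and $\sigma(S_{2,2})=S_{2,1}$. Applying $\sigma$ to $\prv^{\uparrow\downarrow}[\alpha,\beta]=S_{2,2}^{\beta}S_{2,1}^{\alpha}(\tilde s)$, where $\tilde s$ is the source of $\Kmid$, and using that the strings commute on $\Pi^{\uparrow\downarrow}$, gives
\[
\sigma\big(\prv^{\uparrow\downarrow}[\alpha,\beta]\big)=\prv^{\uparrow\downarrow}[\beta,\alpha].
\]
Hence a principal vertex is self-complementary exactly when $\alpha=\beta$, i.e. exactly on the diagonal $D$, while off-diagonal principal vertices are moved by $\sigma$.

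\emph{Non-principal vertices.} It remains to show that no non-principal vertex of $\Kmid$ is self-complementary; this is the crux. I would realize $K(m,m)$ by the sail model and consider the geometric coordinate-swap $\tau\colon(i,j)\mapsto(j,i)$. It carries horizontal (color-$2$) edges to vertical (color-$3$) ones and fixes the corner $(0,0)$, and it interchanges each right sail $R_j$ with the left sail $L_j$ of the same index. A check against the gluing rule — $(i,i)\in D(R_j)$ is identified with $(j,j)\in D(L_i)$ — shows $\tau$ respects all identifications and hence descends to a well-defined color-swapping automorphism of $K(m,m)$ fixing the source. Since such an automorphism is unique (every vertex is reached from $\tilde s$ by a word in $F_2^{\pm1},F_3^{\pm1}$, whose $\sigma$-image is prescribed), $\tau=\sigma|_{\Kmid}$. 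Finally a diagonal (glued, critical) vertex $(i,i)$ is fixed by $\tau$, whereas any non-critical vertex lies strictly inside a single sail, where $(i,j)$ with $i\ne j$ is sent to the distinct, unglued vertex $(j,i)$ of the opposite sail; thus the fixed-point set of $\sigma|_{\Kmid}$ is exactly $D$. Combined with the previous paragraph this yields the asserted equivalence, and $|D|=m+1$ is the stated count.

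The step I expect to be the main obstacle is the identification $\tau=\sigma|_{\Kmid}$: one must match the abstract, path-defined involution to the explicit reflection of the sail model and carefully track how the coordinate swap permutes and re-glues the sails. Notably, the naive weight argument does not suffice here — since $\sigma$ preserves $h_i,t_i$ up to the color swap, a non-principal vertex can satisfy $h_2(v)=h_3(v)$, $t_2(v)=t_3(v)$ without being fixed — so an explicit model of $\sigma$ is genuinely required. Once the identification is in place, locating the fixed points is immediate.
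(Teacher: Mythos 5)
Your proposal is correct and follows essentially the same route as the paper's proof: establish that $\Kmid$ is a symmetric $A_2$-crystal, show via the operator strings $F_2F_3$ and $F_3F_2$ that $\sigma$ acts on the principal lattice by swapping coordinates (so fixed principal vertices are exactly the diagonal), and rule out non-principal fixed points by observing that $\sigma$ exchanges the right and left sails. The paper states this last step in one sentence where you identify $\sigma|_{\Kmid}$ with the explicit coordinate-swap of the sail model; your extra care there is sound but not a different argument.
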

\begin{proof} Take a path in $\Kmid$ going from $v$ to the sink
$\tilde t$ of $\Kmid$, and let $P'$ be the path in $K$ going from $v$ and
complementary to $P$. Then $P'$ is contained in $\Kmid$ (since the edges of
both $P,P'$ have colors only 2 and 3). Since the end $\tilde t$ of $P$ has no
outgoing edges of colors 2 and 3, so does the end $\sigma(\tilde t)$ of $P'$.
This implies $\sigma(\tilde t)=\tilde t$, i.e. $\tilde t\in S$. Similarly,
considering a path in $\Kmid$ going from the source $\tilde s$ of $\Kmid$ to
$v$ (or to $\tilde t$), we can conclude that $\tilde s\in S$.

Consider a vertex $u$ of $\Kmid$. If $u$ belongs to the critical lattice
$\Pi^{\uparrow\downarrow}$ and has coordinates $(\alpha,\beta)$ in it, then $u$
is expressed as $(\bftwo\bfthree)^\beta(\bfthree\bftwo)^\alpha(\tilde s)$. Then
the complementary vertex $\sigma(u)$ is
$(\bfthree\bftwo)^\beta(\bftwo\bfthree)^\alpha(\tilde s)$, and $u=\sigma(u)$
holds if and only if $\alpha=\beta$, i.e. $u$ lies on the diagonal $D$. Also
when $u=\tilde t$, the pair $(\alpha,\beta=\alpha)$ becomes the parameter of
$\Kmid$; so $\Kmid$ is symmetric. Finally, if $u$ is not in
$\Pi^{\uparrow\downarrow}$ and belongs to a right (left) sail, then $\sigma(u)$
must belong to the complementary left (resp. right) sail, whence $\sigma(u)\ne
u$. \hfill
\end{proof}

  \noindent
{\bf Remark 6.} For two consecutive elements $u=(i,i)$ and $w=(i+1,i+1)$ of
$D$, we have $w=\bftwo\bfthree\bfthree\bftwo(u)$. Also
$\bfthree\bftwo(u)\ne\bftwo\bfthree(u)$. This leads to the following
consequence of Lemma~\ref{lm:v-diag}: if $u,w$ are two self-complementary
vertices of a symmetric $A_{2n}$-crystal $K$ and if $w$ is obtained from $u$ by
applying a string of operators $F_n$ and $F_{n+1}$, then
$w=(F_nF_{n+1}^2F_n)^q(u)$ for some integer $q\ge 0$. This justifies the
definition of edges of color $\bar n$ in the $n$-colored extract $\frakC$.
\medskip

  \noindent
{\bf Remark 7.} For a vertex $u$ of the $A_4$-crystal $K$ and a color $i$,
define $\eps_{i}(u):=h_{i}(u)-t_{i}(u)$, where $h_{i}(u)$ and $t_{i}(u)$ are
the lengths of the maximal paths in $K$ having color $i$ and going from $u$ and
to $u$ , respectively (cf. Section~\ref{ssec:typeA}). For a vertex $u$ of
$\frakC$ and a color $\bar i$, the values $h_{\bar i},\, t_{\bar i}$ and
$\eps_{\bar i}(u)$ are defined in a similar way. Considering an edge $(u,w)$ of
color $\bar 2$ in $\frakC$, we have (using axiom (A2) for $K$):
  \begin{multline*}
 \qquad\eps_{\bar 1}(u)=\eps_{1}(u)=\eps_{1}(\bftwo(u))-1=\eps_{1}(\bfthree\bftwo(u))-1\\
=\eps_{1}(\bfthree\bfthree\bftwo(u))-1=\eps_{1}(\bftwo\bfthree\bfthree\bftwo(u))-2
 =\eps_{\bar 1}(w)-2,
\qquad
  \end{multline*}
in view of $w=\bftwo\bfthree\bfthree\bftwo(u)$ and $u,w\in S$. In its turn, an
edge $(u,w)$ of color $\bar 1$ in $\frakC$ gives
 $$
 \eps_{\bar 2}(u)=\eps_{2}(u)=\eps_{2}(\bfone(u))-1=\eps_{2}(\bffour\bfone(u))-1
 =\eps_{\bar 2}(w)-1.
 $$
As a consequence for $n$-colored extracts, we obtain the relation
$\eps_{\bar{n-1}}(u)-\eps_{\bar{n-1}}(w)=-2$ for an edge $(u,w)$ of color
$\bar{n}$, and the relation $\eps_{\bar{n}}(u)-\eps_{\bar{n}}(w)=-1$ for an
edge $(u,w)$ of color $\bar{n-1}$. This hints that the pair of colors
$\bar{n-1},\,\bar{n}$ behaves as prescribed by the Cartan coefficients
$m_{n-1,n}$ and $m_{n,n-1}$ for $C_n$-crystals figured in Axiom~(BC4$'$).
\smallskip

Return to $v$ as above. Using~\refeq{par_mid}, we express the parameter
$\parmid$ of $\Kmid$ by
  \begin{equation} \label{eq:c2c3mid}
  \parmid_2=c_2-\Delta_2+\Delta_1=c_2+\delta\qquad \mbox{and} \qquad
  \parmid_3=\parmid_2.
  \end{equation}

By Lemma~\ref{lm:v-diag}, the vertex $v$ is a point $(\rho,\rho)$ in the
diagonal $D$. Then~\refeq{c2c3mid} implies that the number $\rho$ satisfies
  \begin{equation} \label{eq:rho_ineq}
  0\le\rho\le c_2+\delta.
  \end{equation}

The obtained quadruple $(a_1(v),a_2(v),\delta(v),\rho(v))$ determining a vertex
$v\in S$ is just what we call the \emph{description} of $v$ in the graph
$\frakC$ with the ``first'' color $\bar 1=(1,\,4)$ and the ``second'' color
$\bar 2=(2,\,3)$. It satisfies \refeq{c1-4},\refeq{c2-4},\refeq{rho_ineq}. It
is straightforward to check the corresponding converse property, and we can
conclude with the following polyhedral result (cf. Theorem~\ref{tm:S-constr}
for the $A_3$ case).
  \begin{theorem} \label{tm:poly-4}
The above correspondence $v\mapsto (a_1,a_2,\delta,\rho)$ gives a bijection
between the set of self-complementary vertices of a symmetric $A_4$-crystal
$K(c)$ and the set of integer solutions to the linear system
{\rm\{\refeq{c1-4},\refeq{c2-4},\refeq{rho_ineq}\}}.
 \end{theorem}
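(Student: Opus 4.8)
The forward correspondence $v\mapsto(a_1(v),a_2(v),\delta(v),\rho(v))$ has already been shown to carry each self-complementary vertex to an integer solution of the system \{\refeq{c1-4},\refeq{c2-4},\refeq{rho_ineq}\}; what remains is to produce an inverse, and the plan is simply to run the preceding construction backwards. Starting from an integer solution $(a_1,a_2,\delta,\rho)$, I first recover the locus $a$ by putting $a_4:=a_1+\delta^+$ and $a_3:=a_2+\delta^-$ as forced by \refeq{a4a3}, together with its complement $b:=(a_4,a_3,a_2,a_1)$; the relations \refeq{a4a3} together with \refeq{c1-4},\refeq{c2-4} guarantee $0\le a\le c$, so $\Kup[a]$ and the complementary subcrystal $\Klow[b]=\sigma(\Kup[a])$ are genuinely defined. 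Comparing \refeq{c1-4},\refeq{c2-4} with the parameter and heart formulas \refeq{par_up},\refeq{heart_up} for $\Kup[a]$ shows that the bounds $-\heartup_i\le\Delta_i\le\parup_i-\heartup_i$ hold for the deviation $\Delta:=(\delta,0,-\delta)$, so the upper lattice $\Piup[a]$ contains the vertex $z:=\vup[a|\Delta]$, and I let $\Kmid$ be the middle subcrystal (the lower subcrystal of $\Kup[a]$) whose heart in $\Piup[a]$ is $z$.

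The crux is to certify that $\Kmid$ is symmetric, for only then does it contain self-complementary vertices. Here I apply Theorem~\ref{tm:mainA}: relations \refeq{bi},\refeq{nablai} applied to $(a,\Delta)$ return exactly $\zeta(a,\Delta)=(b,\nabla)$ with the complementary deviation $\nabla=(-\delta,0,\delta)$ of \refeq{Delta-nabla4}, so $\Kmid$ is simultaneously the upper subcrystal of $\Klow[b]$ through $\vlow[b|\nabla]$. On the other hand, applying the complementarity involution $\sigma$ to $\Kmid$ (a lower subcrystal of $\Kup[a]$) yields an upper subcrystal of $\sigma(\Kup[a])=\Klow[b]$; tracking the heart $z$ through $\sigma$ and using that $\sigma$ reverses the color order shows that $\sigma(\Kmid)$ carries the very same encoding $(b,\nabla)$. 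Since a middle subcrystal is determined by its encoding, $\sigma(\Kmid)=\Kmid$, i.e.\ $\Kmid$ is symmetric. Now Lemma~\ref{lm:v-diag} applies: the self-complementary vertices of $\Kmid$ are precisely the diagonal points $(\rho,\rho)$ of its critical lattice, which by \refeq{c2c3mid} has parameter $\parmid_2=\parmid_3=c_2+\delta$; thus $\rho$ ranges over $0\le\rho\le c_2+\delta$, and \refeq{rho_ineq} guarantees a diagonal vertex $v\in S$ with the prescribed $\rho$. By construction the description of this $v$ is $(a_1,a_2,\delta,\rho)$.

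Injectivity is then immediate, since the reconstruction is canonical: the triple $(a_1,a_2,\delta)$ fixes $a$ and $\Delta$, hence $\Kup[a]$ and its middle subcrystal $\Kmid$, while $\rho$ singles out the unique diagonal vertex $(\rho,\rho)$ of $\Kmid$. The two maps are therefore mutually inverse and the correspondence is a bijection, exactly as in the $A_3\to B_2$ case (Theorem~\ref{tm:S-constr}). I expect the one genuinely delicate point to be the symmetry of $\Kmid$: it is not a formal consequence of the linear constraints but rests on the compatibility of the $\zeta$-map of Theorem~\ref{tm:mainA} with the complementarity $\sigma$, namely that $\zeta(a,\Delta)$ returns precisely the complementary pair $(b,\nabla)$ with $b=\sigma(a)$, which is what pins $\sigma(\Kmid)$ back onto $\Kmid$.
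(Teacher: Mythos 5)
Your proof is correct and follows the same route as the paper, which establishes the forward direction in Section~\ref{ssec:C1} and declares the converse ``straightforward to check'' by analogy with Theorem~\ref{tm:S-constr}: you invert the construction via $a_4:=a_1+\delta^+$, $a_3:=a_2+\delta^-$, recover $\Kmid$ from $(a,\Delta)$, and use Lemma~\ref{lm:v-diag} to locate the diagonal vertex with coordinate $\rho$. Your explicit argument that $\sigma(\Kmid)=\Kmid$ --- via the compatibility of $\zeta(a,\Delta)=(b,\nabla)$ with the complementarity involution and the uniqueness of a middle subcrystal with a given encoding --- supplies exactly the detail the paper elides, and it is the right one to isolate, since in the $A_4$ case (unlike $A_3$, cf.\ Lemma~\ref{lm:relat}(iv)) the heart $\vup[a|\Delta]$ need not itself be self-complementary when $\delta>0$, so one genuinely needs the symmetry of the whole subcrystal rather than of a single vertex.
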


 \subsection{Relation to the worm graph}
 \label{ssec:C2}

Based on the above linear system, we can associate the vertices of $\frakC$ to
worms, but now the colors $\tilde 1$ and $\tilde 2$ in the worm model turn out
to be ``swapped'': they become related to colors $\bar 2$ and $\bar 1$ in
$\frakC$, respectively (which is agreeable with Remark~7).

More precisely, we should deal with the reversed parameter
$c'=(c'_1,c'_2):=(c_2,c_1)$ and accordingly consider the worm graph $W(c')$
whose vertices (worms) live in the rectangle $R(c')=\{(\alpha,\beta)\colon 0\le
\alpha\le 2c_2,\, 0\le\beta\le c_1\}$. Given $v\in S$ and its corresponding
$a,\delta,\rho$, we construct the worm $w=w(v)=(X',X'',Y',Y'')$ as follows:
  \begin{gather}
  Y':=(a_2+a_3,\,a_1); \qquad Y'':=(a_2+a_3,\, a_4); \label{eq:C-YY} \\
  X':=(2\min\{\rho,a_3\},\, a_1+\min\{(\rho-a_3)^+,\delta^+\});
       \label{eq:C-Xp} \\
  X'':=(2a_2+2(\rho-a_2-\delta)^+,\, a_1+\min\{(\rho-a_3)^+,\delta^+\}).
       \label{eq:C-Xpp}
  \end{gather}

These settings look cumbersome, and to make them more comprehensible, let us
define the following points on $R(c')$:
  $$
  I_0:=(0,a_1),\quad J_0:=(2a_3,a_1), \quad J_1:=(2a_2,a_4), \quad I_1:=(2c_2,a_4).
  $$

Then, using relations~\refeq{a4a3}--\refeq{c2-4},\refeq{rho_ineq}, we can
rewrite~\refeq{C-YY}--\refeq{C-Xpp} in a more enlightening form, given
in~\refeq{C-hor} and~\refeq{C-vert}.

 \begin{numitem}
Let $\delta\le 0$. Then $a_1=a_4$, ~$a_3=a_2+\delta$, and $w$ is an H-worm such
that:
  \begin{itemize}
\item[(i)] $Y$ is the middle point of the horizontal segment $J_0J_1$;
\item[(ii)] if $\rho\le a_3$, then $X'$ is the point $(2\rho,\,a_1)$ occurring
in the horizontal segment $I_0J_0$, and $X''=J_1$;
\item[(iii)] if $\rho\ge a_3$, then $X'=J_0$, and $X''$ is the point
$(2a_2+2(\rho-a_3),\, a_1)$ occurring in the horizontal segment $J_1I_1$.
 \end{itemize}
  \label{eq:C-hor}
  \end{numitem}
(See Fig.~\ref{fig:wormC}(a),(b).) Here (i) is obvious. Part~(ii) follows from
$\min\{\rho,a_3\}=\rho$, ~$\min\{(\rho-a_3)^+,\delta^+\}=0$ and
$\rho-a_2-\delta=\rho-a_3\le 0$ (since $a_3=a_2+\delta^-$); cf.~\refeq{C-Xp}.
And~(iii) follows from $\min\{\rho,a_3\}=a_3$,
~$\min\{(\rho-a_3)^+,\delta^+\}=0$ and $(\rho-a_2-\delta)^+=\rho-a_3$;
cf.~\refeq{C-Xpp}.

  \begin{figure}[hbt]                  
  \begin{center}
  \unitlength=1mm
  \begin{picture}(145,50)
   \put(0,30){\begin{picture}(50,20)
   \put(0,10){\circle{1}}
   \put(50,10){\circle{1}}
   \put(12,10){\circle*{2}}
   \put(30,10){\circle*{2}}
   \put(40,10){\circle*{2}}
  \put(0,10){\line(1,0){50}}
  \put(0,8){\line(0,1){4}}
  \put(20,8){\line(0,1){4}}
  \put(50,8){\line(0,1){4}}
  \put(-1,12.5){$I_0$}
  \put(17,12.5){$J_0$}
  \put(40,12.5){$J_1$}
  \put(48,12.5){$I_1$}
  \put(12,5){$X'$}
  \put(29,5){$Y$}
  \put(37,5){$X''$}
  \put(23,14){$|\delta|$}
  \put(33,14){$|\delta|$}
  \put(6,14){$\rho$}
  \put(-2,5){$(0,a_1)$}
  \put(47,5){$2c_2$}
  \put(18,5){$2a_3$}
  \put(0,-2){(a)}
 \qbezier(20,10)(25,15)(30,10)
 \qbezier(30,10)(35,15)(40,10)
 \qbezier(0,10)(6,15)(12,10)
{\thicklines
  \put(12,10.1){\line(1,0){28}}
  \put(12,9.9){\line(1,0){28}}
 }
    \end{picture}}
   \put(0,5){\begin{picture}(50,20)
   \put(0,10){\circle{1}}
   \put(50,10){\circle{1}}
   \put(20,10){\circle*{2}}
   \put(30,10){\circle*{2}}
   \put(45,10){\circle*{2}}
  \put(0,10){\line(1,0){50}}
  \put(0,8){\line(0,1){4}}
  \put(40,8){\line(0,1){4}}
  \put(50,8){\line(0,1){4}}
  \put(18,12.5){$J_0$}
  \put(38,12.5){$J_1$}
  \put(18,5){$X'$}
  \put(28,5){$Y$}
  \put(43,5){$X''$}
  \put(36,5){$2a_2$}
  \put(0,0){(b)}
{\thicklines
  \put(20,10.1){\line(1,0){25}}
  \put(20,9.9){\line(1,0){25}}
 }
    \end{picture}}
   \put(80,25){\begin{picture}(40,25)
   \put(0,10){\circle{1}}
   \put(40,23){\circle{1}}
   \put(9,10){\circle*{2}}
   \put(20,10){\circle*{2}}
   \put(20,23){\circle*{2}}
  \put(0,10){\line(1,0){20}}
  \put(20,23){\line(1,0){20}}
  \put(0,8){\line(0,1){4}}
  \put(40,21){\line(0,1){4}}
  \put(-2,13){$I_0$}
  \put(9,12.5){$X'$}
  \put(18,5){$X''=Y'=J_0$}
  \put(4,22){$Y''=J_1$}
  \put(39,25){$I_1$}
  \put(24,15){$\delta$}
  \put(4,13.5){$\rho$}
  \put(-4,5){$(0,a_1)$}
  \put(38,18){$(2c_2,a_4)$}
  \put(20,25){$2a_3$}
  \put(50,10){(c)}
 \qbezier(20,10)(25,16)(20,23)
 \qbezier(0,10)(4.5,14)(9,10)
{\thicklines
  \put(9,10.1){\line(1,0){11}}
  \put(9,9.9){\line(1,0){11}}
  \put(19.9,10){\line(0,1){13}}
  \put(20.1,10){\line(0,1){13}}
 }
    \end{picture}}
   \put(60,-5){\begin{picture}(40,25)
   \put(0,10){\circle{1}}
   \put(40,23){\circle{1}}
   \put(20,16){\circle*{2}}
   \put(20,10){\circle*{2}}
   \put(20,23){\circle*{2}}
  \put(0,10){\line(1,0){20}}
  \put(20,23){\line(1,0){20}}
  \put(0,8){\line(0,1){4}}
  \put(40,21){\line(0,1){4}}
  \put(22,14){$X$}
  \put(20,6){$X''=Y'$}
  \put(14,23){$Y''$}
  \put(5,2){(d)}
{\thicklines
  \put(19.9,10){\line(0,1){13}}
  \put(20.1,10){\line(0,1){13}}
 }
    \end{picture}}
   \put(105,-5){\begin{picture}(40,25)
   \put(0,10){\circle{1}}
   \put(40,23){\circle{1}}
   \put(29,23){\circle*{2}}
   \put(20,10){\circle*{2}}
   \put(20,23){\circle*{2}}
  \put(0,10){\line(1,0){20}}
  \put(20,23){\line(1,0){20}}
  \put(0,8){\line(0,1){4}}
  \put(40,21){\line(0,1){4}}
  \put(28,18){$X''$}
  \put(21.5,7){$Y'$}
  \put(7,23.5){$X'=Y''$}
  \put(5,2){(e)}
{\thicklines
  \put(20,23.1){\line(1,0){9}}
  \put(20,22.9){\line(1,0){9}}
  \put(19.9,10){\line(0,1){13}}
  \put(20.1,10){\line(0,1){13}}
 }
    \end{picture}}
 \end{picture}
 \end{center}
  \caption{(a) $\delta<0$, ~$\rho<a_3$; ~(b) $\delta<0$, ~$\rho>a_3$;
~(c) $\delta>0$, ~$\rho<a_3$; ~(d) $\delta>0$, ~$a_3<\rho<a_3+\delta$; ~(e)
$\delta>0$, ~$\rho>a_3+\delta$.}
  \label{fig:wormC}
  \end{figure}
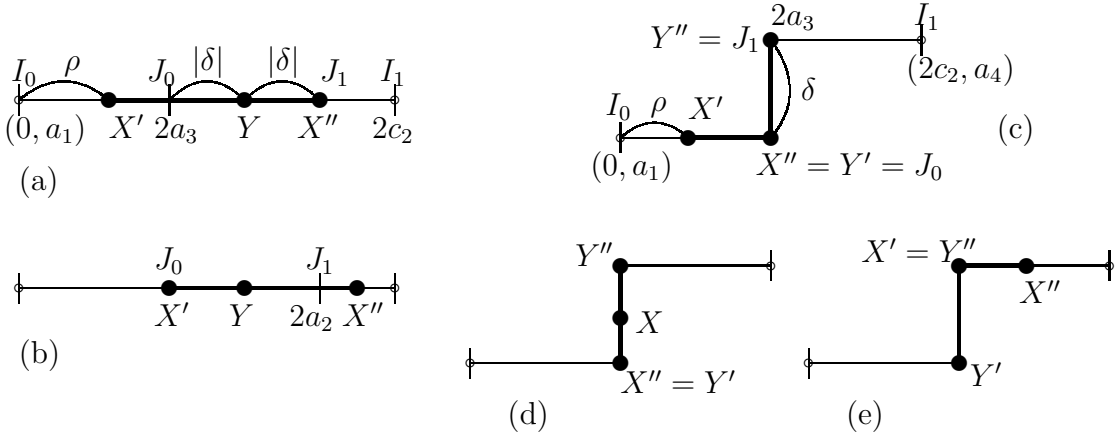
 \begin{numitem}
Let $\delta\ge 0$. Then $a_4=a_1+\delta$, ~$a_3=a_2$, and $w$ is an HV- or V-
or VH-worm such that:
  \begin{itemize}
\item[(i)] $J_0J_1$ is the vertical limb of $w$, ~$Y'=J_0=(2a_3,a_1)$ and
$Y''=J_1=(2a_3,a_4)$;
\item[(ii)] if $\rho\le a_3$, then $w$ is an HV-worm, $X'$ is the point
$(2\rho,\,a_1)$ occurring in the horizontal segment $I_0J_0$, and $X''=J_0$;
\item[(iii)] if $a_3\le\rho\le a_3+\delta$, then $w$ is a V-worm, and $X$ is the point
$(2a_3,\, a_1+\rho-a_3)$ occurring in the vertical limb $J_0J_1$;
\item[(iv)] if $\rho\ge a_3+\delta$, then $w$ is a VH-worm, $X'=J_1$, and $X''$ is the point
$(2a_3+2(\rho-a_3-\delta),\, a_4)$ in the horizontal segment $J_1I_1$.
 \end{itemize}
  \label{eq:C-vert}
  \end{numitem}
(See Fig.~\ref{fig:wormC}(c),(d),(e).) Again, (i) is obvious. Part~(ii) is
provided by $\min\{\rho,a_3\}=\rho$, ~$\min\{(\rho-a_3)^+,\delta^+\}=0$ and
$\rho-a_2-\delta\le 0$. Part~(iii) follows from $\min\{\rho,a_3\}=a_3$,
~$\min\{(\rho-a_3)^+,\delta^+\}=\rho-a_3$ and $\rho-a_2-\delta\le 0$. And~(iv)
follows from $\min\{\rho,a_3\}=a_3$,
~$a_1+\min\{(\rho-a_3)^+,\delta^+\}=a_1+\delta=a_4$ and
$(\rho-a_2-\delta)^+=\rho-a_3-\delta$.

Relations~\refeq{a4a3}--\refeq{c2-4},\refeq{rho_ineq} ensure that $w(v)$ is
indeed a feasible worm on $R(c')$, and one can see that any worm in $W(c')$ can
be obtained by the above construction from some $v\in S$. Furthermore, for an
edge $(u,v)$ of color $\bar 2$ in $\frakC$, we have $a(v)=a(u)$,
$\delta(v)=\delta(u)$ and $\rho(v)=\rho(u)+1$.
Considering~\refeq{C-hor},\refeq{C-vert}, one can realize that the worm $w(v)$
is obtained from $w(u)$ by applying the \emph{first} operator on $W(c')$ (which
updates $X',X''$ and preserves $Y',Y''$).

Thus, we can conclude with the following result analogous to
Proposition~\ref{pr:S-worm}.
  \begin{prop}  \label{pr:S-worm-C}
By the above construction, the correspondence $v\mapsto w(v)$ is a bijection
between the vertices of the symmetric extract $\frakC$ from an $A_4$-crystal
$K(c_1,c_2,c_2,c_1)$ and the vertices of the worm graph $W=W(c_2,c_1)$. Under
this bijection, the edges of color $\bar 2$ in $\frakC$ are transferred
one-to-one to the edges of color $\tilde 1$ in $W$.
  \end{prop}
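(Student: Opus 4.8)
The plan is to build on Theorem~\ref{tm:poly-4}, which already identifies the vertex set $S$ of $\frakC$ with the integer solutions $(a_1,a_2,\delta,\rho)$ of the system \{\refeq{c1-4},\refeq{c2-4},\refeq{rho_ineq}\}. Under this identification the assignment $v\mapsto w(v)$ is the explicit map given by \refeq{C-YY}--\refeq{C-Xpp}, equivalently by the case descriptions \refeq{C-hor} and \refeq{C-vert}. So it remains to verify three things: that each $w(v)$ is a genuine worm of $W(c_2,c_1)$ (the map is well-defined into the correct worm graph), that every worm of $W(c_2,c_1)$ occurs exactly once (bijectivity), and that the $\bar 2$-edges of $\frakC$ are carried precisely onto the first-color edges of $W$. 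The whole argument parallels the $A_3\to B_2$ treatment of Section~\ref{ssec:B2}, with the diagonal coordinate $\rho$ of the middle subcrystal $\Kmid$ (Lemma~\ref{lm:v-diag}) now playing the role of the former tail-length $\ell$.

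For well-definedness I would run through the cases of \refeq{C-hor} and \refeq{C-vert}. In each case the horizontal coordinates of $X',X''$ are even (being of the form $2(\cdot)$) and, using \refeq{a4a3},\refeq{c1-4},\refeq{c2-4},\refeq{rho_ineq}, all four points $X',X'',Y',Y''$ lie in the rectangle $R(c_2,c_1)$, so \refeq{wormpar} holds. The shape conditions \refeq{admiss} are then immediate from the geometry recorded in Fig.~\ref{fig:wormC}: for $\delta<0$ one obtains an H-worm whose $Y$-point lies on the segment $J_0J_1$, while for $\delta\ge 0$ the vertical limb $J_0J_1$ is present and $X$ (or $X',X''$) sits correctly relative to it. Injectivity, together with the inverse map, follows by reading the quadruple back off a worm: the endpoints of the vertical limb recover $a_1$ and $a_4$ (hence $\delta=a_4-a_1$ when $\delta\ge 0$, with $a_2=a_3$ from its horizontal position); in the H-worm case the common height recovers $a_1=a_4$ and the position of $Y$ together with those of $X',X''$ recovers $a_2,a_3,\delta$; and in every case the residual horizontal coordinate recovers $\rho$. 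Checking that the parameters so obtained from an arbitrary worm of $W(c_2,c_1)$ satisfy \{\refeq{c1-4},\refeq{c2-4},\refeq{rho_ineq}\} yields surjectivity, so $v\mapsto w(v)$ is a bijection onto the vertices of $W(c_2,c_1)$.

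For the edge statement, recall that a $\bar 2$-edge of $\frakC$ is a step $v=\bftwo\bfthree\bfthree\bftwo(u)$ along the diagonal $D$ of a single middle subcrystal $\Kmid$ (Lemma~\ref{lm:v-diag} and Remark~6); such a step keeps $\Kup[a]$ and the deviation fixed, so $a(v)=a(u)$, $\delta(v)=\delta(u)$ and $\rho(v)=\rho(u)+1$. Since $Y',Y''$ in \refeq{C-YY} depend only on $(a_1,a_2,\delta)$, they are unchanged, whereas incrementing $\rho$ by one shifts exactly one of $X',X''$ (or moves $X$ up, in the V-case) by a single worm-step. Comparing this with the action \refeq{edges1} of the first worm operator $\tone$ in each regime shows $w(v)=\tone\,w(u)$; conversely, since both $\bar 2$ and $\tone$ are invertible partial operators on finite acyclic graphs, every $\tone$-edge of $W$ arises this way. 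Hence the $\bar 2$-edges of $\frakC$ and the first-color edges of $W$ correspond one-to-one.

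The step needing the most care — and the main obstacle — is matching the edges across the boundaries between worm types, i.e. at the critical values $\rho=a_3$ (and, when $\delta>0$, also $\rho=a_3+\delta$) where the regimes of \refeq{C-hor},\refeq{C-vert} change. There the definition \refeq{edges1} of $\tone$ branches on whether $X'$ moves, $X''$ moves, or $X$ moves up, and in the H-worm case it invokes the tie-breaking rule ``move the point farther from $Y$, and $X''$ if equidistant.'' One must check that at each transition value of $\rho$ the points $X',X''$ become equidistant from $Y$ (for $\delta<0$), or the relevant degeneracy $X=X'=X''$ or $X=Y''$ occurs (for $\delta\ge 0$), so that a single $\tone$-step passes correctly from one worm type to the next. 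This is the analogue of the Case~I/Case~II sign analysis of Section~\ref{ssec:B2}, and it reduces to elementary comparisons of the $\min/\max$ expressions in \refeq{C-Xp},\refeq{C-Xpp}.
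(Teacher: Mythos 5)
Your proposal is correct and follows essentially the same route as the paper: the paper likewise takes the identification of $S$ with the integer solutions of \{\refeq{c1-4},\refeq{c2-4},\refeq{rho_ineq}\} from Theorem~\ref{tm:poly-4}, checks via \refeq{C-hor},\refeq{C-vert} that each $w(v)$ is a feasible worm of $W(c_2,c_1)$ and that every worm arises, and then observes that a $\bar 2$-edge fixes $(a,\delta)$ and increments $\rho$ by one, which matches the action of the first worm operator. Your added details (the explicit inverse map for bijectivity and the check at the regime boundaries $\rho=a_3$, $\rho=a_3+\delta$) only flesh out steps the paper leaves to the reader.
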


 \subsection{Verification of edges of color 2}
 \label{ssec:C3}

It remains to prove that the edges of color $\bar 1$ in $\frakC$ are bijective
to the edges of color $\tilde 2$ in $W=W(c_2,c_1)$.

   \begin{prop} \label{pr:edg_C}
For each $v\in S$, the following properties hold:
  \begin{itemize}
\item[\rm(i)] if $\ttwo$ does not act at the worm $w(v)$, then
$\bone$ does not act at $v$;
 \item[\rm(ii)] if $\ttwo$ acts at $w(v)$, then $\bar\bfone$ acts at $v$
and $w(\bar\bfone v)=\tilde\bftwo w(v)$.
\end{itemize}
  \end{prop}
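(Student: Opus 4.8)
The plan is to mirror, step for step, the proof of Proposition~\ref{pr:edg_color1}, exploiting the fact that on $W(c_2,c_1)$ the worm operator $\ttwo$ plays exactly the role that $\tone$ played on $W(c_1,c_2)$ in the $A_3\to B_2$ reduction, with the horizontal and vertical directions of the ground rectangle interchanged. First I would attach to each $v\in S$ the corner data of the domain rectangle $R_v$ spanned by $w(v)$ (the analogue of~\refeq{ppqq}), namely integers $p_1,p_2,q_1,q_2$ with $p_1,q_1$ reading the vertical (color-$1$) corners and $p_2,q_2$ the horizontal (color-$2$) ones, set the \emph{length} $\eta(v):=q_1+q_2-p_1-p_2$, and run the whole argument by induction on $\eta(v)$. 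Symmetrizing the corners to the self-complementary $4$-tuples $\hat p=(p_1,p_2,p_2,p_1)$ and $\hat q=(q_1,q_2,q_2,q_1)$, I would let $C_v$ be the interval of $\frakC$ between $\prv[\hat p]$ and $\prv[\hat q]$; by Proposition~\ref{pr:int_prlat} and the C-crystal form of Remark~5, $C_v$ is isomorphic to $\frakC(\hat q-\hat p)$, which replaces Corollary~\ref{cor:Bv} and lets the claims below be applied inside $C_v$. I would also use the reversal isomorphism $\frakC^{rev}\cong\frakC$, valid by the reasoning of point~1 in Section~\ref{sec:Bn}.

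When $\ttwo$ acts at $w(v)$, let $u\in S$ satisfy $w(u)=\ttwo\,w(v)$ (it exists by Proposition~\ref{pr:S-worm-C}); the goal in case~(ii) is to prove $u=\bone v$. As in the $B$-case, only $R_u\subset R_v$, $R_u\supset R_v$, and $R_u=R_v$ can occur, and the strict-inclusion case $R_u\subset R_v$ reduces to the induction hypothesis applied in $\frakC^{rev}$, giving the analogue of Claim~1. The substantive work lies in the cases $R_u\supseteq R_v$, where I would split on the type of $w(v)$ dually to Cases~1--3 of Section~\ref{ssec:B3}: the decisive types are now the V-worm (dual to the former H-worm), the H-worm with $Y\ne X''$ (dual to the former V-worm with $X\ne Y''$), and the remaining proper HV-/VH-worms or H-worms with $Y=X''\ne X'$. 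In each type I would read off from~\refeq{edges2} and~\refeq{C-hor}--\refeq{C-vert} how $\ttwo$ displaces the points $Y',Y''$ (or the point $Y$ of an H-worm), and compare this with the displacement of the quadruple $(a_1,a_2,\delta,\rho)$ forced by $\bone=\bfone\bffour$.

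The engine of the proof, exactly as in Section~\ref{ssec:B3}, is a short list of claims---analogues of Claims~2--5---asserting precisely when $\bone$ acts at a vertex in terms of the position of $w(v)$ relative to the top edge $\beta=c_1$ of $R(c_2,c_1)$ (typically $\bone$ acts iff $q_2<c_1$). I would prove these in the Appendix by the technique used there: realize the step as an explicit sequence of worm moves, lift it to a route in $C_v$ using Proposition~\ref{pr:S-worm-C} for the $\tone$-moves (the already-established color-$\bar 2$ edges) and the induction hypothesis for the $\ttwo$-moves (which involve strictly shorter worms), and then pin down the outcome by counting, via gradedness of $C_v$, the signed numbers $k_{\bone}$ and $k_{\btwo}$ of color-$\bar 1$ and color-$\bar 2$ edges along two routes with common endpoints. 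The bookkeeping differs from the $B$-case only in that the Cartan asymmetry is transposed ($m_{12}=-1$, $m_{21}=-2$): by Remark~7, $\eps_{\bar 1}$ drops by $2$ across a $\bar 2$-edge and $\eps_{\bar 2}$ drops by $1$ across a $\bar 1$-edge, and these are the increments that must balance in the counts.

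The main obstacle I anticipate is the verification of these Appendix claims, and within them the equality case $R_u=R_v$, where one must exclude the ``wrong'' degeneration ($w(v')$ becoming an HV-worm where a VH-worm is expected, and vice versa) and confirm the exact matching $w(\bone v)=\ttwo\,w(v)$. A genuinely new difficulty relative to the $B$-case is that the middle subcrystal $\Kmid$ is now a two-dimensional symmetric $A_2$-crystal rather than a single $2$-path, with its self-complementary vertices confined to the diagonal $D$ of its critical lattice (Lemma~\ref{lm:v-diag}); consequently the interplay between a $\bone$-step, which moves between distinct middle subcrystals and so changes $a_1$ and $a_4$, and the diagonal coordinate $\rho$ must be tracked with care when the worm degenerates to a V- or an H-worm. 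Once the claims are established, the three worm-type cases close just as in Section~\ref{ssec:B3}, completing Proposition~\ref{pr:edg_C} and, together with Proposition~\ref{pr:S-worm-C}, the proof of Theorem~\ref{tm:C3-C4}.
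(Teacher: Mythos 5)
Your plan reproduces the paper's proof of Proposition~\ref{pr:edg_C} almost exactly: the paper likewise sets $p_1=a_3$, $q_1=a_2$, $p_2=a_1$, $q_2=a_4$, inducts on $\eta(v)=q_1+q_2-p_1-p_2$, works inside the interval $C_v$ of $\frakC$ between the self-complementary principal vertices determined by the corners of $R_v$, splits into the same three worm-type cases (V-worm; H-worm with $Y\ne X''$; the remaining proper VH-/HV-worms), and pins down the image of $\bone$ by lifting explicit worm-move sequences to routes and comparing $k_{\bone}$, $k_{\btwo}$ via gradedness, with the reversal isomorphism handling the $R_u\subset R_v$ case.

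The one genuine gap is in how you propose to establish the analogues of Claims~2--5 (``$\bone$ acts at $v$ iff $q_2<c_1$,'' etc.). Route-lifting plus gradedness counting cannot prove these: that machinery presupposes that the relevant $\bar 1$-edges exist, whereas these claims are precisely the assertion of existence. In the paper they are proved in the Appendix by a direct computation of the residual head-length $\cpuu_1-\hpuu_1-\Dpuu_1$ of the $1$-line through $v$, obtained by descending through the tower of nested subcrystals $K\supset \Kup[a]\supset \Kpu\supset P_1$ and repeatedly applying the parameter/heart/deviation formulas \refeq{par_up}, \refeq{heart_up}, \refeq{bi}, \refeq{nablai}. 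This computation is the genuinely new technical content of the C-case --- it has one more level of nesting than the $A_3\to B_2$ case precisely because, as you note, the middle subcrystal is now a two-dimensional symmetric $A_2$-crystal whose diagonal coordinate $\rho$ must be threaded through the formulas --- and your proposal leaves it unaddressed by assigning to it a tool that belongs to a different part of the argument.
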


 \begin{proof}
Our approach is similar to used in Section~\ref{ssec:B3}; this allows us to
argue in a more concise manner, omitting details which can be restored by the
similarity.

Given $v\in S$, we define
  $$
  p_1(v):=a_3(v),\quad q_1(v):=a_2(v),\quad p_2(v):=a_1(v),\quad
  q_2(v):=a_4(v),
  $$
consider $\eta(v)$ and $R_v$ as before and proceed by induction on the length
$\eta(v)$ of the worm $w(v)$, using Corollary~\ref{cor:Bv} (where, instead of
$B_v$, one should consider the interval $C_v$ of $C$ between the principal
vertices $\check v[p_2(v),p_1(v),p_1(v),p_2(v)]$ and $\check
v[q_2(v),q_1(v),q_1(v),q_2(v)]$). When $\ttwo$ acts at $w(v)$, we define $u$ so
that  $w(u)=\ttwo w(v)$. As before, either $R_u\subset R_v$ or $R_u\supseteq
R_v$ takes place.

The role of Claim~1 is now played by the following claim whose proof is
similar.
 \medskip

  \noindent
\textbf{Claim 1$'$} ~\emph{{\rm(i)} If $R_u\subset R_v$, then $\bone$ acts at
$v$ and $u=\bone v$. ~{\rm(ii)} If $\bone$ acts at $v$ and if
$\eta(v')<\eta(v)$, where $v':=\bone v$, then $\ttwo$ acts at $w(v)$ and
$u=v'$. } \hfill\qed \medskip

Next we examine three cases (``symmetric'' to the ones in the proof of
Proposition~\ref{pr:edg_color1}); here $p_i,q_i,a_i$ concern $v$. \medskip

\noindent
 \underline{\em Case 1}: ~$w(v)$ is a V-worm, i.e. $p_1=q_1=:r$.
Then $q_2-p_2=\delta\ge 0$. Also $Y'(v)=(2r,\,p_2)$, $Y''(v)=(2r,\,q_2)$ and
$X(v)=(2r,\,y)$ for some $p_2\le y\le q_2$. Let $j:=y-p_2$ (then $0\le
j\le\delta$).

If $j>\delta/2$, then $\Vert Y'(v)X(v)\Vert=j >\delta-j=\Vert X(v)Y''(v)\Vert$.
Hence (cf.~\refeq{edges2}) operator $\ttwo$ acts at $w(v)$ and moves $Y'$ by
one unit up. This gives $R_u\subset R_v$, and we are done by Claim~1$'$(i).

Now let $j\le \delta/2$. The following claim will be proved in the Appendix.
 \medskip

  \noindent
\textbf{Claim 2$'$} ~\emph{When $j\le\delta/2$, operator $\bone$ acts at $v$ if
and only if $q_2<c_1$. } \medskip

In case $q_2=c_1$, neither operator $\bone$ acts at $v$ (by Claim~2$'$), nor
operator $\ttwo$ acts at $w(v)$ (as $Y''$ cannot be moved upward). So we
obtain~(ii) in the proposition.

Thus, we may assume that $q_2<c_1$. This and $\Vert Y'(v)X(v)\Vert=j
\le\delta-j=\Vert X(v)Y''(v)\Vert$ imply that $\ttwo$ acts at $w(v)$ and moves
$Y''$ by one unit up. Then  $w(u)$ is the V-worm with $Y'(u)=(2r,\,p_2)$,
~$Y''=(2r,\,q_2+1)$ and $X(u)=(2r,\,p_2+j)$.

Since Claim~2$'$ is applicable to the subgraph $C_u$ of $C$, the vertex
$v':=\bone v$ belongs to $C_u$, and $w(v')$ is a V-worm with $R_{v'}\subseteq
R_u$. Let
  $$
  Y'(v')=(2r,\,p'_2),\quad Y''(v')=(2r,\,q'_2)\quad \mbox{and}\quad
  X(v')=(2r,p_2+j').
  $$

To show that $p'_2=p_2$, ~$q'_2=q_2+1$ and $j'=j$ (yielding $v'=u$), we argue
as follows. Since $\eta(v')<\eta(v)$ is impossible (by Claim~1$'$(ii) and in
view of $\eta(u)>\eta(v)$), we are in one of the following three cases: (a)
$p'_2=p_2$ and $q'_2=q_2$; ~(b) $p'_2=p_2+1$ and $q'_2=q_2+1$; ~(c) $p'_2=p_2$
and $q'_2=q_2+1$.

Case (a) is impossible. For otherwise we would have $w(v')=\tone^{j'-j} w(v)$,
implying $v'=\btwo^{j'-j}v$ (since $\btwo$ on $C$ corresponds to $\tone$ on
$W$, by Proposition~\ref{pr:S-worm-C}.) But $C$ is graded and $v'=\bone v$.

In cases (b) and (c), acting as in the proof of
Proposition~\ref{pr:edg_color1}, we can construct four routes
$P_1,P_2,Q_1,Q_2$, respectively, from $\check v_0:=\check v[p_2,r,r,p_2]$ to
$v$, from $v$ to $\check v_1:=\check v[q_2+1,r,r,q_2+1]$, from $\check v_0$ to
$v'$, and from $v'$ to $\check v_1$. Moreover, these routes are consistent with
$W$, in the sense that each of their $\bar 1$-edges ($\bar 2$-edges) induces a
$\tilde 2$-edge (resp. $\tilde 1$-edge) in $W$. A direct count (using the
corresponding routes in $W$) gives
   \begin{equation} \label{eq:kP1P2-C}
k_{\bar\bfone}(P_1)=\delta,\quad k_{\bar\bftwo}(P_1)=j,\quad
k_{\bar\bfone}(P_2)=\delta+2,\quad k_{\bar\bftwo}(P_2)=\delta+1-j.
  \end{equation}

In case~(b), a similar count for $Q_2$ gives $k_{\bar\bfone}(Q_2)=q'_2-p'_2=
\delta$. Then concatenating the route $P_1$, the $\bar 1$-edge from $v$ to
$v'$, and the route $Q_2$, we obtain a route $P'$ from $\check v_0$ to $\check
v_1$ such that $k_{\bar\bfone}(P')=2\delta+1$. But $k_{\bar\bfone}(P_1\cdot
P_2)=2\delta+2$; a contradiction.

Thus, case~(c) is only possible. To show that $j=j'$, take the vertices $z,z'$
in $C_u$ whose worms are ``symmetric'' to $w(v),w(v')$, respectively, i.e.
  \begin{gather*}
  Y'(z)=(2r,\,p_2+1),\quad Y''(z)=(2r,\,q_2+1),\quad  X(z)=(2r,q_2+1-j); \\
  Y'(z')=(2r,\,p_2),\quad Y''(z')=(2r,\,q_2+1), \quad  X(z')=(2r,q_2+1-j').
  \end{gather*}
By the isomorphism between $C^{rev}_u$ and $C_u$, we have $z'={\bone}^{-1} z$.
Also $w(v')$ is transformed into $w(z')$ by moving the $X$ point by
$(q_2+1-j')-(p_2+j')=\delta+1-2j'$ points (regarding the upward direction as
positive). Now concatenating the route $P_1$ from $\check v_0$ to $v$, the
$\bar 1$-edge from $v$ to $v'$, the corresponding route from $v'$ to $z'$, the
$\bar 1$-edge from $z'$ to $z$, and the route from $z$ to $\check v_1$
``symmetric'' to $P_1$, we obtain a route $Q$ from $\check v_0$ to $\check v_1$
such that
   $$
  k_{\bar\bftwo}(Q)=j+(\delta+1-2j')+j=\delta+1+2j-2j'.
  $$
On the other hand, $k_{\bar\bftwo}(P_1\cdot P_2)=\delta+1$
(see~\refeq{kP1P2-C}). Hence $j'=j$, as required. \medskip

\noindent
 \underline{\em Case 2}: ~$w(v)$ is an H-worm with $Y(v)\ne X''(v)$.
Then $R_v$  is the horizontal segment connecting the points $X'(v)=(2p_1,f)$
and $X''(v)=(2q_1,f)$, where $f:=p_2=q_2$. Also $Y(v)=(2p_1+j,\,f)$ for some
$0\le j<2h$, where $h:=|\delta|=q_1-p_1$. For $i=0,\ldots,2h$, let $z_i$ denote
the vertex of $C$ such that $R_{z_i}=R_v$ and $Y(z_i)=(2p_1+i,\,f)$; then
$v=z_j$.

By~\refeq{edges2}, for $0\le i<2h$, operator $\ttwo$ transforms the worm
$w(z_i)$ into $w(z_{i+1})$. Define $z'_i:=\bone z_i$. Our goal is to show that
$z'_i=z_{i+1}$ for $i=0,\ldots,2h-1$. The following claim will be proved in the
Appendix.
 \medskip

  \noindent
\textbf{Claim 3$'$} ~\emph{Operator $\bone$ acts at each $z_i$ with $0\le
i<2h$.} \medskip

This claim is applicable to the subgraph $C_v$ of $C$, implying that
$R_{z'_i}\subseteq R_{z_i}=R_v$ for each $i<2h$. The strict inclusion here is
impossible by Claim~1$'$(ii); so $R_{z'_i}=R_v$ and $z'_i=z_{i'}$ for some
$i'\in\{0,\ldots,2h\}$.

The following claim is proved analogously to the proof of Claim~4 in
Section~\ref{ssec:B3}. Here $\check v_0:=\check v[f,p_1,p_1,f]$ and $\check
v_1:=\check v[f,q_1,q_1,f]$. \medskip

  \noindent
\textbf{Claim 4$'$} ~\emph{Let $i\ne h$. Let $P_1$ be a route (in $\frakC$)
from $\check v_0$ to $z_i$, and $P_2$ a route from $z_i$ to $\check v_1$. Then
$k_{\bone}(P_1)=i$ and $k_{\bone}(P_2)=2h-i$. In particular,
$k_{\bone}(P_1\cdot P_2)=2h$.} \hfill\qed
\medskip

This claim enables us to prove the desired equalities $i'=i+1$. If $i\ne h,2h$
and $i'\ne h$, then concatenating a route from $\check v_0$ to $z_i$, the $\bar
1$-edge from $z_i$ to $z_{i'}$, and a route $z_{i'}$ to $\check v_1$, we obtain
a route $Q$ from $\check v_0$ to $\check v_1$ with $k_{\bone}(Q)=i+1+2h-i'$ (by
Claim~4$'$ applied to $i$ and $i'$). This gives $i'=i+1$ (since $k_{\bone}(Q)$
must be equal to $2h$).

If $i=h-1$, then $i'=h$ ($=i+1$) is only possible. And if $i=h$, then $i'=h+1$
follows by considering $C_v^{rev}$ and $i'':=h+1$ (obtaining $\bone^{-1}
z_{i''} =z_h$, whence $i'=i''$).
\medskip

 \noindent
\underline{\em Case 3}: ~$w(v)$ is a proper VH-worm or a proper HV-worm or an
H-worm with $Y(v)= X''(v)\ne X'(v)$. This case is examined in a similar way as
Case~3 from the proof of Proposition~\ref{pr:edg_color1}, relying on the
following claim proved in the Appendix.
 \medskip

  \noindent
\textbf{Claim 5$'$} ~\emph{When $w(v)$ is an HV-worm, $\bone$ acts at $v$ if
and only if $q_2<c_1$.}
\medskip

\noindent This completes the proofs of Proposition~\ref{pr:edg_C},
Theorem~\ref{tm:C3-C4} and Corollary~\ref{cor:A-C}. \hfill\qed\qed
 \end{proof}

 \section*{Appendix. Proofs of claims}

\refstepcounter{section}

In this section we prove the claims from Propositions~\ref{pr:edg_color1}
and~\ref{pr:edg_C} that were left without verification, thus completing the
proofs of main theorems on B- and C-crystals from
Sections~\ref{sec:Bn},\ref{sec:proofB3-B4},\ref{sec:A2n}. In the proofs below
we will extensively use the explicit formulas on the parameters, heart
coordinates and etc. in A-crystals and their subcrystals.

 \subsection*{A.1 ~~Proof of Claims 2,\,3,\,5 from Proposition~\ref{pr:edg_color1}}

Recall that in Section~\ref{sec:proofB3-B4} we considered a self-complementary
vertex $v$ of a symmetric $A_3$-crystal $K=K(c_1,c_2,c_1)$ and its symmetric
extract $B$. Let $a=(a_1,a_2,a_3)$,
$\Delta=(\Delta_1,\Delta_2)=(\delta,-\delta)$, $\ell$,
$\parup=(\parup_1,\parup_2)$, $\heartup=(\heartup_1,\heartup_2)$,
$\parupdown=(\parupdown_2)$, $\heartupdown=(\heartupdown_2)$ be the
corresponding objects concerning $v$ and its related upper subcrystal $\Kup[a]$
and middle subcrystal $\Kmid$ (path of color 2). They are subject to
relations~\refeq{a-delta}--\refeq{par-centr-P}.

The path $\Kmid$, further denoted as $P_2$, is the \emph{lower} subcrystal
containing $v$ in the $A_2$-crystal $\Kup[a]$ with colors 1,2. The heart $z$ of
$P_2$ (i.e. the common vertex of $P_2$ and the critical lattice $\Piup$ of
$\Kup[a]$) has the coordinate $\heartupdown_2$ in $P_2$ (counted from the
beginning of $P_2$) equal to $a_2+\delta$, by~\refeq{par-centr-P}. Hence the
deviation of $v$ from $z$ in $P_2$ is expressed as
   \begin{equation} \label{eq:Dupdown}
   \Dupdown_2:=\ell-\heartupdown_2=\ell-a_2-\delta.
   \end{equation}

The maximal path $P_1$ of color 1 that contains $v$ is an \emph{upper}
subcrystal in $\Kup[a]$. The location of $v$ in $P_1$ is crucial for the claims
that we are going to prove: operator $\bone$ acts at $v$ if and only if $v$ is
\emph{not the last vertex of} $P_1$ (taking into account that either none or
both of operators 1,\,3 act at $v$). In order to find this location, we will
compute the parameter (length) $\parupup_1$ of $P_1$, the coordinate
$\heartupup_1$ of the heart $z'$ of $P_1$ in $P_1$ itself, and the deviation
$\Dupup_1$ of $v$ from $z'$ in $P_1$. Then
\begin{numitem}
$\bone$ acts at $v$ if and only if ~$\heartupup_1+\Dupup_1<\parupup_1$.
  \label{eq:hDcupup}
  \end{numitem}

The values figured in ~\refeq{hDcupup} can be expressed as follows (relying on
the fact that $P_1$, $P_2$ and $v$ are interrelated upper, lower and middle
subcrystals in $\Kup[a]$). The coordinates (locus) $\bup=(\bup_1,\bup_2)$ of
$z$ in $\Piup$ are expressed as $\bup=\heartup+\Delta$. Then~\refeq{heart3} and
the relations $\Delta=(\delta,-\delta)$ and $a_3=a_1+\delta^+$ give
  \begin{equation}  \label{eq:bup12}
  \bup_1=a_2+\delta\quad\mbox{and} \quad \bup_2=a_3-\delta=a_1+\delta^+-\delta
  =a_1-\delta^-.
  \end{equation}

Applying Theorem~\ref{tm:mainA} to $\Kup[a]$, $P_1$, $P_2$, we observe that the
deviation $\Dupup_1$ is equal to $-\Dupdown_2$, and the coordinates (locus)
$\aup=(\aup_1,\aup_2)$ of $z'$ in $\Pi$ are expressed as
$\aup_1=\bup_1+{\Dupdown_2}^-$ and $\aup_2=\bup_2+{\Dupdown_2}^+$
(cf.~\refeq{ai}). Using~\refeq{Dupdown}, we have
   \begin{gather}
   \Dupup_1=-\Dupdown_2=a_2+\delta-\ell; \label{eq:Dupup} \\
   \aup_1=\bup_1+{\Dupdown_2}^-=a_2+\delta+(\ell-a_2-\delta)^-
                =\min\{\ell,\,a_2+\delta\}; \label{eq:aup1} \\
  \aup_2=\bup_2+{\Dupdown_2}^+=a_1-\delta^- +(\ell-a_2-\delta)^+. \label{eq:aup2}
    \end{gather}

Also (cf.~\refeq{par_up},\refeq{heart_up}):
  \begin{equation}  \label{eq:cch}
  \parup_1=c_1-a_1+a_2,\quad \parupup_1=\parup_1-\aup_1+\aup_2 \quad
  \mbox{and}\quad\heartupup_1=\aup_2.
  \end{equation}

Now~\refeq{bup12}--\refeq{cch} enable us to precisely compute the desired
quantity:
   \begin{multline*}
\parupup_1-\heartupup_1-\Dupup_1=(\parup_1-\aup_1+\aup_2) -\aup_2 -(a_2+\delta-\ell) \\
 =(c_1-a_1+a_2)-\min\{\ell,\,a_2+\delta\}-a_2-\delta+\ell \\
 =c_1-a_1-\delta+\ell-\min\{\ell,\,a_2+\delta\}=:\omega.
   \end{multline*}

Thus (by~\refeq{hDcupup}), $\bone$ acts at $v$ if and only if $\omega>0$. (Note
that simultaneously $\omega$ is equal to the length of the maximal $\bar
1$-colored path from $v$ in $B$, i.e. to $h_{\bar 1}(v)$, using notation from
Section~\ref{sec:prelim}.)

Now we are ready to prove Claims~2,\,3,\,5. \smallskip

1) The condition $j\le\delta$ in the hypotheses of Claim~2 is equivalent to
$\ell\le a_2+\delta$ (this is seen by considering the actions of operator
$\ttwo$ on $W$ described in Case~I of Section~\ref{ssec:B2}). Hence (in view of
$a_1+\delta=a_3$)
  $$
  \omega=c_1-a_1-\delta+\ell-\ell=c_1-(a_1+\delta)=c_1-a_3=c_1-q_1,
  $$
proving Claim~2. \smallskip

2) Similar conditions $\delta\ge 0$ and $\ell\le a_2+\delta$ hold in Claim~5,
and we again obtain $\omega=c_1-q_1$. \smallskip

3) Since $w(v)$ in Claim~3 is a V-worm, we have $\delta\le 0$ and $\ell\ge
a_2+\delta$ (see Case~II in Section~\ref{ssec:B2}), whence $\min\{\ell,\,
a_2+\delta\}=a_2+\delta$. Also $c_1\ge a_1$. Then
   $$
\omega=c_1-a_1-\delta+\ell-a_2-\delta\ge \ell-a_2-2\delta.
  $$
Observe that $X(v)$ is the point $(a_1,a_2)$ and $Y''(v)$ is the point
$(a_1,\,\ell-2\delta)$. Now $\omega>0$ follows from the condition that $X(v)$
lies below $Y''(v)$. \hfill\qed

 \subsection*{A.2 ~~Proof of Claims~2$'$,\,3$'$,\,5$'$ from Proposition~\ref{pr:edg_C}}

In Section~\ref{ssec:C3} we considered a self-complementary vertex $v$ of an
$A_4$-crystal $K=K(c_1,c_2,c_2,c_1)$ and related $a,\delta,\rho$. Compared with
the previous case, we are now forced to handle more subcrystals of $K$ that
contain $v$, namely, $K',\,\Kpd,\,\Kpu,\, P_2,\, P_1$, where:

(i) $K'$ has colors 1,2,3 (it is just the upper subcrystal $\Kup[a]$ of $K$);

(ii) $\Kpd$ has colors 2,3; it is a lower subcrystal of $K'$;

(iii) $\Kpu$ has colors 1,2; it is an upper subcrystal of $K'$;

(iv) $P_2$ has color 2; this path is an upper subcrystal of $\Kpd$, a lower
subcrystal of $\Kpu$, and a middle subcrystal of $K'$;

(v) $P_1$ has color 1; this path is an upper subcrystal of $\Kpu$. \smallskip

\noindent Accordingly we denote:

(vi) the principal lattices of $K,\,K',\,\Kpd,\,\Kpu$ by $\Pi,\,\Pi',\,
\Pipd,\,\Pipu$, respectively;

(vii) the unique elements of $\Pi\cap K',\, \Pi'\cap \Kpd,\, \Pi'\cap \Kpu,\,
\Pipd\cap P_2,\, \Pipu\cap P_2,\, \Pipu\cap P_1$ by $z,\,
\zd,\,\zu,\,\zdu,\,\zud,\,\zuu$, respectively (these are the hearts of
corresponding subcrystals);

(viii) the parameters of $K',\,\Kpd,\,\Kpu,\,P_1$ by $c',\, \cpd,\, \cpu,\,
\cpuu$, respectively (each being a duly indexed vector; e.g.,
$c'=(c'_1,c'_2,c'_3)$, $\cpd=(\cpd_2,\cpd_3)$, $\cpuu=(\cpuu_2)$).
\smallskip

Considering one or another heart $z^\bullet$, we denote its coordinate in the
principal lattice of the smaller subcrystal by $\hslash'^\bullet$; e.g., $\hpd$
concerns $\zd$ in $\Pipd$, and $\hpud$ concerns $\zud$ in $P_2$. Notation for
additional objects (such as deviations, loci, et al.) will be specified on the
way.

Like the previous case (cf.~\refeq{hDcupup}), the following property is
evident:
\begin{numitem}
$\bone$ acts at $v$ if and only if ~$\cpuu_1-\hpuu_1-\Dpuu_1>0$,
  \label{eq:chDuu}
  \end{numitem}
where $\Dpuu_1$ is the deviation of $v$ from $\zuu$ in $P_1$. To express the
quantity figured in~\refeq{chDuu} in terms of $a_1,a_2,\delta,\rho$ takes some
technical efforts. We will use the following auxiliary values:
  \begin{equation} \label{eq:phi-psi}
  \phi:=\rho-a_2-\delta\qquad\mbox{and}\qquad \psi:=\rho-a_2-\delta^-.
  \end{equation}

Recall that the tuple $a=(a_1,a_2,a_3,a_4)$ (satisfying~\refeq{a4a3}) is the
locus of the heart of $K'$ in $\Pi$, and that the deviation in $\Pi'$ of the
heart $\zd$ of $\Kpd$ from the heart $v[a]$ of $K'$ is
$\Delta=(\Delta_1,\,\Delta_2,\,\Delta_3)= (\delta,0,-\delta)$
(by~\refeq{Delta-nabla4}).

Let $a',b',\apu,\bpu$ denote the loci of $\zu$ in $\Pi'$, of $\zd$ in $\Pi'$,
of $\zuu$ in $\Pipu$, of $\zud$ in $\Pipu$, respectively. By~\refeq{par_up}
and~\refeq{heart_up} (applied to appropriate subcrystals), we have
 \begin{gather}
 c'_1=c_1-a_1+a_2,\qquad \cpu_1=c'_1-a'_1+a'_2,\qquad
 \cpuu_1=\cpu_1-\apu_1+\apu_2;           \label{eq:cpuu} \\
 \hslash'_i=a_{i+1}\;\; (i=1,2,3),\qquad \hpu_i=a'_{i+1}\;\; (i=1,2), \qquad
                   \hpuu_1=\apu_2. \label{eq:hpuu}
  \end{gather}

Since $b'=\hslash'+\Delta$, the first relation in~\refeq{hpuu} gives
  \begin{equation} \label{eq:bp12}
  b'_1=\hslash'_1+\delta=a_2+\delta\quad \mbox{and}\quad
      b'_2=\hslash'_2+0=a_3=a_2+\delta^-.
  \end{equation}
By~\refeq{bp12} and~\refeq{heart_low} (applied to $K',\Kpd$), the locus $\hpd$
of $\zd$ in $\Pipd$ is computed as
   \begin{equation} \label{eq:hpd23}
   \hpd_2=b'_1=a_2+\delta\qquad \mbox{and}\qquad \hpd_3=b'_2=a_2+\delta^-.
   \end{equation}

Consider $K'$ and its lower, upper and middle subcrystals containing $v$,
namely, $\Kpd$, $\Kpu$ and $P_2$, respectively. Let $\Dpd$ denote the deviation
of $\zdu$ from $\zd$ in $\Pipd$, and $\Dpu$ the deviation of $\zud$ from $\zu$
in $\Pipu$. We know (cf. Lemma~\ref{lm:v-diag} and~\refeq{c2c3mid}) that the
subcrystal $\Kpd$ (with colors 2,3) is symmetric and has the parameter
$\cpd_i=c_2+\delta$ ($i=2,3$). Also the vertex $v$ is the point $(\rho,\rho)$
in the principal lattice $\Pipd$. Hence $v$ coincides with $\zdu$ (the heart of
the path $P_2$ w.r.t. $\Pipd$). These facts give (using~\refeq{phi-psi}
and~\refeq{hpd23}):
   \begin{equation} \label{eq:Dpd23}
   \Dpd_2=\rho-\hpd_2=\rho-a_2-\delta=\phi \quad\mbox{and} \quad
   \Dpd_3=\rho-\hpd_3=\rho-a_2-\delta^-=\psi.
   \end{equation}
This enables us to compute $\Dpu$ and $a'$. Namely (using~\refeq{nablai}
and~\refeq{ai}):
  \begin{equation} \label{eq:Dpu12}
  \Dpu_1=-\Dpd_2=-\phi\qquad \mbox{and} \qquad \Dpu_2=-\Dpd_3=-\psi;
  \end{equation}
and
  \begin{gather}
  a'_1=b'_1+{\Dpd_2}^-=a_2+\delta+\phi^-; \qquad\qquad\qquad \label{eq:ap12} \\
  a'_2=b'_2+{\Dpd_2}^+ +{\Dpd_3}^-=a_2+\delta^-+\phi^+ +\psi^-. \nonumber
  \end{gather}
Also the locus $\bpu$ of $\zud$ in $\Pipu$ is expressed (using~\refeq{hpuu}
and~\refeq{Dpu12}) as
  \begin{equation} \label{eq:bpu12}
  \bpu_1=\hpu_1+\Dpu_1=a'_2-\phi \quad\mbox{and}\quad
    \bpu_2=\hpu_2+\Dpu_2=a'_3-\psi.
    \end{equation}

Next we use the fact that $P_2$, $P_1$ and $v$ are the lower, upper and middle
subcrystals of $\Kpu$, respectively. The coordinate $\hpud_2$ of $\zud$ in
$P_2$ is equal to $\bpu_1$ (cf.~\refeq{heart_low}), and the coordinate of $v$
in $P_2$ is equal to $\rho$ (since the locus of $v=\zdu$ in $\Pipd$ is
$(\rho,\rho)$). Hence the deviation $\Dpud_2$ of $v$ from $\zud$ in $P_2$ is
$\rho-\bpu_1$, and we have (using~\refeq{Dpu12} and~\refeq{bpu12}):
   \begin{equation} \label{eq:Dpud2}
   \Dpud_2=\rho-\bpu_1=\rho-a'_2+\phi=\rho-a_2-\delta^--\phi^+-\psi^-+\phi
   =\phi^-+\psi^+.
   \end{equation}

Finally, we have
  \begin{equation} \label{eq:Dpuu1}
  \Dpuu_1=-\Dpud_2\qquad \mbox{and} \qquad \apu_1=\bpu_1+{\Dpud_2}^-
  \end{equation}
(cf.~\refeq{nablai} and~\refeq{ai}), where $\apu$ is the locus of $\zuu$ in
$\Pipu$.

The obtained formulas enable us to compute the desired quantity:
   \begin{gather*}
   \cpuu_1-\hpuu_1-\Dpuu_1=(\cpu_1-\apu_1+\apu_2)-\apu_2-\Dpuu_1 \qquad
             \qquad           \hfill\mbox{(by \refeq{cpuu},\refeq{hpuu})} \\
   =(c'_1-a'_1+a'_2)-(\bpu_1+{\Dpud_2}^-)+\Dpud_2 \qquad
               \qquad\qquad  \hfill\mbox{(by \refeq{cpuu},\refeq{Dpuu1})} \\
   =(c'_1-a'_1+a'_2)-(a'_2-\phi)+{\Dpud_2}^+ \qquad
               \qquad \qquad\qquad         \hfill\mbox{(by \refeq{bpu12})} \\
   =c'_1-a'_1+\phi+(\phi^-+\psi^+)^+    \qquad
               \qquad\qquad\qquad\qquad     \hfill\mbox{(by \refeq{Dpud2})} \\
   =(c_1-a_1+a_2)-(a_2+\delta+\phi^-)+\phi+(\phi^-+\psi^+)^+ \qquad
                                    \hfill\mbox{(by \refeq{cpuu},\refeq{ap12})} \\
   =c_1-a_1-\delta+\phi^++(\phi^-+\psi^+)^+
                               \qquad \qquad\qquad\qquad\qquad\qquad\qquad\\
   =c_1-a_1-\delta+(\rho-a_2-\delta)^++((\rho-a_2-\delta)^-
   +(\rho-a_2-\delta^-)^+)^+ =: \omega'.
       \end{gather*}

Now we are ready to prove Claims 2$'$,\,3$'$,\,5$'$. \smallskip

1) In the hypotheses of Claim~2$'$, ~$w(v)$ is a V-worm; therefore,
$\delta=a_4-a_1\ge 0$, ~$a_2=a_3$ and $a_2\le\rho\le a_2+\delta$
(cf.~\refeq{C-vert}(iii)). Then $\phi=\rho-a_2-\delta\le 0$ and $\psi=\rho-a_2
\ge 0$. Also $j=\rho-a_2\le\delta/2$. We have
   $$
   \omega'=c_1-a_1-\delta+(\rho-a_2-\delta+\rho-a_2)^+
          =c_1-a_4+(2\rho-2a_2-\delta)^+=c_1-a_4.
   $$
Since $a_4=q_2$, ~$\omega'>0$ if and only if $c_1>q_2$, as required (in view
of~\refeq{chDuu}). \medskip

2) In the hypotheses of Claim~3$'$, ~$w(v)$ is an H-worm; therefore, $\delta\le
0$. Moreover, $Y(v)\ne X''(v)$ implies $\delta<0$ (cf.~\refeq{C-hor}). Also
$a_1\le c_1$. Then $\omega'\ge c_1-a_1-\delta>0$.
\medskip

3) Since $w(v)$ in Claim~5$'$ is an HV-worm, ~$\delta=a_4-a_1\ge 0$ and
$\rho\le a_3=a_2$ (cf.~\refeq{C-vert}(ii)). Then $\phi\le 0$ and $\psi\le 0$.
We have $\omega'=c_1-a_1-\delta+(\phi^-+\psi^+)^+=c_1-a_4=c_1-q_2$, as
required. \hfill\qed

\end{document}